\documentclass{article}
\usepackage[utf8]{inputenc}
\usepackage{amsmath}
\usepackage{mathrsfs}
\usepackage{dsfont}
\usepackage{amssymb}
\usepackage{setspace}
\usepackage{algorithm}
\usepackage{algpseudocode}
\usepackage{subcaption}
\usepackage{caption}
\usepackage{graphicx}

\usepackage{xcolor}
\usepackage{amsthm}
\usepackage{hyperref}
\usepackage{mathtools}
\bibliographystyle{plain}
\newtheorem{theorem}{Theorem}[section]
\newtheorem{lemma}[theorem]{Lemma}

\newtheorem{proposition}[theorem]{Proposition}

\theoremstyle{remark}
\newtheorem{example}[theorem]{Example}
\newtheorem{remark}[theorem]{Remark}

\usepackage{amsopn}

\newtheorem{assumption}[theorem]{Assumption}

\newcommand\norm[1]{\left\lVert#1\right\rVert}
\renewcommand{\phi}{\varphi}
\usepackage{enumitem}
\usepackage{cleveref}
\usepackage{geometry}
\usepackage{easyReview}

\ifpdf
  \DeclareGraphicsExtensions{.eps,.pdf,.png,.jpg}
\else
  \DeclareGraphicsExtensions{.eps}
\fi

\setlist[enumerate]{leftmargin=.5in}
\setlist[itemize]{leftmargin=.5in}



\title{A continuous scale space of diffeomorphisms\thanks{This work was partially supported by NSF 2309683.}}

\author{Yechen Liu\thanks{Department of Applied Mathematics and Statistics, Johns Hopkins University 
  (yliu381@jhu.edu).}
\and Laurent Younes\thanks{Department of Applied Mathematics and Statistics, Johns Hopkins University 
  (laurent.younes@jhu.edu).}}
\date{}

\begin{document}

\maketitle

\begin{abstract}
    In this paper, we  define and study a nested family of reproducing kernel Hilbert spaces of vector fields that is indexed by a range of scales, from which we construct a reproducing kernel Hilbert space of scale-dependent vector fields. We provide a characterization of the reproducing kernel of that space, with numerical approximations ensuring quick evaluations when this kernel does not have a closed form. We then introduce a multiscale version of the large deformation diffeomorphic metric mapping (LDDMM) problem and prove the existence of solutions.  Finally, we provide numerical experiments performing landmark matching using multiscale LDDMM. 
    
\end{abstract}



\section{Introduction}

Analyzing the characteristics of shapes, and in particular, relating them to  biological or clinical phenotypes has become an essential element of many medical studies, following the development of large databases including various imaging modalities \cite{mueller2005alzheimer,casey2018adolescent,soldan2016hypothetical}. While shapes, or anatomies, are intuitively easy to apprehend, they are complex mathematical objects, as can attested from the variety of representations, such as images, landmarks, curves, surfaces, volumes, vector and tensor fields, or measures,  that have been used to describe them in the literature. Because these representations do not generally align with descriptive shape features, an essential step in most studies involves some form of registration, in which all shapes are placed in a common coordinate system that simplifies subsequent analyses. One of the main focuses of this paper involves
diffeomorphic registration, which optimizes a smooth invertible transformation of a given shape or image to place it in a desired position, a problem that we will formalize in a multi-scale setting.


The scientific literature abounds in mathematical models of shapes, and in algorithms allowing for their analysis and comparison. Among such,  
one can cite methods such as the Gromov-Wasserstein distance approach \cite{memoli2011gromov} which compares shapes as metric spaces, the iterative closest point (ICP) framework \cite{Besl1992, Amberg2007}, Gaussian mixture and variational Bayesian models (GMM) \cite{Jian2011,Hirose2021} or Laplace-Beltrami (LB) eigenmap \cite{Lai2017}. Comparisons based on topology-preserving transformations, such as diffeomorphisms, have been introduced, notably, in the large deformation diffeomorphic metric mapping (LDDMM) framework \cite{Dupuis1998, trouve1998diffeomorphisms, Joshi2000, beg2005computing}, and these techniques are especially relevant when applied to point sets discretizing a continuous curve, surface or volume. LDDMM models  transformations using a geodesic flow of diffeomorphisms associated to a time-dependent vector field. This framework can be applied to point set matching with labeled landmarks \cite{Joshi2000}, or, in the unlabeled case, combined with soft assignments \cite{guo2006diffeomorphic} or with kernel-based distances between measures \cite{glaunes2004diffeomorphic}. A variant to LDDMM in \cite{yang2015diffeomorphic} uses stationary vector fields to improve the computational efficiency, at the expense of removing the metric properties of the  approach. Beside the LDDMM model, diffeomorphic registration methods using quasi-conformal maps have also been proposed, where the diffeomorphic property is guaranteed by controlling the Beltrami coefficient \cite{gardiner2000quasiconformal}. A two-dimensional quasi-conformal landmark registration (QCLR) model was proposed in \cite{lam2014landmark}, and later generalized to higher dimensional spaces in \cite{Lee2014LandmarkMatchingTW} and \cite{zhang2022QC}. Besides, various conformal flattening methods of surfaces, as a special case of shape registration have been developed \cite{Choi2015Disk, choi2015Flash, Jin2008RicciFlow, Mullen2008SCP}.


In all these shape matching problems the notion of ``scale'' plays an essential role, in that it defines the range at which points in space interact during the transformation, of which it determines the rigidity. Scale is typically captured by a parameter in the definition of the smoothness metric that is optimized in the registration algorithm. 

For complex data, or when several modalities are analyzed together, transformations typically occur at several scales. Morphological variations in brain MRI data, for example, may result in global differences in the whole brain, as well as minute shape changes in small subregions. When the observation is combined with histological data, variations may occur at even smaller levels. The importance of understanding these data at all relevant scales, as well as the need to decompose the observed transformations according to these scales has led the authors in \cite{miller2022molecular,miller2020coarse} to introduce a new approach in which registration is estimated for a finite number of scales at once, in a cooperative way. The models described in the present paper can be seen as a generalization of this approach, in which the multiscale morphological analysis operates on a continuum of scales, with various options defining how these scales interact. 
We note also that the concept of multiple scales also appeared in the earlier work in \cite{risser2011simultaneous}, in which the authors propose to use diffeomorphic flows with Eulerian velocities associated with reproducing kernel Hilbert spaces (RKHS) whose kernel is a sum of several others at multiple scales. This latter work, however, still computes a single diffeomorphic registration, rather than a cascade of transformations as in \cite{miller2022molecular, miller2020coarse} and in the present paper. 


Following standard scale-space analysis, it is natural to order transformations from most detailed to most global (small scales to large scales). When these transformations are specified as flows of vector fields, as done here, this leads to defining scale-dependent vector fields, which, as introduced in \cref{sec:scale.dependent.vf}, is achieved by defining families of nested reproducing kernel Hilbert spaces, each of them associated with a specific scale. These spaces are assembled in a single RKHS of scale-dependent vector fields, built as a form of first-order Sobolev space with respect to scale. Important properties of this family of spaces are derived in \cref{sec:scale.dependent.vf}, and a characterization of their reproducing kernels, as well as explicit and computational methods allowing for their computation, are provided in \cref{sec:kernels}. A description of registration algorithms applying the large deformation diffeomorphic metric mapping across multiple scales is given in \cref{sec:mslddmm} and experimental results are provided in \cref{sec:experiments}.

\section{Spaces of scale-dependent vector fields}
\label{sec:scale.dependent.vf}

\subsection{Scale-dependent vector fields}

Let $[s_1,s_2]$ be a range of positive scales (we assume that $s_1>0$ for notational convenience). Our goal in this section is to define and study a family of Hilbert spaces of scale-dependent vector fields, i.e., functions $v: [s_1, s_2] \times \mathbb R^d \to \mathbb R^d$ (so that $v(\lambda, \cdot)$ is the vector field $v$ at scale $\lambda$). We will then be in position to define scale-dependent flows as solutions of 
\begin{equation}
    \label{eq:flow.0}
    \partial_t \phi(t, \lambda, x) = v(t, \lambda, \phi(t, \lambda, x))
\end{equation}
with $\phi(t, \lambda, x) = x$ where $v$ is a time-and-scale-dependent vector field with sufficient regularity properties to ensure the existence of solutions.

We start with the definition of a space $\mathbb V$ of such vector fields that will serve as a container for the ``scale-space'' analysis developed in the next section.
Let $V$ be a reproducing kernel Hilbert space (RKHS) continuously embedded into the space of $C^1$ vector fields $v$ on $\mathbb R^d$ such that $v$ and $Dv$ decay to zero at infinity, denoted as $V\xhookrightarrow{} C^1_0(\mathbb R^d)$. Recall that a Hilbert space of scalar- or vector-valued functions over $\mathbb R^d$ is an RKHS if and only if, for all $x\in \mathbb R^d$, the linear mapping $\delta_x: v \mapsto v(x)$ is bounded. { The continuous embedding of $V$ in  $C^1_0(\mathbb R^d)$ means that there exists some constant $C_V>0$ and a mapping $\mathfrak i: V\to C^1_0(\mathbb R^d)$ such that $\norm{\mathfrak i (v)}_{1,\infty}\leq C_V\norm{v}_{V}$. For the remaining of this paper, $\mathfrak i(v)$ is not distinguished from $v$, and $\norm{\mathfrak i (v)}_{1,\infty}$ is simply denoted as $\norm{v}_{1,\infty}$.} 

Recall that a locally Bochner integrable function $g: (a,b)\to V$ is a weak derivative of a locally Bochner integrable function $v: (a,b)\to V$ if $g$ satisfies
\begin{align*}
    \int_a^b g(\lambda)\varphi(\lambda)d\lambda = -\int_a^b \partial_\lambda\varphi(\lambda) v(\lambda)d\lambda
\end{align*}
for all $C^\infty$ function $\varphi:(a,b)\to\mathbb R$ with compact support. 
One then writes $g=\partial_\lambda v$. 

With this, we define $\mathbb V=H^1((s_1,s_2),V)$, or
\begin{equation}    
\mathbb V=\left\{v:(s_1,s_2)\to V \text{ such that } \int_{s_1}^{s_2}\norm{v(\lambda)}_V^2d\lambda+\int_{s_1}^{s_2}\norm{\partial_{\lambda }v(\lambda)}_V^2d\lambda<\infty\right\},
\end{equation}
the Hilbert space of square integrable functions $v$ with square integrable weak derivatives. For $v\in\mathbb V$, the Hilbert norm is given by \begin{align*}
    \norm{v}_{\mathbb V}^2 = \int_{s_1}^{s_2}\norm{v(\lambda)}_V^2d\lambda+\int_{s_1}^{s_2}\norm{\partial_\lambda v(\lambda)}_V^2d\lambda.
\end{align*}

If $v\in \mathbb V$, one has for almost all $\lambda_1,\lambda_2\in (s_1,s_2)$, \begin{align}
    v(\lambda_2) = v(\lambda_1)+\int_{\lambda_1}^{\lambda_2}\partial_\lambda v(\lambda)d\lambda.
    \label{eqn: IBP}
\end{align}
This shows that elements of $\mathbb V$ are a.e. equal to (uniformly) continuous functions, as for a.e. $\lambda_1,\lambda_2\in(s_1,s_2),$ \begin{align*}
    \norm{v(\lambda_2)-v(\lambda_1)}_V\leq\int_{\lambda_1}^{\lambda_2}\norm{\partial_\lambda v(\lambda)}_Vd\lambda\leq (\lambda_2-\lambda_1)^{1/2}\norm{v}_{\mathbb V}.
\end{align*} 
By convention, $v(\lambda)$ always refers to the continuous version of $v$ evaluated at $\lambda$, so that \cref{eqn: IBP} holds for all $\lambda_1$ and $\lambda_2$. 
This remark allows us to define the operator $\iota_\lambda: \mathbb V\to V$, with $\iota_\lambda v=v(\lambda)$, for all $\lambda\in [s_1,s_2]$. As $v(\lambda)\in C^1(\mathbb R^d)$, the notation $v(\lambda)(x)$ makes sense, and to avoid an excessive use of  parentheses, we will simply write $v(\lambda, x)=v(\lambda)(x)$. 
A similar convention will be  applied to vector fields where time and scale variables are involved. Whenever convenient, we will also denote $v(\lambda, \cdot) = \iota_\lambda(v)$.

For the operator $\iota_\lambda$, we have the following lemma, proved in \cref{sec:lemma2.1}: 
\begin{lemma}
    For all $\lambda\in [s_1,s_2]$, $\iota_\lambda: \mathbb V\to V$ is a linear and bounded operator. 

    \label{lemma: ilambda: V to V}
\end{lemma}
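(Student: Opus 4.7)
Linearity of $\iota_\lambda$ is immediate from the definition, so the only content is boundedness: I want to produce a constant $C$ (possibly depending on $s_1,s_2$ but not on $\lambda$) with $\norm{v(\lambda)}_V \leq C\norm{v}_{\mathbb V}$. My plan is the standard $H^1(s_1,s_2)$ trace/embedding argument, carried out for Hilbert-space-valued functions, exploiting the continuous representative introduced just before the lemma statement.

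Concretely, fix $\lambda\in[s_1,s_2]$ and let $\mu\in(s_1,s_2)$ be auxiliary. The identity \cref{eqn: IBP} applied to the continuous version gives
\begin{equation*}
    v(\lambda) = v(\mu) + \int_\mu^\lambda \partial_\lambda v(\tau)\,d\tau
\end{equation*}
in $V$. Taking $\norm{\cdot}_V$ and using the triangle inequality for the Bochner integral, followed by Cauchy--Schwarz (exactly as in the computation preceding the lemma), yields
\begin{equation*}
    \norm{v(\lambda)}_V \leq \norm{v(\mu)}_V + |\lambda-\mu|^{1/2}\,\norm{v}_{\mathbb V}.
\end{equation*}

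Next I would integrate this inequality in $\mu$ over $(s_1,s_2)$ to remove the dependence on $\mu$. This gives
\begin{equation*}
    (s_2-s_1)\,\norm{v(\lambda)}_V \leq \int_{s_1}^{s_2}\norm{v(\mu)}_V\,d\mu + \norm{v}_{\mathbb V}\int_{s_1}^{s_2}|\lambda-\mu|^{1/2}\,d\mu.
\end{equation*}
Another application of Cauchy--Schwarz bounds the first integral by $(s_2-s_1)^{1/2}\norm{v}_{\mathbb V}$, and the second integral is crudely bounded by $(s_2-s_1)^{3/2}$. Dividing through by $s_2-s_1$ gives
\begin{equation*}
    \norm{v(\lambda)}_V \leq \bigl[(s_2-s_1)^{-1/2} + (s_2-s_1)^{1/2}\bigr]\,\norm{v}_{\mathbb V},
\end{equation*}
which is a uniform bound in $\lambda$.

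There is no real obstacle in this proof: everything reduces to the Bochner-valued version of the one-dimensional Sobolev embedding $H^1(s_1,s_2)\hookrightarrow C([s_1,s_2])$, and all the key estimates are already present in the excerpt leading up to the lemma. The only minor point to check is that the triangle inequality and Cauchy--Schwarz used above are legitimate for $V$-valued Bochner integrals, which is standard. Using the continuous representative, as stipulated by the convention just before the lemma, ensures that the bound holds at every $\lambda\in[s_1,s_2]$ rather than merely almost everywhere.
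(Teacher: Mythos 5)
Your proof is correct and follows essentially the same route as the paper's: apply the integral identity \cref{eqn: IBP} to compare $v(\lambda)$ with $v(\mu)$, integrate the resulting inequality in $\mu$ over $(s_1,s_2)$, and use Cauchy--Schwarz to control both terms by $\norm{v}_{\mathbb V}$. The only cosmetic difference is that you apply Cauchy--Schwarz before integrating in $\mu$ whereas the paper squares first and applies it afterward, which changes the explicit constant but not the argument.
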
 

We now extend the discussion by replacing the first term in $\|\cdot\|_{\mathbb V}$ by an integral with respect to an arbitrary positive measure $\rho$ on $[s_1,s_2]$ (rather than Lebesgue's measure). One can consider the norm $\|\cdot \|_{\mathbb V, \rho}$ given by 
\begin{align*}
    \norm{v}_{\mathbb V, \rho}^2 = \int_{[s_1,s_2]}\norm{v(s)}_V^2d\rho(s)+\int_{s_1}^{s_2}\norm{\partial_\lambda v(s)}_V^2ds. 
\end{align*}
It turns out that we have the following equivalence between these two norms, with the proof provided in \cref{sec:prop.V.equip}: 
\begin{proposition}
\label{prop:V.equiv}
    For any positive measure $\rho$ on $[s_1,s_2]$, $\|\cdot\|_{\mathbb V, \rho}\sim\|\cdot\|_{\mathbb V}$.    
\end{proposition}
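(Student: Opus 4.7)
The proof strategy is to establish the two inequalities separately, starting from the observation that the derivative term $\int_{s_1}^{s_2}\|\partial_\lambda v(s)\|_V^2\,ds$ appears identically in both $\|\cdot\|_{\mathbb V}$ and $\|\cdot\|_{\mathbb V,\rho}$. Hence the comparison reduces to relating $\int_{[s_1,s_2]}\|v(s)\|_V^2\,d\rho(s)$ and $\int_{s_1}^{s_2}\|v(s)\|_V^2\,ds$, modulo a contribution from the shared derivative integral. Throughout I would tacitly assume $\rho([s_1,s_2])\in(0,\infty)$, as the equivalence evidently fails for the zero measure or for a measure of infinite total mass on the compact interval.

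For the direction $\|v\|_{\mathbb V,\rho}\lesssim\|v\|_{\mathbb V}$, I would rely on the continuous embedding $\mathbb V\hookrightarrow C([s_1,s_2],V)$, essentially the uniform version of \cref{lemma: ilambda: V to V}. Concretely, \cref{eqn: IBP} together with a mean-value argument for Lebesgue measure on $[s_1,s_2]$ yields some $\lambda_*\in[s_1,s_2]$ with $\|v(\lambda_*)\|_V^2\leq(s_2-s_1)^{-1}\int_{s_1}^{s_2}\|v(s)\|_V^2\,ds$; propagating this estimate to every $\lambda$ via \cref{eqn: IBP} and Cauchy--Schwarz gives $\sup_\lambda\|v(\lambda)\|_V^2\leq C\|v\|_{\mathbb V}^2$ for a constant $C$ depending only on $s_2-s_1$. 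Integrating against the finite measure $\rho$ then yields $\int_{[s_1,s_2]}\|v(s)\|_V^2\,d\rho(s)\leq C\rho([s_1,s_2])\|v\|_{\mathbb V}^2$, and combining with the identical derivative term gives one direction of the equivalence.

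For the reverse direction $\|v\|_{\mathbb V}\lesssim\|v\|_{\mathbb V,\rho}$, the plan is to pick a ``pivot'' scale $\lambda_0$ at which $v$ is controlled by the $\rho$-integral, and then propagate that control to the whole interval via the derivative. Since $\lambda\mapsto\|v(\lambda)\|_V^2$ is continuous on the nonempty compact set $\mathrm{supp}(\rho)$, its minimum there is bounded above by the $\rho$-average; hence there exists $\lambda_0\in\mathrm{supp}(\rho)$ with $\|v(\lambda_0)\|_V^2\leq\rho([s_1,s_2])^{-1}\int_{[s_1,s_2]}\|v(s)\|_V^2\,d\rho(s)$. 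Applying \cref{eqn: IBP} and Cauchy--Schwarz gives, uniformly in $\lambda\in[s_1,s_2]$,
\begin{equation*}
\|v(\lambda)\|_V^2\leq 2\|v(\lambda_0)\|_V^2+2(s_2-s_1)\int_{s_1}^{s_2}\|\partial_\lambda v(s)\|_V^2\,ds.
\end{equation*}
Integrating over $\lambda\in(s_1,s_2)$ and absorbing constants into a single factor depending on $s_2-s_1$ and $\rho([s_1,s_2])$ completes the estimate.

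The main subtlety is the pivot-selection argument, which crucially uses the continuity of $\|v(\cdot)\|_V$ (guaranteed by the convention that $v$ refers to its continuous version) together with $\rho([s_1,s_2])>0$; the resulting constants blow up as $\rho([s_1,s_2])\to 0$, which is consistent with the equivalence failing in that limit. Once the pivot is available, the remaining estimates amount to a standard Poincar\'e-type argument via the fundamental theorem of calculus in $V$.
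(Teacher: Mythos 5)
Your proof is correct, and its overall architecture matches the paper's: both directions are routed through the supremum norm $\sup_{\lambda}\norm{v(\lambda)}_V$, which is controlled by each of the two norms via \cref{eqn: IBP} and which in turn dominates both the Lebesgue and the $\rho$ quadratic terms. The one step you handle genuinely differently is how control at a single scale is extracted from the $\rho$-integral. The paper integrates \cref{eqn: IBP} against $\rho$ separately over $[s_1,\lambda)$ and $[\lambda,s_2]$, sums, and obtains the exact representation
\[
v(\lambda)=\frac{1}{|\rho|}\int_{[s_1,s_2]}v(s)\,d\rho(s)+\int_{s_1}^{s_2}q_\rho(\lambda,u)\,\partial_\lambda v(u)\,du
\]
with an explicit kernel satisfying $|q_\rho|\leq 1$, then routes both norms through the intermediate norm $\norm{v}_\infty+\norm{\partial_\lambda v}_2$. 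You instead select a pivot $\lambda_0\in\mathrm{supp}(\rho)$ at which $\norm{v(\lambda_0)}_V^2$ does not exceed the $\rho$-average and propagate by the fundamental theorem of calculus; this uses $\rho(\mathrm{supp}(\rho))=|\rho|$ and the continuity of $\lambda\mapsto\norm{v(\lambda)}_V$ (which follows from the derivative term alone, so there is no circularity). Your route is more elementary, avoiding the Fubini computation needed to build $q_\rho$ for a general (possibly atomic) $\rho$, at the cost of a marginally less explicit constant; the paper's identity yields the clean bound with $M=\sqrt2\max(|\rho|^{-1/2},\sqrt{s_2-s_1})$. Your explicit caveat that $\rho$ must be nonzero and finite is a hypothesis the paper leaves implicit, but it is indeed required in both arguments, and both sets of constants degenerate as $|\rho|\to 0$, exactly as you note.
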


\subsection{Scale-space analysis}
\label{sec:scale.space}
We now define our multi-resolution analysis for vector fields. We let below $W_0 = V$ to allow for uniform notation.
                    
We assume given a nested family of RKHS's, $W_\lambda$, with Hilbert norm $\| \cdot \|_{\lambda} $ for $\lambda\in[s_1,s_2]$, satisfying $W_\lambda \xhookrightarrow{} W_\mu \xhookrightarrow{} W_0=V$ for $\lambda, \mu \in \{0\}\cup [s_1, s_2]$, $\mu < \lambda$. For such $\mu$ and $\lambda$, we denote by $\mathfrak i_{\lambda,\mu}$ the embedding of of $W_\lambda$ in $W_\mu$ and let  $c_{\lambda, \mu} = \norm{\mathfrak i_{\lambda, \mu}}$ such that $\norm{\mathfrak i_{\lambda, \mu} v}_\mu \leq c_{\lambda, \mu}\norm{v}_\lambda$ for all $v\in W_{\lambda}$. As above, we will not make the distinction between $\mathfrak i_{\lambda, \mu}(v)$ and $v$ as soon as $\mu< \lambda$. 

Our first assumption is that all such embedding coefficients, including that for $\mu=0$ have a bounded supremum, denoted as $C$:

\begin{assumption}
    C:= $\sup \{c_{\lambda, \mu}:  \mu,\lambda\in \{0\}\cup [s_1,s_2], \mu< \lambda\}<\infty$. 
\end{assumption}




We fix, in the following construction, a positive measure $\rho$ on $[s_1, s_2]$.
Let $\mathbb W$ be the subspace of $\mathbb V$ consisting of functions $v$ such that $\iota_\lambda v$ and $\iota_\lambda \partial_\lambda v$ belong to $ W_\lambda$ for a.e. $\lambda \in[s_1,s_2]$ and such that \begin{align*}
    \norm{v}_{\mathbb W}^2:=\int_{[s_1,s_2]}\norm{v(\lambda)}_\lambda^2d\rho(\lambda) +\int_{s_1}^{s_2}\norm{\partial_\lambda v(\lambda)}_\lambda^2d\lambda<\infty.
\end{align*}

We start with the basic remark:
\begin{proposition}
    \label{prop:w.rkhs}
    The space $\mathbb W$ is a reproducing kernel Hilbert space.
\end{proposition}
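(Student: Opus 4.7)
The plan is to verify two things: (i) the polarization of $\|\cdot\|_{\mathbb W}$ defines a complete Hilbert inner product on $\mathbb W$, and (ii) for every $(\lambda,x)\in[s_1,s_2]\times\mathbb R^d$ and every direction $a\in\mathbb R^d$, the linear map $v\mapsto \langle v(\lambda,x),a\rangle$ is bounded on $\mathbb W$. I would handle (ii) first, because it also supplies a key continuous embedding used in (i).

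For (ii), Assumption~2.4 gives $\|w\|_V\leq C\|w\|_\lambda$ for every $w\in W_\lambda$ and every $\lambda\in [s_1,s_2]$. Substituting into the integrals defining the two norms yields $\|v\|_{\mathbb V,\rho}\leq C\|v\|_{\mathbb W}$, and \Cref{prop:V.equiv} then gives a constant $C'$ with $\|v\|_{\mathbb V}\leq C'\|v\|_{\mathbb W}$, so $\mathbb W\hookrightarrow \mathbb V$ continuously. Composing with $\iota_\lambda\colon \mathbb V\to V$ from \Cref{lemma: ilambda: V to V} and with the bounded evaluation functional $V\ni w\mapsto \langle w(x),a\rangle$ (bounded because $V\hookrightarrow C^1_0(\mathbb R^d)$ and $V$ is itself an RKHS) produces the required bound on $v\mapsto \langle v(\lambda,x),a\rangle$.

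For (i), the candidate inner product
\[
\langle v,w\rangle_{\mathbb W} = \int_{[s_1,s_2]}\langle v(\lambda),w(\lambda)\rangle_\lambda\, d\rho(\lambda) + \int_{s_1}^{s_2}\langle \partial_\lambda v(\lambda),\partial_\lambda w(\lambda)\rangle_\lambda\, d\lambda
\]
is symmetric and bilinear, and is positive definite because, by (ii), $\|v\|_{\mathbb W}=0$ forces $v(\lambda,x)=0$ for all $(\lambda,x)$. The real work is completeness. Given a Cauchy sequence $(v_n)\subset \mathbb W$, the embedding from (ii) yields a $\mathbb V$-limit $v$, and by \Cref{lemma: ilambda: V to V} we have $v_n(\lambda)\to v(\lambda)$ in $V$ for \emph{every} $\lambda\in[s_1,s_2]$. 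Separately, I would pass to a subsequence $(v_{n_k})$ for which $\|v_{n_{k+1}}(\lambda)-v_{n_k}(\lambda)\|_\lambda$ is $\rho$-a.e.\ summable in $k$, and then to a further subsequence making $\|\partial_\lambda v_{n_{k+1}}(\lambda)-\partial_\lambda v_{n_k}(\lambda)\|_\lambda$ Lebesgue-a.e.\ summable; completeness of each $W_\lambda$ then produces pointwise limits in $W_\lambda$ for almost every $\lambda$. The continuous inclusion $W_\lambda\hookrightarrow V$ forces these limits to coincide with $v(\lambda)$ (respectively $\partial_\lambda v(\lambda)$), so $v(\lambda),\partial_\lambda v(\lambda)\in W_\lambda$ a.e. Applying Fatou's lemma to $\|v_{n_k}(\lambda)-v_m(\lambda)\|_\lambda^2$ (as $k\to\infty$, $m$ fixed) bounds $\|v-v_m\|_{\mathbb W}$ by the Cauchy tail, which both shows $v\in\mathbb W$ and proves $v_n\to v$ in $\mathbb W$.

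The main obstacle is the bookkeeping in the completeness step: the two summands of $\|\cdot\|_{\mathbb W}$ live against different measures ($\rho$ and Lebesgue), the $W_\lambda$-norm varies with $\lambda$, and $\rho$ may be singular relative to Lebesgue, so subsequences must be extracted separately for the two terms and reconciled with a common candidate limit. The enabling observation that dissolves the difficulty is that $\iota_\lambda\colon\mathbb V\to V$ is bounded \emph{for every} $\lambda$ by \Cref{lemma: ilambda: V to V}, which pins down the pointwise limit inside $V$ before one has to identify it as an element of the smaller space $W_\lambda$.
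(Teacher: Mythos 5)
Your proposal is correct and follows essentially the same route as the paper: the RKHS property via the composition $\mathbb W \hookrightarrow \mathbb V \xrightarrow{\iota_\lambda} V \xrightarrow{\delta_x} \mathbb R^d$ of bounded maps, and completeness via the standard subsequence-with-summable-tails/telescoping/Fatou argument adapted to the two measures. The only cosmetic difference is ordering—you identify the limit in $\mathbb V$ first and then upgrade it to $\mathbb W$, whereas the paper builds the $W_\lambda$-valued limits first and then uses the embedding into $\mathbb V$ to identify the weak derivative—but the substance is identical.
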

\begin{proof}
    The fact that $\mathbb W$ is a Hilbert space, i.e., that it is complete is proved repeating without change the proof that scalar Sobolev spaces are complete, the fact that the norms depend on $\lambda$ bringing only minor disturbance. Given a Cauchy sequence, say $v^n$, in $\mathbb W$, one extracts a subsequence $v^{n_k}$ such that $\|v^{n_{k+1}} - v^{n_k}\|_{\mathbb W} < 2^{-k}$ so that $\sum_{k=1}^\infty \|\partial_\lambda v^{n_{k+1}}(\lambda) - \partial_\lambda v^{n_{k}}(\lambda)\|_\lambda$ is integrable and therefore a.e. finite, showing that 
    \[
    g(\lambda) = v^{n_1}(\lambda) + \sum_{k=1}^\infty (\partial_\lambda v^{n_{k+1}}(\lambda) - \partial_\lambda v^{n_{k}}(\lambda)) 
    \]
    exists and belongs to $W_\lambda$ for almost all $\lambda$. One shows that $\int_{s_1}^{s_2} \|g(\lambda) - \partial_\lambda v^m(\lambda)\|_\lambda d\lambda$ can be made arbitrarily small for large enough $m$ using the fact that this is true with $g$ replaced by $\partial_\lambda v^{n_k}$ for $n_k\geq m$ and $m$ large enough and applying Fatou's lemma to the limit in $k$. Similarly, there exists $v\in \mathbb W$ such that $\int_{s_1}^{s_2} \|v(\lambda) - v^m(\lambda)\|_\lambda d\rho(\lambda)$ tends to 0 when $m$ tends to infinity.

    Because $\|v\|_{\mathbb V, \rho} \leq C \|v\|_{\mathbb W}$, $v^n$ also converges to $v$ in $L^2([s_1, s_2], V)$ and $\partial_\lambda v^n$ to $g$ in $L^2([s_1, s_2], V)$ (recall the equivalence between $\|\cdot\|_{\mathbb V}$ and $\|\cdot\|_{\mathbb V, \rho}$) so that $g = \partial_\lambda v$ and $v^n$ converges to $v$ in $\mathbb W$.

    Now, since $(\mathbb W, \|\cdot\|_{\mathbb W})$ is embedded in $(\mathbb V, \norm{\cdot}_{\mathbb V})$ (calling $\mathfrak i$ the embedding), we have, for any $(\lambda, x) \in [s_1, s_2] \times \mathbb R^d$, the following sequence of continuous linear maps:
    \[
    \delta_{(\lambda, x)}: \mathbb W \stackrel{\mathfrak i}{\longrightarrow} \mathbb V \stackrel{\iota_\lambda}{\longrightarrow} V_\lambda \stackrel{\delta_x}{\longrightarrow} \mathbb R^d, 
    \]
    showing that $\delta_{(\lambda, x)}$ is bounded on $\mathbb W$, which is therefore an RKHS. 
\end{proof}

We note that the first two arrows in the end of the previous proof are equivalent to the statement of the following lemma. 
\begin{lemma}
    The operator $\iota_\lambda: \mathbb W\to V$ is linear and bounded for all $\lambda \in [s_1,s_2]$. \label{lemma: ilambda: W to V} 
\end{lemma}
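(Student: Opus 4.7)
The plan is to derive the result as an immediate consequence of the chain of continuous maps $\mathbb W \hookrightarrow \mathbb V \xrightarrow{\iota_\lambda} V$ that already appears at the end of the proof of \cref{prop:w.rkhs}. Linearity of $\iota_\lambda$ on $\mathbb W$ is clear since $\iota_\lambda$ is linear on the ambient space $\mathbb V$, so the content of the lemma is boundedness.

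First, I would verify that the inclusion $\mathbb W \hookrightarrow \mathbb V$ is a bounded operator. For any $v \in \mathbb W$, the assumption on the embedding constants gives $\norm{v(\lambda)}_V \leq C\norm{v(\lambda)}_\lambda$ and $\norm{\partial_\lambda v(\lambda)}_V \leq C\norm{\partial_\lambda v(\lambda)}_\lambda$ for a.e.\ $\lambda\in [s_1,s_2]$, since $W_\lambda \hookrightarrow W_0 = V$ with $c_{\lambda,0} \leq C$. Integrating the first inequality against $\rho$ and the second against Lebesgue measure yields
\begin{align*}
    \norm{v}_{\mathbb V, \rho}^2 \leq C^2\left(\int_{[s_1,s_2]}\norm{v(\lambda)}_\lambda^2 d\rho(\lambda) + \int_{s_1}^{s_2}\norm{\partial_\lambda v(\lambda)}_\lambda^2 d\lambda\right) = C^2 \norm{v}_{\mathbb W}^2.
\end{align*}

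Next, by \cref{prop:V.equiv}, the norms $\norm{\cdot}_{\mathbb V}$ and $\norm{\cdot}_{\mathbb V, \rho}$ are equivalent, so there is a constant $K>0$ with $\norm{v}_{\mathbb V} \leq K\norm{v}_{\mathbb V, \rho}$, hence $\norm{v}_{\mathbb V} \leq CK\norm{v}_{\mathbb W}$. This shows that the embedding $\mathbb W \hookrightarrow \mathbb V$ is bounded.

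Finally, I would invoke \cref{lemma: ilambda: V to V}: the evaluation $\iota_\lambda : \mathbb V \to V$ is bounded, with some norm $M_\lambda$. Composing with the embedding above gives, for every $v \in \mathbb W$,
\begin{align*}
    \norm{\iota_\lambda v}_V \leq M_\lambda \norm{v}_{\mathbb V} \leq M_\lambda CK\norm{v}_{\mathbb W},
\end{align*}
which establishes the desired boundedness. There is no genuine obstacle here; the only point worth flagging is that the uniform bound on the embedding constants (the assumption $C<\infty$) is what allows one to compare $\norm{\cdot}_{\mathbb W}$ with $\norm{\cdot}_{\mathbb V, \rho}$, without which the argument would collapse.
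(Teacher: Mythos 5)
Your argument is correct and is precisely the paper's own route: the paper observes that this lemma is just the composition of the first two arrows $\mathbb W \xhookrightarrow{} \mathbb V \xrightarrow{\iota_\lambda} V$ from the end of the proof of \cref{prop:w.rkhs}, where the embedding $\mathbb W \xhookrightarrow{} \mathbb V$ is bounded via $\norm{v}_{\mathbb V,\rho} \leq C\norm{v}_{\mathbb W}$ together with \cref{prop:V.equiv}, and $\iota_\lambda:\mathbb V\to V$ is bounded by \cref{lemma: ilambda: V to V}. Your write-up simply makes these steps explicit, including the correct remark that the uniform bound $C$ on the embedding constants is the essential hypothesis.
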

We now investigate whether this result can be strengthened to $\iota_\lambda$ being bounded from $\mathbb W$ to $W_\lambda$, which is a natural expectation. We will prove this with an additional assumption on $\rho$ is required for $\lambda\in [s_1,s_2)$, and a left-continuity assumption on $W_\lambda$ for the end point.



\begin{proposition}
    If $\lambda\in [s_1, s_2]$ is such that $\rho([\lambda, s_2])>0$, then  $\iota_\lambda(v)\in W_\lambda$ and $w\in \mathbb W$. Moreover, this operator is bounded from  $(\mathbb W, \norm{\cdot}_{\mathbb W})$ to $(W_\lambda, \norm{\cdot}_\lambda)$. 

    Let $\lambda_0\in [s_1, s_2]$ be such that $\rho([\lambda_0, s_2]) = 0$ and $\rho([\lambda, s_2]) > 0$ for all $\lambda < \lambda_0$. 
    If $W_{\lambda_0}=\cap_{\lambda\in [s_1,\lambda_0)}W_\lambda$, then $v$ (extended by continuity to $[s_1, \lambda_0]$) is such that $\iota_{\lambda_0}(v)\in W_{\lambda_0}$. 
    
    Furthermore, if one assumes, in addition, that the mapping $\lambda\in[s_1,\lambda_0]\mapsto \norm{w}_\lambda$ is continuous for all $w\in W_{\lambda_0}$, then $\iota_{\lambda_0}:\mathbb W\to W_{\lambda_0}$ is bounded. 
    
    \label{Prop W lambda}
\end{proposition}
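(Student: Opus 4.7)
My plan is to handle the three assertions in order, each building on the previous. For the first, with $\lambda$ satisfying $\rho([\lambda,s_2])>0$, the main tool is the fundamental-theorem-of-calculus identity
\[
v(\lambda)=v(\mu)-\int_\lambda^\mu\partial_\lambda v(s)\,ds, \qquad \mu\geq \lambda,
\]
valid for $\rho$-a.e.\ $\mu\in[\lambda,s_2]$ with $v(\mu)\in W_\mu$. Since $\partial_\lambda v(s)\in W_s$ for a.e.\ $s$ and the embeddings $W_\mu, W_s\hookrightarrow W_\lambda$ are bounded by $C$ from Assumption~2.4, the right-hand side lies in $W_\lambda$, forcing $v(\lambda)\in W_\lambda$. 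Taking $\|\cdot\|_\lambda$-norms, using the embedding constants to pass to scale-$\mu$ and scale-$s$ norms, then averaging $\mu$ against $d\rho/\rho([\lambda,s_2])$ and applying Cauchy--Schwarz, I would obtain $\|v(\lambda)\|_\lambda\leq C\bigl(\rho([\lambda,s_2])^{-1/2}+(s_2-\lambda)^{1/2}\bigr)\|v\|_{\mathbb W}$.

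For the endpoint set membership, I would fix $\lambda<\lambda_0$ and show that $(v(\mu))_{\mu\nearrow\lambda_0}$ is Cauchy in the complete space $W_\lambda$. For $\lambda\leq\mu_1<\mu_2<\lambda_0$ with $v(\mu_i)\in W_{\mu_i}$, the FTC identity combined with $W_s\hookrightarrow W_\lambda$ for $s\geq\lambda$ and Cauchy--Schwarz gives $\|v(\mu_2)-v(\mu_1)\|_\lambda\leq C(\mu_2-\mu_1)^{1/2}\|v\|_{\mathbb W}$. The $W_\lambda$-limit of $v(\mu)$ must coincide with $v(\lambda_0)$ by comparison with the $V$-limit from \cref{lemma: ilambda: V to V}, so $v(\lambda_0)\in W_\lambda$ for every $\lambda<\lambda_0$, whence $v(\lambda_0)\in\bigcap_{\lambda<\lambda_0}W_\lambda=W_{\lambda_0}$.

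The hard part is the final boundedness. The natural attempt---combining the Part~1 estimate on $\|v(\lambda)\|_\lambda$ with the short-range estimate $\|v(\lambda_0)-v(\lambda)\|_\lambda\leq C(\lambda_0-\lambda)^{1/2}\|v\|_{\mathbb W}$---fails because the Part~1 bound blows up as $\lambda\nearrow\lambda_0$ (since $\rho([\lambda,s_2])\to 0$), so no uniform bound on $\|v(\lambda_0)\|_\lambda$ is available by direct computation. I would instead invoke the closed graph theorem: $\mathbb W$ and $W_{\lambda_0}$ are Hilbert, and $\iota_{\lambda_0}$ is linear and everywhere defined by the previous step. For closure of the graph, if $v_n\to v$ in $\mathbb W$ and $v_n(\lambda_0)\to w$ in $W_{\lambda_0}$, then $v_n(\lambda_0)\to v(\lambda_0)$ in $V$ via the continuous inclusion $\mathbb W\hookrightarrow\mathbb V$ (as seen in the proof of \cref{prop:w.rkhs}) together with \cref{lemma: ilambda: V to V}, while simultaneously $v_n(\lambda_0)\to w$ in $V$ via the continuous chain $W_{\lambda_0}\hookrightarrow W_\lambda\hookrightarrow V$---this is where the continuity hypothesis intervenes, ensuring the $\|\cdot\|_{\lambda_0}$-topology meshes compatibly with the family at the boundary. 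Uniqueness of the $V$-limit gives $w=v(\lambda_0)$, so the graph is closed and $\iota_{\lambda_0}$ is bounded.
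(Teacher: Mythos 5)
Your Parts 1 and 2 follow the paper's own route essentially verbatim: the same integration of the fundamental-theorem-of-calculus identity against $d\rho/\rho([\lambda,s_2])$ with Cauchy--Schwarz for the first claim, and the same Cauchy-sequence argument in $W_\lambda$ (identifying the limit with $v(\lambda_0)$ through the $V$-limit) for membership in $\cap_{\lambda<\lambda_0}W_\lambda=W_{\lambda_0}$. Where you genuinely diverge is the final boundedness. The paper argues directly: using the hypothesis that $\lambda\mapsto\norm{w}_\lambda$ is continuous on $W_{\lambda_0}$ together with the estimate $\norm{v(s)-v(\lambda_0)}_s\leq C\int_s^{\lambda_0}\norm{\partial_\lambda v}\,d\mu$, it shows $\norm{v(\lambda)}_\lambda\to\norm{v(\lambda_0)}_{\lambda_0}$ and concludes from there. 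Your closed graph argument is a clean and valid alternative: $\mathbb W$ and $W_{\lambda_0}$ are complete, $\iota_{\lambda_0}$ is everywhere defined by Part 2, and if $v_n\to v$ in $\mathbb W$ and $v_n(\lambda_0)\to w$ in $W_{\lambda_0}$, both sequences converge in $V$ (the first via $\mathbb W\hookrightarrow\mathbb V$ and \cref{lemma: ilambda: V to V}, the second via $W_{\lambda_0}\hookrightarrow V$) to $v(\lambda_0)$ and $w$ respectively, forcing $w=v(\lambda_0)$. One correction, though: you claim the continuity hypothesis is ``where the $\norm{\cdot}_{\lambda_0}$-topology meshes with the family at the boundary,'' but the embedding $W_{\lambda_0}\hookrightarrow V$ with constant $\leq C$ is already guaranteed by Assumption 2.4 and the nesting --- the norm-continuity hypothesis plays no role anywhere in your argument. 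This is not a gap (your proof is complete without it), but you should either say explicitly that your route dispenses with that hypothesis, which is a genuine strengthening relative to the stated proposition, or not pretend to use it. The trade-off between the two approaches: the paper's direct argument yields the quantitative fact $\norm{\iota_{\lambda_0}v}_{\lambda_0}=\lim_{\lambda\to\lambda_0}\norm{\iota_\lambda v}_\lambda$, which is of independent interest, while yours is shorter, non-quantitative, and needs one fewer hypothesis.
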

The proof of this proposition can be found in \cref{sec:Prop W lambda}.

\section{Kernel of \texorpdfstring{$\mathbb W$}{}}
\label{sec:kernels}
\subsection{Notation on reproducing kernels}

One of the features of reproducing kernel theory is that many variational problems in RKHS's can be reduced to finite-dimensional problems involving their kernel, and become easy (or at least easier) to solve as soon as these kernels are known in closed form, or easily computable. Such kernels are key components of the implementation of diffeomorphic mapping algorithms such as LDDMM when discretized in terms of point sets or landmarks, and will also be important in our implementation of diffeomorphic matching in the multiscale framework. We therefore dedicate this section to the characterization and computation of the kernel of $\mathbb W$, and we start by fixing some notation that will be used throughout.

If $H$ is a Hilbert space, we let $H^*$ denote its topological dual, and denote as $(\eta\mid h)$ the evaluation of a bounded linear form $\eta\in H^*$ at a vector $h\in H$. This notation should be distinguished from $\langle h, h'\rangle_H$ that denotes the inner product between $h,h'\in H$. The duality operator of $H$, denoted $\mathbf L_H: H \to H^*$ associates to $h\in H$ the linear form $\mathbf L_H h$ such that, for all $h'\in H$, $(\mathbf L_H h\mid h') = \langle h, h'\rangle_H$. Riesz's theorem implies that $\mathbf L_H$ is invertible and we will denote $\mathbf K_H = \mathbf L_H^{-1}: H^* \to H$. 

If $A$ is a continuous linear map between two Hilbert spaces $H$ and $\tilde H$, its dual, denoted $A^*: \tilde H^* \to H^*$ transforms the linear form $\tilde \eta \in \tilde H^* $ into $A^*\tilde \eta \in H^*$ such that $(A^*\tilde \eta \mid h) = (\tilde\eta \mid Ah)$. The transpose of $A$ is the mapping $A^T:\tilde H \to H$ such that, for all $h\in H$ and $\tilde h \in \tilde H$, $\langle h, A^T \tilde h \rangle_{H} = \langle Ah, \tilde h \rangle_{H}$ and can be identified as $A^T = \mathbf K_H A^* \mathbf L_{\tilde H}$.

If $U$ is a fixed set, an RKHS $H$ over $U$ is a Hilbert space of mappings $h:U \to \mathbb R^k$ such that for any $x\in  U$, the evaluation mapping $\delta_x: h \mapsto h(x)$ is continuous from $H$ to $\mathbb R^k$. For $a\in \mathbb R^k$, we denote by $a^T\delta_x$ the mapping $h \mapsto a^Th(x)$, which belongs to $H^*$, so that $\mathbf K_H(a^T\delta_x) \in H$ is a function with values in $\mathbb R^k$ that depends linearly on $a$. The reproducing kernel of $H$ is then a mapping, denoted $K_H$, defined on $U \times U$, taking values in the set of $k\times k$ matrices, such that, for all $a\in \mathbb R^k$, all $x,y\in U$: $K_H(y,x) a = \mathbf K_H(a^T\delta_x)(y)$.

One says that a kernel is scalar if $K_H(x,y) = \kappa_H(x,y) I_k$ (the latter being the $k\times k$ identity matrix), $\kappa_H$ taking real values. If $U= \mathbb R^d$, the kernel is translation-invariant if $K_H(x,y)$ only depends on $x-y$ and it is radial if and only if it only depends on $|x-y|$. Scalar and radial kernels have simpler implementation and good invariance properties, and will be used in our experiments. An important example of such kernels is the Gaussian kernel for which $K_H(x, y) = \exp(-\frac{|x-y|^2}{2\lambda^2})I_k$ for some $\lambda >0$.

Returning to the multiscale problem, our goal in this section is to compute $K_{\mathbb W}$, for which $U = [s_1, s_2] \times \mathbb R^d$. We will denote the kernel and duality operators on $W_\lambda$ by $K_\lambda$, $\mathbf K_\lambda$, $\mathbf L_\lambda$ (rather than $K_{W_\lambda}$, etc.) to lighten the notation. The assumption underlying this discussions is that the kernels $K_\lambda$ are known explicitly and easily computable.

\subsection{Characterization of \texorpdfstring{$K_{\mathbb W}$}{}}
By definition, for any given vector $a\in\mathbb R^d$ and $w\in\mathbb W$, for all $\lambda_0\in[s_1,s_2]$, $x_0\in\mathbb R^d$, one has \begin{align*}
    \langle K_{\mathbb W}(\cdot, (\lambda_0, x_0))a, w\rangle_{\mathbb W} = a^Tw(\lambda_0, x_0).
\end{align*} 
In the computation that follows, we fix $\lambda_0$, $x_0$ and $a$, and let $v(\lambda, x) = K_{\mathbb W}((\lambda, x),(\lambda_0, x_0))a$.

For all $w\in \mathbb W$, we have 
\begin{align}
    \langle v, w\rangle_{\mathbb W}=\int_{[s_1,s_2]}\langle v(\mu), w(\mu)\rangle_\mu d\rho(\mu)+\int_{s_1}^{s_2}\langle \partial_\lambda v(\mu), \partial_\lambda w(\mu)\rangle_\mu d\mu = a^Tw(\lambda_0, x_0). \label{Hilbert: vw}
\end{align}

Fixing $\lambda \in (s_1, s_2]$, take $w(\mu,x)=q_\lambda(\mu)\bar w(x)$ in the equality above, with $q_\lambda(\mu)=(\lambda-\mu)\mathbf 1_{[s_1,\lambda]}(\mu)$, i.e.,
$$q_\lambda(\mu)=\left\{\begin{aligned}
    &\lambda-\mu, &\text{ for }\mu \in [s_1,\lambda],\\
    &0, &\text{ for }\mu \in [\lambda, s_2].
\end{aligned}\right.$$
and $\bar w\in W_\lambda$. Then $w\in \mathbb W$ and
\cref{Hilbert: vw} can be rewritten as 
\begin{align*}
    \int_{[s_1, \lambda]}(\lambda-\mu)\langle v(\mu), \bar w\rangle_\mu d\rho(\mu)-\int_{s_1}^\lambda\langle \partial_\lambda v(\mu), \bar w\rangle_\mu d\mu = q_\lambda(\lambda_0)a^T\bar w(x_0).
\end{align*}
Using the duality operator, the above can be further rewritten as \begin{align}
    \int_{[s_1,\lambda]}(\lambda-\mu)(\mathbf L_\mu v(\mu)|\bar w)d\rho(\mu)-\int_{s_1}^\lambda (\mathbf L_\mu \partial_\lambda v(\mu)|\bar w)d\mu &= (\lambda-\lambda_0)\mathbf{1}_{[s_1,\lambda]}(\lambda_0)a^T(\delta_{x_0}|\bar w) \nonumber\\ \int_{[s_1,\lambda]}(\lambda-\mu)(\mathbf L_\mu v(\mu))d\rho(\mu)-\int_{s_1}^\lambda (\mathbf L_\mu \partial_\lambda v(\mu))d\mu &= (\lambda-\lambda_0)\mathbf{1}_{[s_1,\lambda]}(\lambda_0)a^T\delta_{x_0},
    \label{eqn: kernel 1}
\end{align}
where the left-hand side is understood as an operator on $W_\lambda$, since $\mathbf L_\mu v(\mu)$ and $\mathbf L_\mu \partial_\lambda v(\mu)$ both belong to $W_\mu^* \xhookrightarrow{} W_\lambda^*$. We note that \cref{eqn: kernel 1} can be written as
\[
\int_{s_1}^\lambda \int_{[s_1,\mu]}(\mathbf L_{\mu'}v(\mu'))d\rho(\mu')d\mu-\int_{s_1}^\lambda (\mathbf L_\mu \partial_\lambda v(\mu))d\mu = (\lambda-\lambda_0)\mathbf{1}_{[s_1,\lambda]}(\lambda_0)a^T\delta_{x_0},
\]
which is equivalent to 
\begin{equation}
    \label{eqn: kernel 11}
 \mathbf L_\lambda \partial_\lambda v(\lambda)  = \int_{[s_1,\lambda]}(\mathbf L_{\mu}v(\mu))d\rho(\mu) - \mathbf{1}_{[s_1,\lambda]}(\lambda_0)a^T\delta_{x_0}
\end{equation}
for almost all $\lambda\in [s_1, s_2]$. This can be written as
\begin{equation}
    \label{eqn: kernel 12}
\partial_\lambda v(\lambda)  = \int_{[s_1,\lambda]}(\mathbf K_\lambda \mathbf L_{\mu}v(\mu))d\rho(\mu) - \mathbf{1}_{[s_1,\lambda]}(\lambda_0) K_\lambda(\cdot, x_0)a
\end{equation}
for almost all $\lambda\in [s_1, s_2]$. The r.h.s. of this equation is continuous everywhere except at $\lambda = \lambda_0$ and at the atoms of $\rho$ (scales $s\in [s_1, s_2]$ such that $\rho(\{s\}) > 0$), and there is no loss of generality in requiring that $\partial_\lambda v_\lambda$ is equal to this r.h.s. everywhere except at these points.

On the other hand, if one takes $w(\lambda, x)=\bar w(x)$ with $\bar w\in W_{s_2}$, so that  $\partial_\lambda w = 0$, \cref{Hilbert: vw} becomes
\begin{align}
\label{eqn: kernel 2}
\int_{[s_1,s_2]} (\mathbf  L_\mu v(\mu)) d\rho(\mu) = a^T \delta_{x_0}.
\end{align}

The solution of equations \cref{eqn: kernel 11,eqn: kernel 2} will depend on the choice of $\rho$. We start with a simple case in which this solution is explicit. 

\subsection{First example: Dirac measure}
\label{sec:example.dirac}
Assume that $\rho = \sigma \delta_{s_0}$ for some $s_0\in [s_1, s_2]$ and $\sigma > 0$. Then \cref{eqn: kernel 2} gives 
$\sigma \mathbf  L_{s_0} v(s_0) = a^T \delta_{x_0}$, i.e., $v(s_0) = K_{s_0}(\cdot, x_0) a/\sigma$. \Cref{eqn: kernel 11} gives, for $\lambda\not\in \{s_0, \lambda_0\}$,
\[
 \mathbf L_\lambda \partial_\lambda v(\lambda)  = \frac1\sigma (\mathbf{1}_{[s_0, s_2]} - \mathbf{1}_{[\lambda_0, s_2]})(\lambda)a^T\delta_{x_0}
\]
so that for $\lambda\not\in \{ s_0, \lambda_0\}$,
\[
\partial_\lambda v(\lambda)  = \frac{1}{\sigma}(\mathbf{1}_{[s_0, s_2]} - \mathbf{1}_{[\lambda_0, s_2]})(\lambda) K_\lambda(\cdot, x_0)a.
\]
Bearing in mind that $v= K_{\mathbb W}((\cdot, \cdot), (\lambda_0, x_0))a$, we get
\begin{align}
\nonumber
& K_{\mathbb W}((\lambda, \cdot), (\lambda_0, x_0)) = K_{s_0}(\cdot, x_0)  + \frac{1}{\sigma}\int_{s_0}^\lambda (\mathbf{1}_{[s_0, s_2]} - \mathbf{1}_{[\lambda_0, s_2]})(\mu) K_\mu(\cdot, x_0) d\mu\\
\label{eq:kernel.dirac}
 &= 
K_{s_0}(\cdot, x_0) + \frac{1}{\sigma}\mathrm{sign}(\lambda_0 - s_0)\left\{
\begin{aligned}
& \int_{s_0}^{\min(s_0, \lambda_0)} K_\mu(\cdot, x_0) d\mu: \lambda \leq \min(s_0, \lambda_0)\\ 
& \int_{s_0}^\lambda K_\mu(\cdot, x_0) d\mu: \min(s_0, \lambda_0) \leq \lambda \leq \max(\lambda_0, s_0)\\
& \int_{s_0}^{\max(\lambda_0, s_0)} K_\mu(\cdot, x_0) d\mu: \max(s_0, \lambda_0) \leq \lambda
\end{aligned}
\right.
\end{align}

Taking the two extreme cases for $s_0$, we have, when $s_0 = s_2$:
\[
K_{\mathbb W}((\lambda, \cdot), (\lambda_0, x_0)) = 
K_{s_2}(\cdot, x_0) + \frac{1}{\sigma}\int_{\max(\lambda_0, \lambda)}^{s_2} K_\mu(\cdot, x_0) d\mu,
\]
and when $s_0=s_1$:
\[
K_{\mathbb W}((\lambda, \cdot), (\lambda_0, x_0)) = 
K_{s_1}(\cdot, x_0) + \frac{1}{\sigma}\int_{s_1}^{\min(\lambda_0, \lambda)} K_\mu(\cdot, x_0) d\mu.
\]

\subsubsection*{Piecewise constant kernel}
A simple example of application of the previous formulas can be obtained when $K_\lambda$ is a piecewise constant in scale. Let $\{r_k:k=1,\dots, n+1\}$ be a subdivision of the scale interval with $s_1=r_1<r_2<\dots<r_{n+1}=s_2$. We assume that  $K_{\lambda}$ is  constant on these intervals, with $K_{\lambda}=K_{k}$ for $\lambda\in [r_k, r_{k+1})$. Then, for  $\lambda_1\leq  \lambda_2\in [s_1, s_2]$
\[
\int_{\lambda_1}^{\lambda_2} K_\mu(x, x_0) d\mu = 
(r_{k_1+1} - \lambda_1) K_{k_1}(x, x_0) + \sum_{j= k_1+1}^{k_2-1} (r_{j+1} - r_j)  K_{j}(x, x_0) + (\lambda_2 - r_{k_2}) K_{k_2}(x, x_0)
\]
where $\lambda_i \in [k_i, k_{i+1})$, $i=1,2$. The kernel considered in \cite{miller2022molecular} is equivalent to this kernel, with $\rho= \delta_{s_2}$. 

\subsubsection*{Gaussian kernel}
Consider now the situation in which $K_\lambda$ is defined as a Gaussian kernel, namely
\[
K_{\lambda}(x,y)=\exp{(-\frac{|x-y|^2}{2\lambda^2})}I_d
\]
Then, with $c=\frac{|x-x_0|^2}{2}$, for  $\lambda_1\leq  \lambda_2\in [s_1, s_2]$,
\begin{align*}
     \int_{\lambda_1}^{\lambda_2} K_{\lambda}(x,x_0) d\mu &= \int^{1/\lambda_1}_{1/\lambda_2}\frac{e^{-ct^2}}{t^2}dt=-\frac{e^{-ct^2}}{t}\bigg\vert_{t=1/\lambda_2}^{t=1/\lambda_1}-2c\int_{1/\lambda_2}^{1/\lambda_1}e^{-ct^2}dt\\
     &=\lambda_2 e^{-\frac{c}{\lambda_2^2}}-\lambda_1 e^{-\frac{c}{\lambda_1^2}}+\sqrt{c\pi}\left(\text{erf}(\frac{\sqrt{c}}{\lambda_2})-\text{erf}(\frac{\sqrt{c}}{\lambda_1})\right)
\end{align*}
where $\text{erf}(x)=\frac{2}{\sqrt{\pi}}\int_0^x e^{-t^2}dt$ is the Gauss error function. 



\subsection{Translation-invariant and radial kernels}


While $K_{\mathbb W}$ does not have a closed form for general $\rho$, its computation can be somewhat schematic with additional assumptions on the kernels $K_\lambda$, namely that they are scalar radial kernels. Such a property indeed induces similar invariance on the space component of $K_{\mathbb W}$, as discussed in this section.

Fixing $x\in\mathbb R^d$, 
define the translation operator $\tau_x$ on, say, continuous functions from $\mathbb R^d$ to $\mathbb R^d$ by $(\tau_x w)(y) = w(y-x)$. It is a standard result on reproducing kernels that, if $\tau_x$ maps an $H$ onto itself and is an isometry, then the kernel of $H$ is translation invariant. This result directly extend to the multiscale case, in which we define 
$\tau_x$ on functions from $[s_1, s_2] \times \mathbb R^d$ to $\mathbb R^d$ by $(\tau_x w)(\lambda, y) = w(\lambda, y-x)$. This is stated in the next lemma.


\begin{lemma}
    Suppose that, for all $x\in \mathbb R^d$, $\tau_x$ maps $\mathbb W$ onto itself as an isometry, so that $\norm{\tau_x(w)}_{\mathbb W}=\norm{w}_{\mathbb W}$. Then $K_{\mathbb W}$ is translation invariant in space over scales, in the sense that $K_{\mathbb W}((\lambda, x), (\mu,y))=K_{\mathbb W}((\lambda, 0),(\mu, y-x))$, where $\lambda,\mu\in[s_1,s_2]$ and $x,y,z\in\mathbb R^d$.
\end{lemma}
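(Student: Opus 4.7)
The plan is to mimic the classical RKHS argument that isometries of the space induce the corresponding covariance of the reproducing kernel, adapted to our scale-dependent matrix-valued setting. The key identity I want to exploit is $\tau_x^{-1} = \tau_{-x}$, and since $\tau_x$ is assumed to be an isometry of $\mathbb{W}$ for every $x \in \mathbb{R}^d$, its adjoint with respect to $\langle\cdot,\cdot\rangle_{\mathbb{W}}$ coincides with its inverse, i.e.\ $\tau_x^* = \tau_{-x}$ as operators on $\mathbb{W}$.

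First I would fix $\lambda, \mu \in [s_1, s_2]$, $x, y \in \mathbb{R}^d$ and $a \in \mathbb{R}^d$, and apply the reproducing property of $K_{\mathbb{W}}$ in two different ways to $w \in \mathbb{W}$ evaluated at the translated point. On the one hand, the reproducing property gives directly
\begin{equation*}
    a^T w(\mu, y - x) = \langle K_{\mathbb{W}}(\cdot, (\mu, y-x))a,\, w\rangle_{\mathbb{W}}.
\end{equation*}
On the other hand, $w(\mu, y-x) = (\tau_x w)(\mu, y)$, so by reproducing on $\tau_x w \in \mathbb{W}$ (which lies in $\mathbb{W}$ because $\tau_x$ preserves $\mathbb{W}$),
\begin{equation*}
    a^T w(\mu, y - x) = \langle K_{\mathbb{W}}(\cdot, (\mu, y))a,\, \tau_x w\rangle_{\mathbb{W}} = \langle \tau_{-x} K_{\mathbb{W}}(\cdot, (\mu, y))a,\, w\rangle_{\mathbb{W}},
\end{equation*}
where the last equality uses $\tau_x^* = \tau_{-x}$.

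Since these two inner products agree for every $w \in \mathbb{W}$, I conclude the identity of representers
\begin{equation*}
    K_{\mathbb{W}}(\cdot, (\mu, y-x))a = \tau_{-x} K_{\mathbb{W}}(\cdot, (\mu, y))a \quad \text{in } \mathbb{W}.
\end{equation*}
Evaluating both sides at the point $(\lambda, 0)$ yields
\begin{equation*}
    K_{\mathbb{W}}((\lambda, 0), (\mu, y-x))a = K_{\mathbb{W}}((\lambda, x), (\mu, y))a,
\end{equation*}
and since $a \in \mathbb{R}^d$ is arbitrary, the matrix identity $K_{\mathbb{W}}((\lambda, x), (\mu, y)) = K_{\mathbb{W}}((\lambda, 0), (\mu, y-x))$ follows.

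There is no real obstacle here; the only care needed is the bookkeeping: checking that $\tau_x$ indeed has $\tau_{-x}$ as an $\mathbb{W}$-adjoint (immediate from the isometry assumption and the polarization identity, or directly from $\tau_x\tau_{-x} = \mathrm{Id}$ combined with $\|\tau_x w\|_{\mathbb{W}} = \|w\|_{\mathbb{W}}$), and handling the matrix-valued kernel by testing against an arbitrary $a$ rather than working componentwise. The final claim in the lemma about the extra variable $z$ appears to be a typo, so I would simply state and prove the translation invariance in the stated two-point form.
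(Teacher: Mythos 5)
Your proof is correct and follows essentially the same route as the paper's: both use the surjective-isometry assumption to get $\tau_x^* = \tau_{-x}$, apply the reproducing property to identify $\tau_{-x}\bigl(K_{\mathbb W}(\cdot,(\mu,y))a\bigr)$ with $K_{\mathbb W}(\cdot,(\mu,y-x))a$, and then evaluate at a point to read off the translation invariance. Your remark that the stray variable $z$ in the statement is a typo is also consistent with the paper, whose own proof derives the identity for all $z$ and specializes to $z=0$.
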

\begin{proof}
For any $w\in W$, we have
    \begin{align*}
        \langle \tau_{-x}(K_{\mathbb W}((\cdot,\cdot),(\mu, y))a),  w\rangle_{\mathbb W}= 
        \langle K_{\mathbb W}((\cdot,\cdot),(\mu, y))a, \tau_{x} w\rangle_{\mathbb W}=
        a^T (\tau_{x}w)(\mu, y) = a^T (\mu, y-x)
    \end{align*}
    so that 
    \[
    \tau_{-x}(K_{\mathbb W}((\cdot,\cdot),(\mu, y))a) = K_{\mathbb W}((\cdot,\cdot),(\mu, y-x))a
    \]
which is just
\begin{align*}
        K_{\mathbb W}((\lambda, z+x),(\mu, y))=K_{\mathbb W}((\lambda, z),(\mu, y-x))
    \end{align*} 
    for all $\lambda\in [s_1,s_2]$, and $z\in \mathbb R^d$, 
    proving the assertion. 
\end{proof}

Notice that translations are isometries on $\mathbb W$ as soon as they are isometries on $W_\lambda$ for each $\lambda$. So, if $K_\lambda$ is translation invariant for each $\lambda$, $K_{\mathbb W}$ will also be translation invariant.

Scalar kernels can also be characterized by an invariance property. If one requires that the operation $v\mapsto Rv$, where $R$ is any orthogonal matrix, maps $\mathbb W$ onto itself and is an isometry, a similar argument to that made above implies that $K_{\mathbb W}(\cdot, \cdot)$ commutes with orthogonal matrices and is therefore scalar. Such a property is once again inherited from the same invariance holding for each $W_\lambda$, i.e., if each $K_\lambda$ is scalar, then so is $K_{\mathbb W}$.

The same argument applied to the transformation $\eta_R$ such that 
\[
(\eta_R w)(\lambda, x) = w(\lambda, R^Tx)
\]
shows that,
if one assumes that $\eta_R$ is an isometry on $W$ for all orthogonal matrices $R$, then 
\[
K_{\mathbb W}((\lambda, Rx), (\mu, Ry)) = K_{\mathbb W}((\lambda, x), (\mu, y)).
\]
Once again, the property is inherited from it holding for all $K_\lambda$.

Now assuming all three properties, which are true when all $K_\lambda$'s are scalar and radial, we find that $K_{\mathbb W}$ is also scalar and radial, i.e., that it takes the form
\[
K_{\mathbb W}((\lambda, x), (\mu, y)) = \kappa_{\mathbb W}(\lambda, \mu, x-y) I_d =  \gamma_{\mathbb W}(\lambda, \mu, |x-y|) I_d\,,
\]
for some function $\gamma_{\mathbb W}: [0, +\infty)\to \mathbb R$.

Note that the symmetry of the kernel, $K_{\mathbb W}((\lambda, x), (\mu, y))^T = K_{\mathbb W}((\mu, y), (\lambda, x))$, implies that  $\gamma_{\mathbb W}$ is  symmetric with respect to $\lambda$ and $\mu$, while $\kappa_{\mathbb W}(\lambda, \mu, z) = \kappa_{\mathbb W}(\mu, \lambda, -z)$.

Upon suitable integrability assumptions, translation-invariant kernels are characterized by their Fourier transforms. Here, we define the Fourier transform of an integrable function $f: \mathbb R^d \to \mathbb R^k$ as
\[
\hat f(\xi) = \int_{\mathbb R^d} f(x) \exp(-i2\pi x^T\xi) dx
\]
so that the inverse Fourier transform is
\[
\check g(x) = \int_{\mathbb R^d} g(\xi) \exp(i2\pi x^T\xi) d\xi.
\]
Importantly, if $f$ is a radial function, i.e., $f(x) = \alpha(|x|)$ for some function $\alpha$, then so is its Fourier transform, with $\hat f(\xi) = \tilde \alpha(|\xi|)$ and the transformation $\alpha \mapsto \tilde \alpha$ is called the Hankel transform of $\alpha$.

If $f$ or $g$ also depend on one or two scale variables (like $\kappa_{\mathbb W}$ above), we will use the 
same notation, with the understanding that the transform is applied to the space component.

The reason why the Fourier transform is useful for reproducing kernels is because applying the operator $\mathbb K$ for a radial kernel to some function or measure, which is equivalent to taking the convolution with $\kappa$, results in simple products when passing to Fourier transforms. As a consequence, if we assume that for all $\lambda\in[s_1, s_2]$, 
\[
K_{\lambda}(x, y) = \kappa_{\lambda}(x-y) I_d =  \gamma_{\lambda}(|x-y|) I_d\,
\]
then \cref{eqn: kernel 12}, in which we take $x_0=0$ (which is sufficient since $K_{\mathbb W}$ is translation invariant) reads, denoting $\chi_\lambda = 1/\hat \kappa_\lambda$,
\begin{equation}
\label{eqn: kernel 12f}
\chi_\lambda(\xi) \partial_\lambda \hat \kappa_{\mathbb W}(\lambda, \lambda_0, \xi)  = \int_{[s_1,\lambda]} \chi_\mu(\xi) \hat \kappa_{\mathbb W}(\mu,\lambda_0, \xi)d\rho(\mu) - \mathbf{1}_{[\lambda_0, s_2]}(\lambda),
\end{equation}
and \cref{eqn: kernel 2} gives
\begin{equation}
\label{eqn: kernel 2f}
\int_{[s_1,s_2]} \chi_\mu(\xi) \hat\kappa_{\mathbb W}(\mu, \lambda_0, \xi) d\rho(\mu) = 1.
\end{equation}

\subsection{Lebesgue measure}

\subsubsection{Continuous scale kernels}
If $\rho$ is absolutely continuous with respect to Lebes\-gue's measure, say, $\rho = \sigma \mathcal L$ for some continuous function $\sigma$, and $\chi(\lambda, \xi)$ is continuous in $\lambda$,  the r.h.s. of \cref{eqn: kernel 12f} is differentiable in $\lambda$ everywhere except at $\lambda = \lambda_0$. We therefore obtain the second-order differential equation for the Fourier transform 
\[
\partial_\lambda(\chi_\lambda(\xi) \partial_\lambda \hat \kappa_{\mathbb W}(\lambda, \lambda_0, \xi))  = \chi_\lambda (\xi) \hat \kappa_{\mathbb W}(\lambda,\lambda_0, \xi)\sigma(\lambda).
\]
For $\lambda_0 \in (s_1, s_2)$ this equation comes with boundary conditions $\partial_\lambda \hat \kappa_{\mathbb W}(s_1, \lambda_0, \xi) = 0$ (deduced from \cref{eqn: kernel 12f}) and $\partial_\lambda \hat \kappa_{\mathbb W}(s_2, \lambda_0, \xi) = 0$ (deduced from taking the difference between \cref{eqn: kernel 12f} and \cref{eqn: kernel 2f}). These conditions must be combined with the fact that $\hat \kappa_{\mathbb W}(\lambda,\lambda_0, \xi))$ is continuous in $\lambda$ and has a jump in its derivative at $\lambda_0$ given by $-1/\chi_{\lambda_0}(\xi)$ to completely specify the solution.

If $\lambda \in \{s_1, s_2\}$, this equation is satisfied on the whole interval $(s_1, s_2)$ and the boundary conditions are $\hat \kappa_{\mathbb W}(s_1,\lambda_0, \xi) = -1/\chi_{s_1}(\xi)$
and $\partial_\lambda \hat \kappa_{\mathbb W}(s_2, \lambda_0, \xi) = 0$ when $\lambda_0=s_1$, and $\hat \kappa_{\mathbb W}(s_1,\lambda_0, \xi) = 0$
and $\partial_\lambda \hat \kappa_{\mathbb W}(s_2, \lambda_0, \xi) = -1/\chi_{s_2}(\xi)$ for $\lambda = s_2$. 

The solution of this differential equation can be computed numerically for any fixed value of $|\xi|$ and $\lambda_0$, which can be done as a single-run preprocessing step. We have not, however, explored this approach numerically in this paper, but rather focused on an approach in which the scale kernels are modeled (or approximated) to be piecewise constant, as to be described in the next section.

\subsubsection{Piecewise constant kernels}
We now assume that $\rho = \sigma^2 \mathcal L$ where $\sigma$ is a positive constant.
Let $\{r_k:k=1,\dots, n+1\}$ be a subdivision of the scale interval with $s_1=r_1<r_2<\dots<r_{n+1}=s_2$. We assume that  $\kappa_{\lambda}$ is piecewise constant on these intervals, and we will write, with some abuse of notation, $\kappa_{\lambda}=\kappa_{k}$ for $\lambda\in [r_k, r_{k+1})$. We proceed with our computation with the assumption that $\lambda_0\in \{r_k:k=1,\dots, n+1\}$, which comes without loss of generality because one can always add $\lambda_0$ to this list by splitting the interval it falls in into two pieces. So, we let $\lambda_0 = r_{k_0}$.

Let $h(\lambda) = \hat \kappa_{\mathbb W}(\lambda, \lambda_0, \xi)$ in the following computation, with $h_k = h(r_k)$.
On every open interval $(r_k, r_{k+1})$, $h$ satisfies the linear equation $\partial_\lambda^2 h - \sigma^2 h = 0$, as can be obtained by differentiating \cref{eqn: kernel 12f} in $\lambda$. It follows that, for $\lambda \in (r_k, r_{k+1})$, 
\[
h(\lambda) = \frac{\sinh(\sigma(r_{k+1} - \lambda))}{\sinh(\sigma(r_{k+1} - r_k))} h_k + \frac{\sinh(\sigma(\lambda-r_k))}{\sinh(\sigma(r_{k+1} - r_k))} h_{k+1}.
\]

Denote by $\partial_\lambda^-h$ and $\partial_\lambda^+h$ the left and right derivative of $h$ in $\lambda$. \Cref{eqn: kernel 12f} implies 
\[
\chi_{k} \partial_\lambda^+h(r_k) - \chi_{k-1} \partial_\lambda^-h(r_k) = -\delta_{k_0}(k),
\]
the special cases at the boundaries being obtained by letting $\chi_0 = \chi_{n+1} = 0$. If we let $\rho_k = r_{k+1}-r_k$ for $k=1,\dots,n$, computing the left and right derivatives as functions of the $h_k$'s yields, for $k=2, \ldots, n$,
\begin{align*}
\frac{\sigma h_{k+1}\chi_k}{\sinh(\sigma\rho_k)}  - \sigma h_k \left(\coth(\sigma\rho_k)\chi_k + \coth(\sigma\rho_{k-1})\chi_{k-1}\right) + \frac{\sigma h_{k-1}\chi_{k-1}}{\sinh(\sigma\rho_{k-1})} = -\delta_{k_0}(k).
\end{align*}

\begin{remark}
This is a tridiagonal system in $h_1, \ldots, h_{n+1}$, which can be solved independently for each value of $\xi$. It is however numerically preferable to let $g_k = \chi_{k} h_{k}$, $k=1, \ldots, n$, $g_{n+1} = \chi_n h_{n+1}$ and solve the system in terms of $g_1, \ldots, g_{n+1}$, which becomes, with $\psi^v_k = \chi_{k-1}/\chi_{k}$ for $k=1, \ldots, n$, and $\psi^v_{n+1} = 1$:
\begin{align*}
   \frac{\sigma g_{k+1}\psi^v_{k+1}}{\sinh(\sigma\rho_{k})}  - \sigma g_k \left(\coth(\sigma\rho_k) + \coth(\sigma\rho_{k-1})\psi^v_k\right) + \frac{\sigma g_{k-1}}{\sinh(\sigma\rho_{k-1})} = -\delta_{k_0}(k). 
\end{align*}
for $k=2, \ldots, n$ and
\begin{align*}
- \sigma g_{n+1} \coth(\sigma\rho_n) + \frac{\sigma g_{n}}{\sinh(\sigma\rho_n)} &= -\delta_{k_0}(n+1)\\
\frac{\sigma g_{2}\psi^v_{2}}{\sinh(\sigma\rho_1)}  - \sigma g_1 \coth(\sigma\rho_1)  &= -\delta_{k_0}(1).    
\end{align*}

In the Gaussian case, with $\kappa_{r_k}(z) = \exp\left(- \frac{|z|^2}{2r_k^2}\right)$, then 
\[
\chi_k(\xi) = \hat \kappa_{r_k}(\xi)^{-1} = (2\pi r_k^2)^{-d/2} \exp\left(2r_k^2\pi^2|\xi|^2\right),
\]
so that
\[
\psi^v_k(\xi) = \left(\frac{r_{k-1}}{r_k}\right)^{d} \exp\left(-2(r_{k}^2 - r_{k-1}^2)\pi^2|\xi|^2\right).
\]

\end{remark}

\subsection{Sum of Dirac measures}
A similar computation can be made when $\rho$ is a sum of Dirac measures on $[s_1, s_2]$ (for continuous or piecewise constant kernels), and we only consider here the simple where $\rho = \delta_{s_1} + \delta_{s_2}$.
Writing $\hat \kappa_{\mathbb W}(\lambda)$ for $\hat \kappa_{\mathbb W}(\lambda, \lambda_0, \xi)$,
\Cref{eqn: kernel 12f,eqn: kernel 2f} read in this case as:
\[
\partial_\lambda \hat \kappa_{\mathbb W}(\lambda)  = \frac{\chi_{s_1}}{\chi_\lambda} \hat \kappa_{\mathbb W}(s_1) - \frac{\mathbf{1}_{[\lambda_0, s_2]}(\lambda)}{\chi_\lambda}
\]
and
\[
\chi_{s_1} \hat\kappa_{\mathbb W}(s_1) + \chi_{s_2} \hat\kappa_{\mathbb W}(s_2) = 1.
\]
The first equation yields, introducing the notation $X(\lambda) = \int_{s_1}^\lambda(1/\chi_\mu)d\mu$,
\[
\hat \kappa_{\mathbb W}(\lambda)  = \hat \kappa_{\mathbb W}(s_1) + \chi_{s_1}X(\lambda) \hat \kappa_{\mathbb W}(s_1) - X(\max(\lambda, \lambda_0)) + X(\lambda_0),
\]
and replacing $\hat \kappa_{\mathbb W}(s_2)$ in the second one gives
\[
\chi_{s_1} \hat\kappa_{\mathbb W}(s_1) + \chi_{s_2} \hat \kappa_{\mathbb W}(s_1) + \chi_{s_1}\chi_{s_2} X(s_2) \hat \kappa_{\mathbb W}(s_1) - \chi_{s_2}(X(s_2) - X(\lambda_0)) = 1.
\]
This gives
\[
\hat\kappa_{\mathbb W}(s_1) = \frac{1 + \chi_{s_2}(X(s_2) - X(\lambda_0))}{\chi_{s_1} + \chi_{s_2} + \chi_{s_1}\chi_{s_2} X(s_2)}.
\]
Using this in the expression of $\hat \kappa_{\mathbb W}(\lambda)$ yields (after a little algebra)
\[
    \hat \kappa_{\mathbb W}(\lambda)  = \frac{(1+\chi_{s_{2}}(X(s_2) - X(\max(\lambda, \lambda_0))(1+\chi_{s_{1}}X(\min(\lambda, \lambda_0))}{\chi_{s_1} + \chi_{s_2} + \chi_{s_1}\chi_{s_2} X(s_2)}.
\]

\subsection{Inverse Fourier transform}
The previous sections illustrated situations in which the functions $\kappa_{\mathbb W}(\lambda, \lambda_0, \cdot)$ were computable (for fixed $\lambda$ and $\lambda_0$) through their Fourier transform. Because this kernel needs to be called many times in the registration procedure, it is important to ensure that this function can be computed as efficiently as possible. We now restrict to the case of radial kernels, which implies that the same property holds for the Fourier transform. This restriction simplifies the approximation since it will only require working with a scalar function of a single scalar variable (for each pair of scales).

We will approximate $\hat \kappa_{\mathbb W}(\lambda_1, \lambda_2, \xi)$ in the form
\begin{equation}
\label{eq:approx.ft}
\hat \kappa_{\mathbb W}(\lambda_1,\lambda_2, \xi) \simeq \sum_{q=1}^Q \beta_q(\lambda_1, \lambda_2) \hat h_q(|\xi|)
\end{equation}
where $\hat h_1, \ldots, \hat h_Q: [0, +\infty) \to [0, +\infty)$ are fixed functions and the coefficients $\beta_q(\lambda_1, \lambda_2)$ can be precomputed and stored for all scales $\lambda_1, \lambda_2$ of interest (we will see in \cref{sec:mslddmm} that only a finite number of them are typically needed for registration). The functions $\hat h_1, \ldots, \hat h_q$ should be designed to be Hankel transforms of known functions $h_1, \ldots, h_Q$, yielding the direct approximation
\begin{equation}
\label{eq:approx.ft.2}
\kappa_{\mathbb W}(\lambda_1,\lambda_2, \xi) \simeq \sum_{q=1}^Q \beta_q(\lambda_1, \lambda_2) h_q(|\xi|).
\end{equation}

It is however desirable to ensure that the approximation still corresponds to a positive kernel. The following extension of Bochner's theorem, whose proof is given in appendix, describes the required condition.


\begin{theorem}
    \label{th:bochner.plus}
    Let $S$ be any set and $\Gamma: S\times S \times \mathbb R^d\to \mathbb R$. Then the followings are equivalent:
    \begin{enumerate}[label=(\arabic*)]
        \item Far all $s_1, s_2\in S$, $z\mapsto \Gamma(s_1, s_2, z) $ is continuous and the function
    \[
    K: ((s_1, x_1), (s_2, x_2)) \mapsto \Gamma(s_1, s_2, x_1-x_2)
    \]
    is a positive semi-definite kernel. 
    \item There exists a family of finite complex Radon measures $\mathcal M = (\mu_{s_1,s_2}, s_1, s_2 \in S)$ such that, for all $s_1, s_2\in S$ 
    \[
    \Gamma(s_1, s_2, z) = \int_{\mathbb R^d} e^{-i2\pi \xi^Tz} d\mu_{s_1, s_2}(\xi),
    \]
    and $\mathcal M$ is a measure-valued positive kernel in the sense that, for all $s_1, \ldots, s_n\in S$ and all $a_1, \ldots, a_n\in \mathbb C$, 
    \[
    \sum_{k,l=1}^n a_k\bar a_l \mu_{s_k, s_l}
    \]
    is a positive measure.
  \end{enumerate}
  \end{theorem}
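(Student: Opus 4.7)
The plan is to handle the two directions separately. The implication $(2)\Rightarrow(1)$ is a Riemann-sum argument: given $(s_k,x_k)_{k=1}^n$ and $a\in\mathbb C^n$, I would rewrite
\[
\sum_{k,l}a_k\bar a_l\,\Gamma(s_k,s_l,x_k-x_l)=\sum_{k,l}\int_{\mathbb R^d}c_k(\xi)\overline{c_l(\xi)}\,d\mu_{s_k,s_l}(\xi),\qquad c_k(\xi):=a_ke^{-i2\pi\xi^Tx_k},
\]
and upgrade the hypothesis, which only asserts positivity of $\sum b_k\bar b_l\mu_{s_k,s_l}$ for \emph{constant} $b_k$, to integrands of this functional form. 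This is done by approximating each $c_k$ uniformly on a large compact by simple functions $\sum_j b_k^{(j)}\mathbf 1_{A_j}$ on a common measurable partition, so that the double sum collapses to $\sum_j\sum_{k,l}b_k^{(j)}\bar b_l^{(j)}\mu_{s_k,s_l}(A_j)\ge 0$, and then passing to the limit by dominated convergence (all total variations being finite).

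For the harder direction $(1)\Rightarrow(2)$, fix $s_1,\ldots,s_n\in S$ and $a\in\mathbb C^n$ and set $\Phi_a(z):=\sum_{k,l}a_k\bar a_l\Gamma(s_k,s_l,z)$. Continuity of $\Phi_a$ in $z$ is inherited from $\Gamma$, and its positive definiteness on $\mathbb R^d$ follows by testing the hypothesis at the product family $\{(s_k,x_m)\}_{k,m}$ with composite coefficients $a_kb_m$, since the resulting inequality factors as $\sum_{m,n}b_m\bar b_n\Phi_a(x_m-x_n)\ge 0$. The classical scalar Bochner theorem therefore supplies a unique finite positive Radon measure $\nu_a$ on $\mathbb R^d$ with $\Phi_a=\hat\nu_a$.

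The pair measures are extracted by polarization: for $s,t\in S$, applying the previous step to the doubleton $\{s,t\}$ with $a=(1,\epsilon)$, $\epsilon\in\{1,i,-1,-i\}$, yields positive measures $\nu_\epsilon^{s,t}$, and I define
\[
\mu_{s,t}:=\tfrac14\sum_{\epsilon\in\{1,i,-1,-i\}}\epsilon\,\nu_\epsilon^{s,t}.
\]
The identities $\sum_\epsilon\epsilon=\sum_\epsilon\epsilon^2=0$ and $\sum_\epsilon|\epsilon|^2=4$ give $\hat\mu_{s,t}=\Gamma(s,t,\cdot)$, establishing the Fourier representation in (2). For the positivity clause, I observe that for any $s_1,\ldots,s_n$ and $a\in\mathbb C^n$ the complex measure $\sum_{k,l}a_k\bar a_l\mu_{s_k,s_l}$ has Fourier transform $\Phi_a=\hat\nu_a$, and by injectivity of the Fourier transform on finite complex Radon measures it must coincide with $\nu_a$, hence is a positive measure. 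The main obstacle I anticipate is the function-valued extension of the measure-positivity hypothesis needed in the easy direction; in the hard direction, once Fourier uniqueness is invoked, the consistency of $\mu_{s,t}$ across different ambient tuples and the positivity of all finite Hermitian combinations fall out together, so the polarization identity essentially forces the whole scheme to close.
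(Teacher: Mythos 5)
Your proposal is correct, and while the $(2)\Rightarrow(1)$ direction coincides with the paper's argument (approximate the exponentials $\xi\mapsto a_ke^{-i2\pi\xi^Tx_k}$ by simple functions on a common partition, apply the measure-valued positivity cell by cell, and pass to the limit using finiteness of the total variations), your $(1)\Rightarrow(2)$ direction takes a genuinely different and arguably cleaner route. The paper first asserts that each $\Gamma(s_1,s_2,\cdot)$, being continuous and bounded, is the Fourier transform of a finite complex measure --- a step that is not justified for arbitrary continuous bounded functions and really needs exactly the polarization argument you supply --- and then proves the measure-valued positivity directly by a Fej\'er-kernel argument: writing a nonnegative compactly supported $f$ as $|g|^2$, periodizing, approximating $g$ uniformly by trigonometric polynomials, and exploiting positivity of $\sum_{k,l}a_k\bar a_l\int\bigl|\sum_q b_qe^{-2i\pi\xi^Tx_q}\bigr|^2d\mu_{s_k,s_l}$ before a limiting and regularization step over Borel sets. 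You instead observe that each quadratic form $\Phi_a(z)=\sum_{k,l}a_k\bar a_l\Gamma(s_k,s_l,z)$ is a continuous positive definite function on $\mathbb R^d$ (by testing the kernel on the product family $\{(s_k,x_m)\}$ with coefficients $a_kb_m$), invoke the scalar Bochner theorem to get positive measures $\nu_a$, extract the pair measures $\mu_{s,t}$ by polarization over $\epsilon\in\{1,i,-1,-i\}$, and obtain the positivity of every finite Hermitian combination for free from the identity $\widehat{\sum_{k,l}a_k\bar a_l\mu_{s_k,s_l}}=\Phi_a=\hat\nu_a$ together with injectivity of the Fourier--Stieltjes transform. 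This buys you a shorter proof that delegates all the analytic work to the classical Bochner theorem, simultaneously establishes the existence of the $\mu_{s,t}$ and their positivity properties, and repairs the one under-justified step in the paper's version; the paper's Fej\'er argument is more self-contained but longer. The only points worth writing out carefully are the polarization computation (including the diagonal case $s=t$, which is consistent) and the fact that Bochner's theorem applies to the generally complex-valued functions $\Phi_a$, both of which are routine.
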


Assume that one only wants to approximate the kernel for a finite family of target scales $s_1, \ldots, s_n$. Assume also that Fourier transforms are computed on a finite family of frequencies $\xi_j$, $j=1, \ldots, J$. Then  \cref{th:bochner.plus} requires that 
for all $j$, the matrix with entries
\[
\sum_{q=1}^Q \beta_q(s_k, s_l) \hat h_q(|\xi_j|)
\]
is positive, where each of these entries is an approximation of $\hat \kappa_{\mathbb W}(s_k,s_l, \xi_j)$. This leads to a semi-definite program involving as many matrices as discrete frequencies $\xi_j$. In the experiments presented here, we only ensure positive-definiteness for pairs of scales, using a greedy approach organized as follows:
\begin{enumerate}[label=$\bullet$,wide]
\item First, for every $k=1, \ldots, n$, we estimate  $\beta_q(s_k, s_k)$ by minimizing
\[
\max_{j=1, \ldots, J} \left|\hat \kappa_{\mathbb W}(s_k,s_k, \xi_j) - \sum_{q=1}^Q \beta_q(s_k, s_k) \hat h_q(|\xi_j|)\right|
\]
subject to 
\[
\sum_{q=1}^Q \beta_q(s_k, s_k) \hat h_q(|\xi_j|) \geq 0
\]
for all $j= 1, \ldots, J$. 

\item Then, for all $k\neq l$, we define
\[
c_j = \sqrt{\left(\sum_{q=1}^Q \beta_q(s_k, s_k) \hat h_q(|\xi_j|)\right)\left(\sum_{q=1}^Q \beta_q(s_l, s_l) \hat h_q(|\xi_j|)\right)}
\]
and minimize
\[
\max_{j=1, \ldots, J} \left|\hat \kappa_{\mathbb W}(s_k,s_l, \xi_j) - \sum_{q=1}^Q \beta_q(s_k, s_l) \hat h_q(|\xi_j|)\right|
\]
subject to
\[
\left | \sum_{q=1}^Q \beta_q(s_k, s_k) \hat h_q(|\xi_j|)\right| \leq c_j.
\]
\end{enumerate}
This computation, which can be implemented as a series of linear programs, ensures the positivity of the kernel restricted to any pair of target scales.

\section{Multi-scale LDDMM}
\label{sec:mslddmm}

\subsection{Optimal control and Pontryagin Maximum Principle (PMP)}

\subsubsection{Single-scale problem}
The LDDMM algorithm is formulated as an optimal control problem whose state space is a group of $d$-dimensional diffeomorphisms and the control space an RKHS $V$ of $C^1$ vector fields on $\mathbb R^d$, with objective to minimize a cost function taking the form
\begin{subequations}
\begin{equation}
    \label{eq:lddmm.base.a}
\frac{1}{2} \int_0^1 \|v(t)\|^2_V dt + G(\phi(1))
\end{equation}
subject to 
\begin{equation}
    \label{eq:lddmm.base.b}
    \varphi(t, x) = x+\int_{0}^t v(\tau, \varphi(\tau, x))d\tau  .
\end{equation}
\end{subequations}

Let $C^p_0(\mathbb R^d, \mathbb R^d)$ denote the Banach space of $C^p$ functions $w$ such that $w$, $dw$, \ldots, $d^pw$ tend to zero at infinity with norm $\|w\|_{p, \infty}$ equal to the maximum norm taken by $w(\cdot)$ and its first two derivatives. \Cref{eq:lddmm.base.b} has a unique solution, that we will denote $\phi(\cdot, \cdot;v)$ as soon as $v\in L^1([0,1], C^p_0(\mathbb R^d, \mathbb R^d))$ with $p\geq 1$ and $\phi(t, \cdot;v)$ is, for all $t \in [0,1]$, a $C^p$ diffeomorphism (see, e.g., \cite{sontag2013mathematical,Younes_2019}).

The group of diffeomorphisms used as state space is the set of $C^1$ diffeomorphisms that tend to the identity at infinity, which  is an open subset of the affine space $\mathrm{id}_{\mathbb R^d} + C^1_0(\mathbb R^d, \mathbb R^d)$. We will denote it as $\mathit{Diff}^1_0(\mathbb R^d)$.

\subsubsection{Multi-scale problem}
We  will assume that $V \hookrightarrow C^p_0(\mathbb R^d, \mathbb R^d)$ for $p$ large enough (with $p\geq 1$). This implies that $L^2([0,1], V) \hookrightarrow L^1([0,1], C^p_0(\mathbb R^d, \mathbb R^d))$, so that 
transformations $\phi(t)$ associated with finite-cost controls in \cref{eq:lddmm.base.a,eq:lddmm.base.b} are $C^p$ diffeomorphisms and therefore belong to the state space at all times as soon as $p\geq 1$.

In the multiscale framework, we consider time-and-scale-dependent vector fields $v\in L^2([0,1], \mathbb W)$. 
We introduce a new notation for fixing scales in this space, defining, for $\lambda\in [s_1, s_2]$, $\mathbb I_\lambda: L^2\left([0,1], \mathbb W\right) \to L^2([0,1], V)$ by 
$(\mathbb I_\lambda v) (\cdot, \cdot) = v(\cdot,\lambda,\cdot)$. One can use almost the same proofs as in the previous section to show that $\mathbb I_\lambda$ is linear and bounded.

Given $v\in L^2([0,1], \mathbb W)$, we define scale-dependent flows of diffeomorphisms as solutions of the equation
\begin{align}
    \varphi(t, \lambda, x) = x+\int_{0}^t v(\tau, \lambda, \varphi(\tau, \lambda, x))d\tau  .
    \label{eqn: ms diffeo}
\end{align} 

\Cref{eqn: ms diffeo} describes  the flow on $[s_1, s_2]\times \mathbb R^d$ associated with the time-dependent vector field
\[
\bar v(t, \lambda, x) = \begin{pmatrix}
    0\\
    v(t, \lambda, x)
\end{pmatrix}
\]
so that the existence of a unique solution is a  consequence of the fact that the Lispchitz constant of $\hat v(t, \cdot, \cdot)$ (which is the same as that of $v(t, \cdot, \cdot)$) is time integrable as soon as $v\in L^1([0,1], \mathbb W)$. This solution is continuous in $(\lambda, x)$ and, using the fact that $\mathbb I_\lambda$ is continuous to analyze the solution for fixed $\lambda$, it is $C^p$ with respect to the space variable.

The MS-LDDMM optimal control problem  minimizes
\begin{equation}
\label{eq:ms.lddmm.obj}
\frac{1}{2} \int_0^1 \|v(t, \cdot, \cdot)\|^2_{\mathbb W} dt + G(\phi(1, \cdot, \cdot))
\end{equation}
subject to \cref{eqn: ms diffeo}. 

We state without proof the following result, which can be proved using, almost without change, the proof of Theorem 7.9 in \cite{Younes_2019}. 

\begin{theorem}
\label{th:weak.cvg}
If $(v_n)$ is a bounded sequence  in $L^2\left([0,1], \mathbb W\right)$ which converges weakly to $v \in L^2\left([0,1], \mathbb W\right)$, $\phi(\cdot, \cdot, \cdot; v_n)$ converges, uniformly on $[0, 1]\times [s_1, s_2] \times K$, to $\phi(\cdot, \cdot, \cdot; v)$, for  every compact subset $K\subset \mathbb{R}^d$. 
\end{theorem}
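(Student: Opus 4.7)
The plan is to adapt the standard weak-continuity argument for single-scale LDDMM flows (\cite[Theorem 7.9]{Younes_2019}) to the scale-dependent setting, exploiting the fact that $\bar v=(0,v)$ has no $\lambda$-component, so \cref{eqn: ms diffeo} is, for each fixed $\lambda$, an ODE in $x$ whose vector field depends continuously on $\lambda$. The proof will split into three blocks: uniform a priori bounds on $\phi_n:=\phi(\cdot,\cdot,\cdot;v_n)$, equicontinuity in $(t,\lambda,x)$, and identification of the Arzelà--Ascoli limit.

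First I would derive the uniform bounds. Weak convergence of $(v_n)$ in $L^2([0,1],\mathbb W)$ yields, by Banach--Steinhaus, $M:=\sup_n\|v_n\|_{L^2([0,1],\mathbb W)}<\infty$. Chaining \cref{lemma: ilambda: W to V} with the continuous embedding $V\hookrightarrow C^1_0(\mathbb R^d)$ produces a constant $C'>0$ such that $\|v_n(t,\lambda,\cdot)\|_{1,\infty}\leq C'\|v_n(t,\cdot,\cdot)\|_{\mathbb W}$ for all $(t,\lambda)$. Gronwall's inequality applied to \cref{eqn: ms diffeo} then yields uniform Lipschitz bounds on $\phi_n(t,\lambda,\cdot)$ and uniform bounds on $D_x\phi_n$, independent of $n$, $t$, and $\lambda$.

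Second, equicontinuity of $(\phi_n)$ on $[0,1]\times[s_1,s_2]\times K$ for every compact $K\subset\mathbb R^d$. The $x$ modulus is Lipschitz by the bound on $D_x\phi_n$; the $t$ modulus is of order $|t_2-t_1|^{1/2}$ via
\[
|\phi_n(t_2,\lambda,x)-\phi_n(t_1,\lambda,x)|\leq\int_{t_1}^{t_2}\|v_n(\tau,\lambda,\cdot)\|_\infty\,d\tau\leq C'|t_2-t_1|^{1/2}M;
\]
the $\lambda$ modulus comes from \cref{eqn: IBP} applied fiberwise in $t$: $v_n(\tau,\lambda_2,\cdot)-v_n(\tau,\lambda_1,\cdot)=\int_{\lambda_1}^{\lambda_2}\partial_\lambda v_n(\tau,\mu,\cdot)\,d\mu$, which after Cauchy--Schwarz in $\mu$ and then in $\tau$, together with the embedding $V\hookrightarrow C^0$, gives an order $|\lambda_2-\lambda_1|^{1/2}$ bound on $\int_0^1\|v_n(\tau,\lambda_2,\cdot)-v_n(\tau,\lambda_1,\cdot)\|_\infty d\tau$. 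A further Gronwall step upgrades this into an equicontinuity estimate for $\phi_n$ in $\lambda$. Arzelà--Ascoli then extracts a subsequence $\phi_{n_k}$ converging uniformly on $[0,1]\times[s_1,s_2]\times K$ to some continuous $\phi_\infty$.

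Third, to identify $\phi_\infty=\phi(\cdot,\cdot,\cdot;v)$ I would pass to the limit in
\[
\phi_{n_k}(t,\lambda,x)=x+\int_0^t v_{n_k}(\tau,\lambda,\phi_{n_k}(\tau,\lambda,x))\,d\tau,
\]
splitting the integrand as $[v_{n_k}(\tau,\lambda,\phi_{n_k})-v_{n_k}(\tau,\lambda,\phi_\infty)]+[v_{n_k}(\tau,\lambda,\phi_\infty)-v(\tau,\lambda,\phi_\infty)]$. The first bracket is dominated by the uniform Lipschitz bound on $v_{n_k}$ times $\|\phi_{n_k}-\phi_\infty\|_\infty$ and therefore vanishes. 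For the second bracket, for each fixed $(t,\lambda,x)$ the map $w\mapsto\int_0^t w(\tau,\lambda,\phi_\infty(\tau,\lambda,x))\,d\tau$ is a bounded linear functional on $L^2([0,1],\mathbb W)$ (compose $\iota_\lambda$, evaluation at $\phi_\infty(\tau,\lambda,x)$, and integration in $\tau$), so weak convergence gives pointwise convergence in $(t,\lambda,x)$; the same equicontinuity estimates as above, now applied to this functional rather than to the flow, upgrade pointwise to uniform convergence on $[0,1]\times[s_1,s_2]\times K$. Uniqueness of solutions to \cref{eqn: ms diffeo} yields $\phi_\infty=\phi(\cdot,\cdot,\cdot;v)$, and subsequence-independence of the limit forces the whole sequence to converge. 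The main obstacle is precisely this last upgrade from pointwise to uniform convergence under only weak $L^2$ convergence of the controls, where the scale-dependence of the RKHS norms and the interplay between $\iota_\lambda$ and evaluation at the moving base point $\phi_\infty(\tau,\lambda,x)$ must be tracked carefully.
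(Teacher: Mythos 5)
Your proposal is correct and follows exactly the route the paper intends: the paper gives no written proof but states that Theorem~\ref{th:weak.cvg} follows "almost without change" from the proof of Theorem 7.9 in \cite{Younes_2019}, and your three blocks (Gronwall a priori bounds via the uniform-in-$\lambda$ embedding $\mathbb W\hookrightarrow V\hookrightarrow C^1_0$, equicontinuity with the extra $|\lambda_2-\lambda_1|^{1/2}$ modulus supplied by the $H^1$-in-scale structure of $\mathbb W$, and identification of the limit by testing weak convergence against the bounded functionals $w\mapsto\int_0^t w(\tau,\lambda,\phi(\tau,\lambda,x))\,d\tau$ followed by the standard pointwise-plus-equicontinuity upgrade) are precisely that adaptation. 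The one step you flag as the "main obstacle" is in fact already closed by your own equicontinuity estimates, so no gap remains.
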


Since minimizing sequences for \cref{eq:ms.lddmm.obj} are bounded and have subsequences that converge weakly, 
\cref{th:weak.cvg} implies that minimizers always exist, as soon as $\Gamma$ is lower semi-continuous for the uniform convergence on compact subsets.

\subsubsection{Base scales}
\label{sec:base.scales}
To simplify the discussion, and because this is a natural setting for applications, we now assume that $\Gamma(\psi(\cdot, \cdot))$ only depends on $\psi$ evaluated at a finite number of scales, so that there exists $\lambda_1, \ldots, \lambda_m$ and a function $g$ such that
\[
G(\psi(\cdot, \cdot)) = g(\psi(\lambda_1, \cdot), \ldots, \psi(\lambda_m, \cdot)).
\]
We will refer to $\lambda_1,\ldots, \lambda_m$ as the base scales.

In this form, the state space of the problem can be reduced to collections of $m$ diffeomorphisms, included in $(\mathrm{id}_{\mathbb R^d} + C^1_0(\mathbb R^d, \mathbb R^d))^m$, letting $\phi_k(t) = \phi(t, \lambda_k)$. We assume that $g$ is differentiable in its variables. We also assume that $V\hookrightarrow C^p_0(\mathbb R^d, \mathbb R^d)$ with $p\geq 2$. This ensures that the r.h.s. of the state equation $\partial_t \phi(t, \lambda, \cdot) = v(t, \lambda, \phi(t, \lambda, \cdot))$ is differentiable with respect to the state variable, and that the Pontryagin maximum principle (PMP) is satisfied. This principle implies that there exists a co-state $p: [0,1] \to (C^1_0(\mathbb R^d, \mathbb R^d)^*)^m$ such that, at almost every time $t$, the optimal control maximizes the Hamiltonian
\[
\sum_{k=1}^m (p_k \mid w(\lambda_k, \phi(t, \lambda_k, \cdot))) - \frac{1}{2} \|w\|_{\mathbb W}^2 = \sum_{k=1}^m (p_k \mid R_{\phi(t, \lambda_k, \cdot)} \iota_{\lambda_k} w) - \frac{1}{2} \|w\|_{\mathbb W}^2
\]
with respect to $w\in \mathbb W$, where $R_{\varphi(t,\lambda_k,\cdot)}$ is the right composition operator such that for all $k$, one has $ R_{\varphi(t,\lambda_k, \cdot)}\iota_{\lambda_k w}= w(\lambda_k, \varphi(t,\lambda_k, \cdot))$.
This yields 
\[
v(t, \cdot, \cdot) = \sum_{k=1}^m \mathbf K_{\mathbb W} (\iota_{\lambda_k}^*  R_{\phi(t, \lambda_k, \cdot)}^* p_k)
\]
for optimal controls $v$.
In this form, we have 
\[
\|v(t)\|_{\mathbb W}^2 = \sum_{k, l=1}^m (p_k\mid R_{\phi(t, \lambda_k, \cdot)}\iota_{\lambda_k} \mathbf K_{\mathbb W} (\iota_{\lambda_l}^*  R_{\phi(t, \lambda_l, \cdot)}^* p_l)).
\]
The boundary conditions of the PMP implies that $p_k(1) = - \partial_{k}g(\phi_1(t), \ldots, \phi_m(1))$ and the co-state equation still expresses the fact that, for all $h\in C^1_0(\mathbb R^d, \mathbb R^d)$, 
\[
\partial_t (p_k(t) \mid h) = - (p_k(t) \mid \partial_x v(t, \lambda_k, \phi(t, \lambda_k, \cdot)) h),
\]
which implies $(p_k(t) \mid \partial_x \phi(t, \lambda_k, \cdot)h) = (p_k(0)\mid h)$.

 Assume now that the end-cost $g$ is discrete, in the sense that it only depends on the values of $\phi_k(1, \cdot)$ on finite point sets, say, $(x_{ki}^0, i=1, \ldots, N_k)$, so that
 \[
 g(\psi^v_1, \ldots, \psi^v_m) = q(\psi^v_k(x_{ki}^0), i=1, \ldots, N_k, k=1, \ldots, m),
 \]
 for some function $q: \mathbb R^{N_1} \times \cdots \times \mathbb R^{N_m} \to \mathbb R$. 
 Then the costate takes the form
\[
p_k(t) = \sum_{j=1}^{N_k} a_{kj}(t)^T \delta_{x_{kj}(0)}
\]
with $a_{kj}(t) = d\phi(t, \lambda_k, x_{kj}(0))^T a_{kj}(0)$. One then has
\begin{equation}
\label{eq:base.scale.vf}
v(t, \lambda, x) = \sum_{k=1}^m \sum_{j=1}^{N_k} K_{\mathbb W}((\lambda, x), (\lambda_k, x_{kj}(t))) a_{kj}(t)
\end{equation}
with $\phi(t, \lambda_k, x_{kj}(0))$. In this form, we have
\[
\|v(t)\|_{\mathbb W}^2 = \sum_{k,l=1}^m \sum_{i=1}^{N_k} \sum_{j=1}^{N_l} a_{ki}(t)^T K_{\mathbb W}((\lambda_k, x_{ki}(t)), (\lambda_l, x_{lj}(t))) a_{lj}(t).
\]

Similar to other point-set schemes used in LDDMM, this formulation leads to a reduction of the original problem to an optimal control formulation with finite-dimensional state (the collection $(x_{kj}(\cdot), k=1, \ldots, m, j=1, \ldots, N_k)$) and control (the collection $(a_{kj}(\cdot), k=1, \ldots, m, j=1, \ldots, N_k)$). The experiments in \cref{sec:experiments} use this formulation with standard optimal control optimization where the gradient is computed using the ``adjoint'' (or ``backpropagation'') method. In this setting, the differential of the objective function (call it $F$) with respect to the control is given by, assuming a scalar, translation-invariant kernel
\[
\partial_{a_{ki}(t)}  F = \sum_{l=1}^m \sum_{j=1}^{N_k} \kappa_{\mathbb W}((\lambda_k, \lambda_l, x_{ki}(t) - x_{lj}(t)) p_{lj}(t)
\]
where $p_{ki}(\cdot)$ satisfies the dynamical system
\begin{multline*}
\partial_t p_{ki}(t) \\
= - \sum_{l=1}^m\sum_{j=1}^{N_l} \nabla_x \kappa_{\mathbb W}(\lambda_k, \lambda_l, x_{ki}(t) -x_{lj}(t)) (p_{ki}(t)^T a_{lj}(t) + p_{lj}(t)^T a_{ki}(t) - 2 a_{ki}(t)^T a_{lj}(t))
\end{multline*}
with $p_{ki}(1) = - \partial_{k,i} q_{|{x(1)}}$.

\section{Alternative scale-space kernels}

The previous discussion provides examples of what can be called ``scale-space kernels,'' namely, reproducing kernels on $[s_1, s_2]\times \mathbb R^d$, and we here discuss possible alternative constructions of such kernels. Arguably, the first-order operators used to ensure embeddings in $W^{1,2}([s_1, s_2], W_0)$, are among the simplest  choices that could be made, and one may consider higher order operators in scale, ensuing higher regularity instead (provided they lead to feasible computation). 

Moreover, given that reproducing kernels can be combined in multiple ways to create new kernels, the examples provided in the previous section easily lead to new ones. For example, one can consider the integral with respect to $s_0$, of the kernels computed in  \cref{sec:example.dirac}, yielding, taking $\sigma = s_2-s_1$:
\begin{multline*}
K((\lambda, x), (\lambda_0, x_0)) \\
= \int_{s_1}^{s_2} \left(1+ \mathbf 1_{[s_1, \mathrm{min}(\lambda, \lambda_0)]}(\mu) \frac{\mu-s_1}{s_2-s_1} + \mathbf 1_{[\mathrm{max}(\lambda, \lambda_0), s_2]}(\mu) \frac{s_2-\mu}{s_2-s_1}\right) K_\mu(x, x_0) d\mu.
\end{multline*}

Beyond these extensions of the main construction in this paper, one may try to directly build a reproducing kernels on $[s_1, s_2]\times \mathbb R^d$, which raises the question of what kind of properties such scale-space kernels should satisfy to provide a multi-scale analysis.

So, let us start $K$ with a reproducing kernel on $[s_1, s_2]\times \mathbb R^d$ with values on the set of $d\times d$ matrices. We assume that this kernel corresponds to a Hilbert space of continuous functions $v: [s_1, s_2]\times \mathbb R^d \to \mathbb R^d$, denoted  $\mathbb W^K$. Our first task is to define induced spaces at fixed $\lambda\in [s_1, s_2]$, and a natural choice is to let $W^K_\lambda$ be the set of all vector fields $w: \mathbb R^d \to \mathbb R^d$ such that there exists $v\in \mathbb W^K $ with $v(\lambda, \cdot) = w$. One can equip $W^K_\lambda$ with the norm
\[
\|w\|_\lambda = \inf\{\|v\|_{\mathbb W^K}: v(\lambda, \cdot) = w\},
\]
which provides $W^K_\lambda$ with a Hilbert norm isometric to that on the quotient space $\mathbb W^K/H_\lambda$ with
\[
H_\lambda = \{\tilde v \in \mathbb W^K: \tilde v(\lambda, \cdot) = 0\}.
\]
The isometry indeed works by identifying $w$ with the set $I_\lambda(w) = \{v\in \mathbb W^K: v(\lambda, \cdot) = w\}\in \mathbb W^K/H_\lambda$, the norm being equal to the quotient norm by definition. Note that this norm is equal to $\|\pi_{H_\lambda^\perp}(v)\|_{\mathbb W^K}$ for any $v\in I_\lambda(w)$, where $\pi_{H_\lambda^\perp}$ is the orthogonal projection onto $H_\lambda^\perp$.

One can show that $W^K_\lambda$ is an RKHS of vector fields, with associated kernel simply given by $K'_\lambda(x,y) = K((\lambda, x), (\lambda, y))$. Indeed, for any $a, y\in \mathbb R^d$, $w:= K'_\lambda(\cdot, y)a\in W^K_\lambda$ and $v:=K((\cdot, \cdot), (\lambda, y))a$ belongs to $I_\lambda(w)$ by definition. Moreover, $v\in H_\lambda^\perp$ since for any $\tilde v\in H_\lambda$,
\[
\left\langle \tilde v, v\right\rangle_{\mathbb W^K} = a^T\tilde v(\lambda, y) = 0. 
\]
This shows that, for any $\tilde w\in \mathbb W_\lambda$ such that $\tilde w = \tilde v(\lambda, \cdot)$,
\begin{align*}
\left\langle\tilde w, w\right\rangle_{W^K_\lambda} &= \left\langle \pi_{H_\lambda^\perp}(\tilde v), \pi_{H_\lambda^\perp}(v) \right\rangle_{\mathbb W^K}&\\
&= \left\langle \pi_{H_\lambda^\perp}(\tilde v), v \right\rangle_{\mathbb W^K} & (v\in H_\lambda^\perp)\\
&= \left\langle \tilde v, v \right\rangle_{\mathbb W^K} & (\text{property of the orthogonal projection})\\
&= \tilde v(\lambda, y) = w(y). &
\end{align*}

The spaces $W^K_\lambda$ can be defined for any kernel $K$ on $[s_1, s_2]\times \mathbb R^d$, but in order for this kernel to provide a multi-scale analysis, it is natural to require as a minimal condition that these spaces satisfy the nesting property that $W^K_\lambda \hookrightarrow W^K_\mu$ if $\mu\leq \lambda$. 

As an example, one may consider kernels of the form
\begin{equation}
\label{eq:explicit.K}
K((\lambda, x), (\mu, y)) = K_{\mathit{scale}}(\lambda, \mu) K_{\mathit{space}}(h(\lambda, x), h(\mu, y))
\end{equation}
for some function $h: [s_1, s_2] \times \mathbb R^d \to \mathbb R^d$, where $K_{\mathit{scale}}$ and $K_{\mathit{space}}$ are respectively kernels on $[s_1, s_2]$ and $\mathbb R^d$. Using the fact that the entry-wise product of two positive semi-definite matrices is 
positive semi-definite, one easily shows that this provides a positive kernel. To ensure positive-definiteness, we need to require that $x \mapsto h(\lambda, x)$ is one-to-one for all $\lambda$. Making this assumption, let $V$ be an RKHS with kernel $K_{\mathit{space}}$ and let $V_\lambda$ be the subspace of $V$ generated by the mappings $K_{\mathit{space}}(\cdot, h(\lambda, y))a$, for $a, y\in \mathbb R^d$. Then, one can see that the mapping $\mathcal J_\lambda: v \mapsto v\circ h(\lambda, \cdot)$ is, up to a multiplicative factor, an isometry between $V_\lambda$ and $\mathbb W^K_\lambda$. Indeed, $\mathcal J_\lambda K_{\mathit{space}}(\cdot, h(\lambda, y))a = K_{\mathit{space}}(h(\lambda, \cdot), h(\lambda, y))a$ with 
\[
\langle K_{\mathit{space}}(\cdot, h(\lambda, y))a, K_{\mathit{space}}(\cdot, h(\lambda, y'))a' \rangle_{V_\lambda} = a^T K_{\mathit{space}}(h(\lambda, y), h(\lambda, y')) a'
\]
while
\begin{multline*}
\langle K_{\mathit{space}}(h(\lambda, \cdot), h(\lambda, y)a), K_{\mathit{space}}(h(\lambda, \cdot), h(\lambda, y')a')\rangle_{{\mathbb W}_\lambda^K} \\
= K_{\mathit{scale}}(\lambda, \lambda)^{-1} a^T K_{\mathit{space}}(h(\lambda, y), h(\lambda, y')) a'.
\end{multline*}
Given that $K_{\mathit{space}}(\cdot, h(\lambda, y))a$ (resp. $K_{\mathit{space}}(h(\lambda, \cdot), h(\lambda, y))a$), for $a,y\in \mathbb  R^d$ generate $V_\lambda$ (resp. $\mathbb W_\lambda^K$) and that both families are related by $\mathcal J_\lambda$, we obtain the fact that the latter is an isometry, up to the factor $K_{\mathit{scale}}(\lambda, \lambda)$.

If $h(\lambda, \cdot)$ is onto (in addition to being one-to-one), then obviously $V_\lambda = V$ and all spaces $\mathbb W_\lambda^K$ are isometric, which obviously implies the nesting property. This is the case, for example, if $h(\lambda, x) = x$, which give in this case a product kernel
\[
K((\lambda, x), (\mu, y)) = K_{\mathit{scale}}(\lambda, \mu) K_{\mathit{space}}(x,y)
\]
with the quite  uninteresting property that all $\mathbb W_\lambda^K$ are identical. This is also the case for $h(\lambda, x) = x/\lambda$, in which case the isometry is just rescaling.

\begin{remark}
It is important to point out that, the construction discussed in this section is not reverse of that introduced in \cref{sec:scale.space}. If one takes 
$K = K_{\mathbb W}$ (which is discussed in \cref{sec:kernels}), where $\mathbb W$ is defined in  \cref{sec:scale.space}, then the spaces $W_\lambda$ and  $W^K_\lambda$ both correspond to fixing the scale coordinate, but their norms (or reproducing kernels) differ. 

Another important remark is that kernels in the form of equation \cref{eq:explicit.K} do not satisfy, in general, the translation invariance property discussed in \cref{th:bochner.plus}.

\end{remark}

\section{Experimental results}
\label{sec:experiments}

We focus, in our illustration, on two-dimensional point set matching examples, in which landmark correspondences are extrapolated to diffeomorphic transformation of the whole space. We provide synthetic examples rather than real-data applications of diffeomorphic mapping, because they are easier to analyze and interpret, and provide sufficient insight to assess the performance of the new approach. Arguably, some of the examples provided below are most challenging than those encountered in typical applications. Of specific interest when analyzing our results, is how registration, based on one or two point sets, is interpolated across scales, and how transformation residuals from one scale to another can be visualized.


In all our experiments, we set $s_1 = 0.1$ and $s_2 = 2$ and let $K_\lambda$ be Gaussian kernel, piecewise constant on intervals $[r_n, r_{n+1}]$ with $r_n = n/10$ for $n=1,\dots, 20$. We will use  the finest and coarsest scales ($\lambda=0.1$ and $\lambda=2$) as base scales (see \cref{sec:base.scales}). Denoting, for short, $\psi^v_\lambda(x) = \phi^v(1, \lambda, x)$,  this yields the objective function 
\begin{multline*}
F(v) = \frac{1}{2}\int_0^1\norm{v(t,\cdot, \cdot)}_{\mathbb W}^2dt 
+ w\left(\sum_{j=1}^{N_1}|\psi^v_{r_1}(x_{1,j})-T_{1, j}|^2+\sum_{j=1}^{N_2}|\psi^v_{r_{20}}(x_{20,j})-T_{20, j}|^2\right),
\end{multline*}
where $w\in \mathbb R$ and $T_{i, j}$ denotes the target of $j$-th landmark $x_{i, j}$ for scale $r_i$ and $\varphi^v(1, \lambda, \cdot)$ denotes the diffeomorphism at time $t=1$ and scale $\lambda$ associated to vector field $v$. In our implementation, the integration over time is realized by numerical integration using a Euler scheme,  over 20 time steps, and we took $w=1$. We will also use the notation, for $k = 1, \ldots, 20$, $\rho_{r_k}^v = \psi^v_{r_k} \circ \psi^v_{r_{k-1}}$ with the convention $\psi^v_{r_{k-1}} = \mathrm{id}$ to represent the residual transformations from one scale to another.

\begin{example}
We consider a schematic human, represented by a head, two arms and the body. We change the positions of both arms from raising horizontally to raising above, and change the shape of the head from a circle to an ellipse, as illustrated in  \cref{fig: human template_target}. The same landmark matching is enforced at both $r_1$ and $r_{20}$ (i.e., $x_{1,j} = x_{20,j}$ and $T_{1,j} = T_{20,j}$).

\begin{figure}
\centering
\begin{subfigure}{.45\textwidth}
\centering
    \includegraphics[width=.25\textwidth]{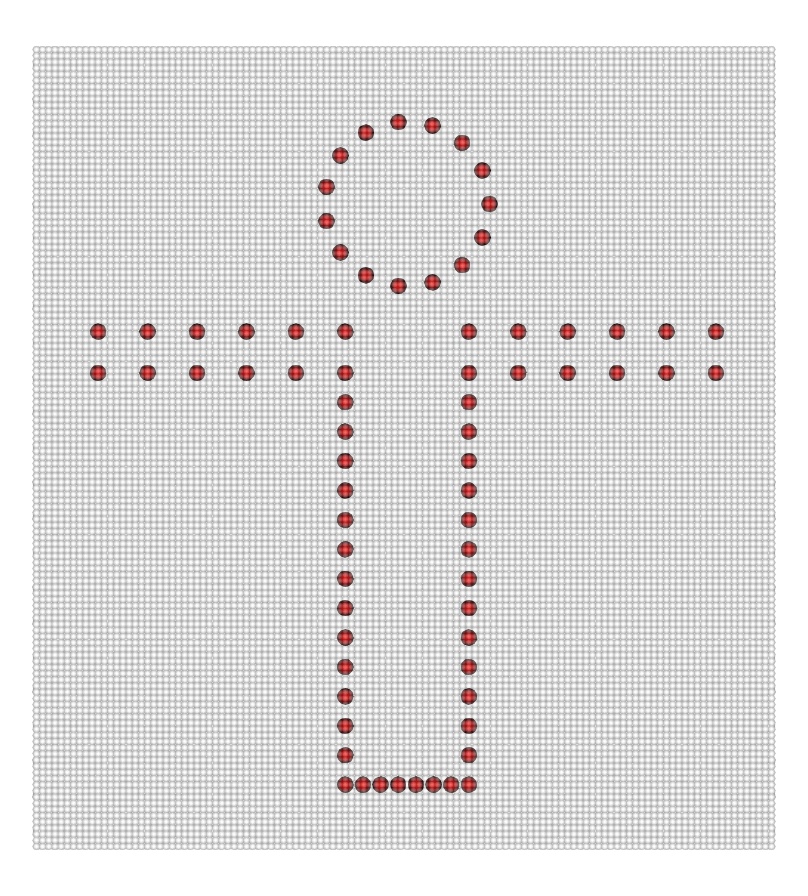}
\subcaption{template}
\end{subfigure}
\begin{subfigure}{.45\textwidth}
\centering
    \includegraphics[width=.25\textwidth]{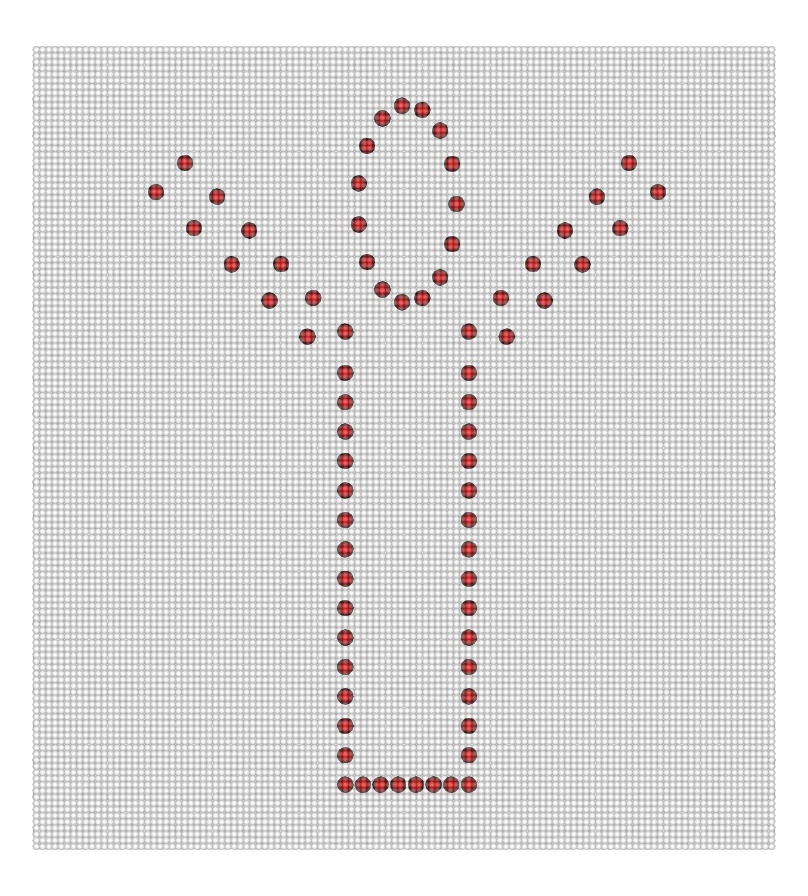}
\subcaption{target}
\end{subfigure}
\caption{Schematic human: template and target.}
\label{fig: human template_target}
\end{figure}

\Cref{fig: human ms results} shows the deformation $\psi^v_{r_k}$ at different scales. One can see that the deformed templates at $r_1$ and $r_{20}$ are very close to the target. The blue color stands for a shrinkage and the red color implies the presence of an expansion at a point. For a more intuitive view, \cref{fig: human ms residual} (first row) shows the determinant of the Jacobian of $\psi^v_{r_k}$. We will simply write ``Jacobian'' for the determinant of Jacobian $\det(d \psi^v_{r_k})$, and ``log Jacobian'' for $\log \det(d\psi^v_{r_k})$ in the description of figures. 

\begin{figure}
\centering
\begin{subfigure}{.16\linewidth}
\centering
\includegraphics[width=.7\linewidth]{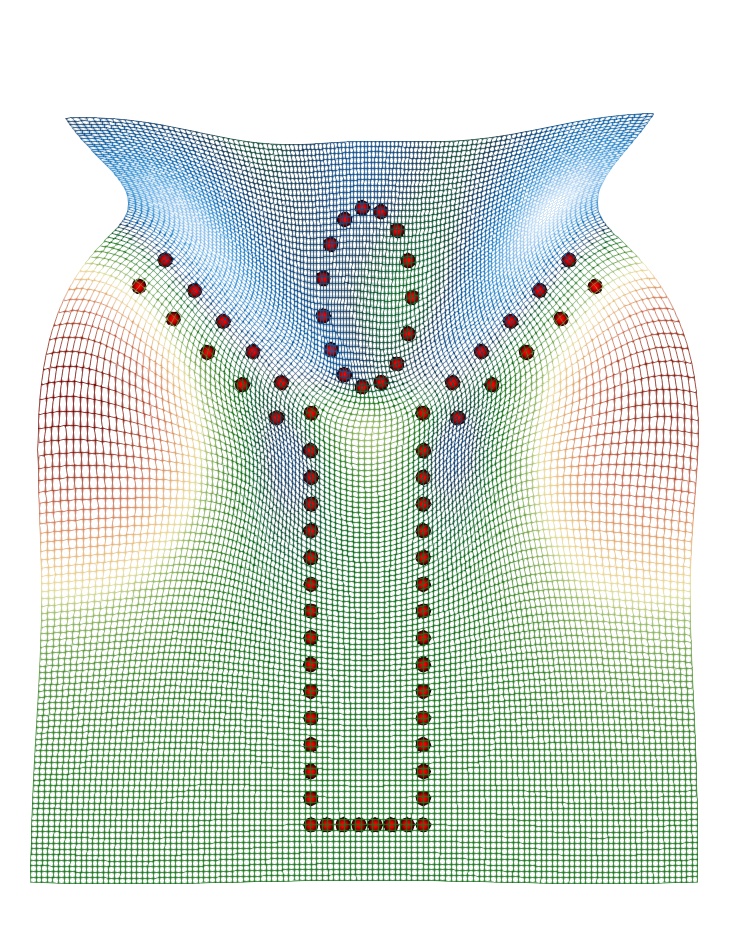}
\subcaption{$r_{1}$}
\end{subfigure}
\begin{subfigure}{.16\linewidth}
\centering
\includegraphics[width=.7\linewidth]{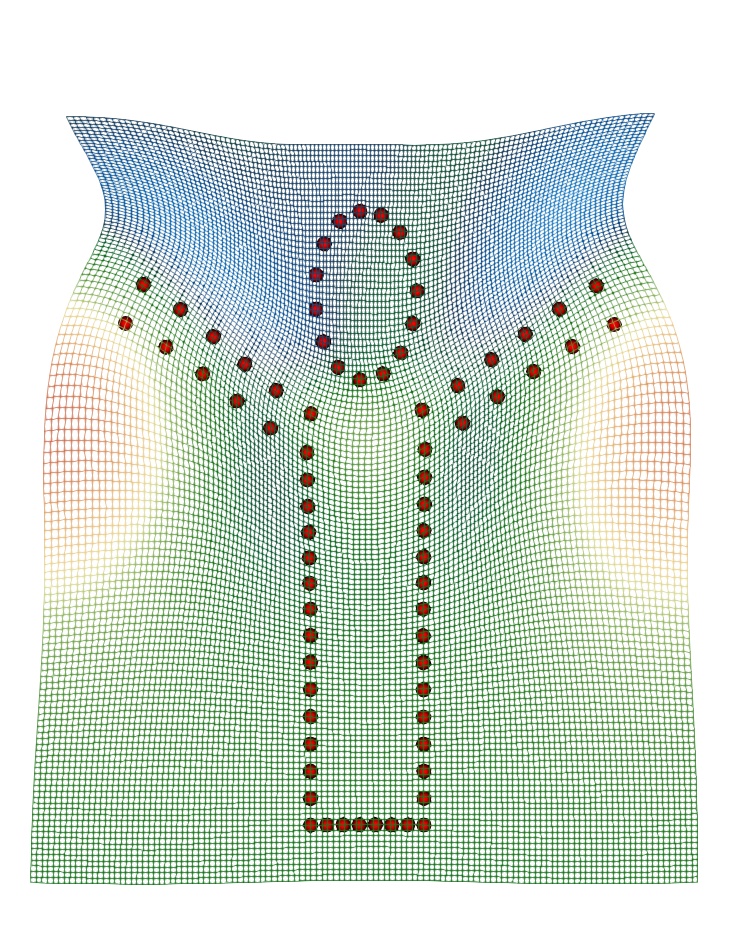}
\subcaption{$r_{5}$}

\end{subfigure}
\begin{subfigure}{.16\linewidth}
\centering
\includegraphics[width=.7\linewidth]{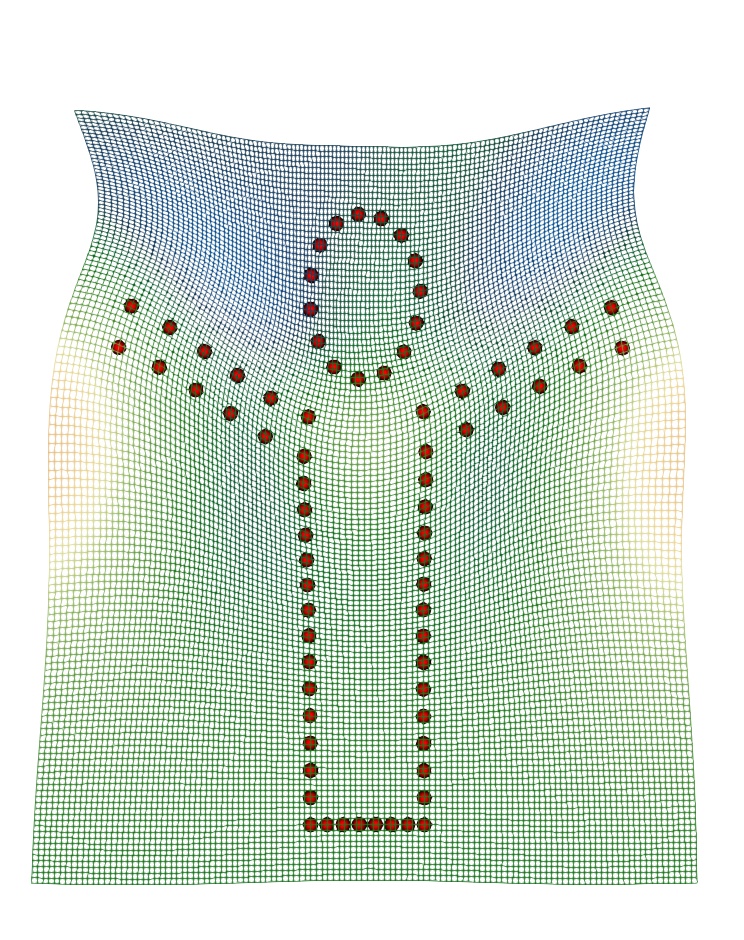}
\subcaption{$r_{10}$}

\end{subfigure}
\begin{subfigure}{.16\linewidth}
\centering
\includegraphics[width=.7\linewidth]{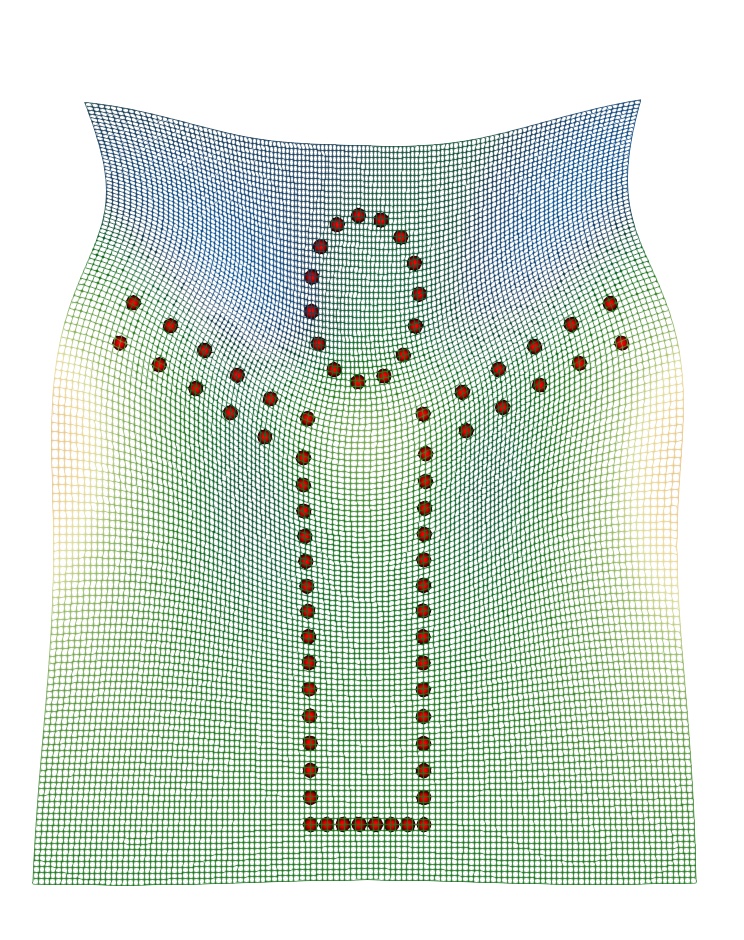}
\subcaption{$r_{13}$}
\end{subfigure}
\begin{subfigure}{.16\linewidth}
\centering
\includegraphics[width=.7\linewidth]{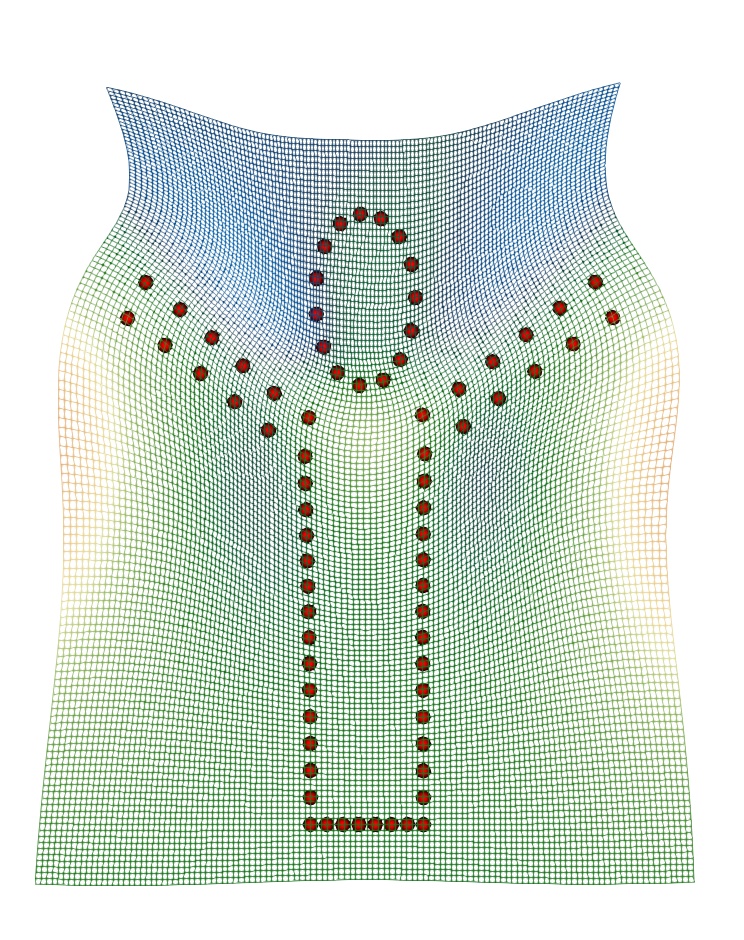}
\subcaption{$r_{17}$}
\end{subfigure}
\begin{subfigure}{.16\linewidth}
\centering
\includegraphics[width=.7\linewidth]{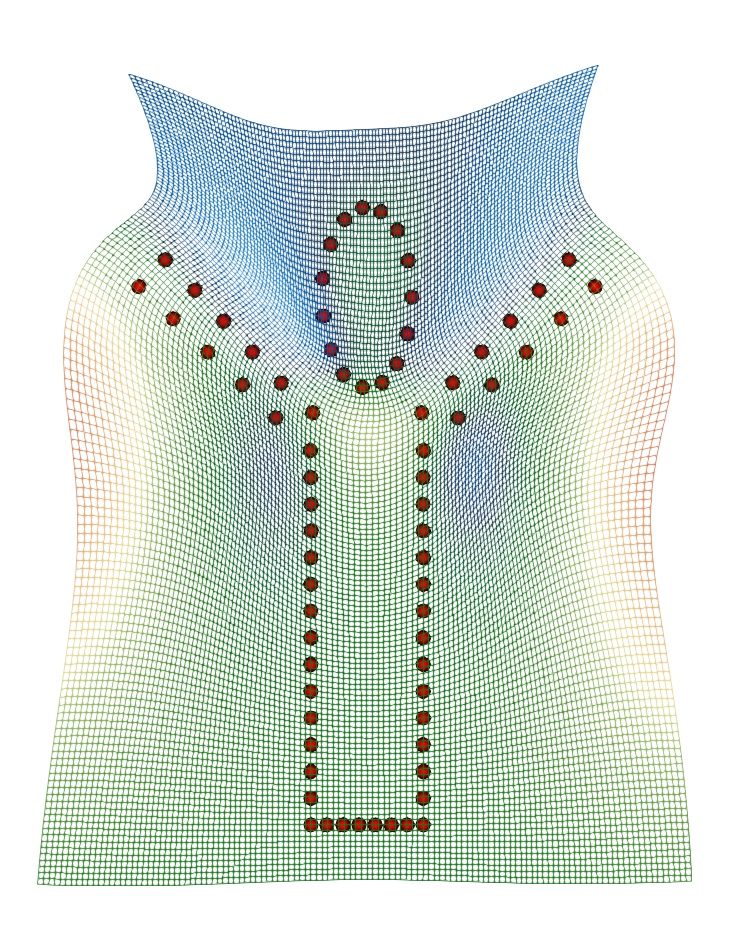}
\subcaption{$r_{20}$}
\end{subfigure}
\caption{Schematic human: deformation at different scales. Base scales: $r_1$, $r_{20}$.}
\label{fig: human ms results}
\end{figure}

\begin{figure}
\centering
\begin{subfigure}{.16\linewidth}
\centering
\includegraphics[width=.8\linewidth]{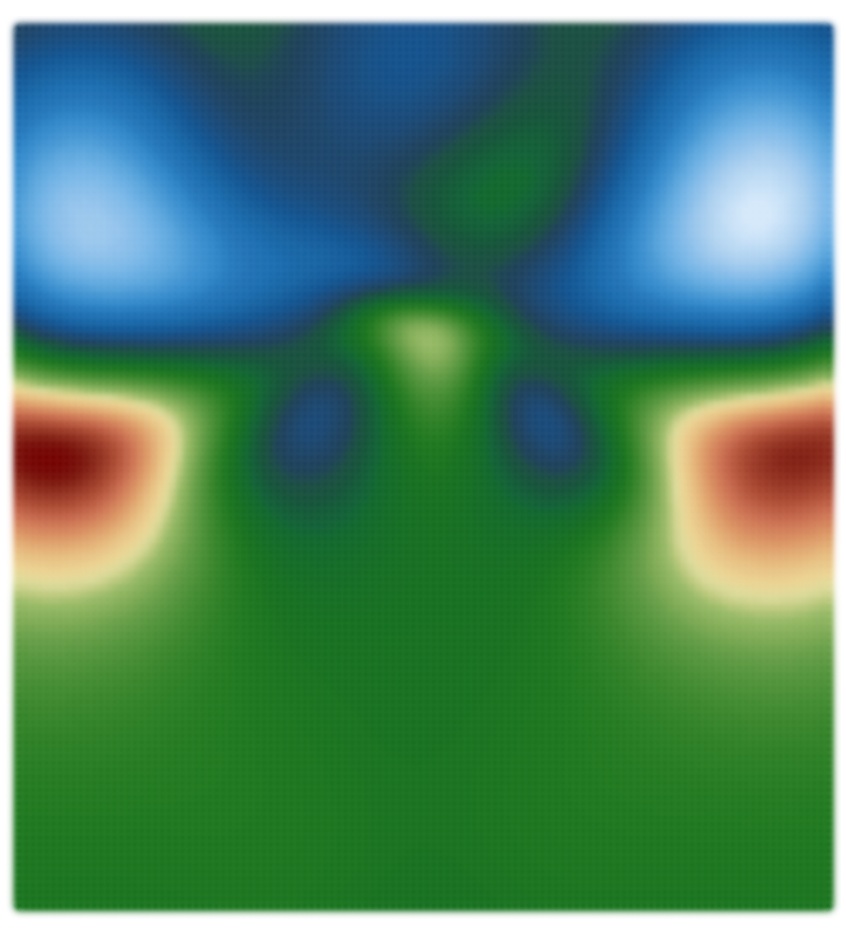}
\subcaption{$r_1$}
\end{subfigure}
\begin{subfigure}{.16\linewidth}
\centering
\includegraphics[width=.8\linewidth]{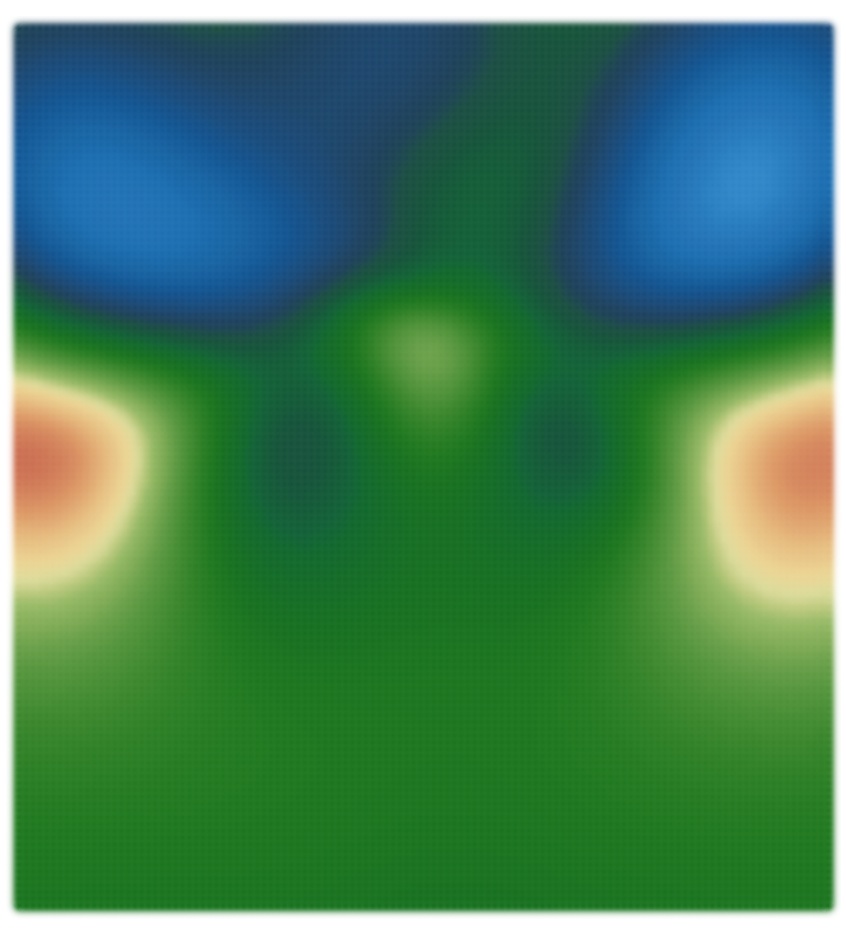}
\subcaption{$r_5$}
\end{subfigure}
\begin{subfigure}{.16\linewidth}
\centering
\includegraphics[width=.8\linewidth]{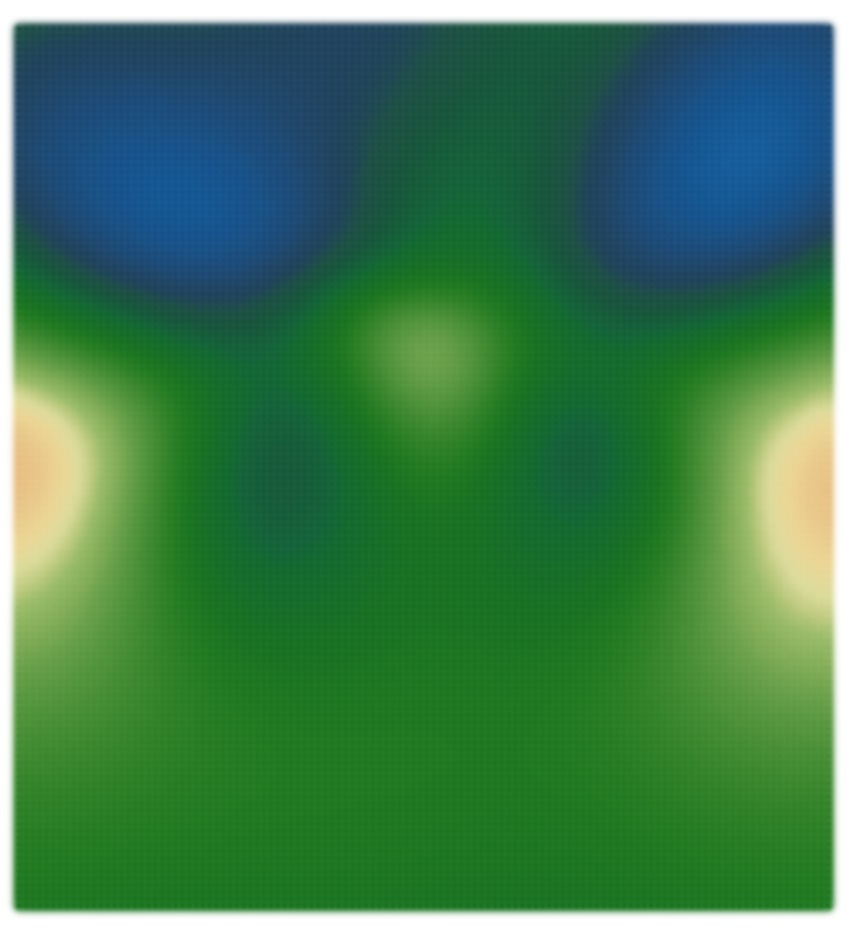}
\subcaption{$r_{10}$}
\end{subfigure}
\begin{subfigure}{.16\linewidth}
\centering
\includegraphics[width=.8\linewidth]{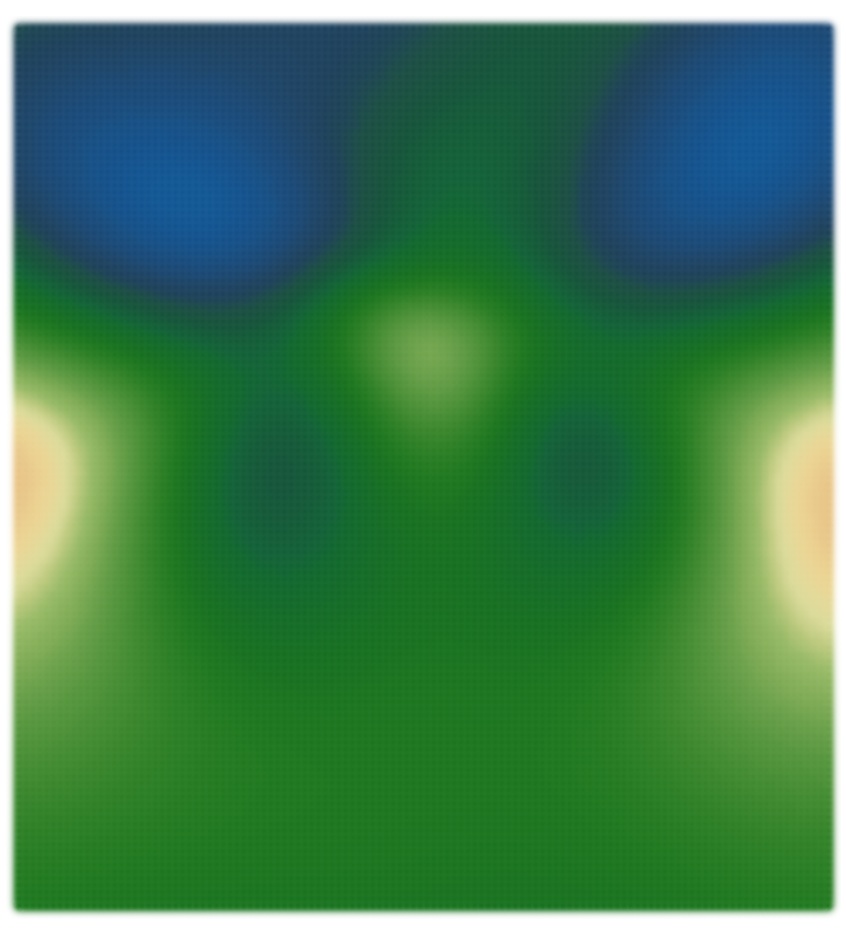}
\subcaption{$r_{13}$}
\end{subfigure}
\begin{subfigure}{.16\linewidth}
\centering
\includegraphics[width=.8\linewidth]{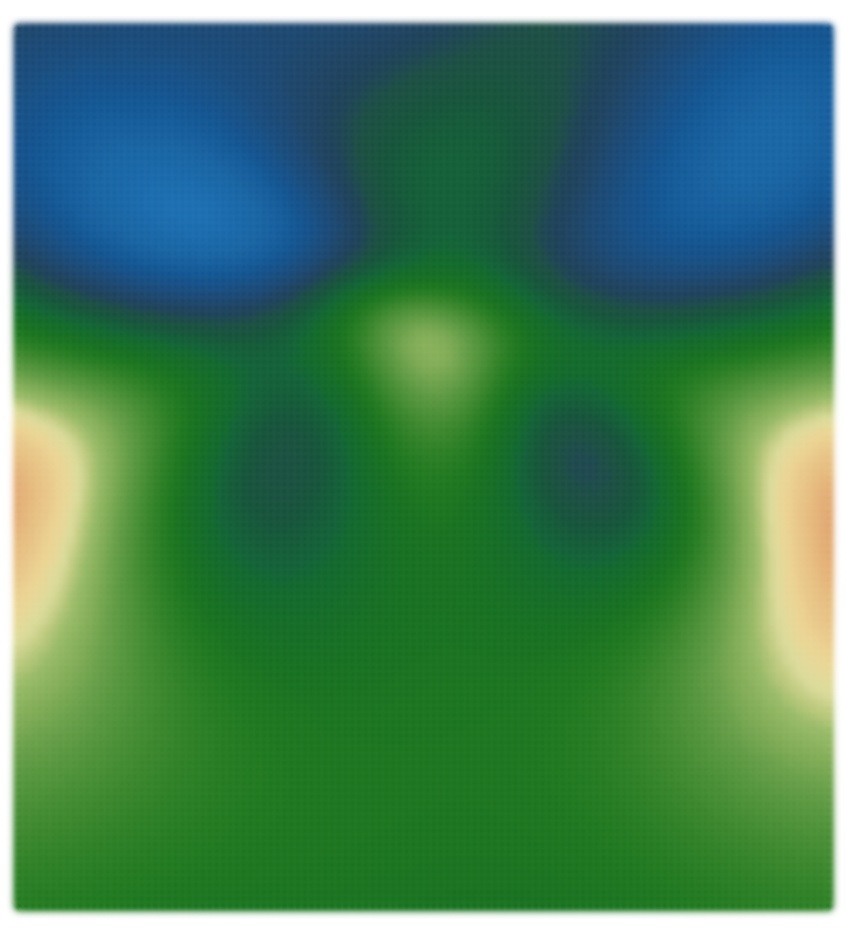}
\subcaption{$r_{17}$}
\end{subfigure}
\begin{subfigure}{.16\linewidth}
\centering
\includegraphics[width=.8\linewidth]{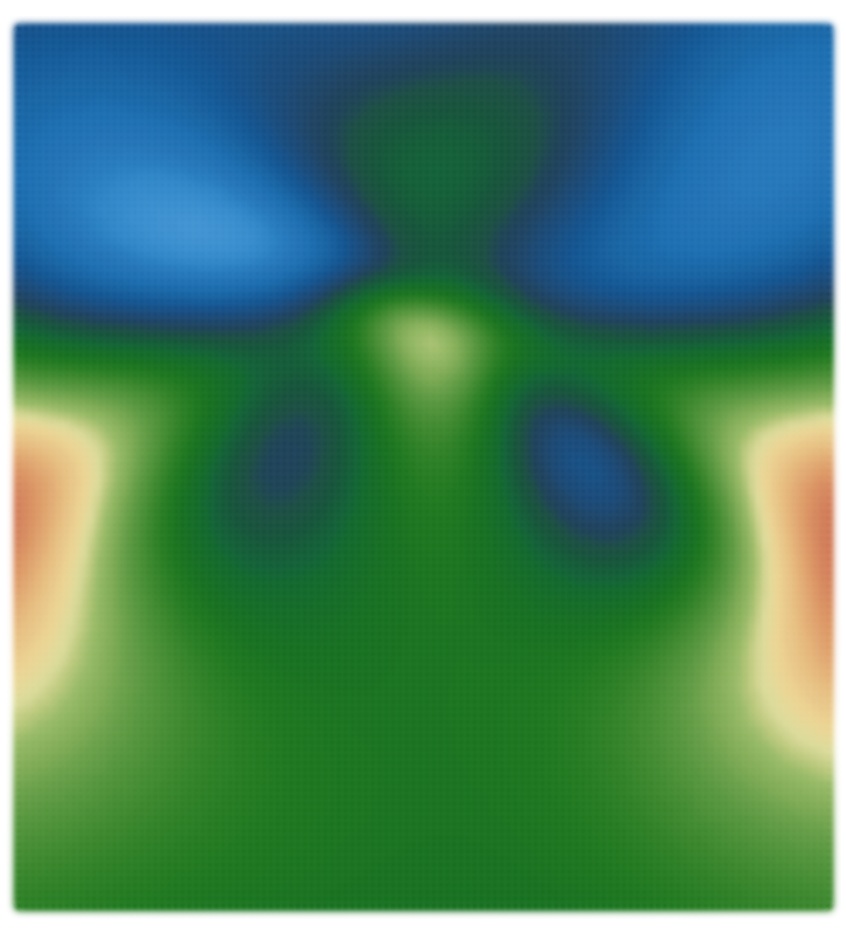}
\subcaption{$r_{20}$}
\end{subfigure}

\setcounter{subfigure}{0}
\begin{subfigure}{.135\textwidth}
\centering
\includegraphics[width=\textwidth]{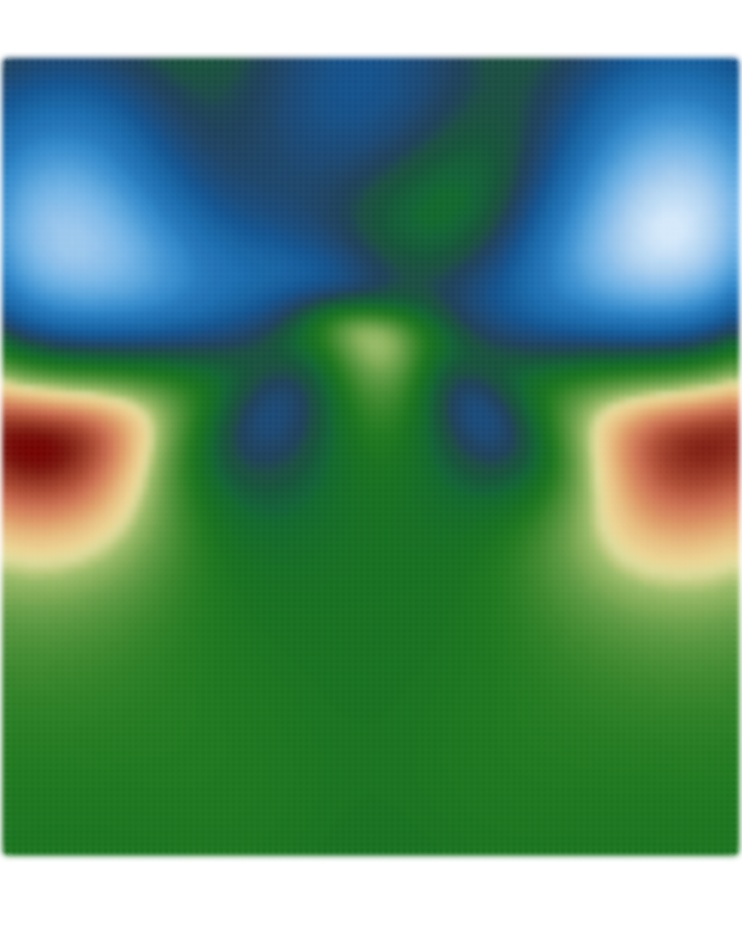}
\subcaption{$\rho^v_{r_1}$}
\end{subfigure}
\begin{subfigure}{.135\textwidth}
\centering
\includegraphics[width=\textwidth]{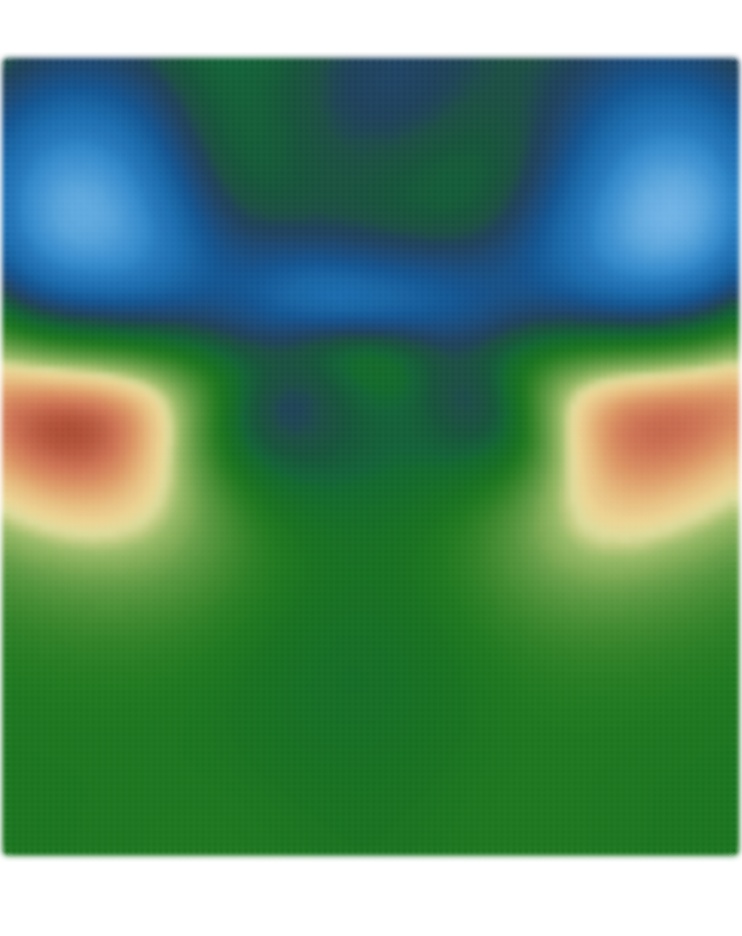}
\subcaption{$\rho^v_{r_2}$}
\end{subfigure}
\begin{subfigure}{.135\textwidth}
\centering
\includegraphics[width=\textwidth]{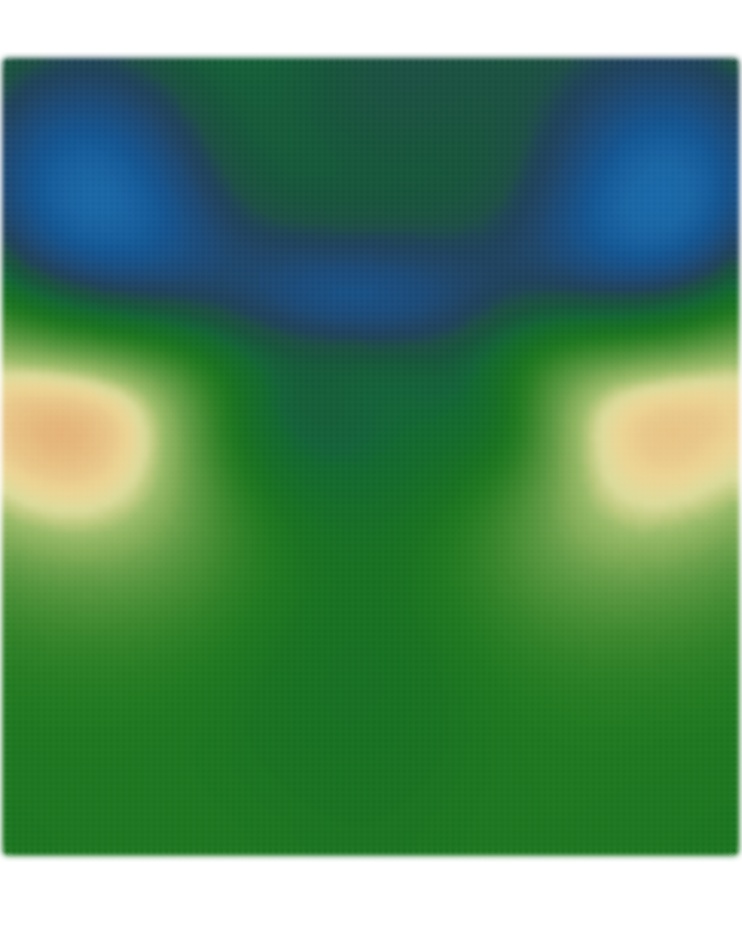}
\subcaption{$\rho^v_{r_{5}}$}
\end{subfigure}
\begin{subfigure}{.135\textwidth}
\centering
\includegraphics[width=\textwidth]{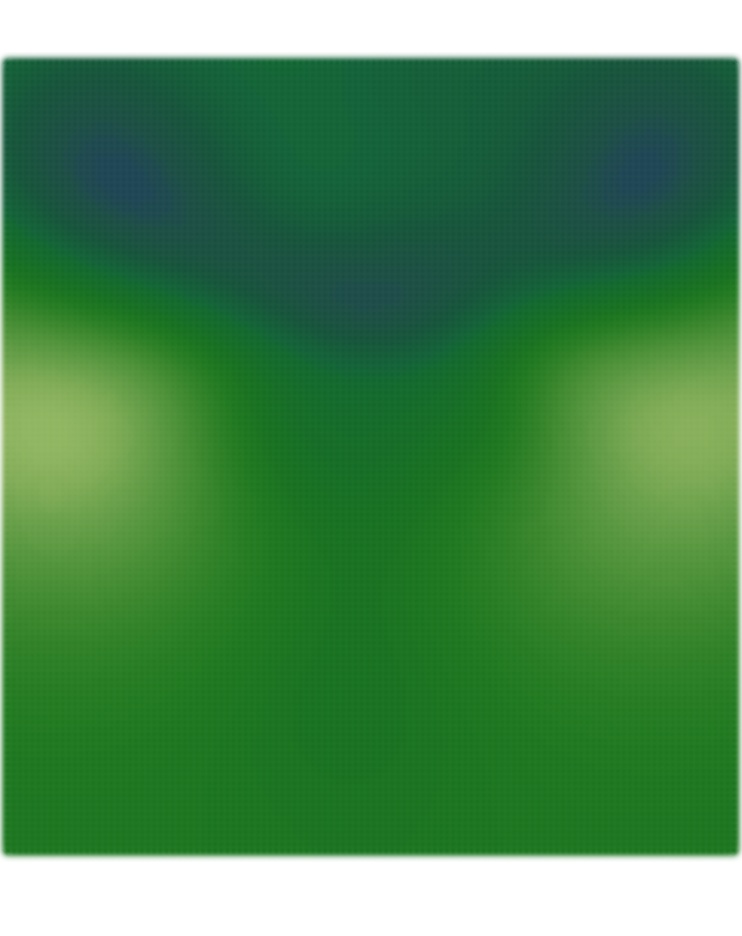}
\subcaption{$\rho^v_{r_{10}}$}
\end{subfigure}
\begin{subfigure}{.135\textwidth}
\centering
\includegraphics[width=\textwidth]{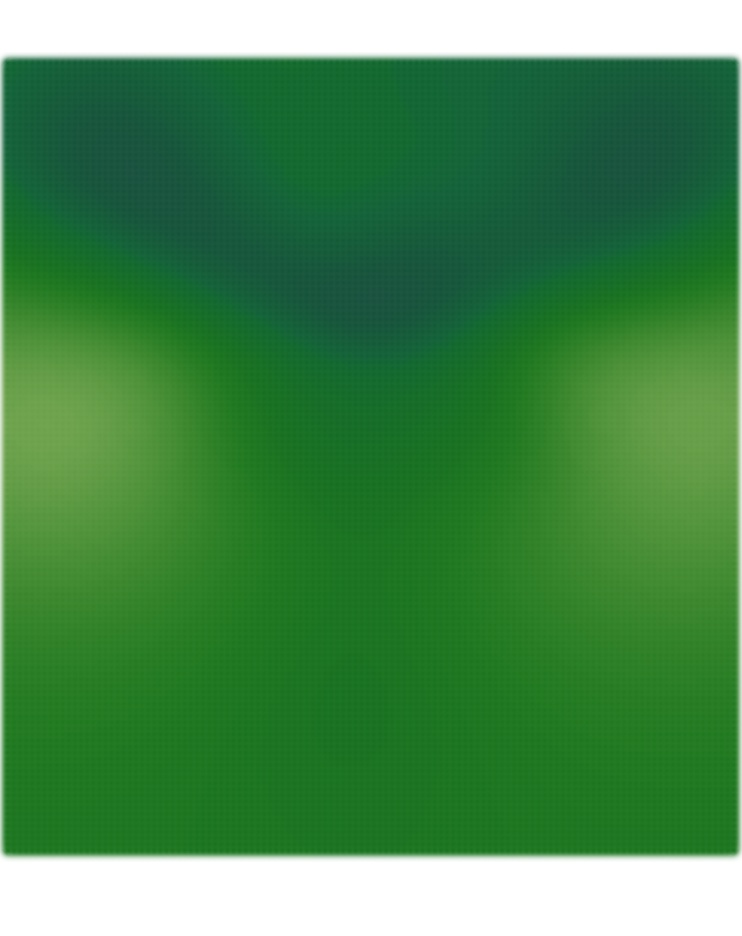}
\subcaption{$\rho^v_{r_{13}}$}
\end{subfigure}
\begin{subfigure}{.135\textwidth}
\centering
\includegraphics[width=\textwidth]{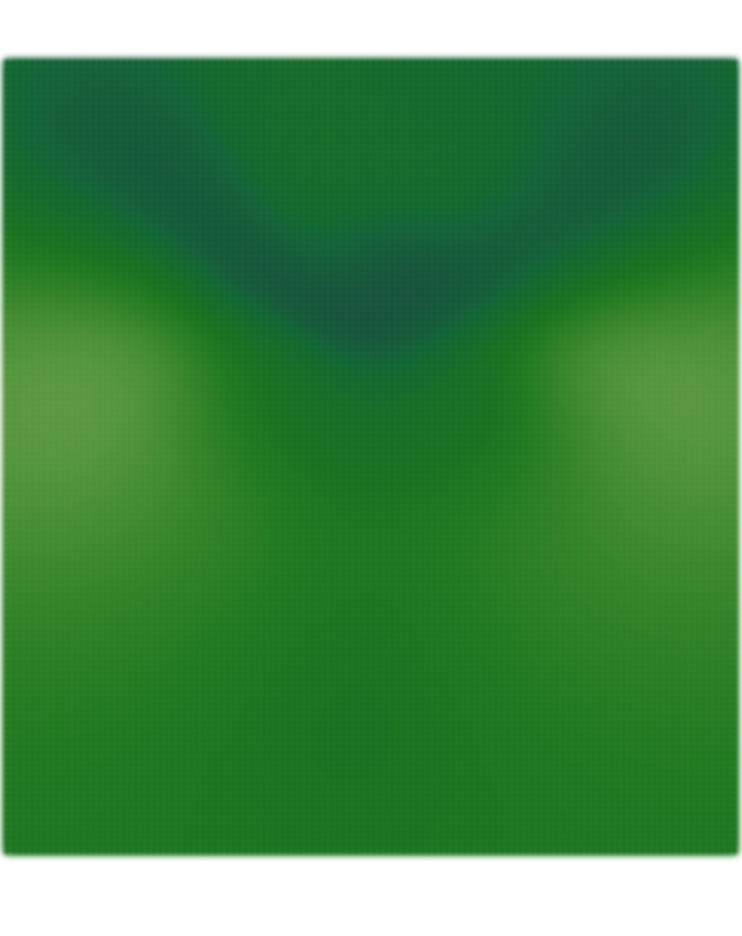}
\subcaption{$\rho^v_{r_{17}}$}
\end{subfigure}
\begin{subfigure}{.135\textwidth}
\centering
\includegraphics[width=\textwidth]{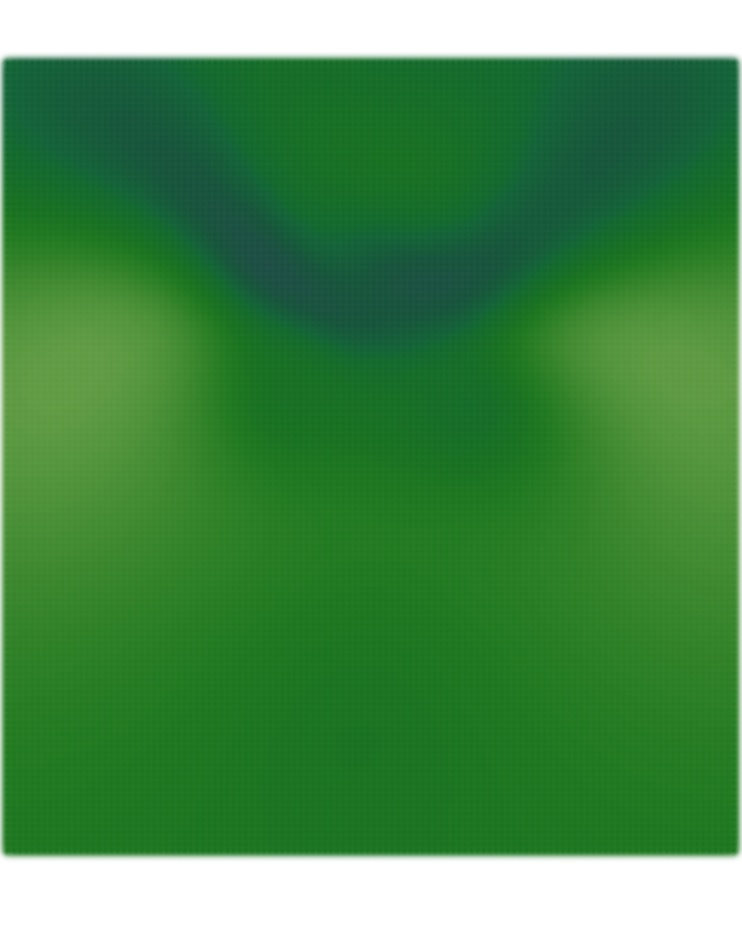}
\subcaption{$\rho^v_{r_{20}}$}
\end{subfigure}
\caption{Schematic human: First row: log Jacobian; Second row: log Jacobian of residuals $\rho^v_{r_i} = \psi^v_{r_{i}}\circ (\psi^v_{r_{i-1}})^{-1}$. Base scales: $r_1, r_{20}$.}
\label{fig: human ms residual}
\end{figure}

We plot the residuals between consecutive discretized scales $\psi^v_{r_{k+1}}\circ (\psi^v_{r_i})^{-1}$ for different $i$ on the original grid. As shown in \cref{fig: human ms residual} (second row), we can observe the colors are deeper as we are approaching the base scales. But the magnitude is clearly larger for the scales closer to the finer base scale, which indicates a sharper deformation as the scale varies from the finer base scale than the coarser one. As 
\[
\psi^v_{r_{k}}=\rho^v_{r_{k}}\circ\dots \circ \rho^v_{r_{1}},
\]
the residuals provide decompositions of the deformation at  coarser scales. 

As a comparison, we also applied the MS-LDDMM model, with matching enforced at a single base scale, $r_1$ and $r_{20}$ respectively. For each base scale, we compute the multiscale deformations as well as the Jacobians, which can be seen from  \cref{fig: human fine scale,fig: human coarse scale logJacobian} for the case of $r_1$, and  \cref{fig: human coarse scale,fig: human coarse scale logJacobian} for the case of $r_{20}$. In all example, we observe a dampening of the deformation (which gets closer to the identity) as scales  become further away from the base scales, with a stronger effect with a single base scale than with two.

\begin{figure}
\centering
\begin{subfigure}{.16\linewidth}
\centering
\includegraphics[width=.7\textwidth]{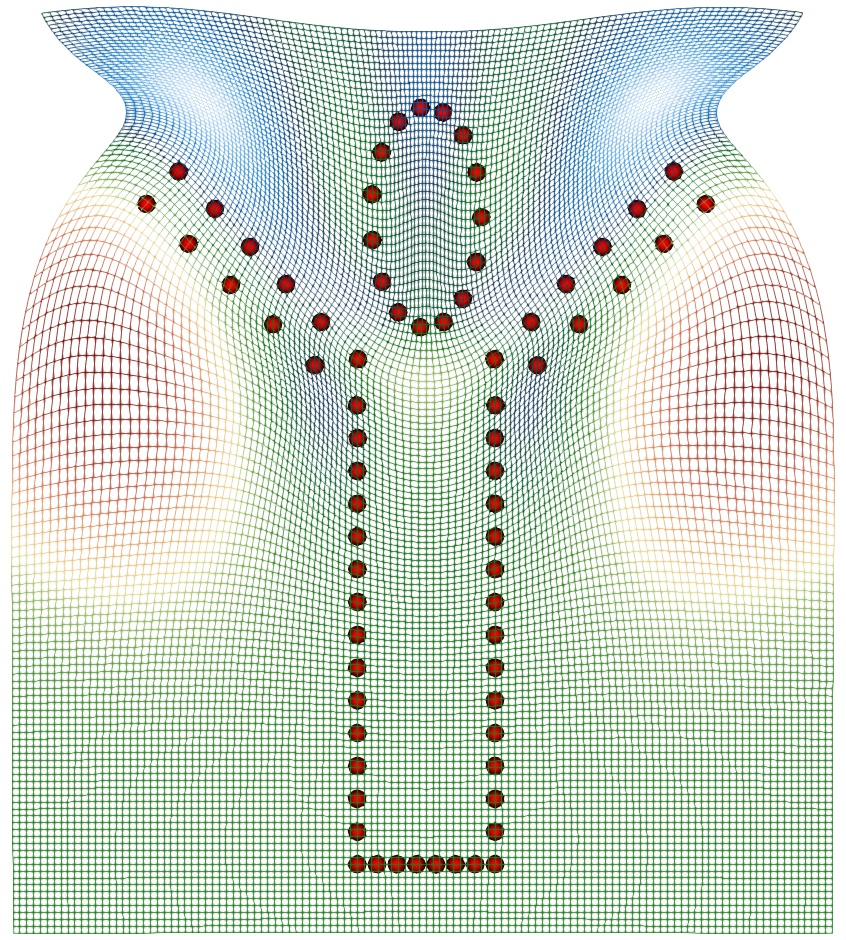}
\subcaption{$r_{1}$}
\end{subfigure}
\begin{subfigure}{.16\linewidth}
\centering
\includegraphics[width=.7\textwidth]{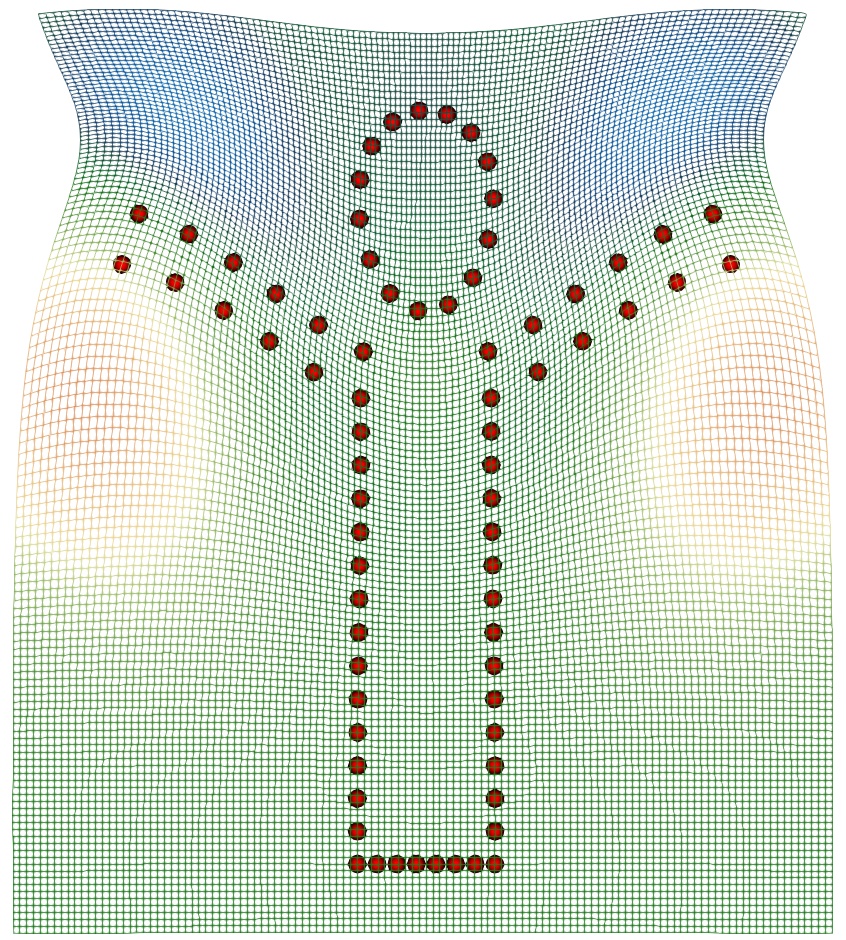}
\subcaption{$r_{5}$}
\end{subfigure}
\begin{subfigure}{.16\linewidth}
\centering
\includegraphics[width=.7\textwidth]{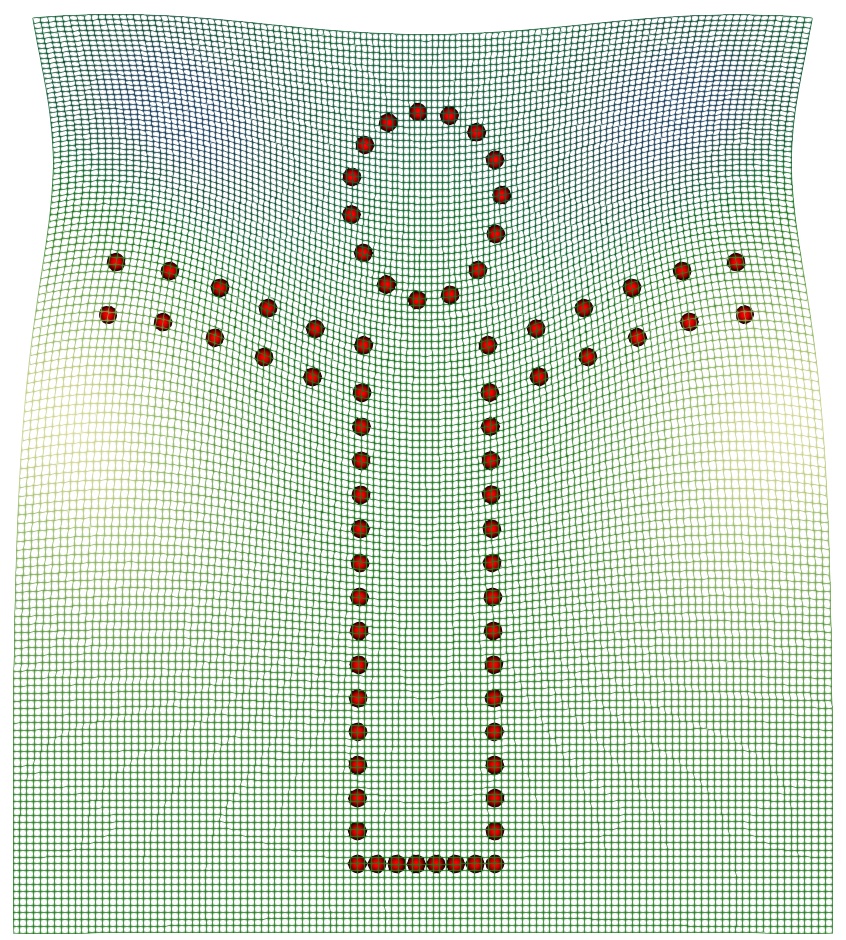}
\subcaption{$r_{9}$}
\end{subfigure}
\begin{subfigure}{.16\linewidth}
\centering
\includegraphics[width=.7\textwidth]{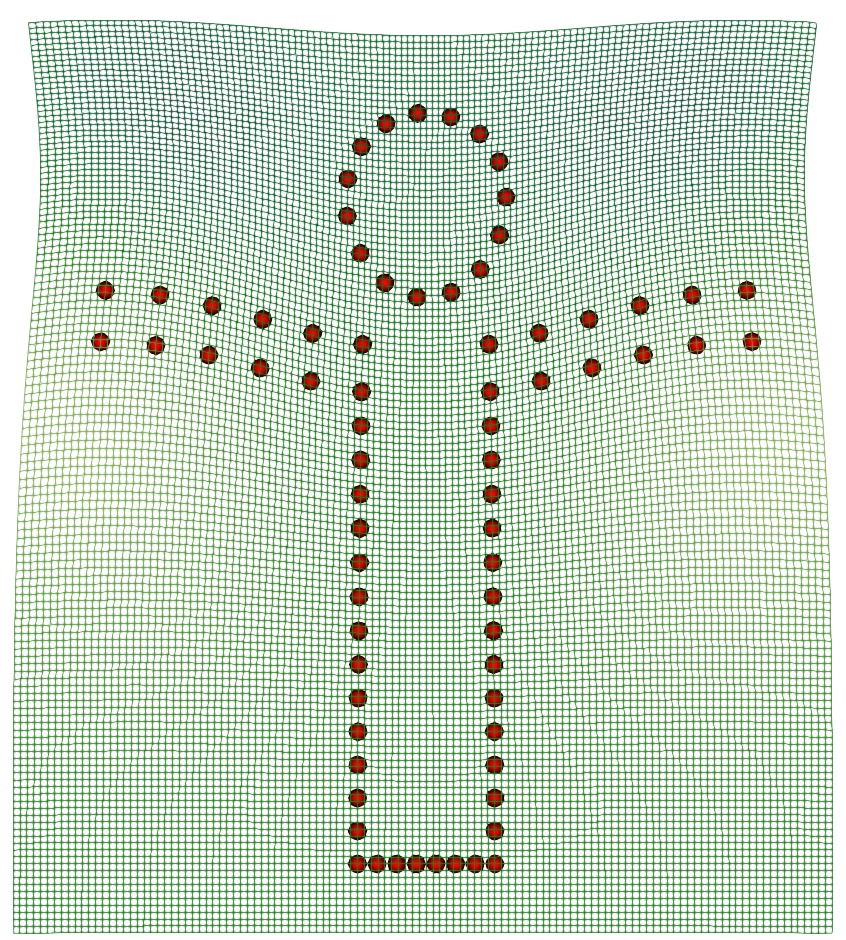}
\subcaption{$r_{13}$}
\end{subfigure}
\begin{subfigure}{.16\linewidth}
\centering
\includegraphics[width=.7\textwidth]{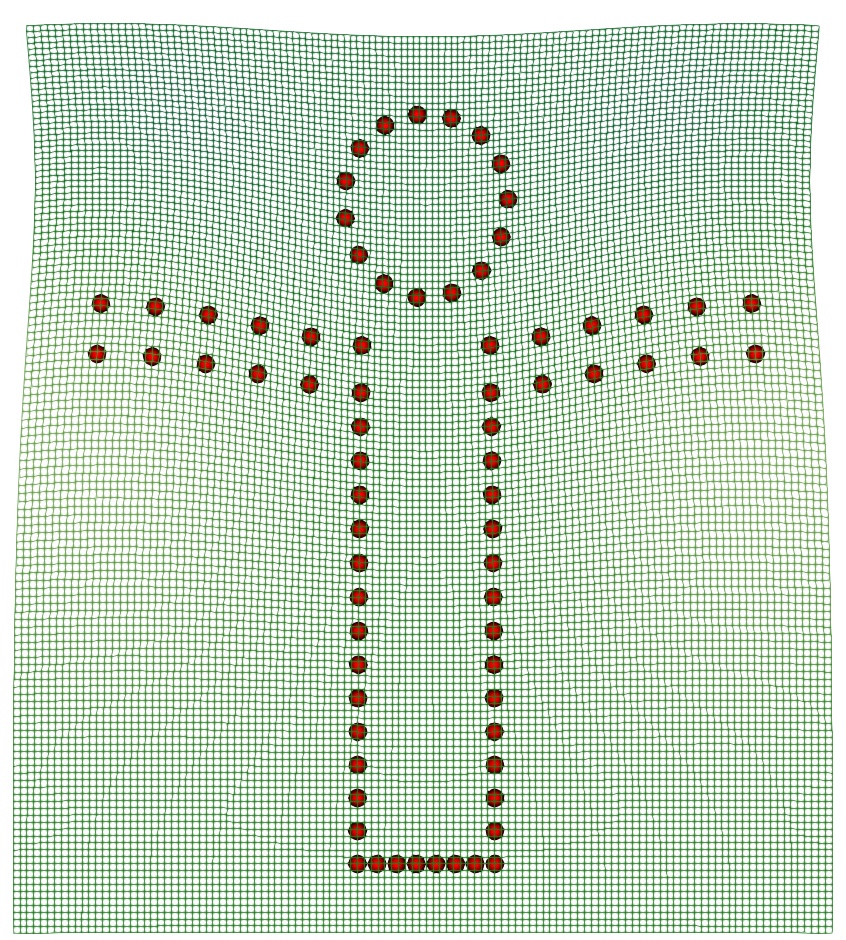}
\subcaption{$r_{17}$}
\end{subfigure}
\begin{subfigure}{.16\linewidth}
\centering
\includegraphics[width=.7\textwidth]{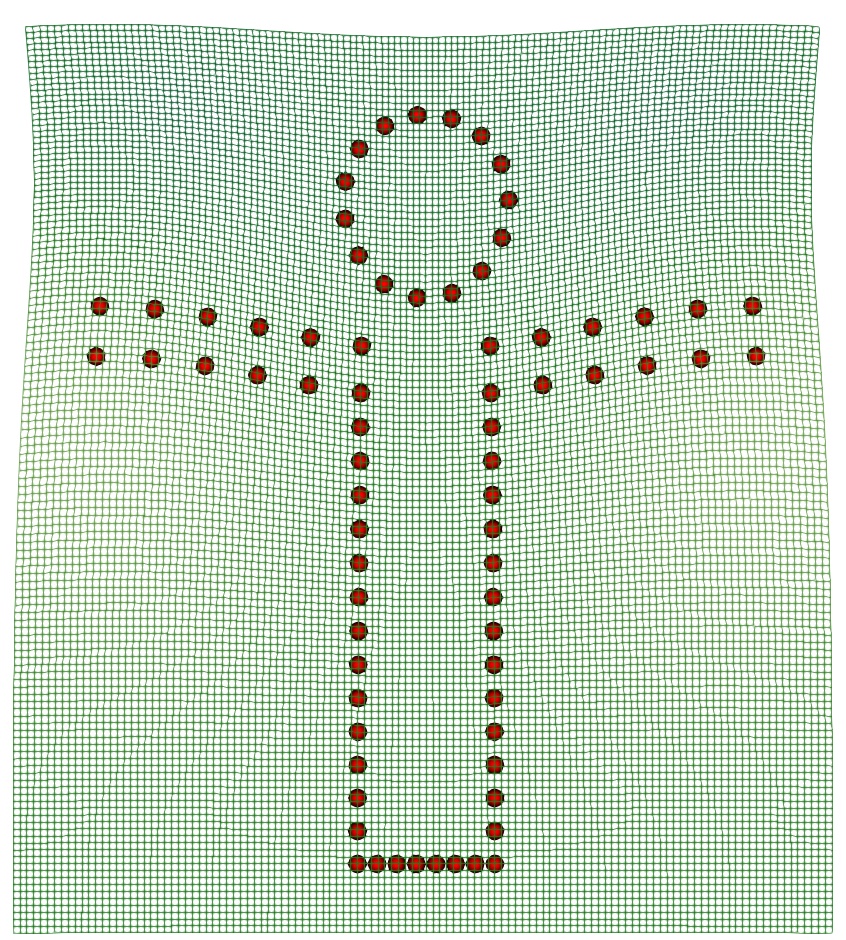}
\subcaption{$r_{20}$}
\end{subfigure}
\caption{Schematic human: deformations. Base scale: $r_1$.}
\label{fig: human fine scale}
\end{figure}

\begin{figure}
\centering
\begin{subfigure}{.16\linewidth}
\centering
\includegraphics[width=.7\textwidth]{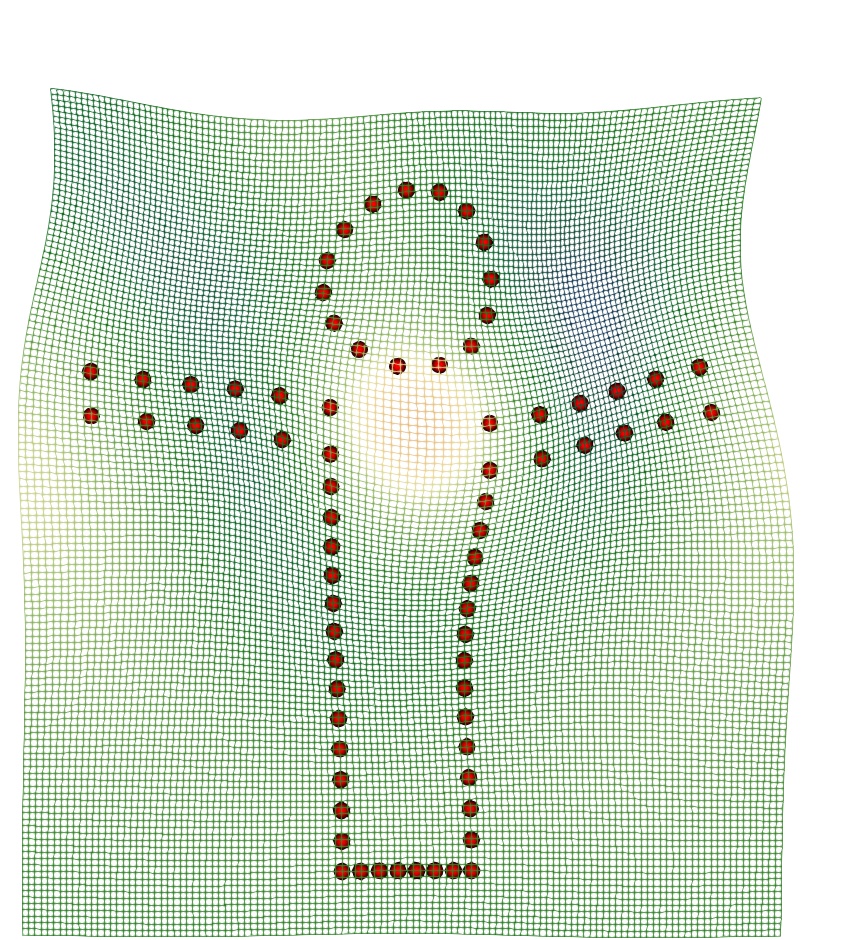}
\subcaption{$r_{1}$}
\end{subfigure}
\begin{subfigure}{.16\linewidth}
\centering
\includegraphics[width=.7\textwidth]{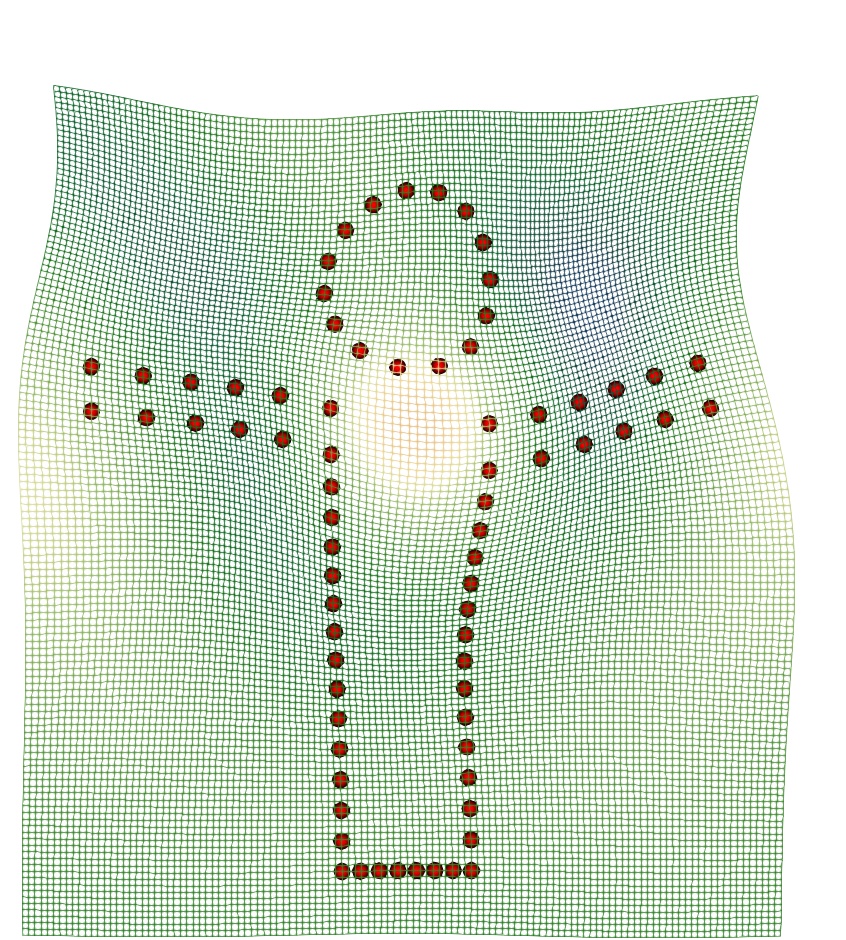}
\subcaption{$r_{5}$}
\end{subfigure}
\begin{subfigure}{.16\linewidth}
\centering
\includegraphics[width=.7\textwidth]{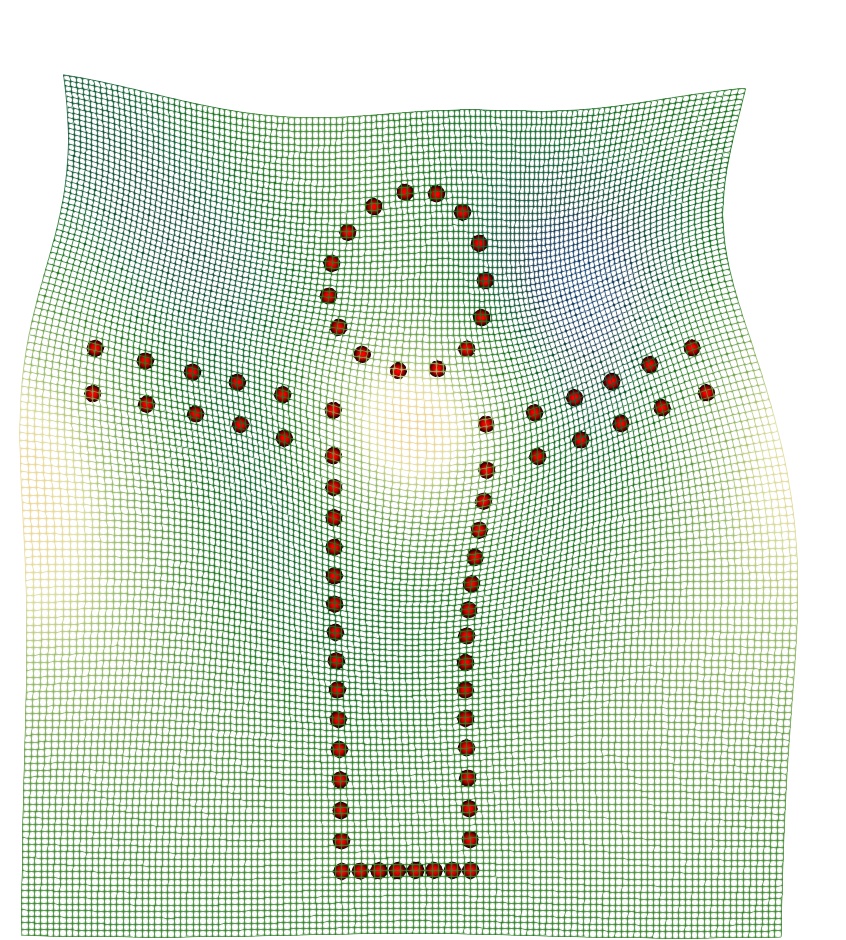}
\subcaption{$r_{9}$}
\end{subfigure}
\begin{subfigure}{.16\linewidth}
\centering
\includegraphics[width=.7\textwidth]{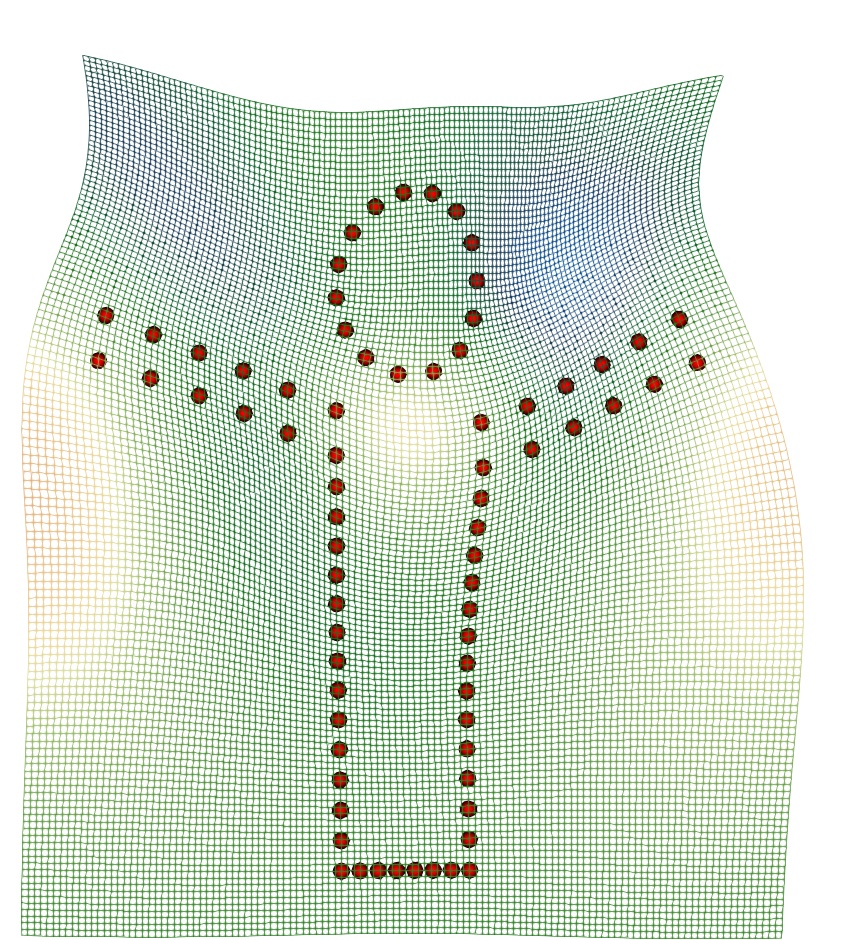}
\subcaption{$r_{13}$}
\end{subfigure}
\begin{subfigure}{.16\linewidth}
\centering
\includegraphics[width=.7\textwidth]{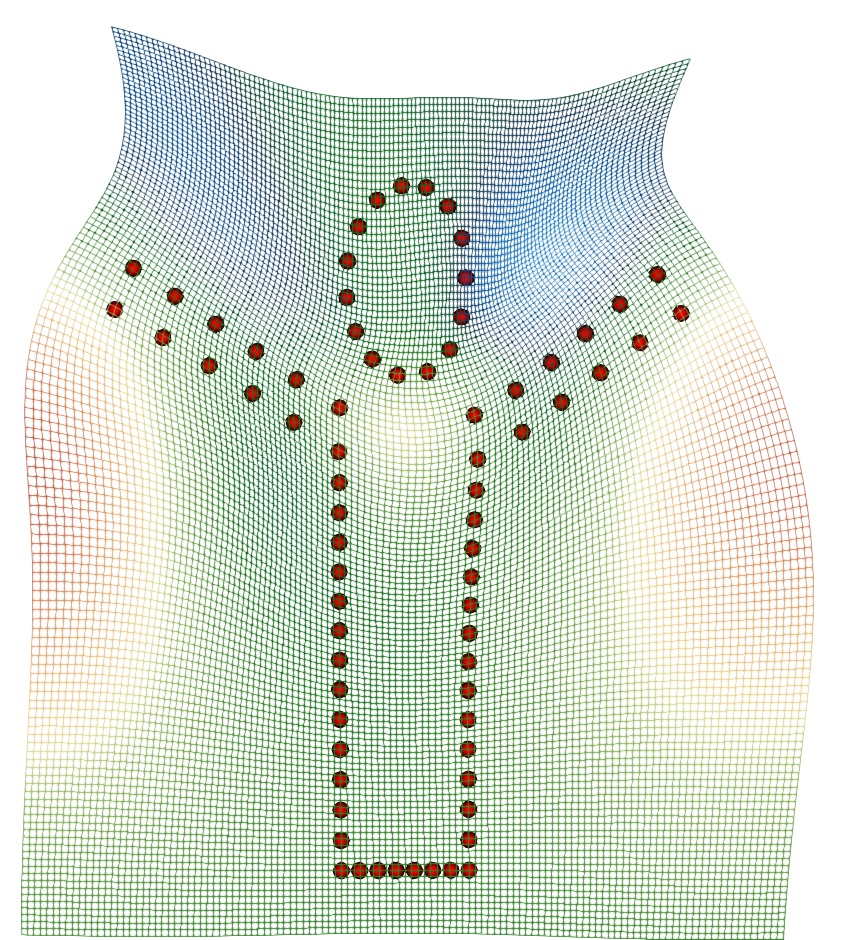}
\subcaption{$r_{17}$}
\end{subfigure}
\begin{subfigure}{.16\linewidth}
\centering
\includegraphics[width=.7\textwidth]{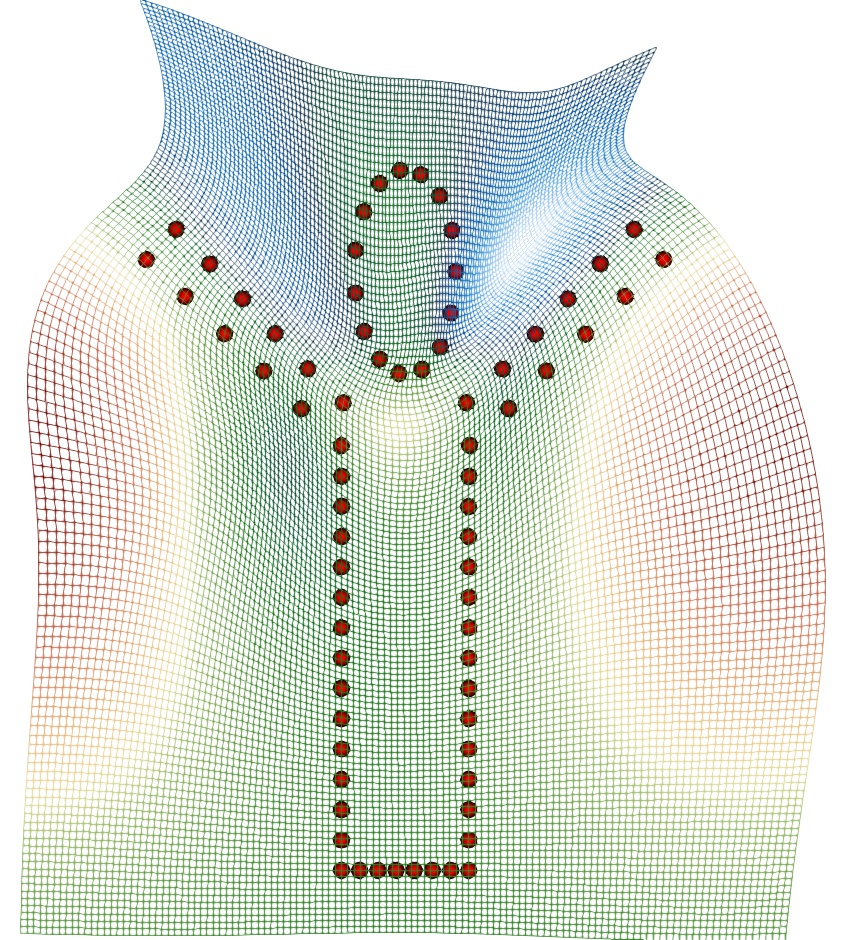}
\subcaption{$r_{20}$}
\end{subfigure}
\caption{Schematic human: deformations. Base scale: $r_{20}$.}
\label{fig: human coarse scale}
\end{figure}

\begin{figure}
\centering
\begin{subfigure}{.16\linewidth}
\centering
\includegraphics[width=.7\textwidth]{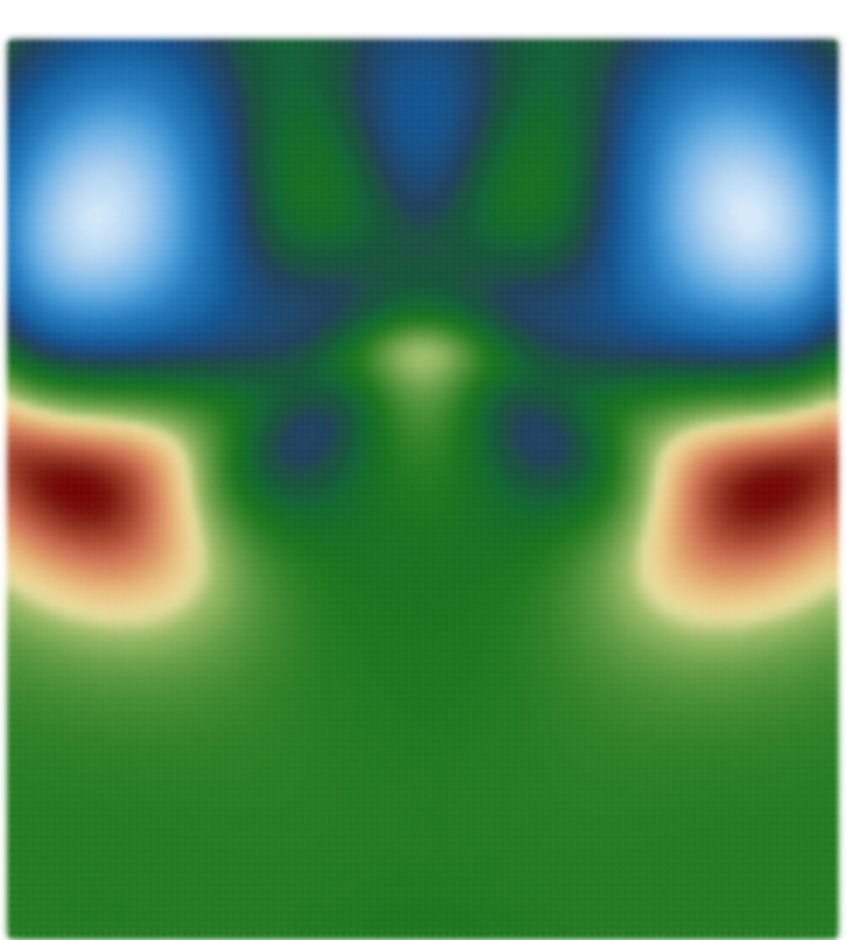}
\subcaption{$r_{1}$}
\end{subfigure}
\begin{subfigure}{.16\linewidth}
\centering
\includegraphics[width=.7\textwidth]{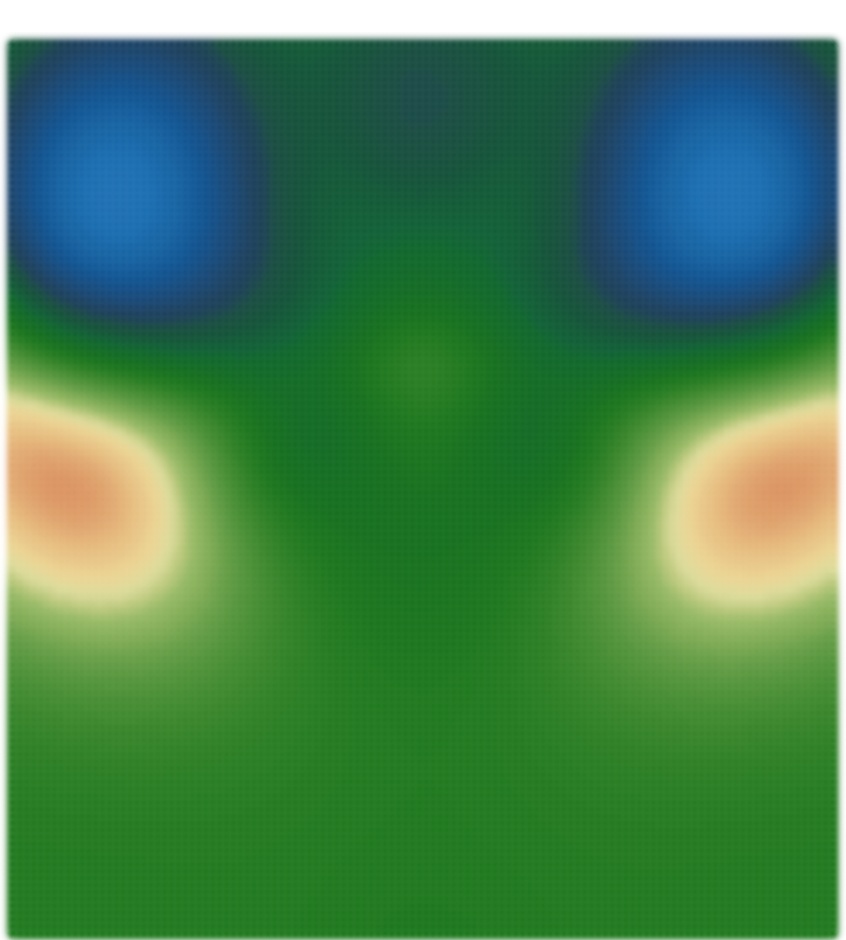}
\subcaption{$r_{5}$}
\end{subfigure}
\begin{subfigure}{.16\linewidth}
\centering
\includegraphics[width=.7\textwidth]{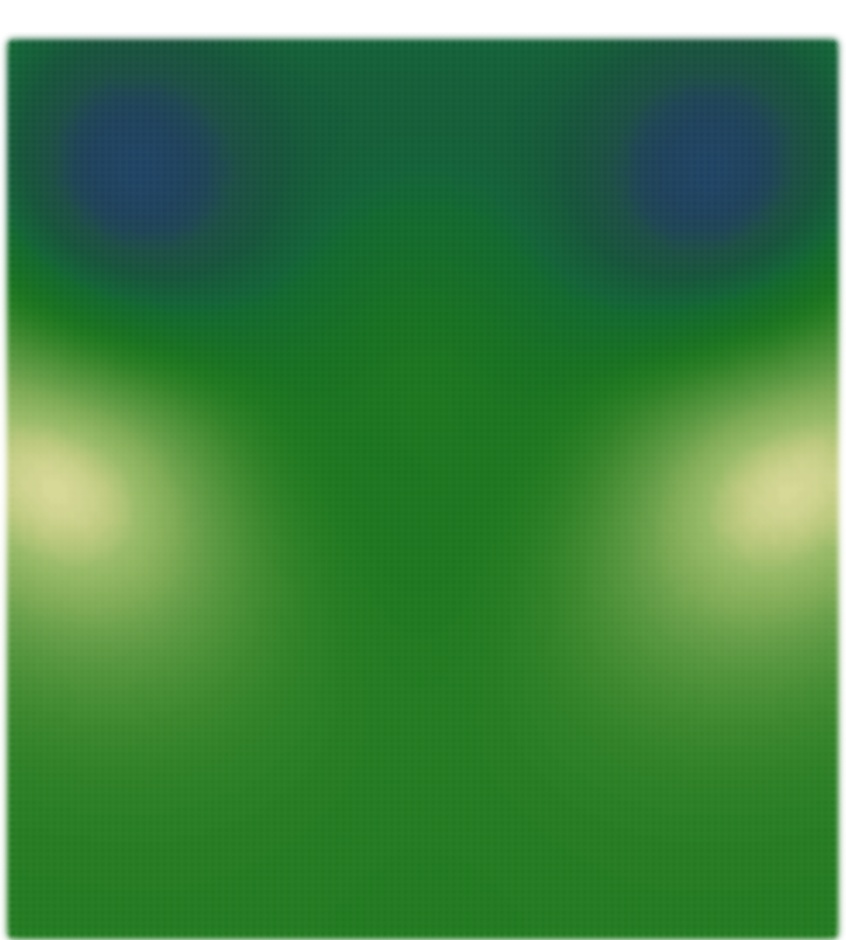}
\subcaption{$r_{9}$}
\end{subfigure}
\begin{subfigure}{.16\linewidth}
\centering
\includegraphics[width=.7\textwidth]{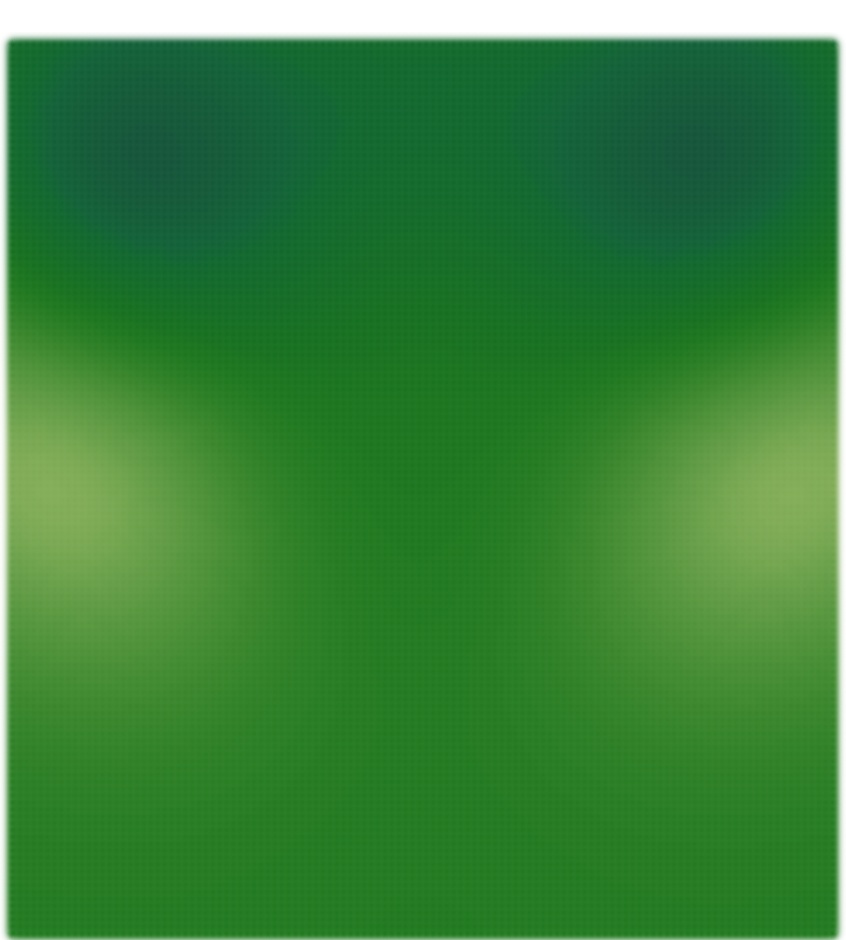}
\subcaption{$r_{13}$}
\end{subfigure}
\begin{subfigure}{.16\linewidth}
\centering
\includegraphics[width=.7\textwidth]{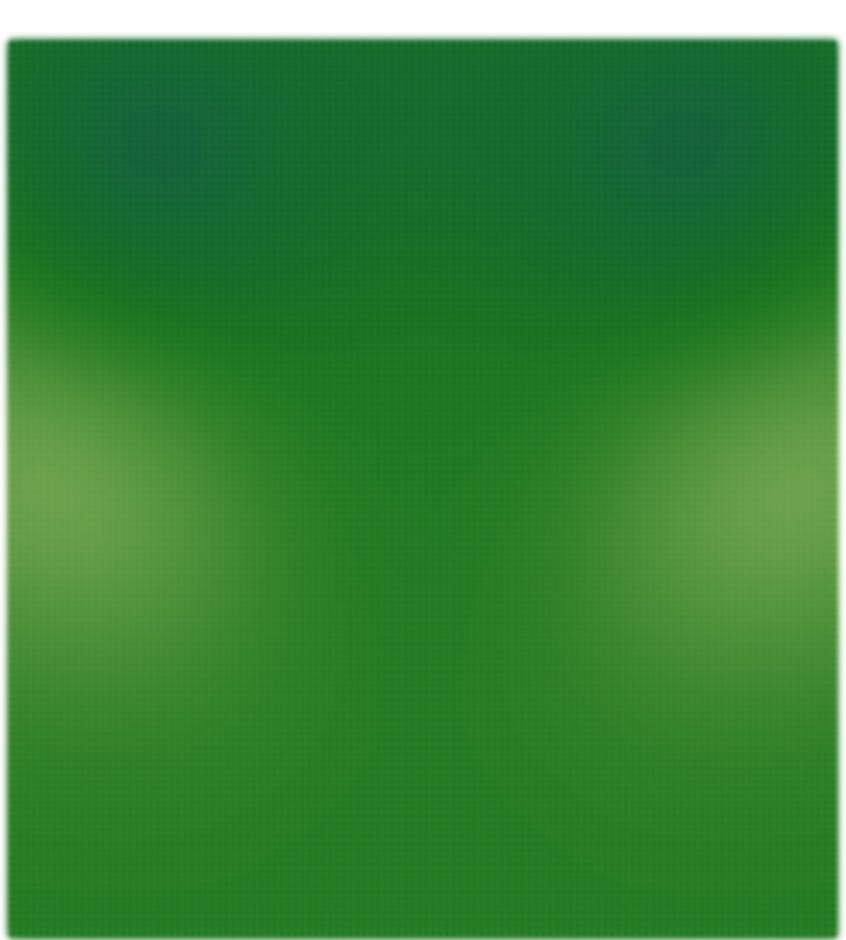}
\subcaption{$r_{17}$}
\end{subfigure}
\begin{subfigure}{.16\linewidth}
\centering
\includegraphics[width=.7\textwidth]{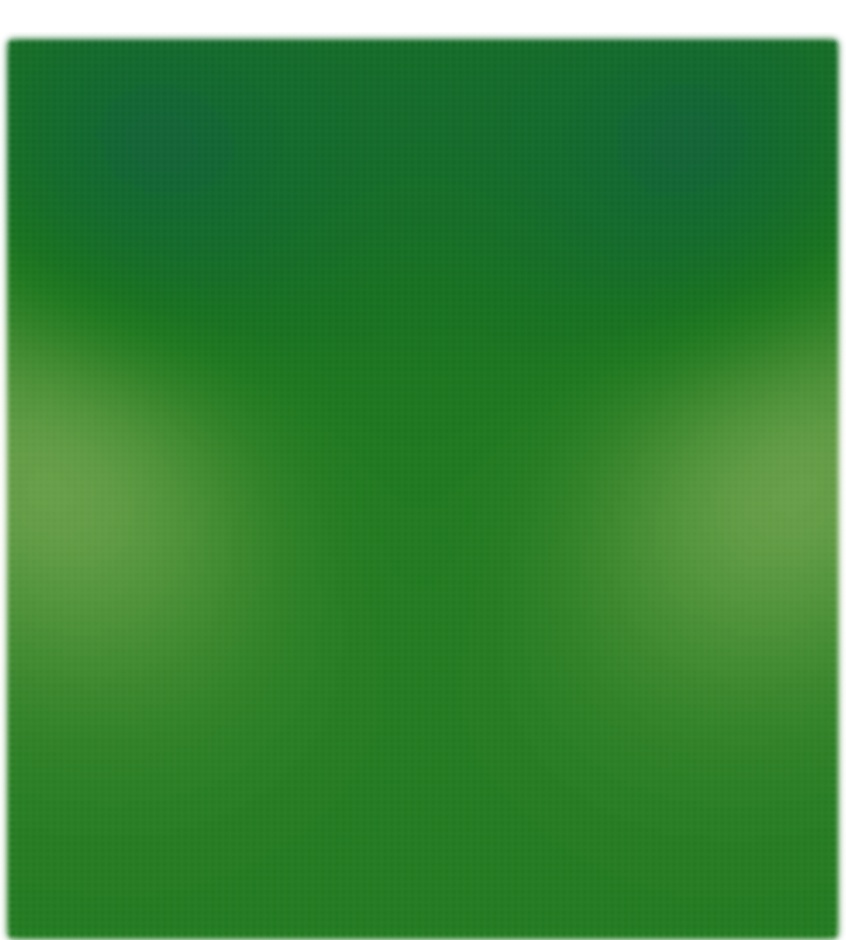}
\subcaption{$r_{20}$}
\end{subfigure}

\setcounter{subfigure}{0}
\begin{subfigure}{.16\linewidth}
\centering
\includegraphics[width=.7\textwidth]{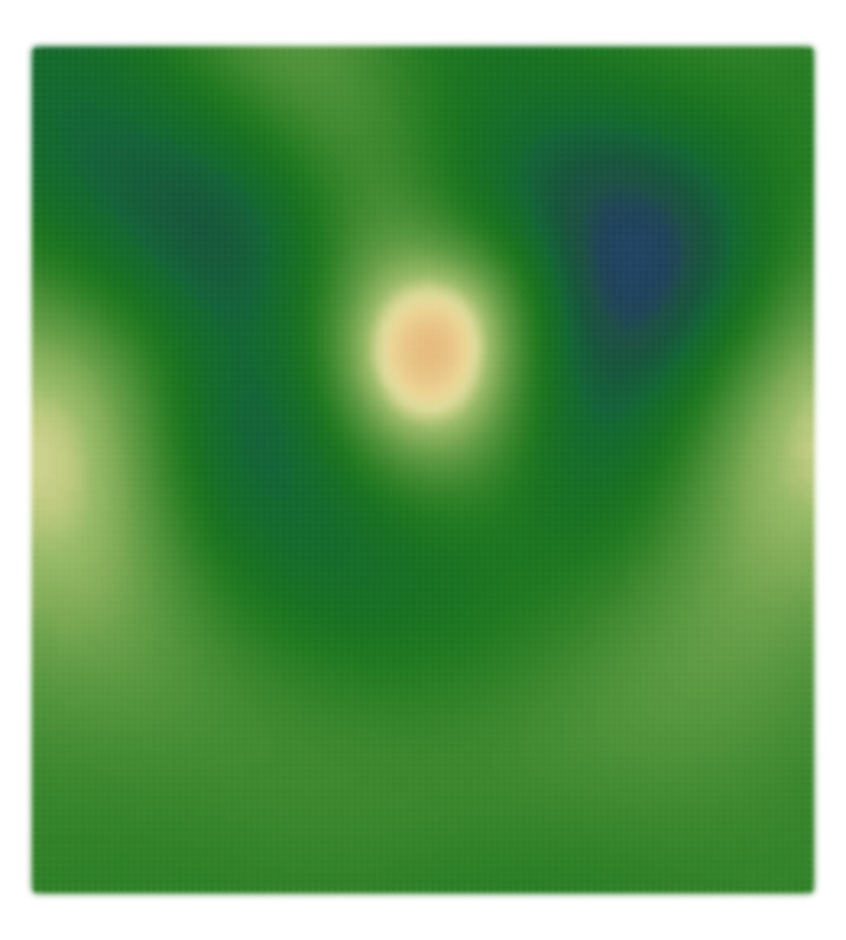}
\subcaption{$r_{1}$}
\end{subfigure}
\begin{subfigure}{.16\linewidth}
\centering
\includegraphics[width=.7\textwidth]{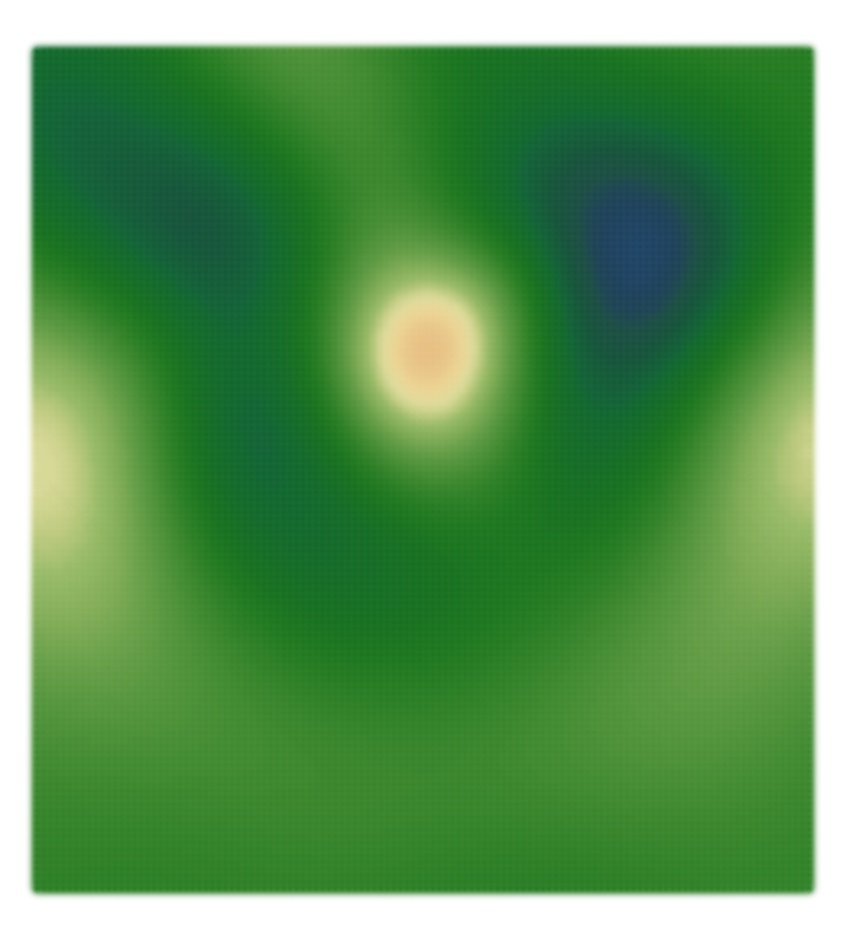}
\subcaption{$r_{5}$}
\end{subfigure}
\begin{subfigure}{.16\linewidth}
\centering
\includegraphics[width=.7\textwidth]{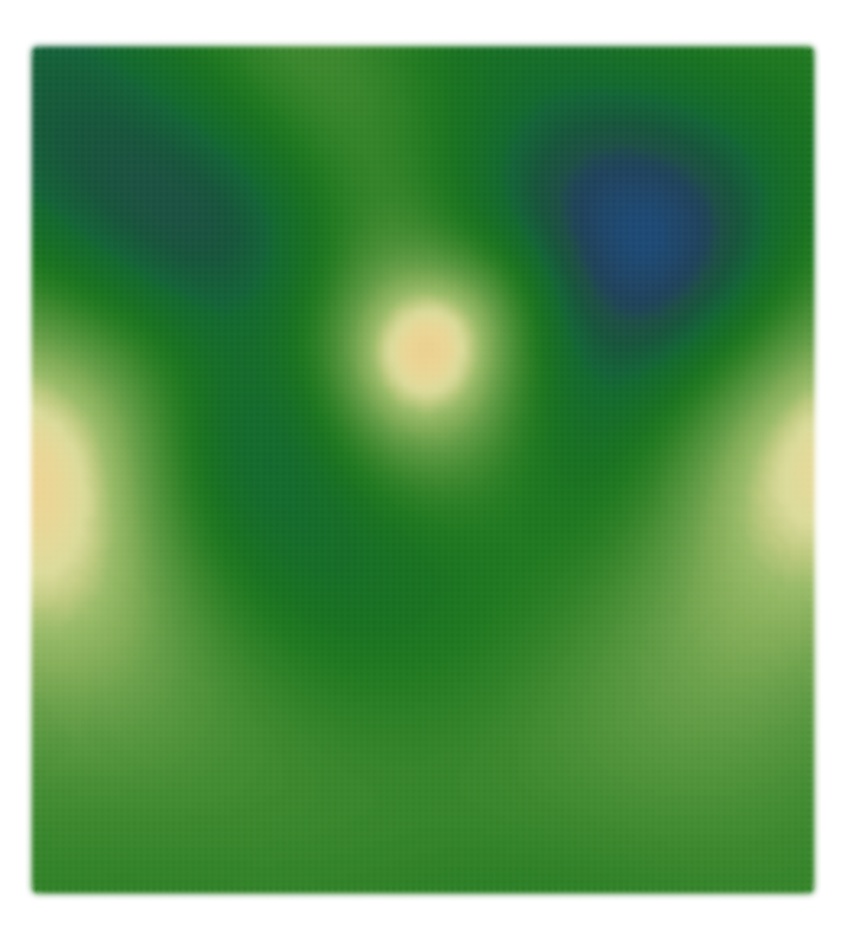}
\subcaption{$r_{9}$}
\end{subfigure}
\begin{subfigure}{.16\linewidth}
\centering
\includegraphics[width=.7\textwidth]{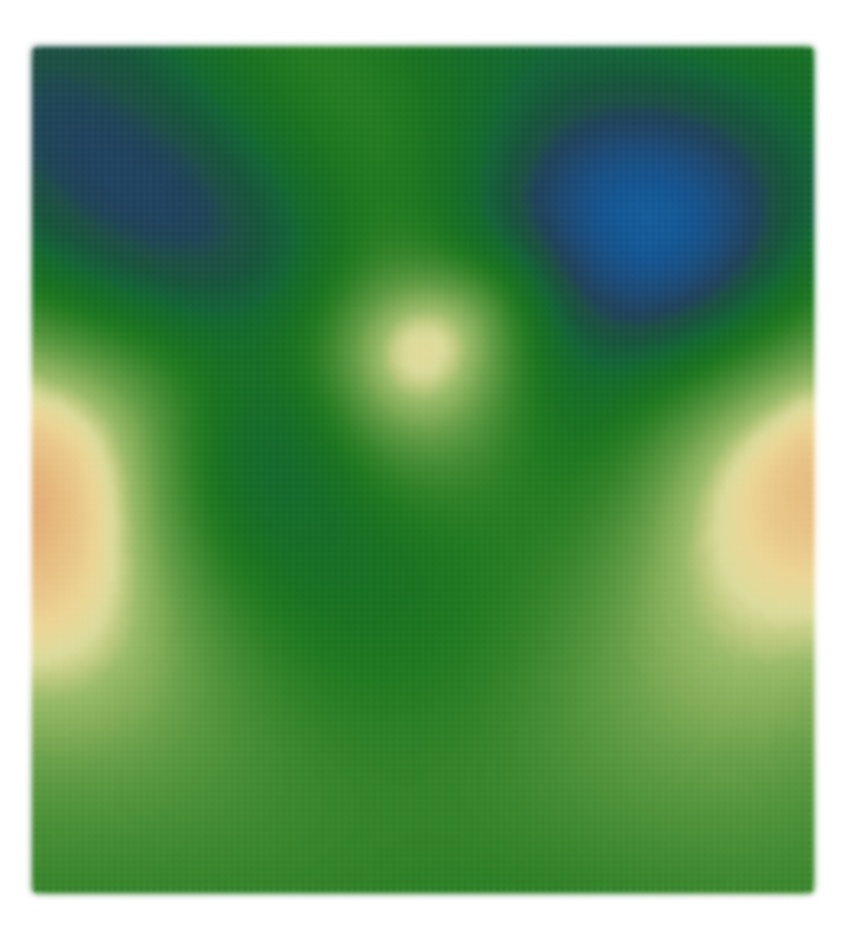}
\subcaption{ $r_{13}$}
\end{subfigure}
\begin{subfigure}{.16\linewidth}
\centering
\includegraphics[width=.7\textwidth]{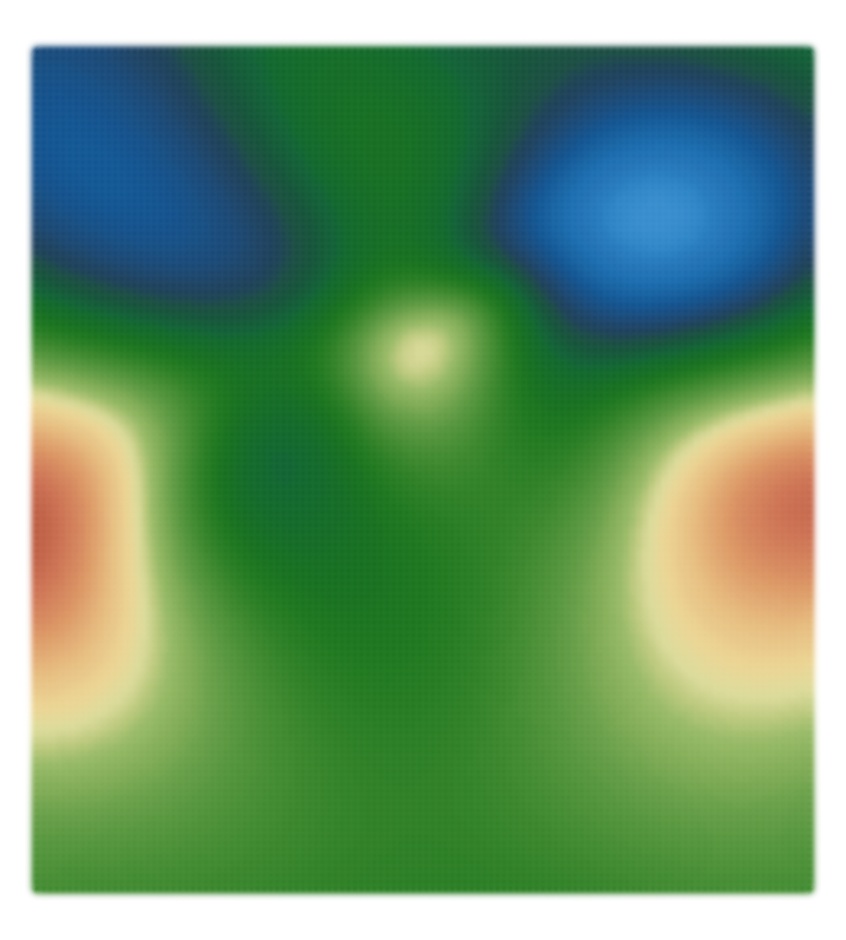}
\subcaption{$r_{17}$}
\end{subfigure}
\begin{subfigure}{.16\linewidth}
\centering
\includegraphics[width=.7\textwidth]{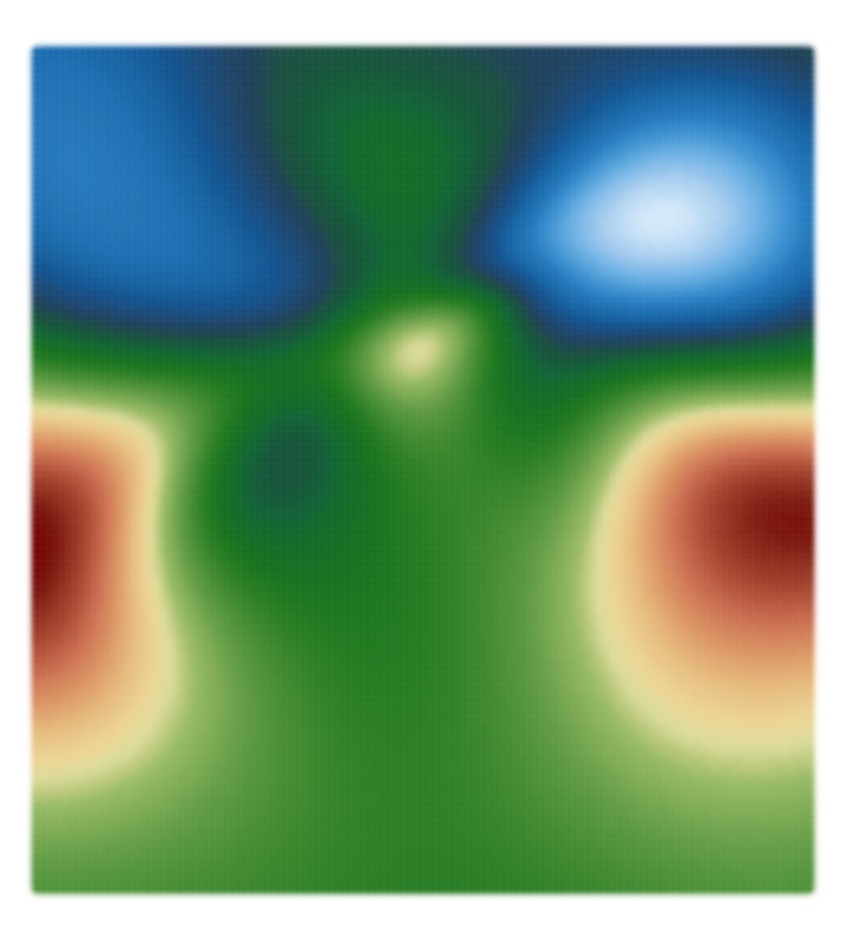}
\subcaption{ $r_{20}$}
\end{subfigure}
\caption{Schematic human: log Jacobian. First row: Base scale: $r_1$; Second row: Base scale: $r_{20}$.}
\label{fig: human coarse scale logJacobian}
\end{figure}
\end{example}


\begin{example}
    
For the second example, we map a circle to a ``bumpy ellipse,'' which is obtained by applying a sinusoidal normal displacement along an ellipse. One can refer to  \cref{fig: bumpy template and target} for an intuitive illustration. 
\begin{figure}
\centering
\begin{subfigure}{.45\textwidth}
\centering
\includegraphics[width=.7\textwidth]{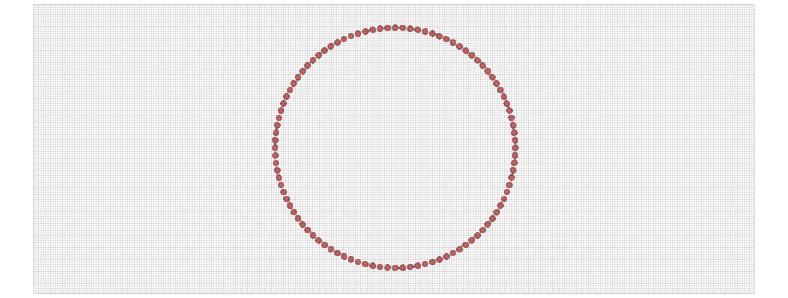}
\subcaption{template}
\end{subfigure}
\begin{subfigure}{.45\textwidth}
\centering
\includegraphics[width=.7\textwidth]{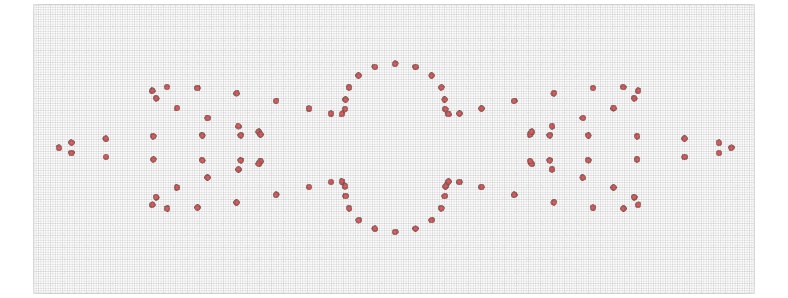}
\subcaption{target}
\end{subfigure}
    \caption{Bumpy ellipse: template and target.}
    \label{fig: bumpy template and target}
\end{figure}
 \Cref{fig: bumpy ms deformation} shows the obtained multiscale diffeomorphisms across scales, and  \cref{fig: bumpy ms logJacobian} provides the log Jacobian of the diffeomorphisms on the original grid. One can see that as the scale gets further away from either base scale, the deformation becomes smaller, as in the previous example. Moreover, one can observe that the deformation around the landmarks are bigger for scales closer to the coarser base scale. This can be seen from the brighter colors in both \cref{fig: bumpy ms deformation,fig: bumpy ms logJacobian} near the landmarks at $r_{17}$ and $r_{20}$. \Cref{fig: bumpy ms residual} shows the different transition patterns or the residuals when the scale variable approaches the base scales. There are subtler deformations near the finer base scale, in which we can observe sawtooth boundaries in colors, while there are more regular shapes with smoother boundaries near the coarser base scale. 

\begin{figure}
\centering
\begin{subfigure}{.25\textwidth}
\centering
\includegraphics[width=1.\textwidth]{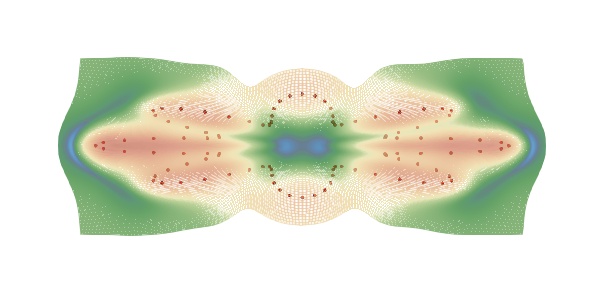}
\subcaption{$r_1$}
\end{subfigure}
\begin{subfigure}{.25\textwidth}
\centering
\includegraphics[width=1.\textwidth]{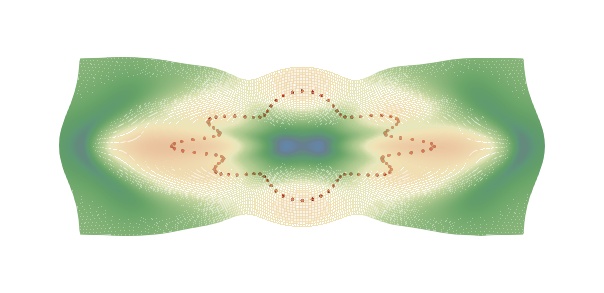}
\subcaption{$r_5$}
\end{subfigure}
\begin{subfigure}{.25\textwidth}
\centering
\includegraphics[width=1.\textwidth]{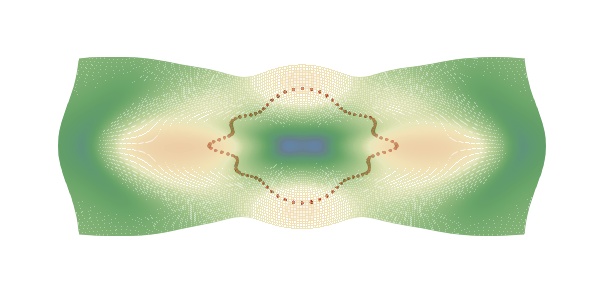}
\subcaption{$r_9$}
\end{subfigure}
\begin{subfigure}{.25\textwidth}
\centering
\includegraphics[width=1.\textwidth]{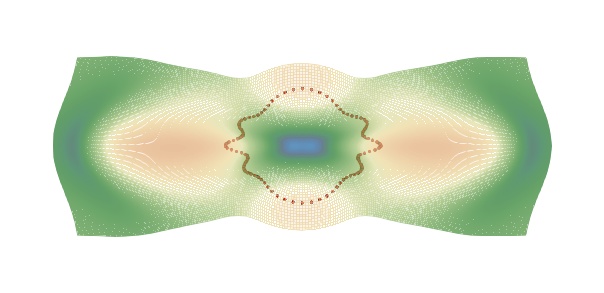}
\subcaption{$r_{13}$}
\end{subfigure}
\begin{subfigure}{.25\textwidth}
\centering
\includegraphics[width=1.\textwidth]{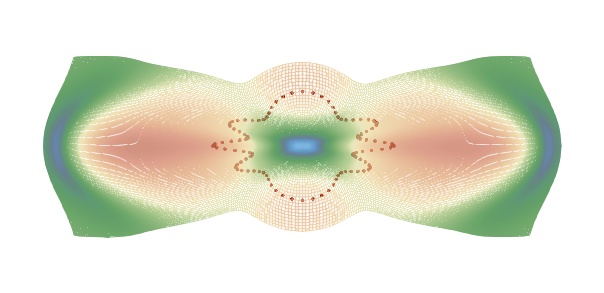}
\subcaption{$r_{17}$}
\end{subfigure}
\begin{subfigure}{.25\textwidth}
\centering
\includegraphics[width=1.\textwidth]{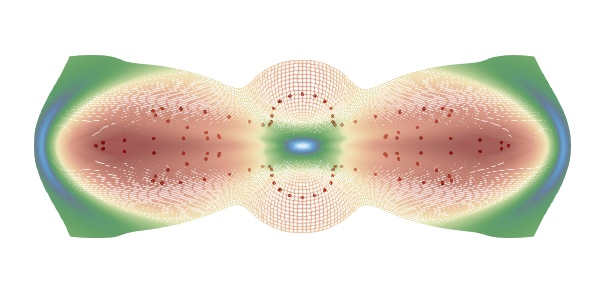}
\subcaption{$r_{20}$}
\end{subfigure}
\caption{Bumpy ellipse: deformation at different scales. Base scales: $r_1$, $r_{20}$.}
\label{fig: bumpy ms deformation}
\end{figure}

\begin{figure}
\centering
\begin{subfigure}{.2\textwidth}
\centering
\includegraphics[width=1.\textwidth]{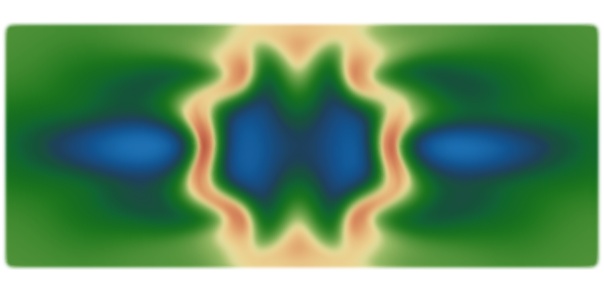}
\subcaption{$r_1$}
\end{subfigure}
\begin{subfigure}{.2\textwidth}
\centering
\includegraphics[width=1.\textwidth]{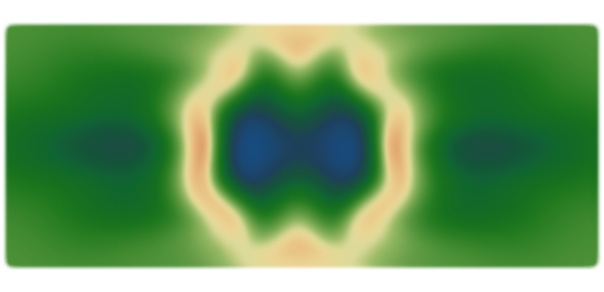}
\subcaption{$r_5$}
\end{subfigure}
\begin{subfigure}{.2\textwidth}
\centering
\includegraphics[width=1.\textwidth]{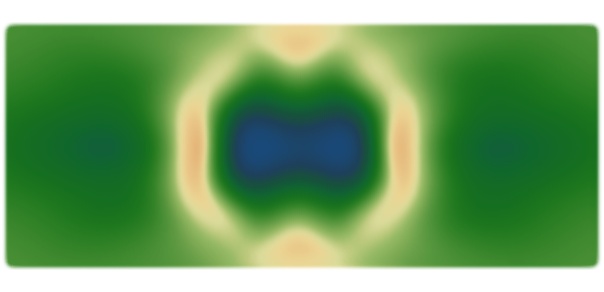}
\subcaption{$r_9$}
\end{subfigure}
\begin{subfigure}{.2\textwidth}
\centering
\includegraphics[width=1.\textwidth]{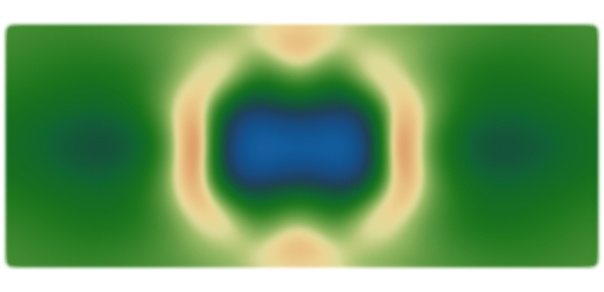}
\subcaption{$r_{13}$}
\end{subfigure}
\begin{subfigure}{.2\textwidth}
\centering
\includegraphics[width=1.\textwidth]{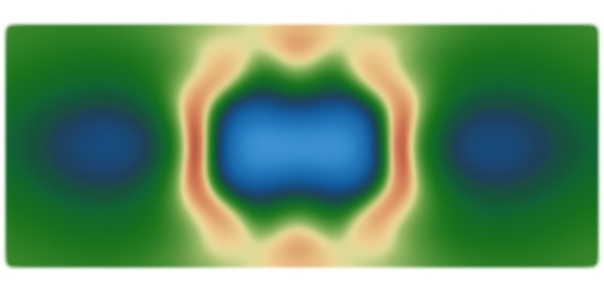}
\subcaption{$r_{17}$}
\end{subfigure}
\begin{subfigure}{.2\textwidth}
\centering
\includegraphics[width=1.\textwidth]{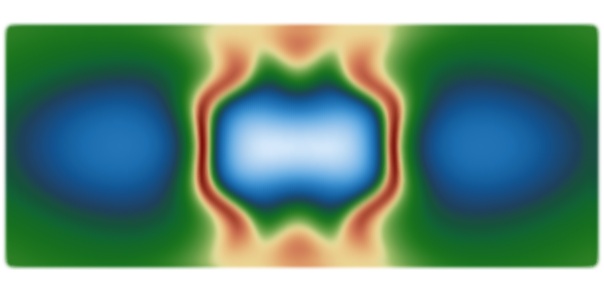}
\subcaption{$r_{20}$}
\end{subfigure}
\caption{Bumpy ellipse: log Jacobian. Base scales: $r_1$, $r_{20}$.}
\label{fig: bumpy ms logJacobian}
\end{figure}

\begin{figure}
\centering
\begin{subfigure}{.2\textwidth}
\centering
\includegraphics[width=\textwidth]{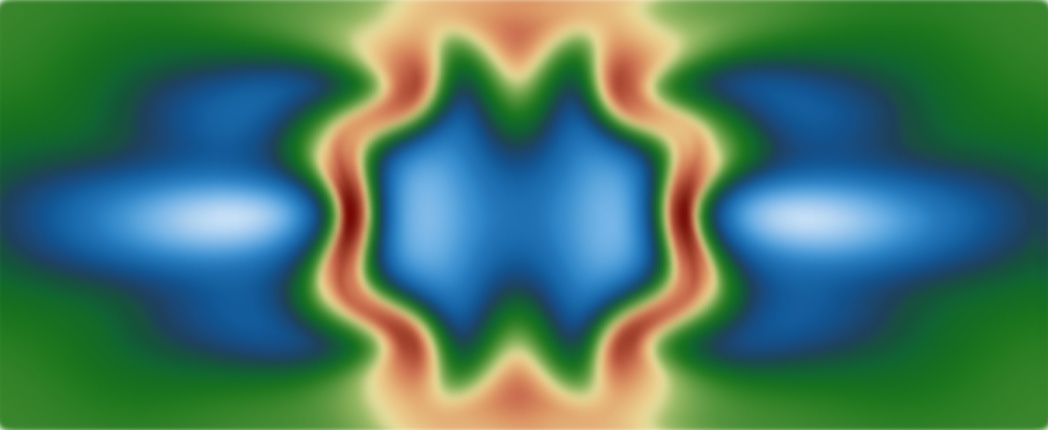}
\subcaption{$\rho^v_{r_{1}}$}
\end{subfigure}
\begin{subfigure}{.2\textwidth}
\centering
\includegraphics[width=\textwidth]{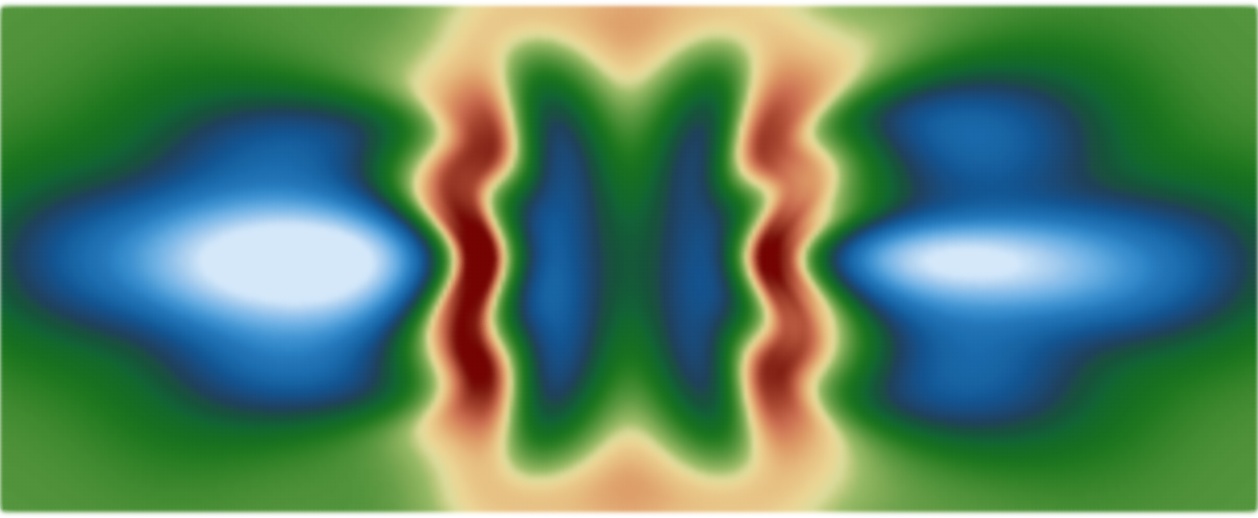}
\subcaption{$\rho^v_{r_{2}}$}
\end{subfigure}
\begin{subfigure}{.2\textwidth}
\centering
\includegraphics[width=\textwidth]{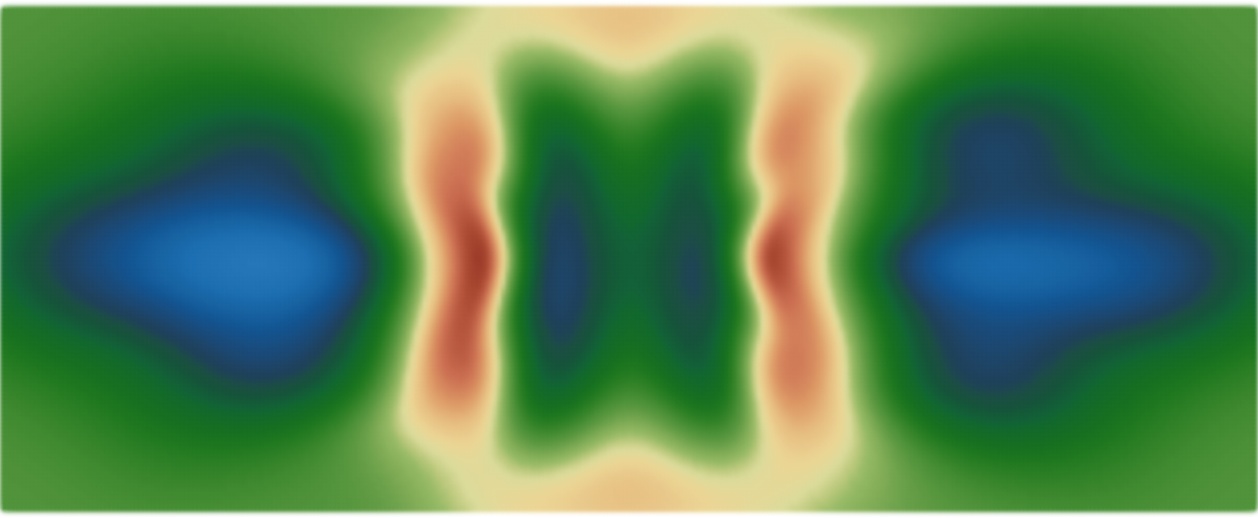}
\subcaption{$\rho^v_{r_{5}}$}
\end{subfigure}
\begin{subfigure}{.2\textwidth}
\centering
\includegraphics[width=\textwidth]{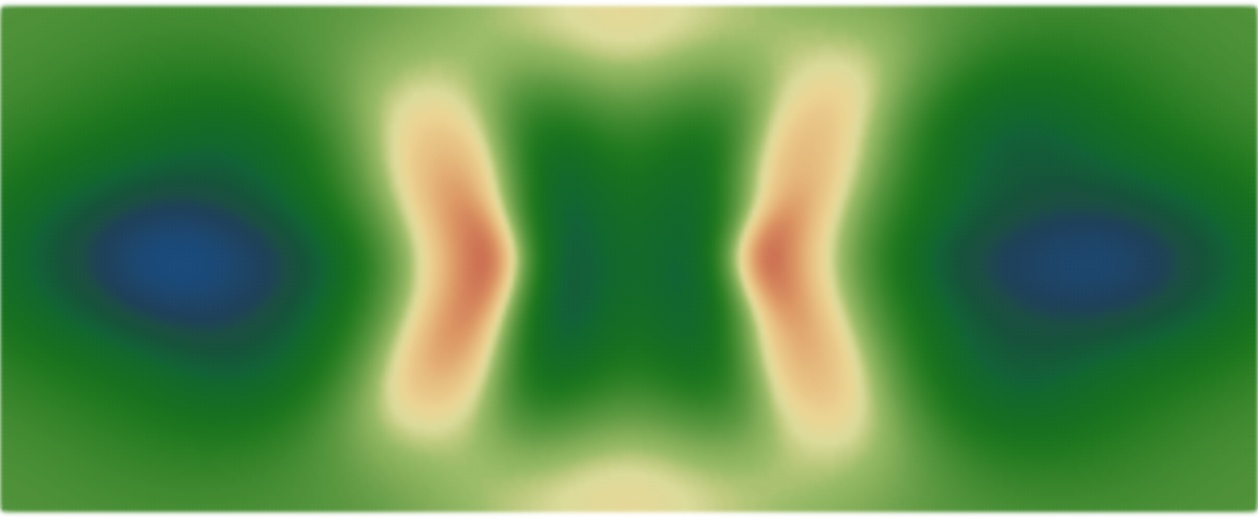}
\subcaption{$\rho^v_{r_{9}}$}
\end{subfigure}
\begin{subfigure}{.2\textwidth}
\centering
\includegraphics[width=\textwidth]{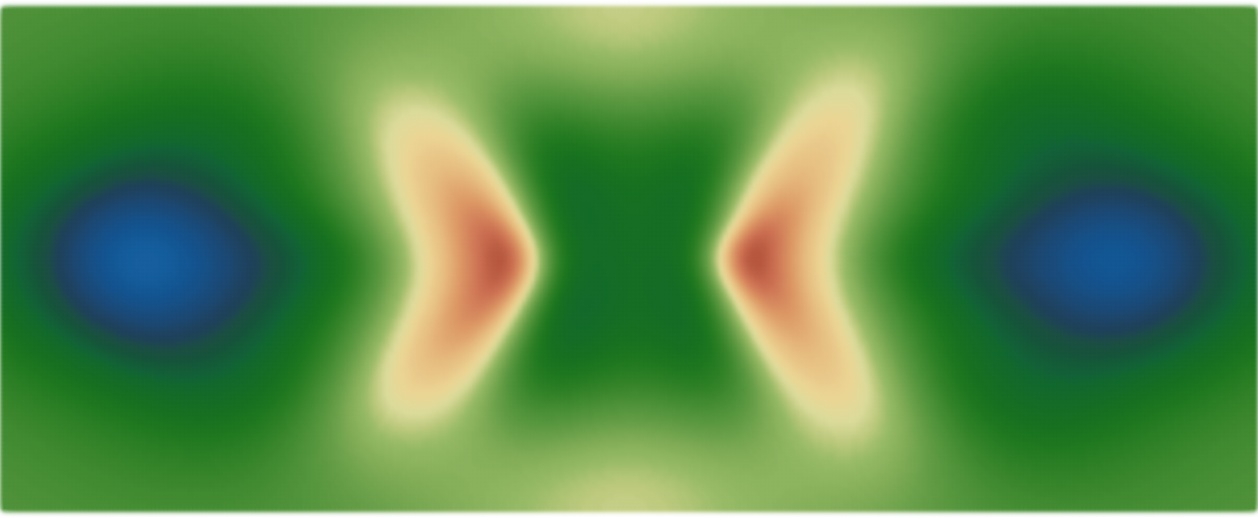}
\subcaption{$\rho^v_{r_{13}}$}
\end{subfigure}
\begin{subfigure}{.2\textwidth}
\centering
\includegraphics[width=\textwidth]{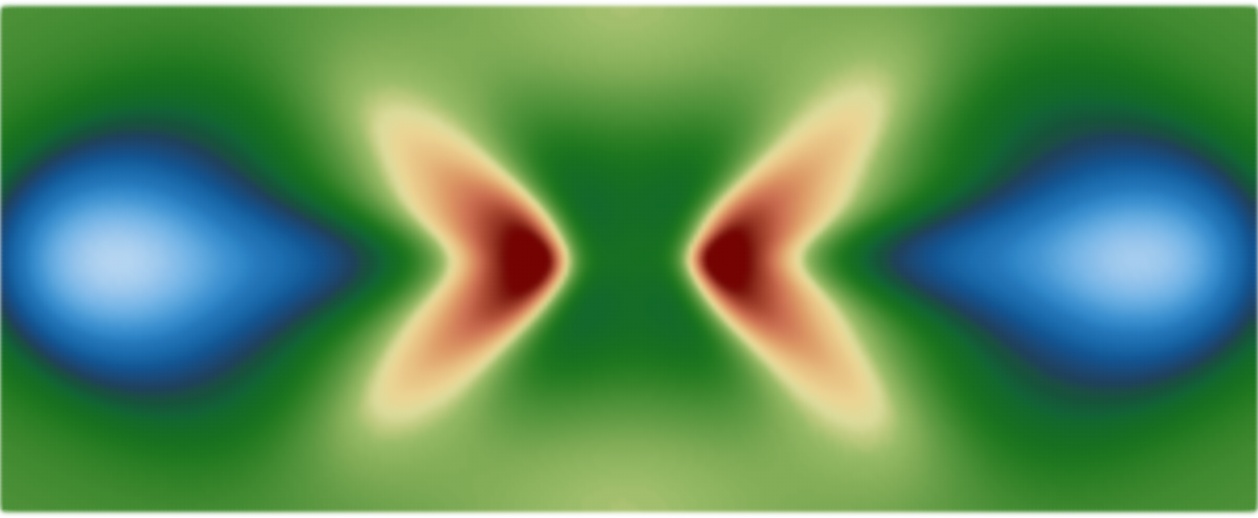}
\subcaption{$\rho^v_{r_{17}}$}
\end{subfigure}
\begin{subfigure}{.2\textwidth}
\centering
\includegraphics[width=\textwidth]{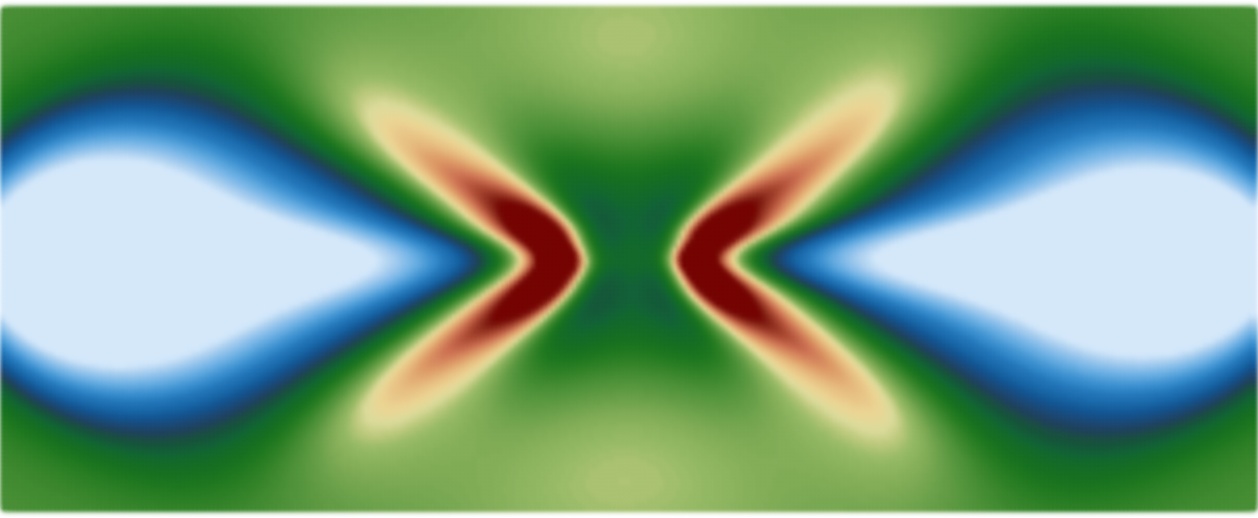}
\subcaption{$\rho^v_{r_{20}}$}
\end{subfigure}
\caption{Bumpy ellipse: residuals. Base scales: $r_1$, $r_{20}$.}
\label{fig: bumpy ms residual}
\end{figure}

\Cref{fig: bumpy fine scale deformation,fig: bumpy coarse scale deformation} show the LDDMM result with $r_1$ (resp. $r_{20}$) being the base scale. \Cref{fig: bumpy coarse scale logJacobian} shows the log Jacobian of these transformations. 

\begin{figure}
\centering
\begin{subfigure}{.25\textwidth}
\centering
\includegraphics[width=\textwidth]{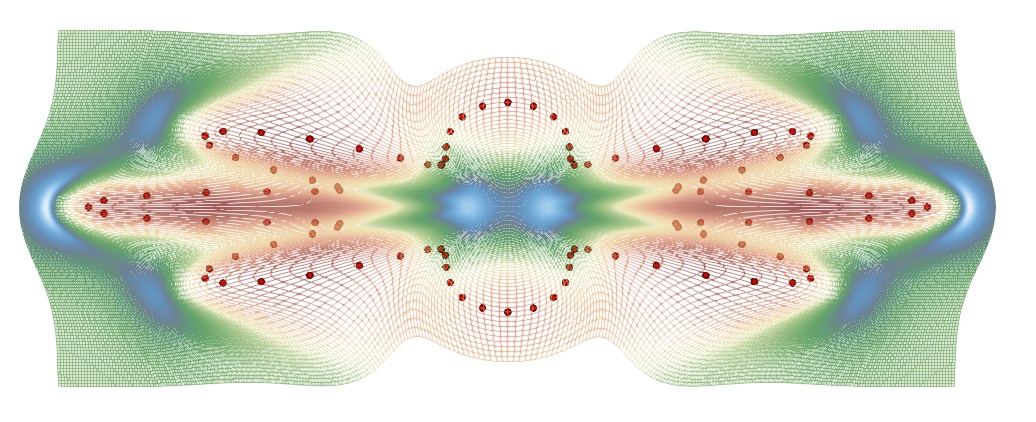}
\subcaption{$r_{1}$}
\end{subfigure}
\begin{subfigure}{.25\textwidth}
\centering
\includegraphics[width=\textwidth]{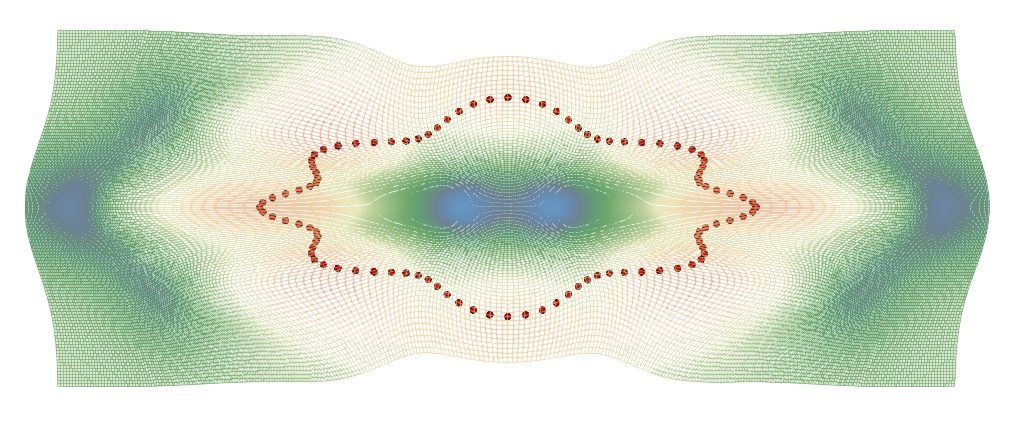}
\subcaption{$r_{5}$}
\end{subfigure}
\begin{subfigure}{.25\textwidth}
\centering
\includegraphics[width=\textwidth]{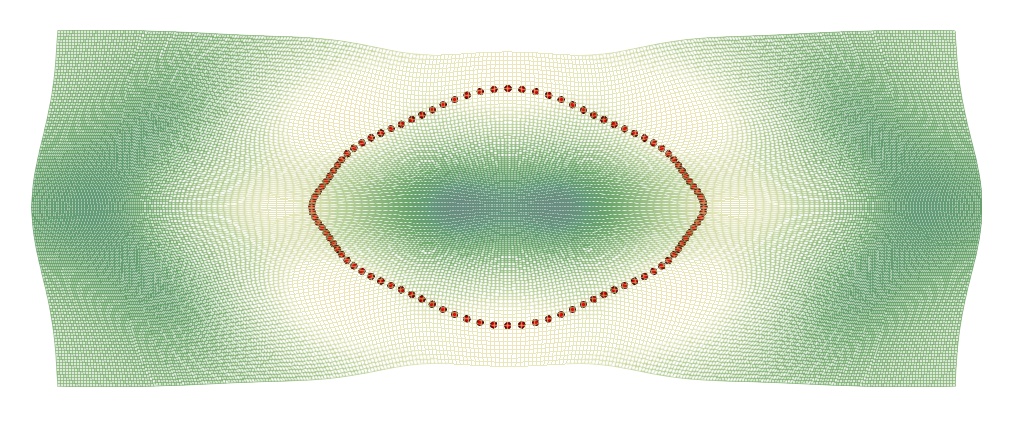}
\subcaption{$r_{9}$}
\end{subfigure}
\begin{subfigure}{.25\textwidth}
\centering
\includegraphics[width=\textwidth]{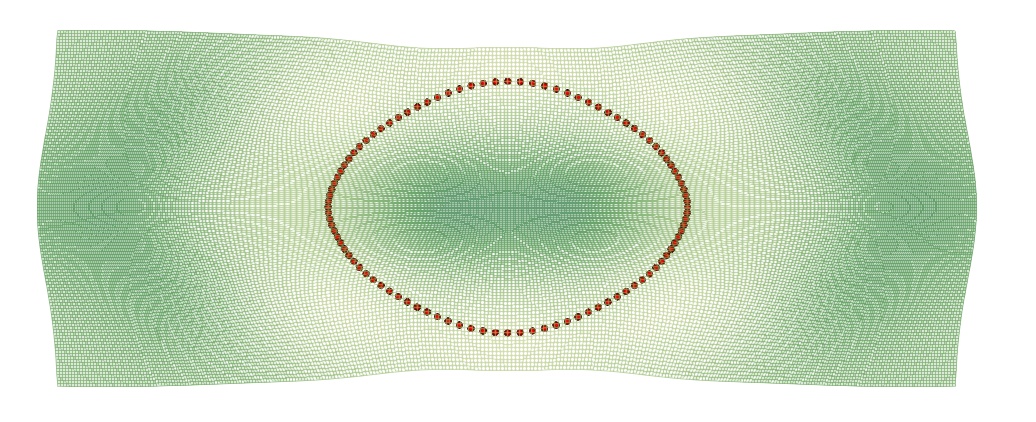}
\subcaption{$r_{13}$}
\end{subfigure}
\begin{subfigure}{.25\textwidth}
\centering
\includegraphics[width=\textwidth]{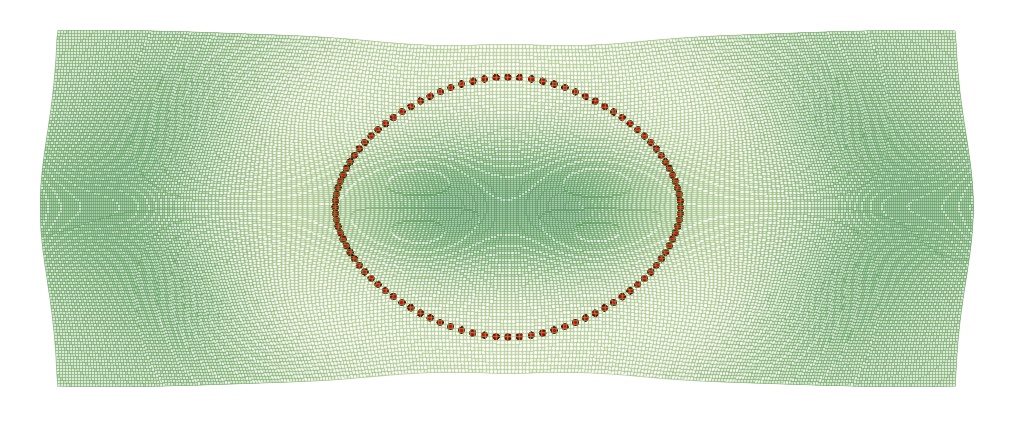}
\subcaption{$r_{17}$}
\end{subfigure}
\begin{subfigure}{.25\textwidth}
\centering
\includegraphics[width=\textwidth]{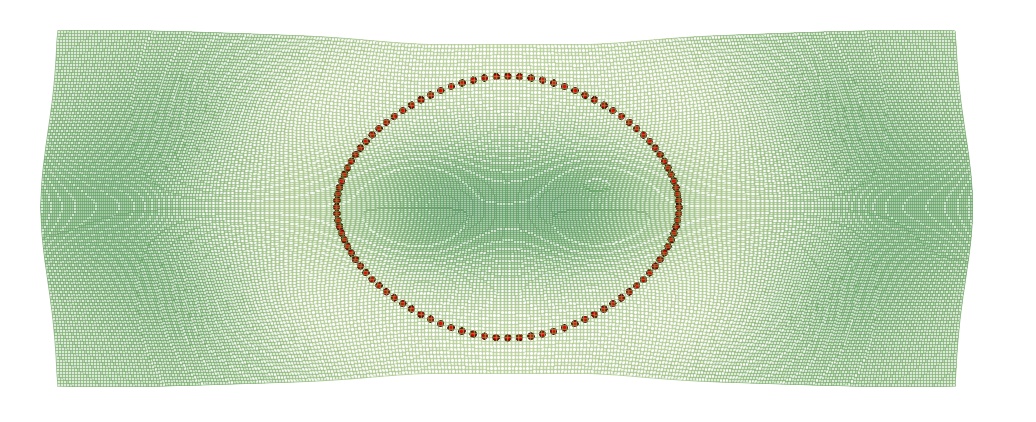}
\subcaption{$r_{20}$}
\end{subfigure}
\caption{Bumpy ellipse: deformations. Base scale: $r_1$.}
\label{fig: bumpy fine scale deformation}
\end{figure}

\begin{figure}
\centering
\begin{subfigure}{.25\textwidth}
\centering
\includegraphics[width=\textwidth]{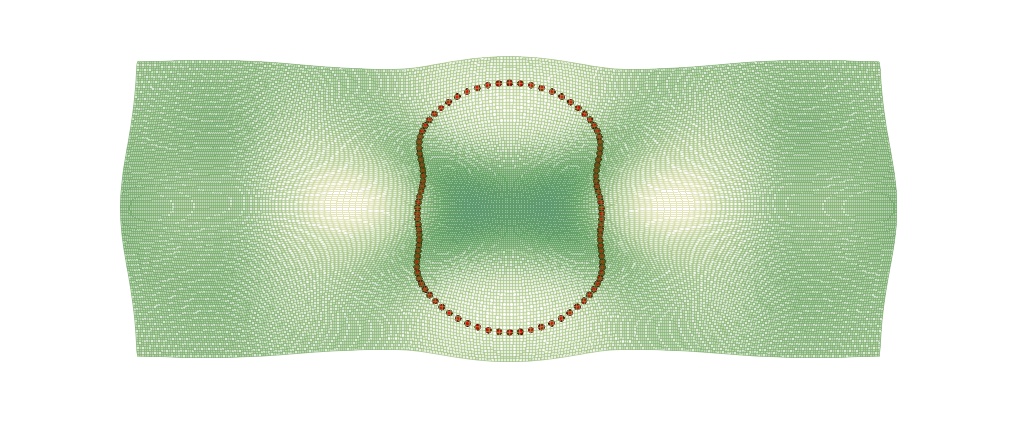}
\subcaption{$r_{1}$}
\end{subfigure}
\begin{subfigure}{.25\textwidth}
\centering
\includegraphics[width=\textwidth]{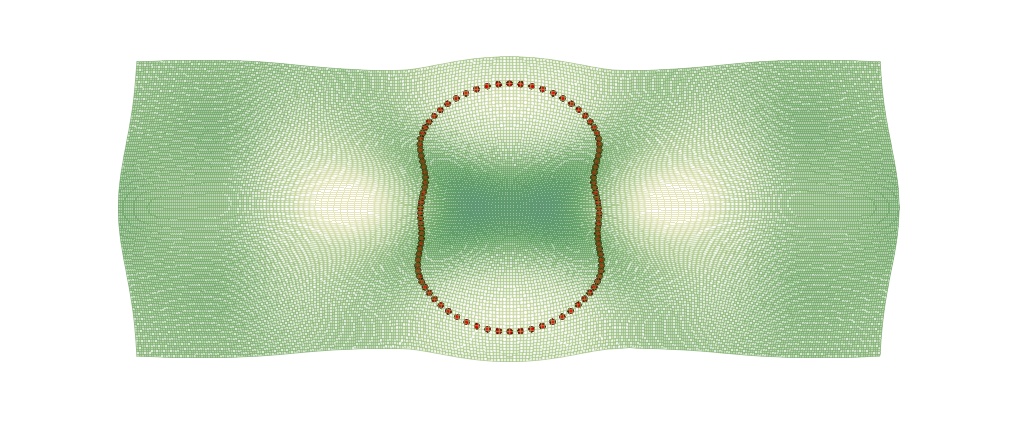}
\subcaption{$r_{5}$}
\end{subfigure}
\begin{subfigure}{.25\textwidth}
\centering
\includegraphics[width=\textwidth]{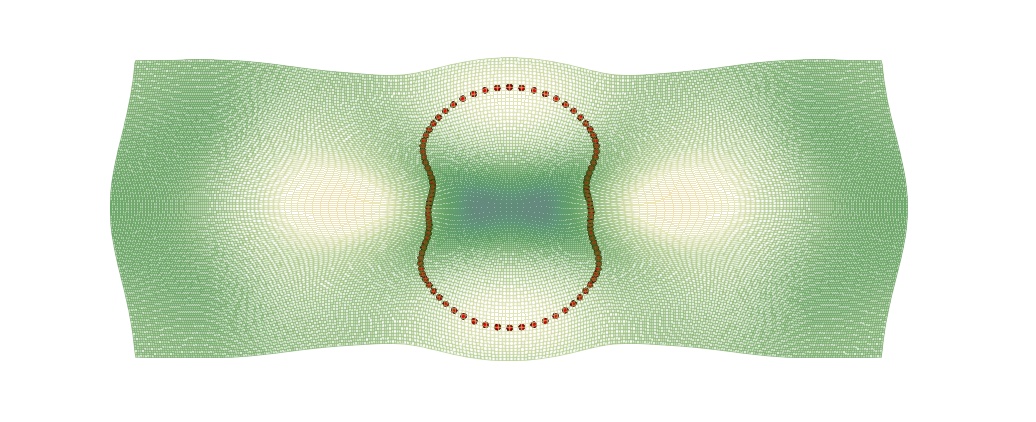}
\subcaption{$r_{9}$}
\end{subfigure}
\begin{subfigure}{.25\textwidth}
\centering
\includegraphics[width=\textwidth]{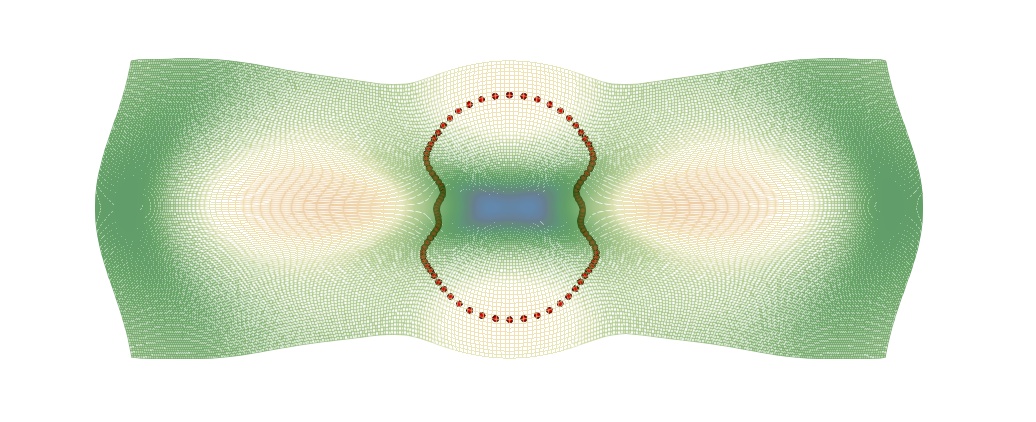}
\subcaption{$r_{13}$}
\end{subfigure}
\begin{subfigure}{.25\textwidth}
\centering
\includegraphics[width=\textwidth]{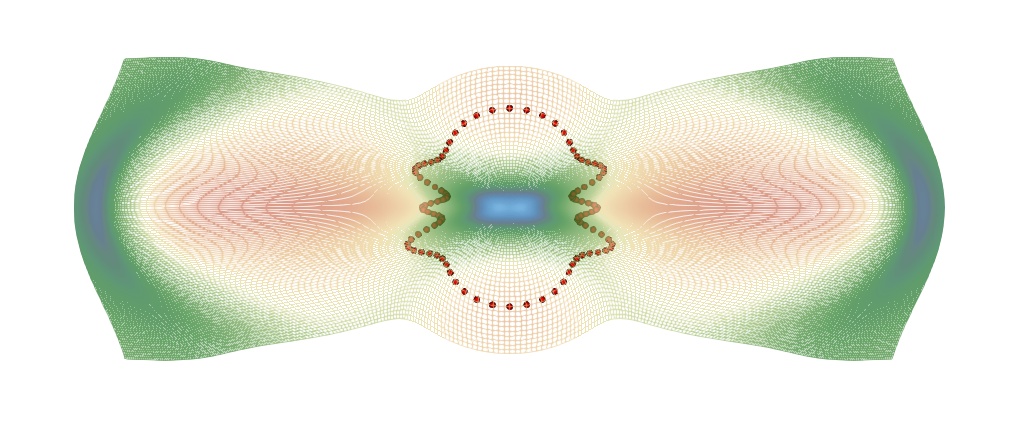}
\subcaption{$r_{17}$}
\end{subfigure}
\begin{subfigure}{.25\textwidth}
\centering
\includegraphics[width=\textwidth]{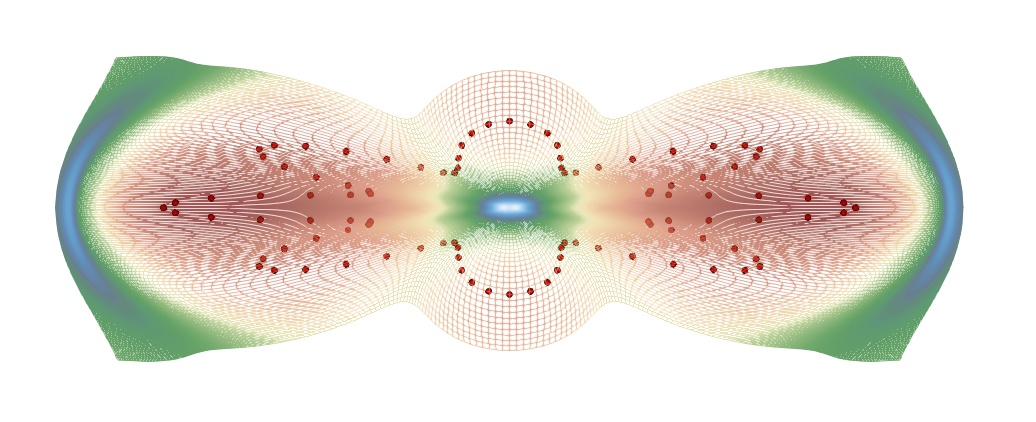}
\subcaption{$r_{20}$}
\end{subfigure}
\caption{Bumpy ellipse: Deformation. Base scale: $r_{20}$.}
\label{fig: bumpy coarse scale deformation}
\end{figure}

\begin{figure}
\centering
\begin{subfigure}{.15\textwidth}
\centering
\includegraphics[width=\textwidth]{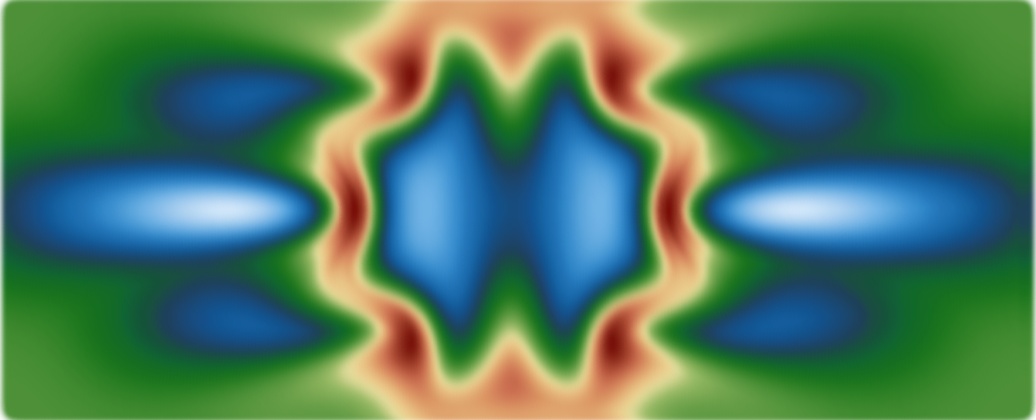}
\subcaption{$r_1$}
\end{subfigure}
\begin{subfigure}{.15\textwidth}
\centering
\includegraphics[width=\textwidth]{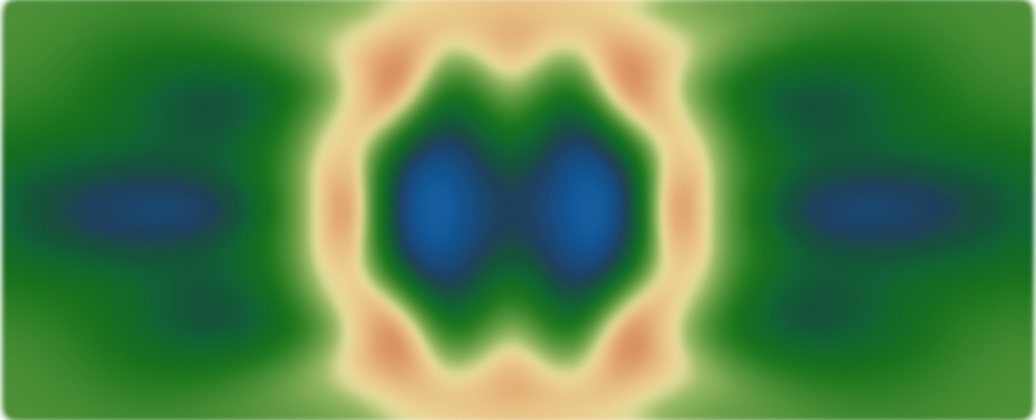}
\subcaption{$r_5$}
\end{subfigure}
\begin{subfigure}{.15\textwidth}
\centering
\includegraphics[width=\textwidth]{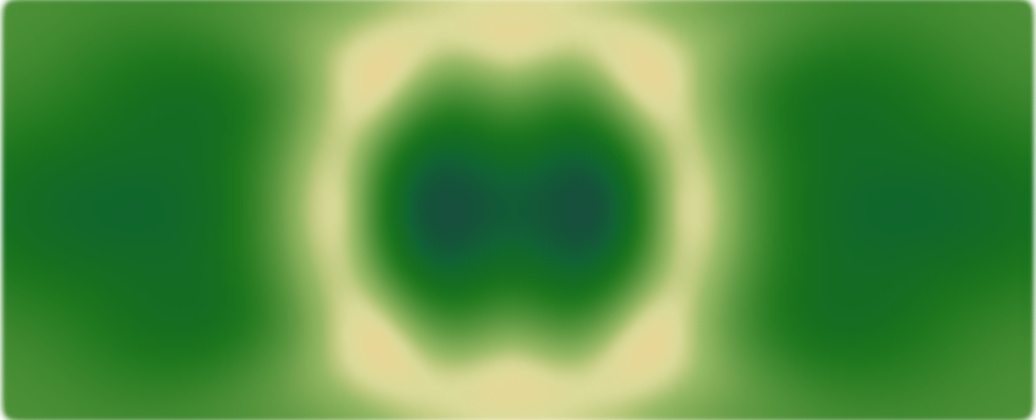}
\subcaption{$r_9$}
\end{subfigure}
\begin{subfigure}{.15\textwidth}
\centering
\includegraphics[width=\textwidth]{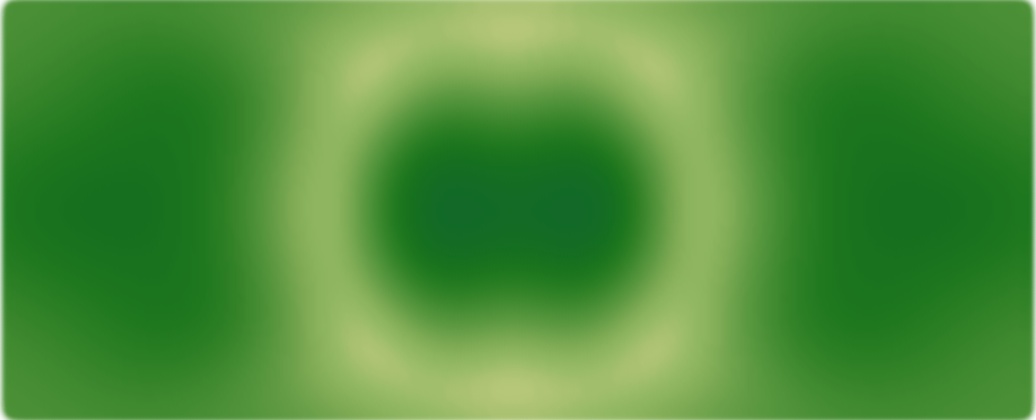}
\subcaption{$r_{13}$}
\end{subfigure}
\begin{subfigure}{.15\textwidth}
\centering
\includegraphics[width=\textwidth]{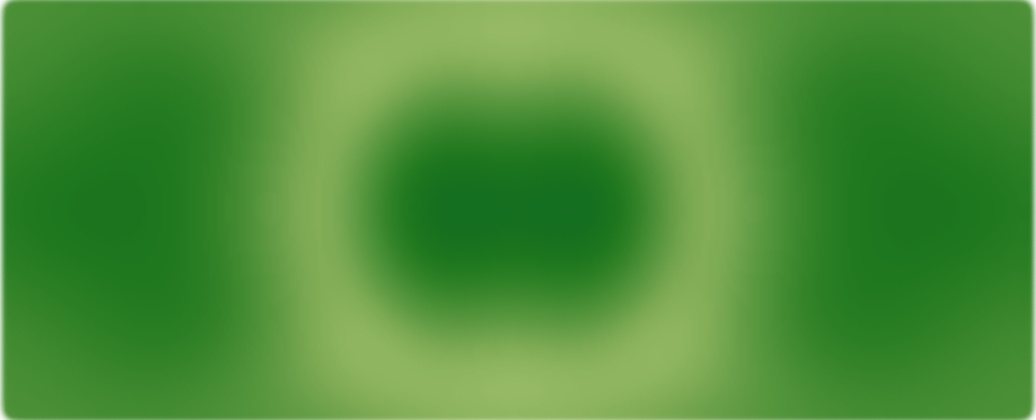}
\subcaption{$r_{17}$}
\end{subfigure}
\begin{subfigure}{.15\textwidth}
\centering
\includegraphics[width=\textwidth]{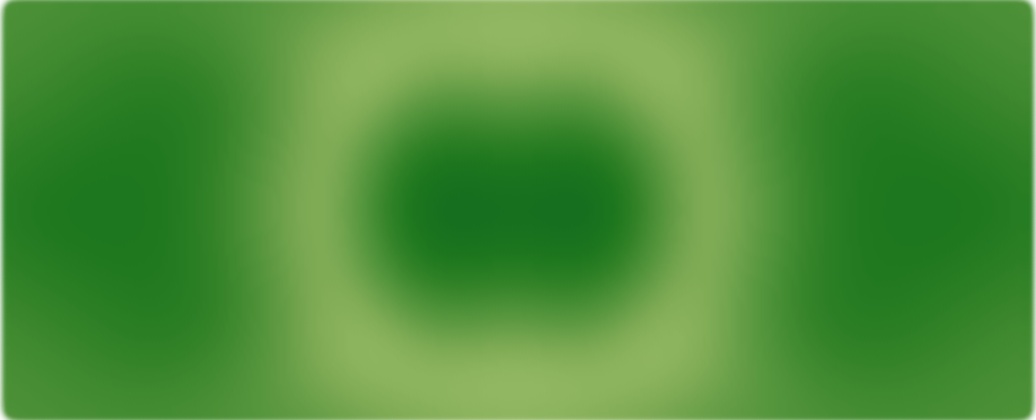}
\subcaption{$r_{20}$}
\end{subfigure}

\setcounter{subfigure}{0}
\begin{subfigure}{.15\textwidth}
\centering
\includegraphics[width=\textwidth]{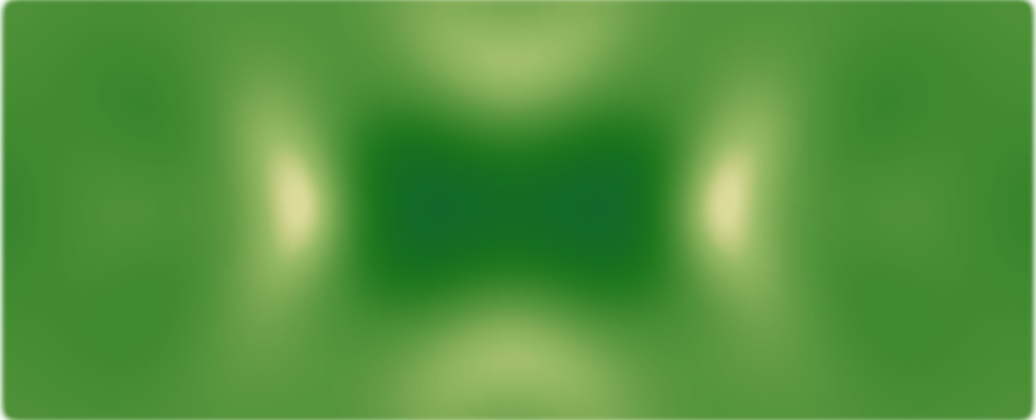}
\subcaption{$r_{1}$}
\end{subfigure}
\begin{subfigure}{.15\textwidth}
\centering
\includegraphics[width=\textwidth]{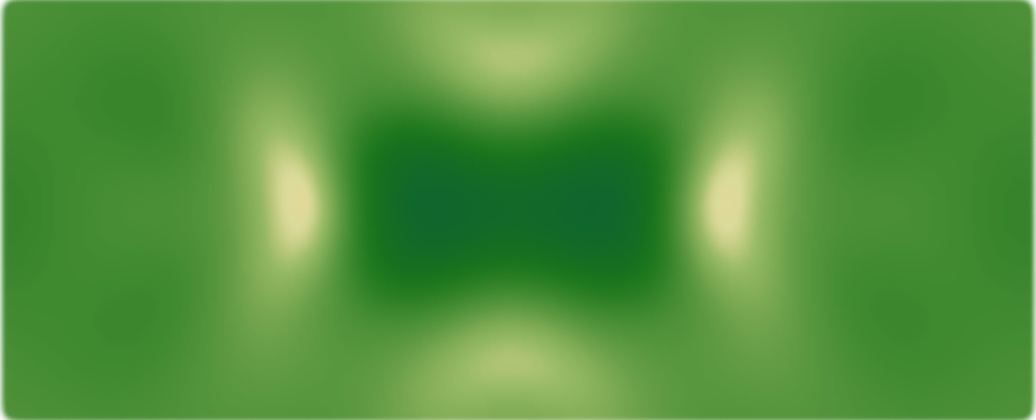}
\subcaption{$r_{5}$}
\end{subfigure}
\begin{subfigure}{.15\textwidth}
\centering
\includegraphics[width=\textwidth]{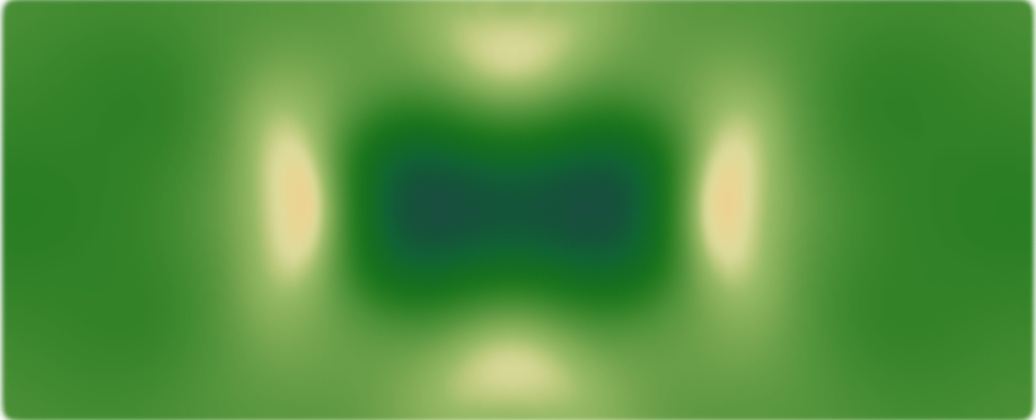}
\subcaption{$r_{9}$}
\end{subfigure}
\begin{subfigure}{.15\textwidth}
\centering
\includegraphics[width=\textwidth]{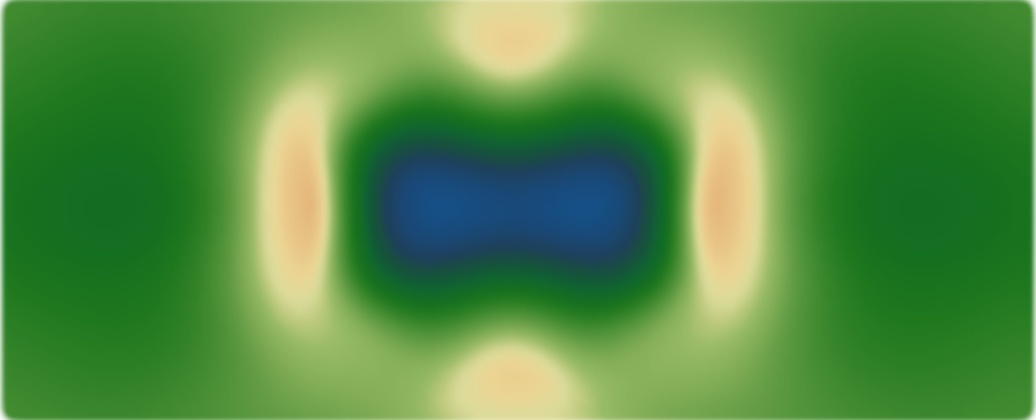}
\subcaption{$r_{13}$}
\end{subfigure}
\begin{subfigure}{.15\textwidth}
\centering
\includegraphics[width=\textwidth]{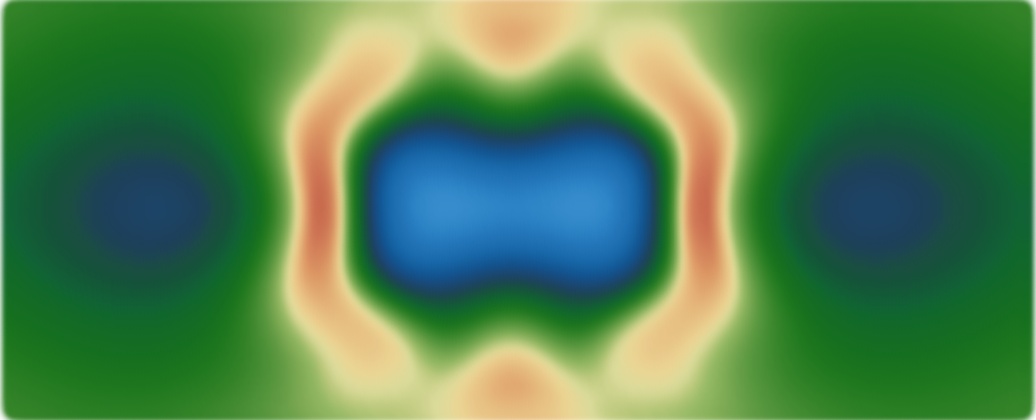}
\subcaption{$r_{17}$}
\end{subfigure}
\begin{subfigure}{.15\textwidth}
\centering
\includegraphics[width=\textwidth]{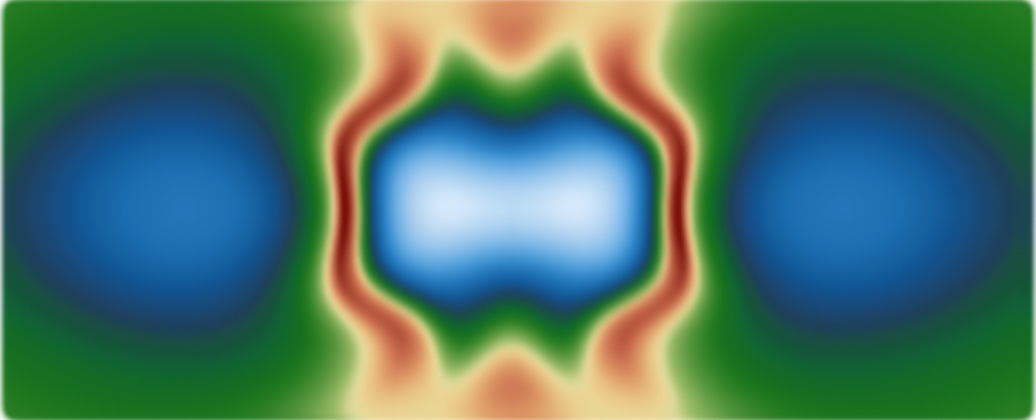}
\subcaption{$r_{20}$}
\end{subfigure}
\caption{Bumpy ellipse: log Jacobian. First row: Base scale $r_1$. Second row: Base scale: $r_{20}.$}
\label{fig: bumpy coarse scale logJacobian}
\end{figure}
\end{example}

\begin{example}
The previous examples have the same landmark matching template as well as target for different base scales. We now consider situations in which they vary across scales. 

This example maps a circle to the same flower shape at the two base scales, albeit with a slight difference in point location. This can be observed in \cref{fig: flower rotate template and target}, especially near the centers.

\begin{figure}
\centering
\begin{subfigure}{.3\textwidth}
\centering
\includegraphics[width=.8\textwidth]{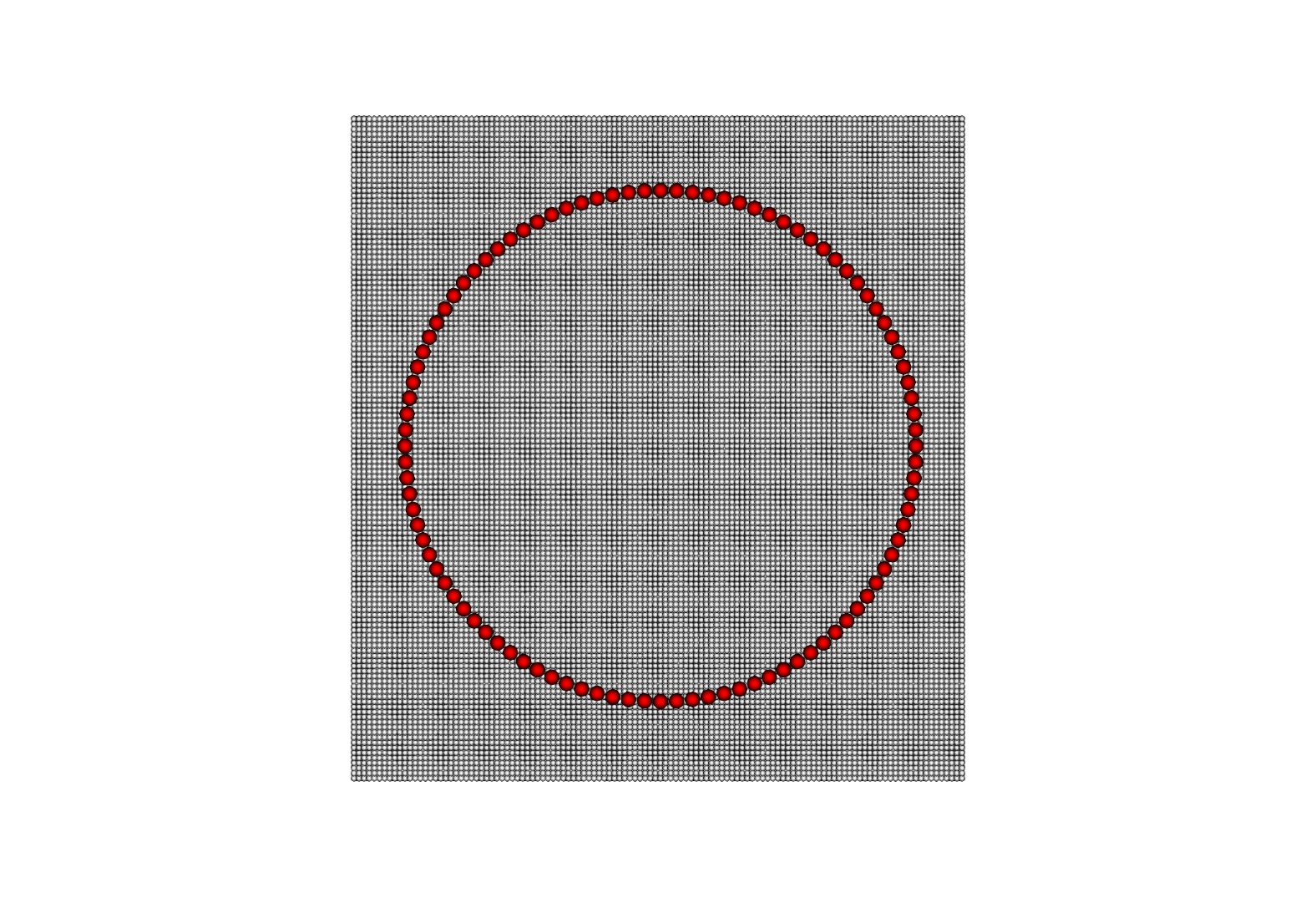}
\subcaption{template}
\end{subfigure}
\begin{subfigure}{.3\textwidth}
\centering
\includegraphics[width=.8\textwidth]{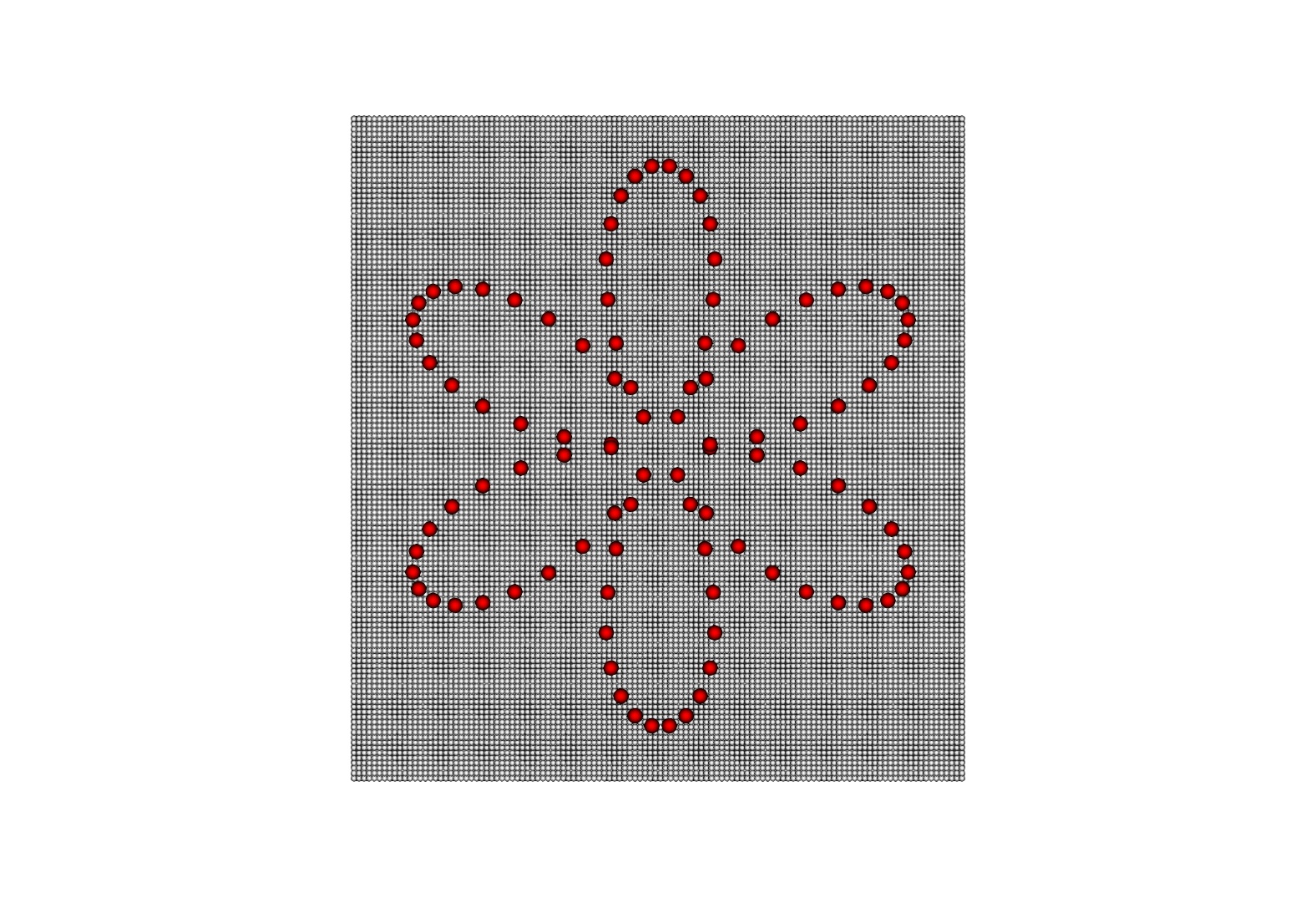}
\subcaption{target at $r_1$}
\end{subfigure}
\begin{subfigure}{.3\textwidth}
\centering
\includegraphics[width=.8\textwidth]{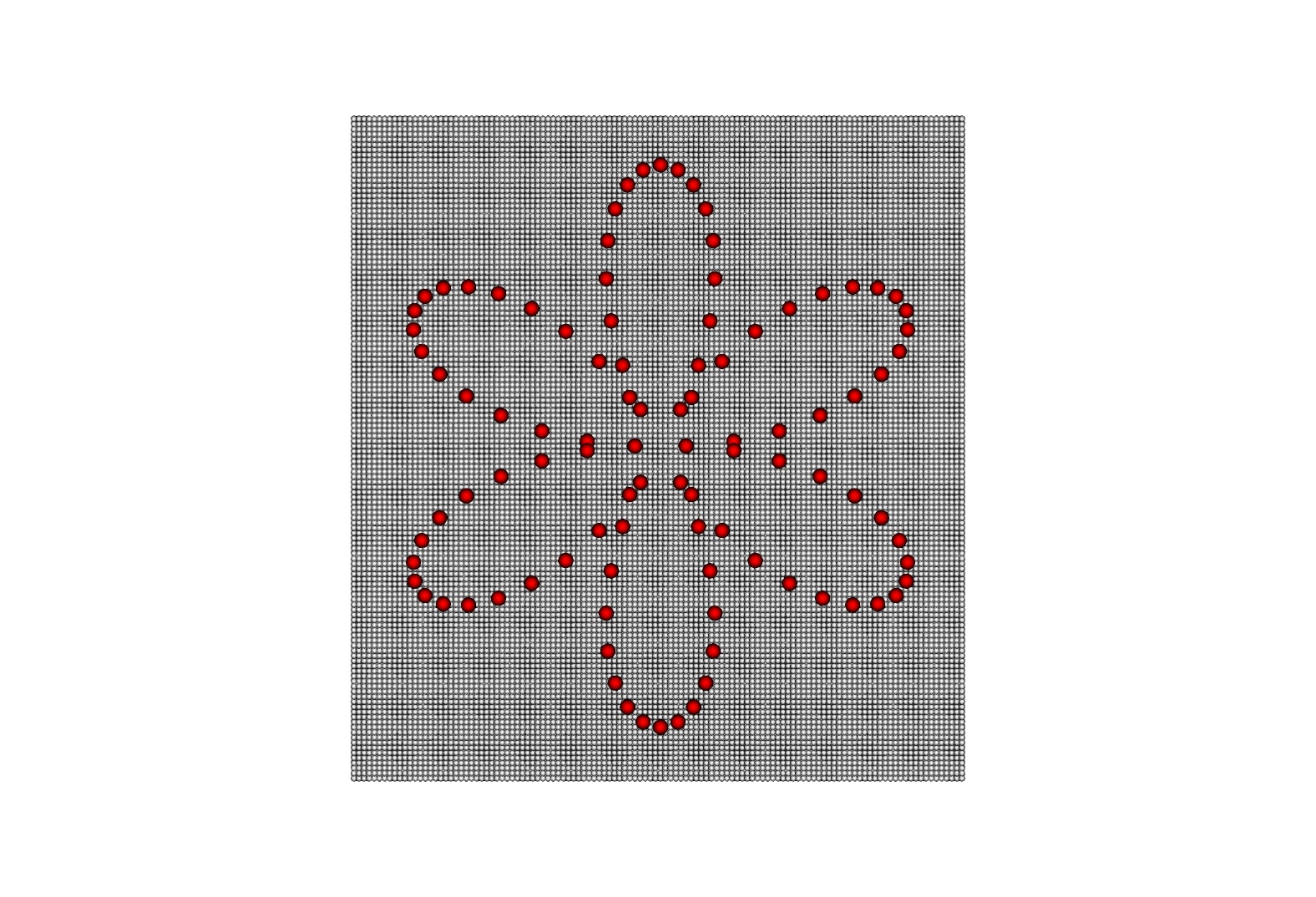}
\subcaption{target at $r_{20}$}
\end{subfigure}
\caption{Flowers: template and target.}
\label{fig: flower rotate template and target}
\end{figure}
 \Cref{fig: flower rotate deformation} demonstrates the deformed template at different scales. One can roughly see that as scale varies from $r_1$ to $r_{20}$, at the intermediate scales the flower folds and expands, with the landmarks shifting to reach the target at the other base scale.  \Cref{fig: flower rotate residual} shows the corresponding log Jacobian plots on the original grid together with the log Jacobian of the residual transformations. As in the previous examples, the changes in the finer scale are larger than those in the coarser scales, indicated by obvious deeper colors. 
\begin{figure}
\centering
\begin{subfigure}{.12\textwidth}
\centering
\includegraphics[width=1.\textwidth]{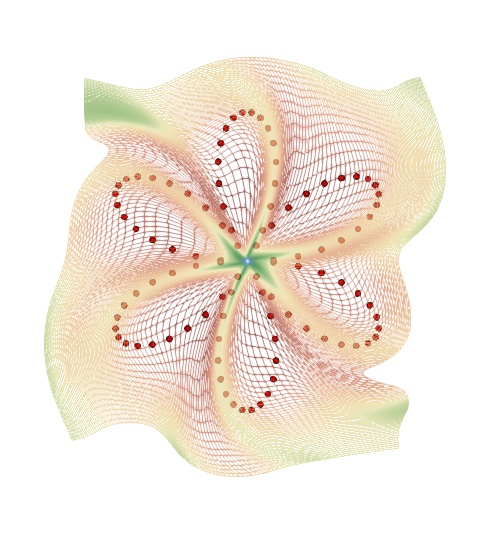}
\subcaption{$r_{1}$}
\end{subfigure}
\begin{subfigure}{.12\textwidth}
\centering
\includegraphics[width=1.\textwidth]{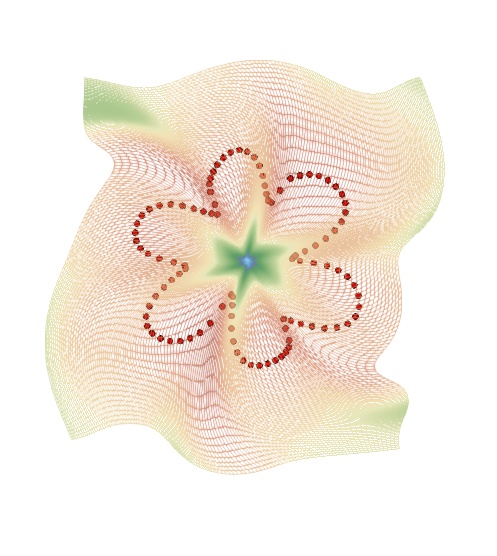}
\subcaption{$r_{3}$}
\end{subfigure}
\begin{subfigure}{.12\textwidth}
\centering
\includegraphics[width=1.\textwidth]{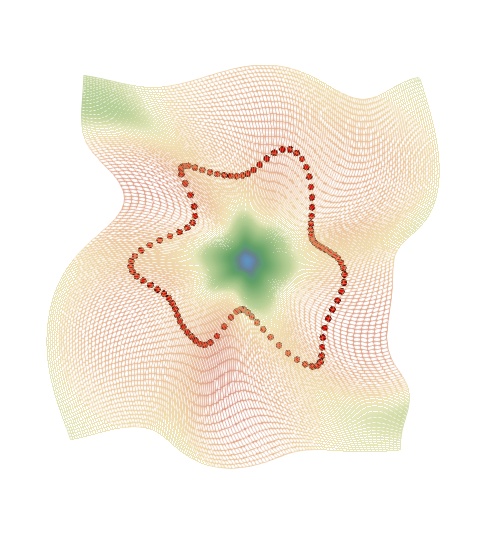}
\subcaption{$r_{6}$}
\end{subfigure}
\begin{subfigure}{.12\textwidth}
\centering
\includegraphics[width=1.\textwidth]{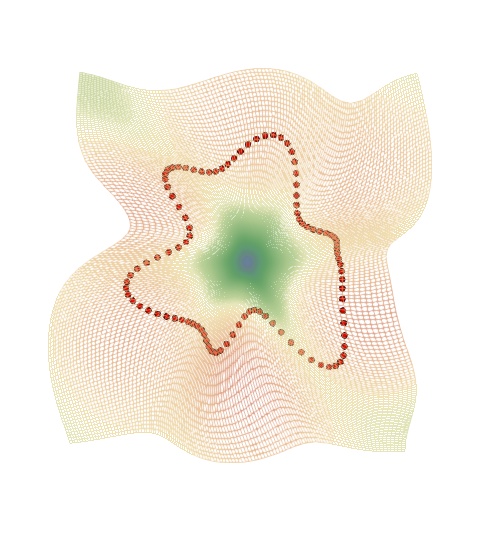}
\subcaption{$r_{10}$}
\end{subfigure}
\begin{subfigure}{.12\textwidth}
\centering
\includegraphics[width=1.\textwidth]{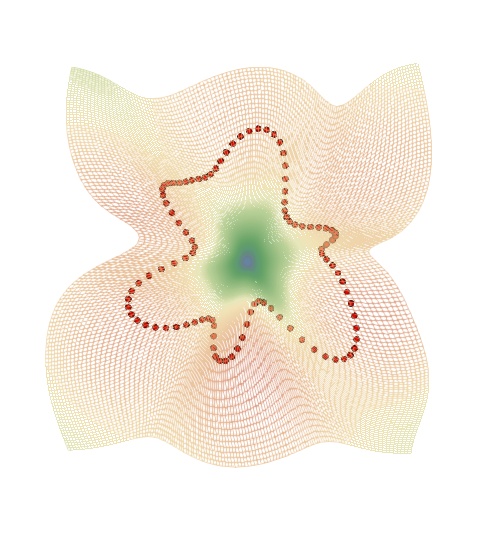}
\subcaption{$r_{14}$}
\end{subfigure}
\begin{subfigure}{.12\textwidth}
\centering
\includegraphics[width=1.\textwidth]{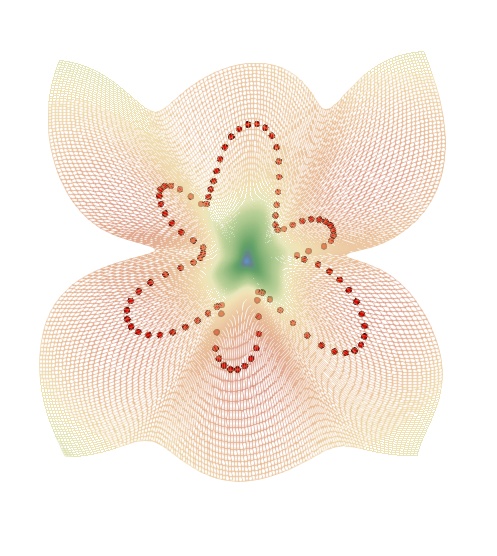}
\subcaption{$r_{17}$}
\end{subfigure}
\begin{subfigure}{.12\textwidth}
\centering
\includegraphics[width=1.\textwidth]{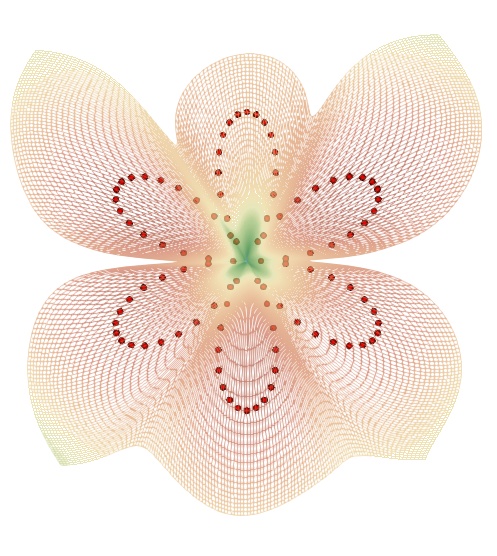}
\subcaption{$r_{20}$}
\end{subfigure}
\caption{Flowers: deformation. Base scales: $r_1$, $r_{20}$.}
\label{fig: flower rotate deformation}
\end{figure}

\begin{figure}
\centering
\begin{subfigure}{.11\textwidth}
\centering
\includegraphics[width=.9\textwidth]{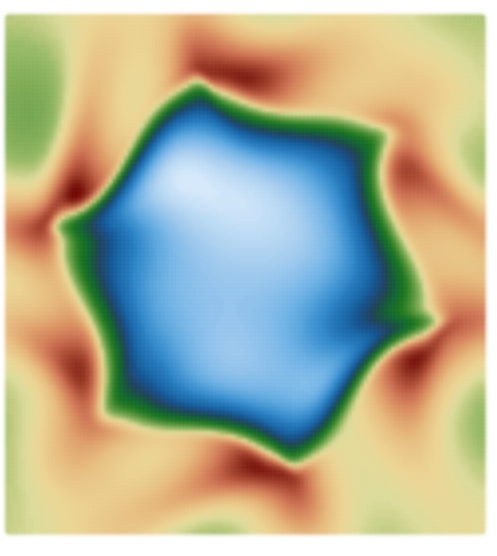}
\subcaption{$r_{1}$}
\end{subfigure}
\begin{subfigure}{.11\textwidth}
\centering
\includegraphics[width=.9\textwidth]{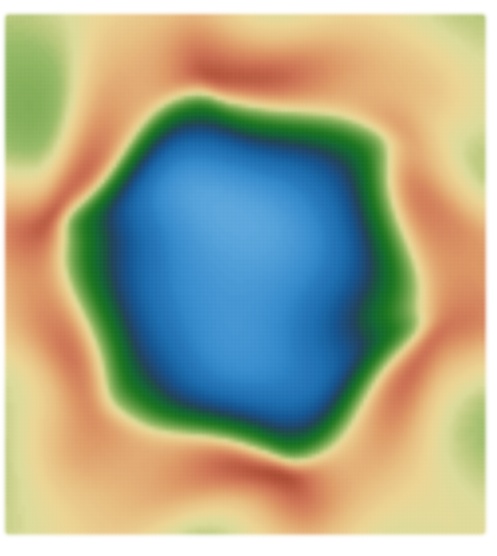}
\subcaption{$r_{3}$}
\end{subfigure}
\begin{subfigure}{.11\textwidth}
\centering
\includegraphics[width=.9\textwidth]{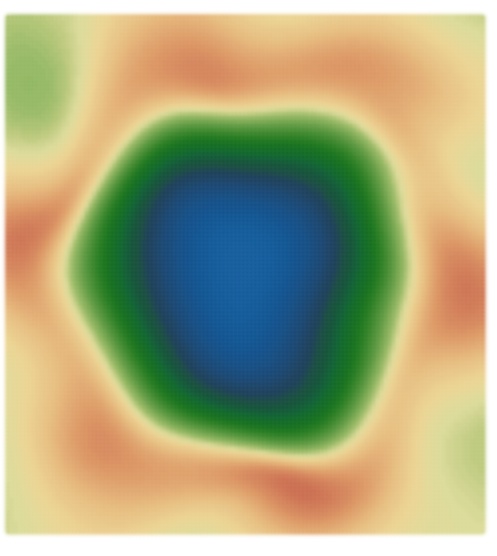}
\subcaption{$r_{6}$}
\end{subfigure}
\begin{subfigure}{.11\textwidth}
\centering
\includegraphics[width=.9\textwidth]{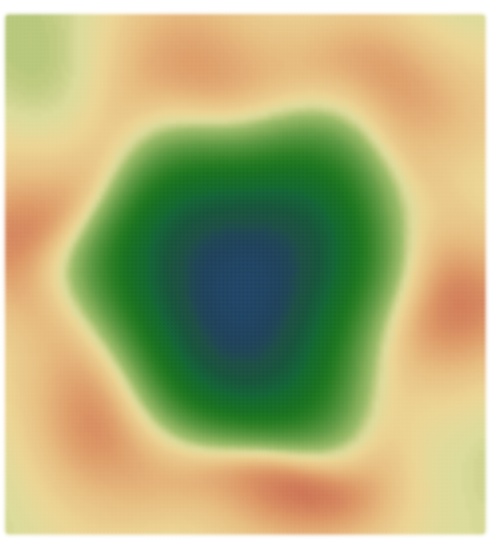}
\subcaption{$r_{10}$}
\end{subfigure}
\begin{subfigure}{.11\textwidth}
\centering
\includegraphics[width=.9\textwidth]{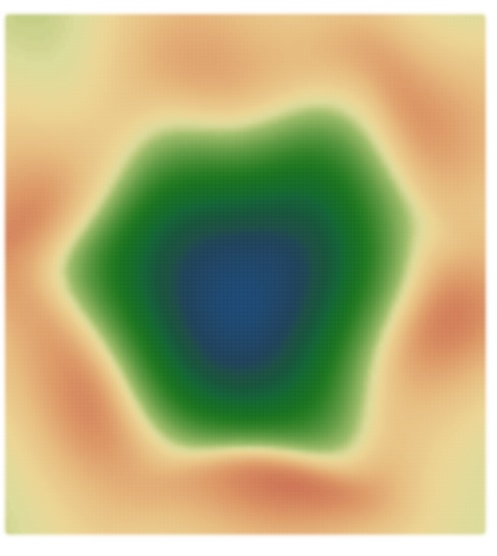}
\subcaption{$r_{14}$}
\end{subfigure}
\begin{subfigure}{.11\textwidth}
\centering
\includegraphics[width=.9\textwidth]{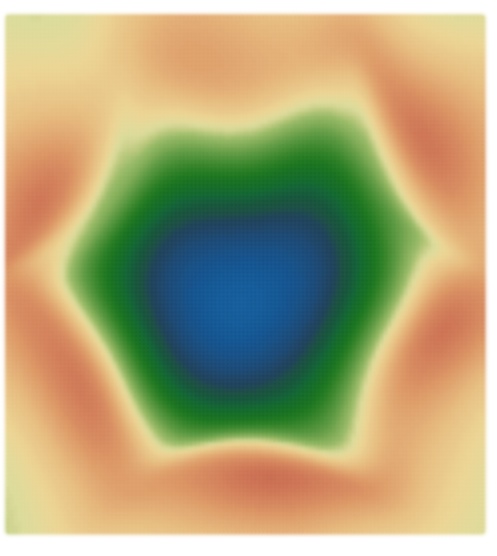}
\subcaption{$r_{17}$}
\end{subfigure}
\begin{subfigure}{.11\textwidth}
\centering
\includegraphics[width=.9\textwidth]{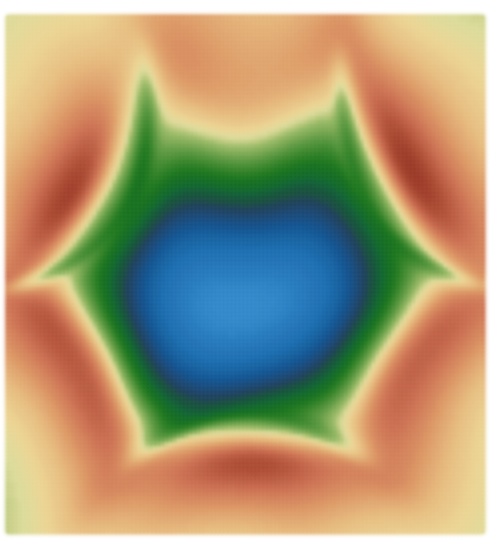}
\subcaption{$r_{20}$}
\end{subfigure}

\setcounter{subfigure}{0}
\begin{subfigure}{.11\textwidth}
\centering
\includegraphics[width=.9\linewidth]{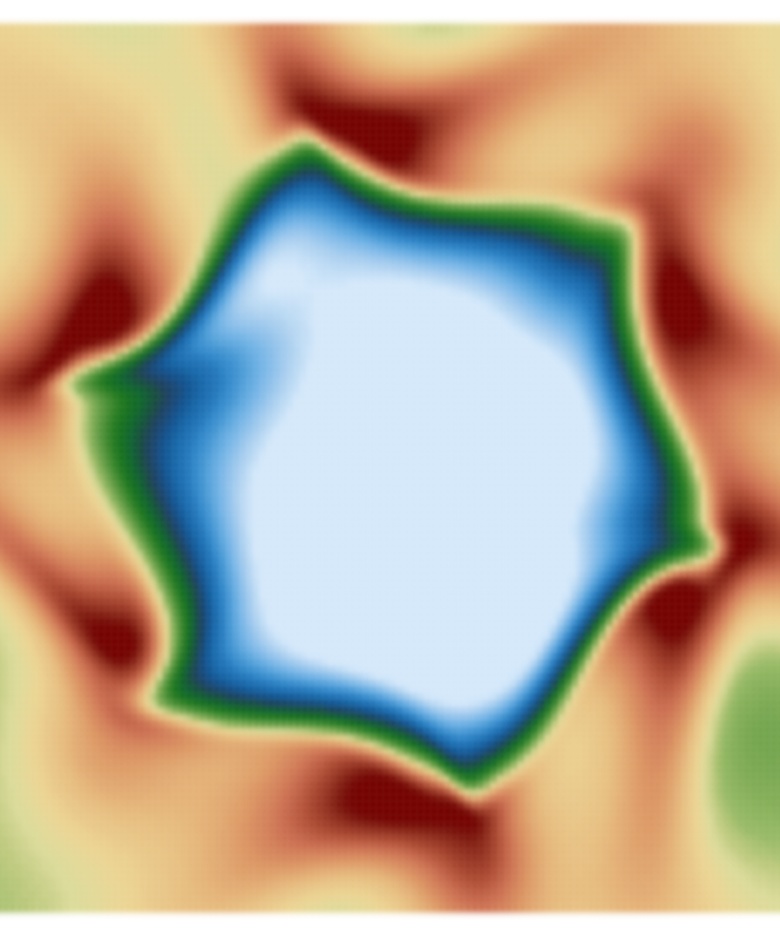}
\subcaption{$\rho^v_{r_1}$}
\end{subfigure}
\begin{subfigure}{.11\textwidth}
\centering
\includegraphics[width=.9\linewidth]{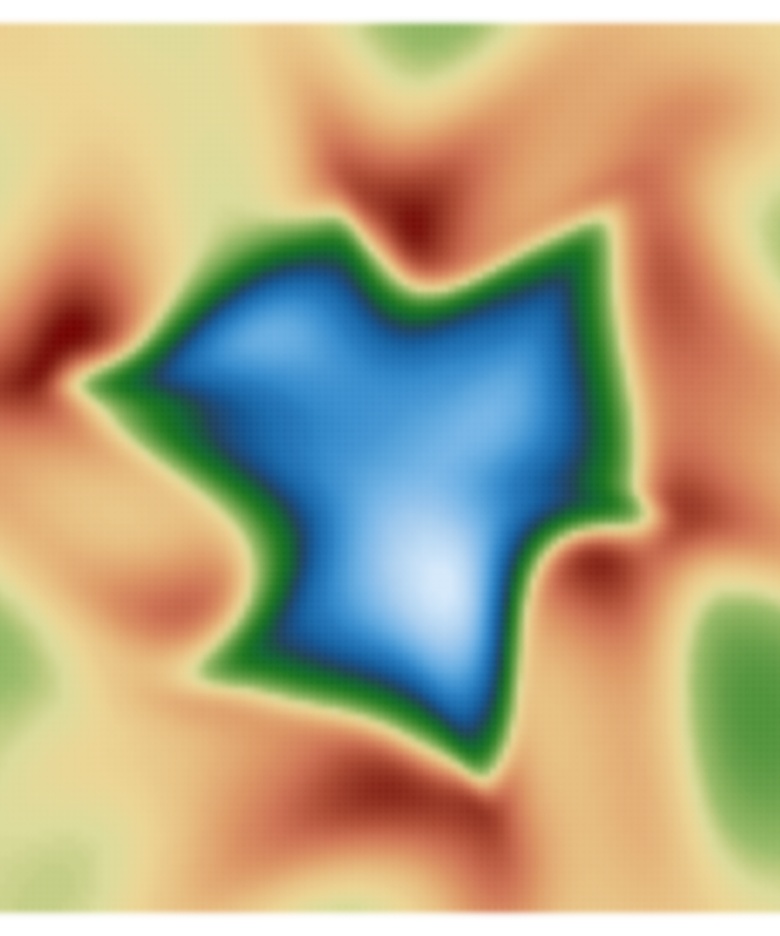}
\subcaption{$\rho^v_{r_2}$}
\end{subfigure}
\begin{subfigure}{.11\textwidth}
\centering
\includegraphics[width=.9\linewidth]{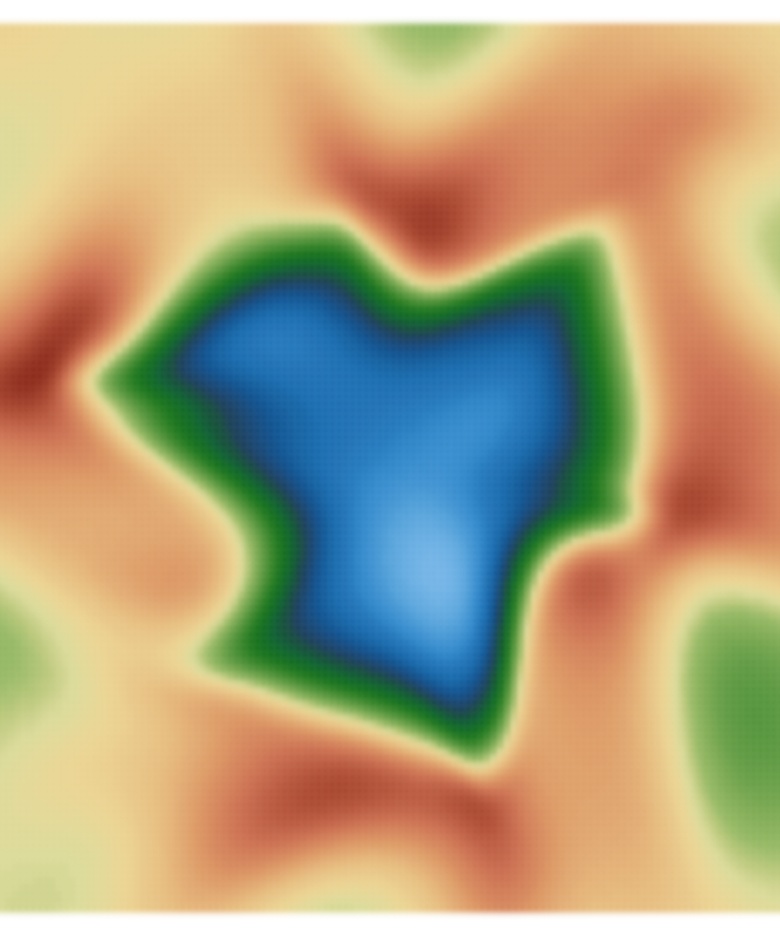}
\subcaption{$\rho^v_{r_3}$}
\end{subfigure}
\begin{subfigure}{.11\textwidth}
\centering
\includegraphics[width=.9\linewidth]{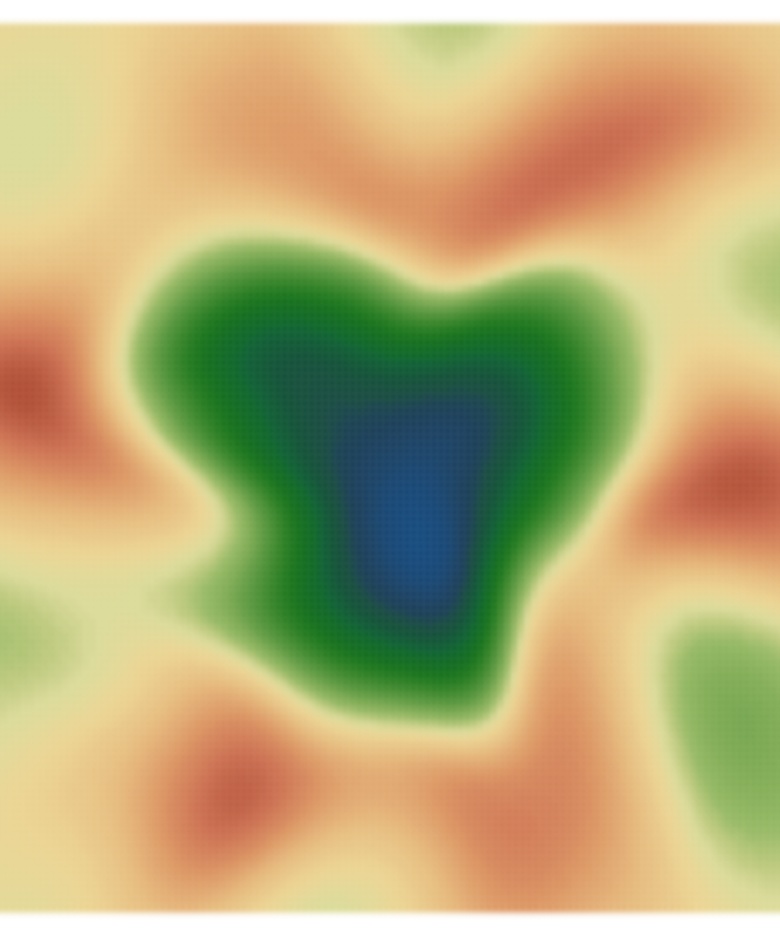}
\subcaption{$\rho^v_{r_6}$}
\end{subfigure}
\begin{subfigure}{.11\textwidth}
\centering
\includegraphics[width=.9\linewidth]{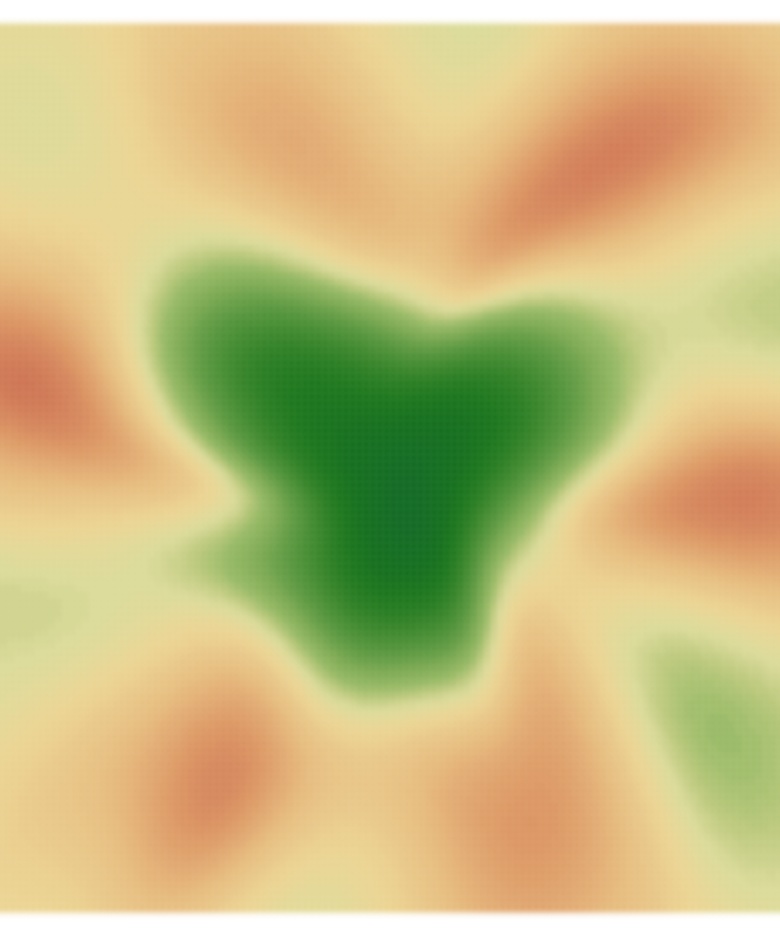}
\subcaption{$\rho^v_{r_{10}}$}
\end{subfigure}
\begin{subfigure}{.11\textwidth}
\centering
\includegraphics[width=.9\linewidth]{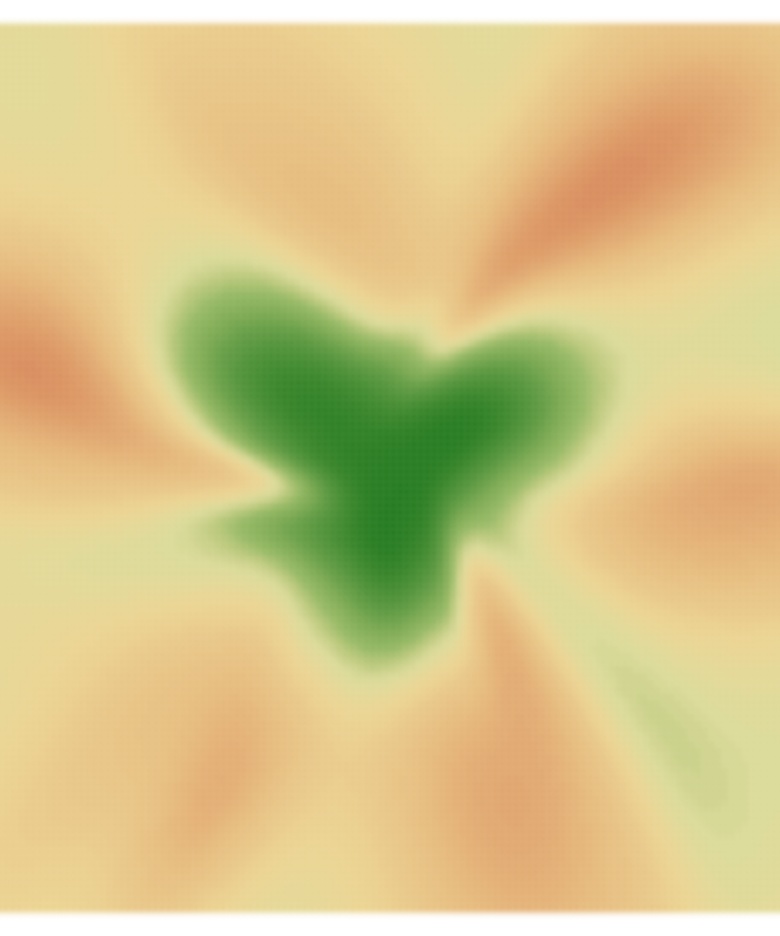}
\subcaption{$\rho^v_{r_{15}}$}
\end{subfigure}
\begin{subfigure}{.11\textwidth}
\centering
\includegraphics[width=.9\linewidth]{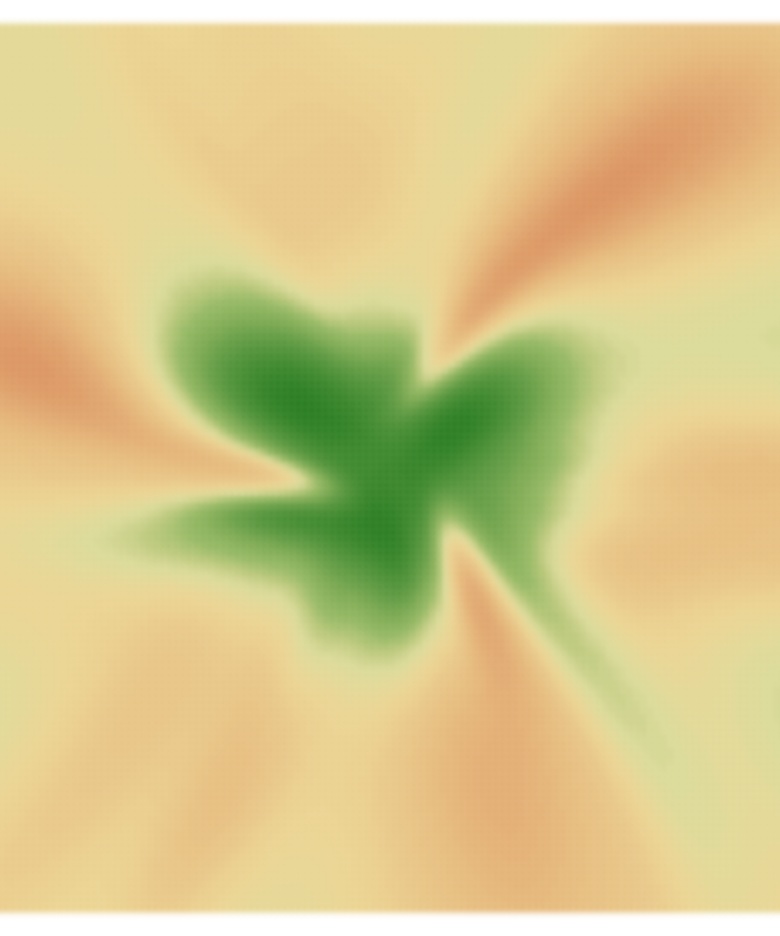}
\subcaption{$\rho^v_{r_{17}}$}
\end{subfigure}
\begin{subfigure}{.11\textwidth}
\centering
\includegraphics[width=.9\linewidth]{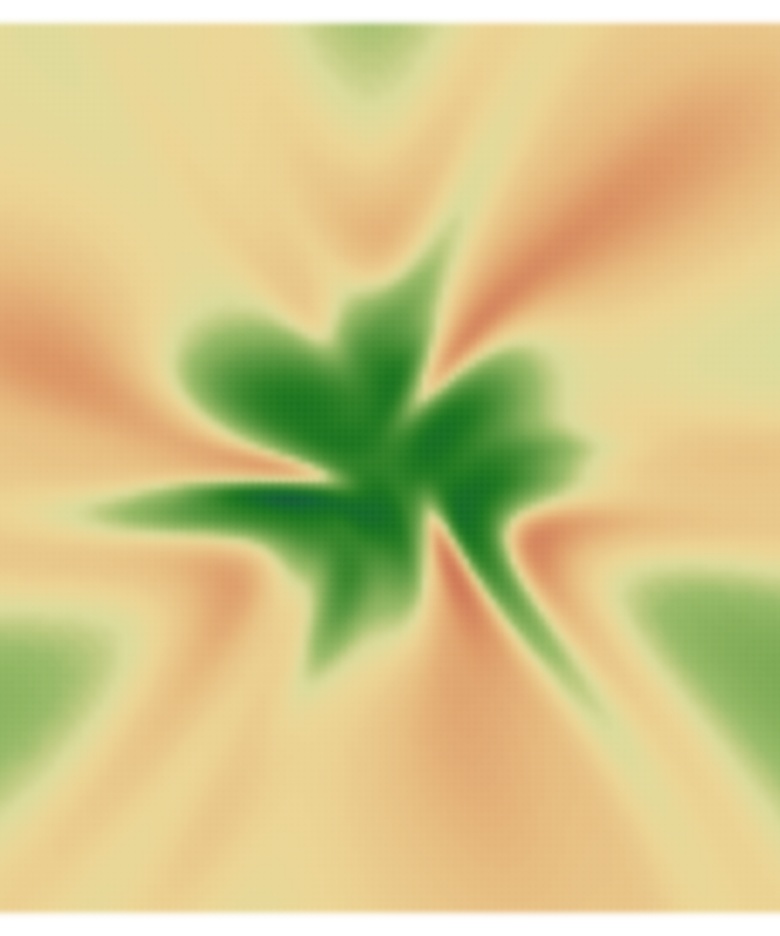}
\subcaption{$\rho^v_{r_{20}}$}
\end{subfigure}
\caption{Flowers: First row: log Jacobian; Second row: log Jacobian of residuals. Base scales: $r_{1}$, $r_{20}$.}
\label{fig: flower rotate residual}
\end{figure}

From  \cref{fig: flower rotate deformation} we see that the interior of the circle template shrinks dramatically, while the exterior of the circle stretches mostly. One might notice that at $r_{20}$ of  \cref{fig: flower rotate deformation}, the center of the flower has some folding. This is simply because our discretization with 20 time steps is not fine enough for the explicit Euler's method used to compute the diffeomorphism from the flow equation. (This singularity disappears when using 40 time steps, but we have kept the 20-step result here to be consistent with the other examples.)
\end{example}
\begin{example} This example maps a circle to a flower at the finest scale $r_1$ and to the same circle at the coarsest scale $r_{20}$.  \Cref{fig: flower and circle deformation} shows the deformed template at different scales. We can see that the flower gradually deforms to the circle. We can observe from the first row of  \cref{fig: flower and circle residual} at $r_1$ that the center of the circle is dramatically compressed, and then gradually stretch out to small values as the scale varies towards $r_{20}$, the second base scale where the landmarks are expected to be steady. As the deformation at the coarsest scale $r_{20}$ is approximately identity, the residual plots (second row of \cref{fig: flower and circle residual}) demonstrate an approximate decomposition of the identity map as compositions of transition transformations. 

\begin{figure}
\centering
\begin{subfigure}{.3\textwidth}
\centering
\includegraphics[width=.4\linewidth]{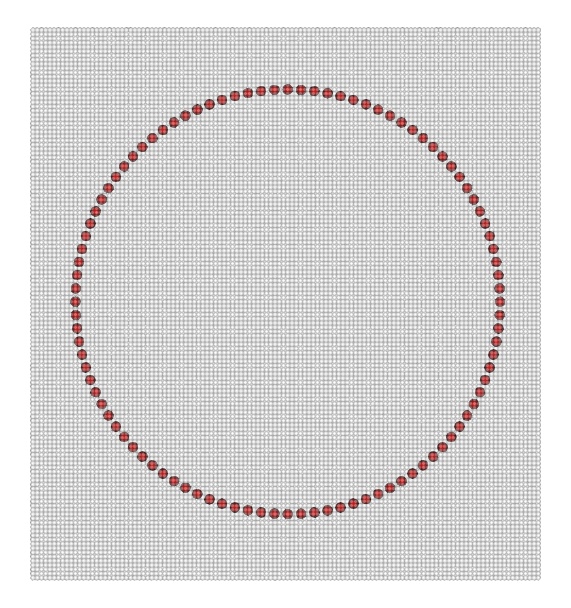}
\subcaption{template}
\end{subfigure}
\begin{subfigure}{.3\textwidth}
\centering
\includegraphics[width=.4\linewidth]{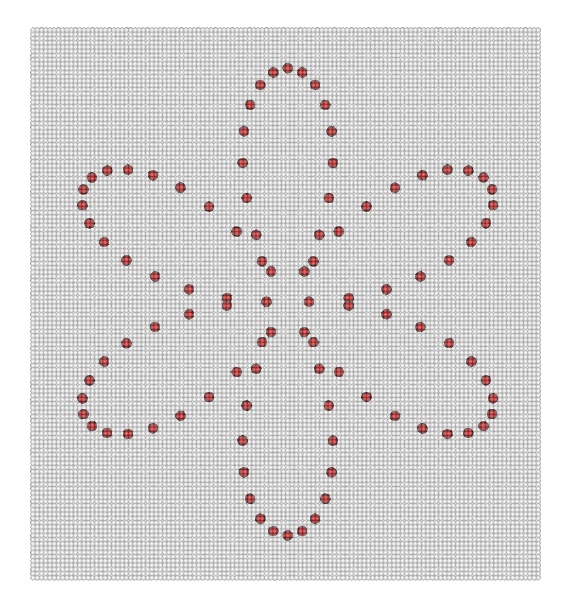}
\subcaption{target at $r_1$}
\end{subfigure}
\begin{subfigure}{.3\textwidth}
\centering
\includegraphics[width=.4\linewidth]{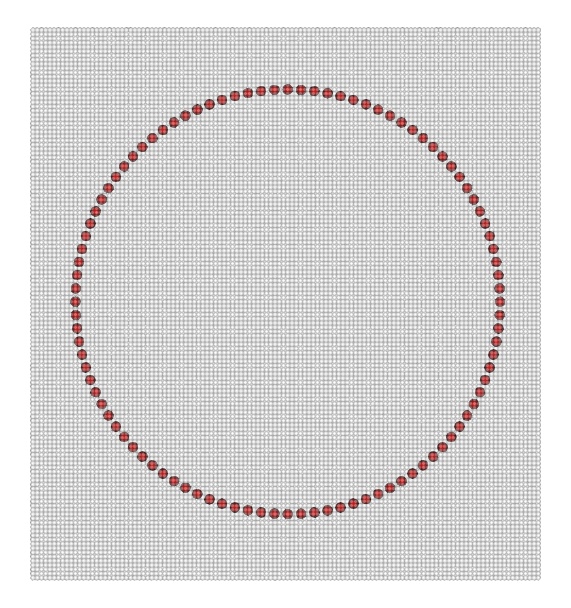}
\subcaption{target at $r_{20}$}
\end{subfigure}
\caption{Flower and circle example: template and targets.}
\end{figure}

\begin{figure}
\centering
\begin{subfigure}{.16\textwidth}
\centering
\includegraphics[width=.8\linewidth]{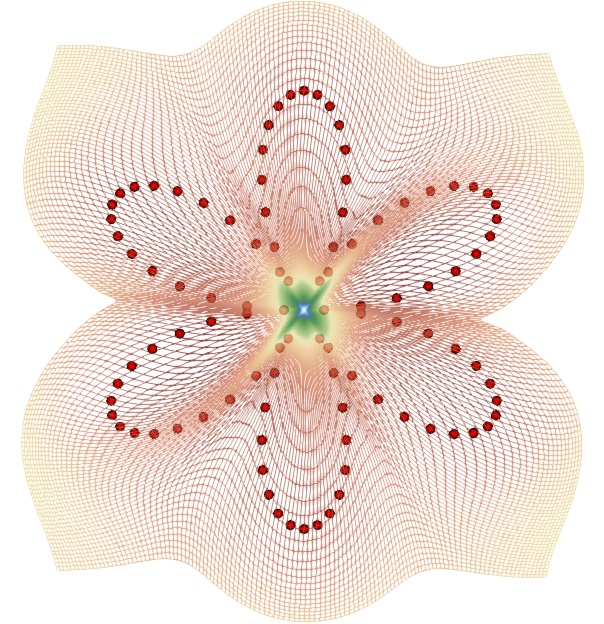}
\subcaption{$r_{1}$}
\end{subfigure}
\begin{subfigure}{.16\textwidth}
\centering
\includegraphics[width=.8\linewidth]{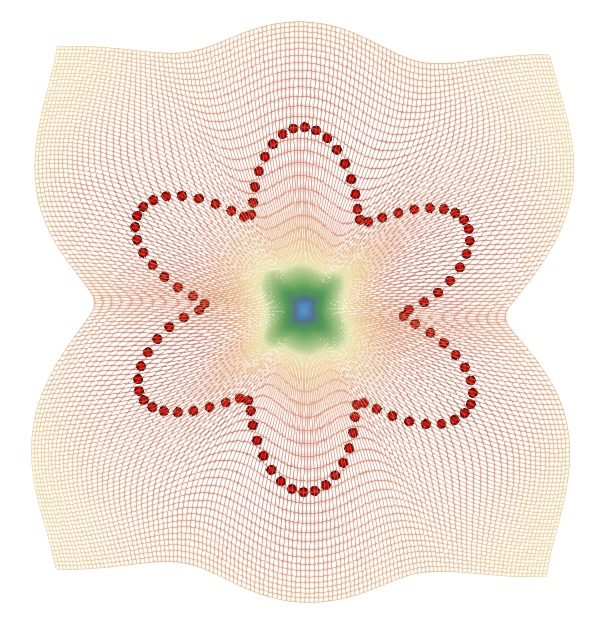}
\subcaption{$r_{4}$}
\end{subfigure}
\begin{subfigure}{.16\textwidth}
\centering
\includegraphics[width=.8\linewidth]{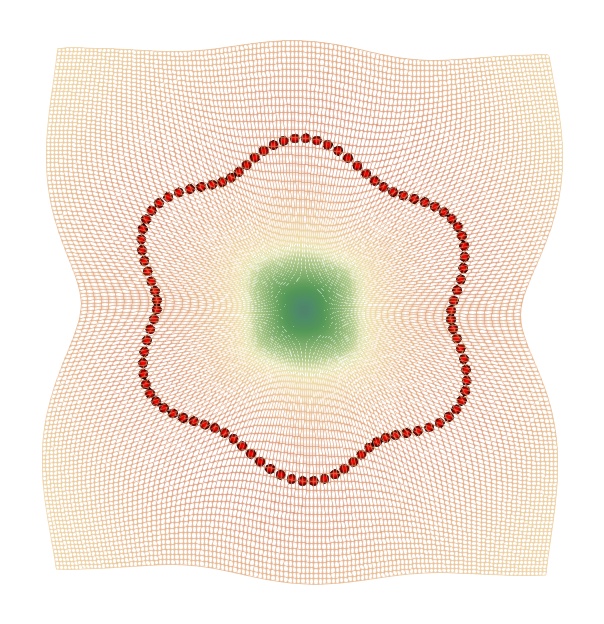}
\subcaption{$r_{7}$}
\end{subfigure}
\begin{subfigure}{.16\textwidth}
\centering
\includegraphics[width=.8\linewidth]{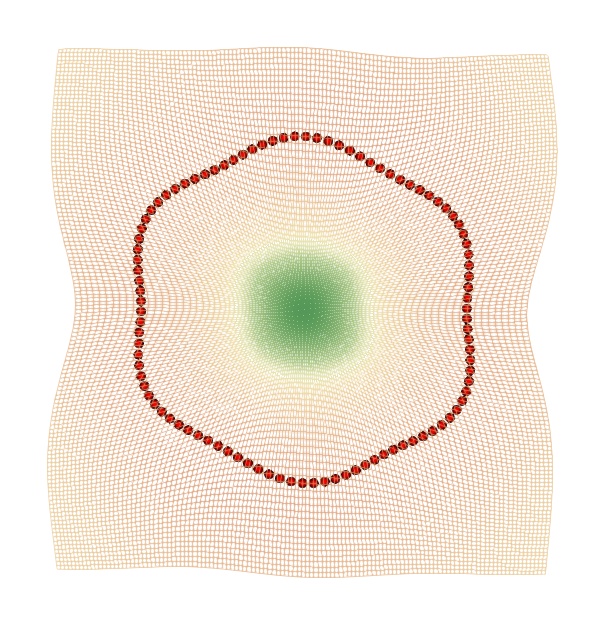}
\subcaption{$r_{9}$}
\end{subfigure}
\begin{subfigure}{.16\textwidth}
\centering
\includegraphics[width=.8\linewidth]{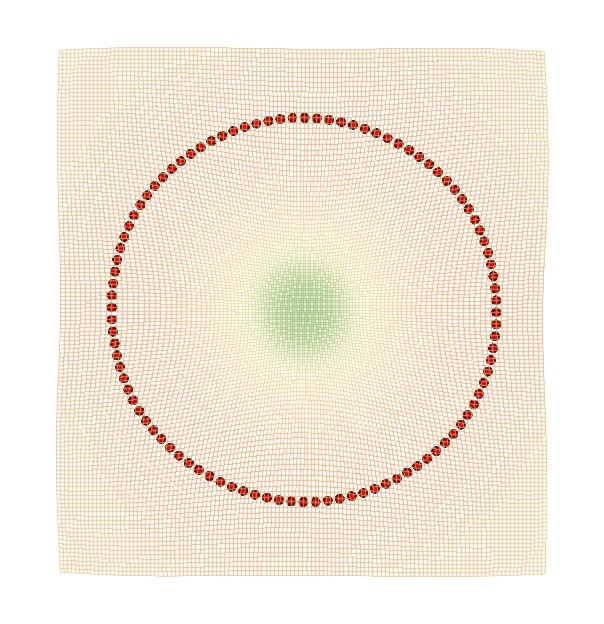}
\subcaption{$r_{17}$}
\end{subfigure}
\begin{subfigure}{.16\textwidth}
\centering
\includegraphics[width=.8\linewidth]{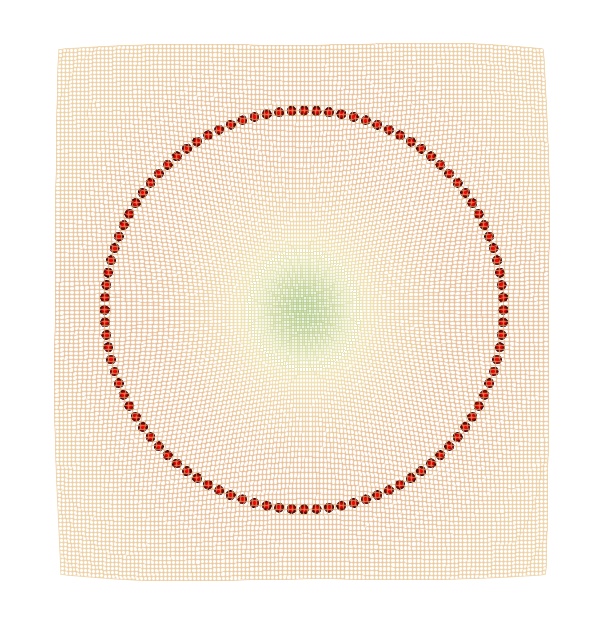}
\subcaption{$r_{20}$}
\end{subfigure}
\caption{Flower and circle: deformation. Base scales: $r_{1}$, $r_{20}$.}
\label{fig: flower and circle deformation}
\end{figure}

\begin{figure}
\centering
\begin{subfigure}{.12\textwidth}
\centering
\includegraphics[width=.8\linewidth]{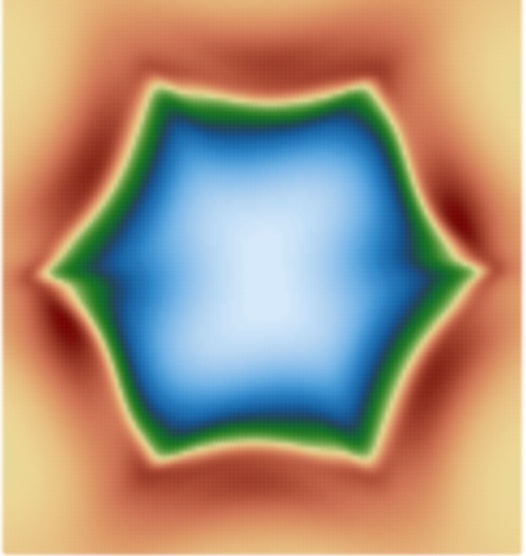}
\subcaption{$r_{1}$}
\end{subfigure}
\begin{subfigure}{.12\textwidth}
\centering
\includegraphics[width=.8\linewidth]{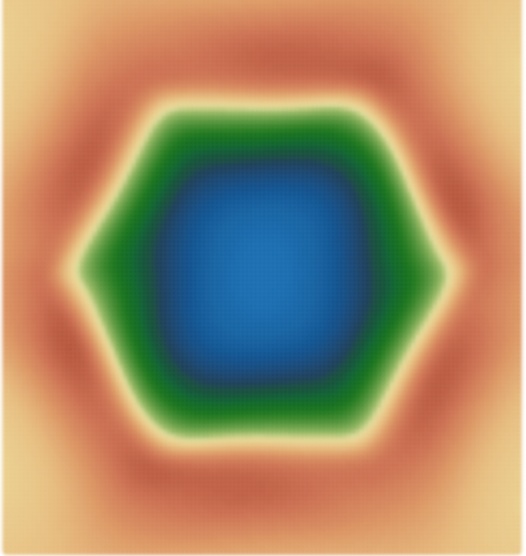}
\subcaption{$r_{4}$}
\end{subfigure}
\begin{subfigure}{.12\textwidth}
\centering
\includegraphics[width=.8\linewidth]{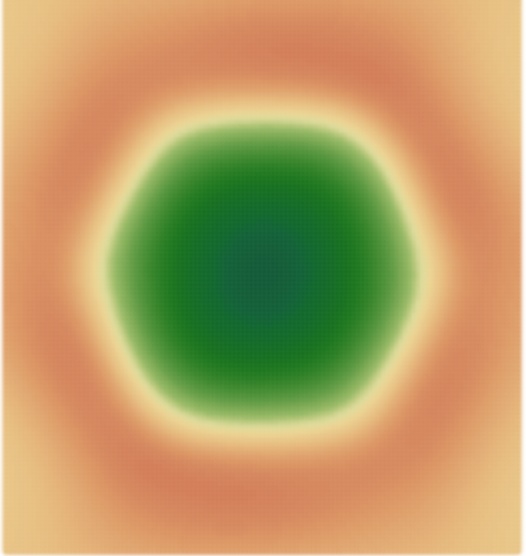}
\subcaption{$r_{7}$}
\end{subfigure}
\begin{subfigure}{.12\textwidth}
\centering
\includegraphics[width=.8\linewidth]{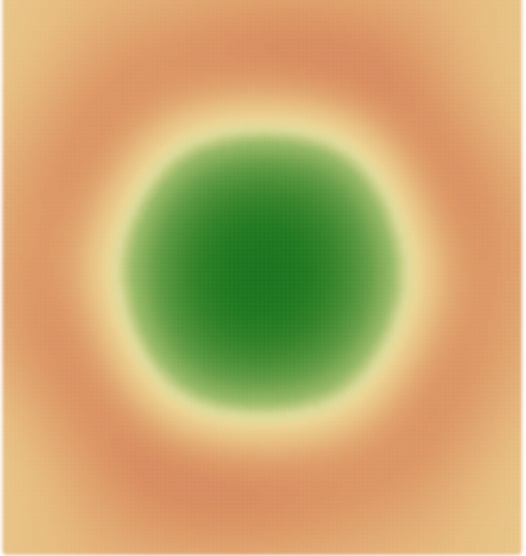}
\subcaption{$r_{9}$}
\end{subfigure}
\begin{subfigure}{.12\textwidth}
\centering
\includegraphics[width=.8\linewidth]{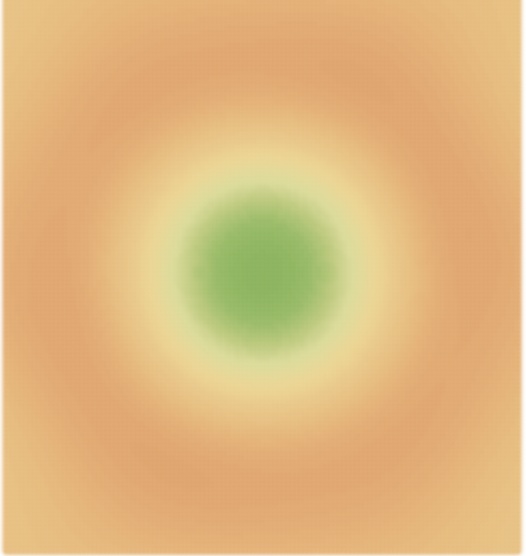}
\subcaption{$r_{17}$}
\end{subfigure}
\begin{subfigure}{.12\textwidth}
\centering
\includegraphics[width=.8\linewidth]{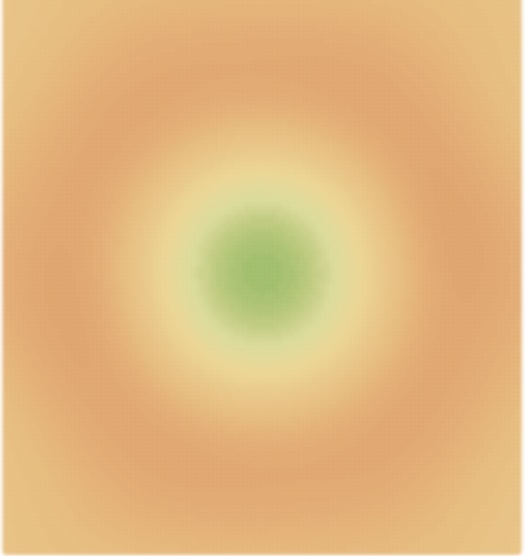}
\subcaption{$r_{20}$}
\end{subfigure}

\setcounter{subfigure}{0}
\begin{subfigure}{.12\textwidth}
\centering
\includegraphics[width=.8\linewidth]{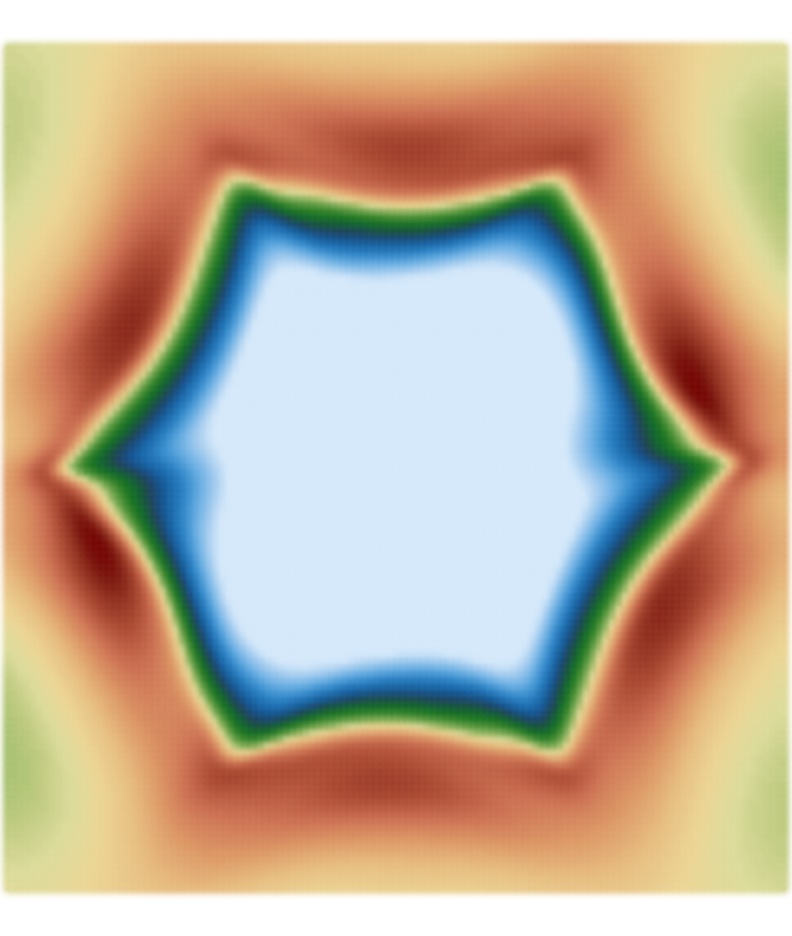}
\subcaption{$\rho^v_{r_{1}}$}
\end{subfigure}
\begin{subfigure}{.12\textwidth}
\centering
\includegraphics[width=.8\linewidth]{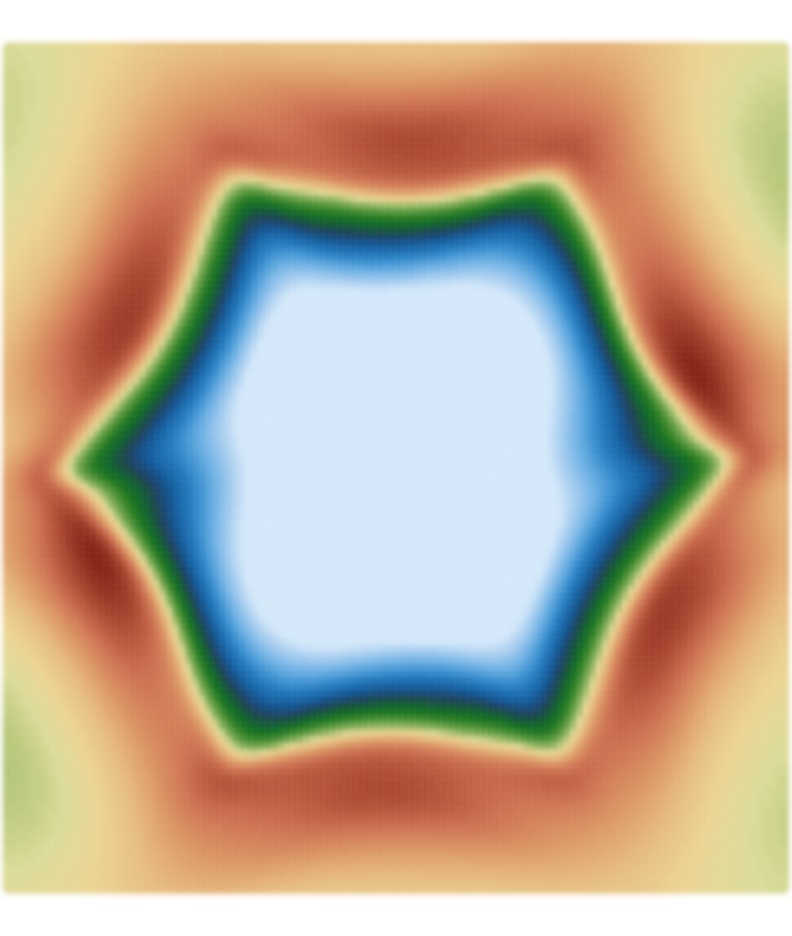}
\subcaption{$\rho^v_{r_{2}}$}
\end{subfigure}
\begin{subfigure}{.12\textwidth}
\centering
\includegraphics[width=.8\linewidth]{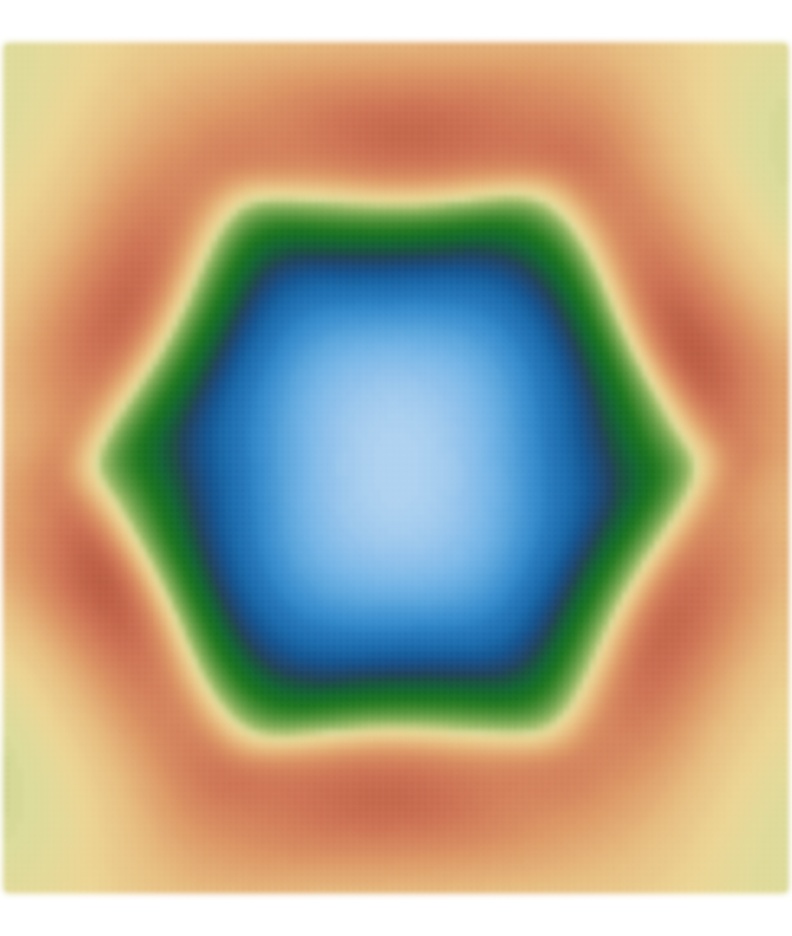}
\subcaption{$\rho^v_{r_{4}}$}
\end{subfigure}
\begin{subfigure}{.12\textwidth}
\centering
\includegraphics[width=.8\linewidth]{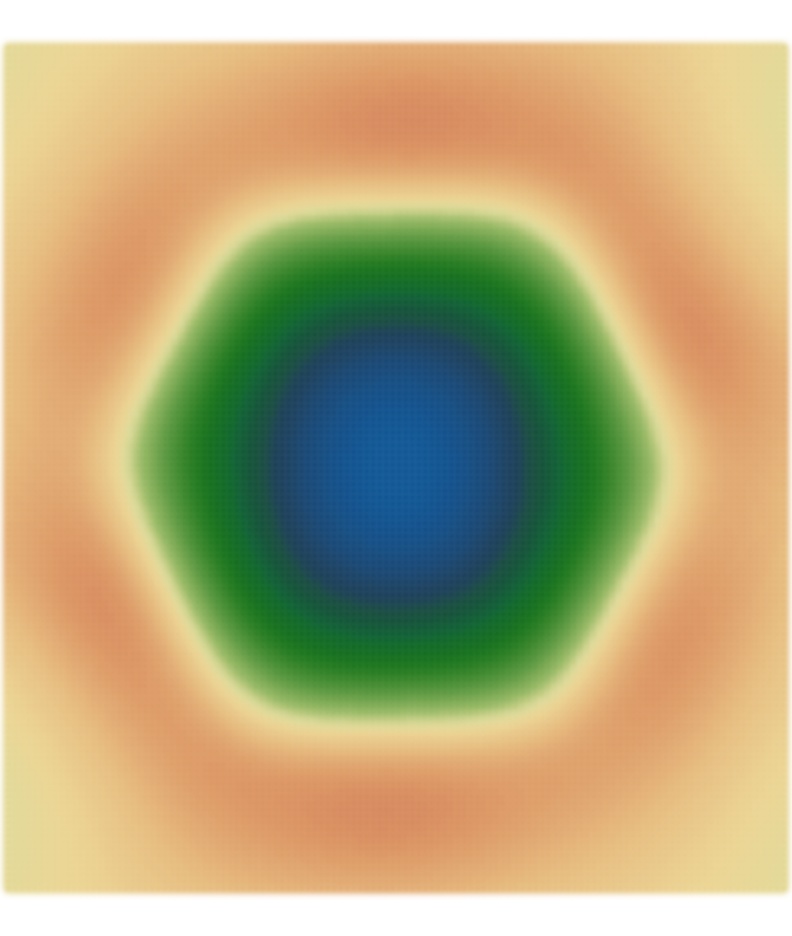}
\subcaption{$\rho^v_{r_{7}}$}
\end{subfigure}
\begin{subfigure}{.12\textwidth}
\centering
\includegraphics[width=.8\linewidth]{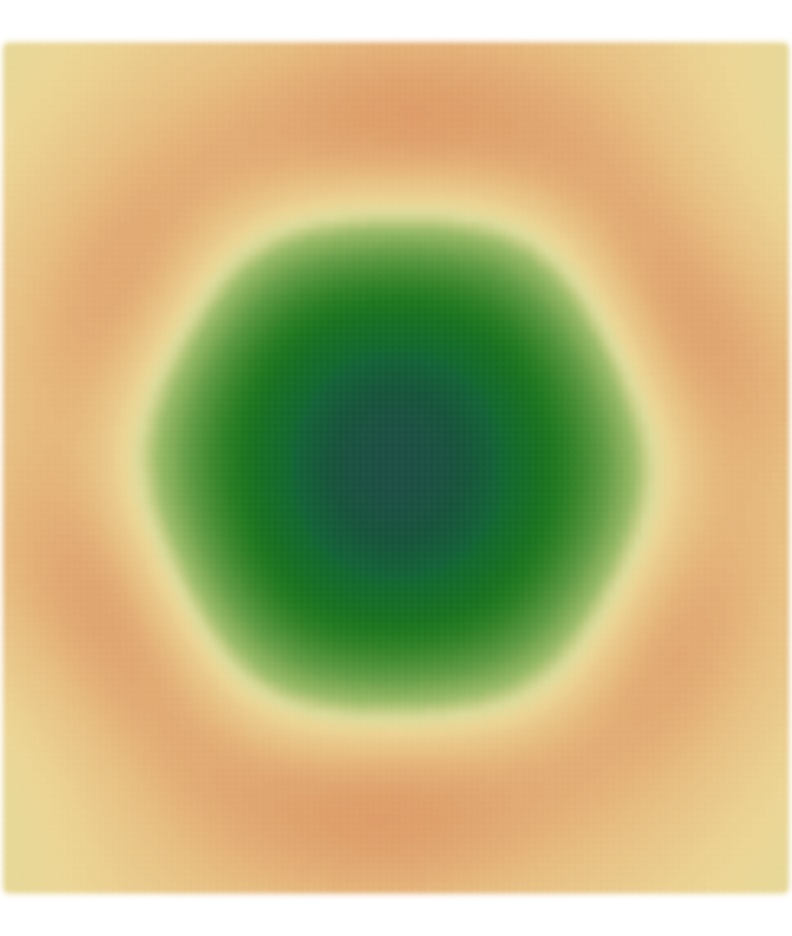}
\subcaption{$\rho^v_{r_{9}}$}
\end{subfigure}
\begin{subfigure}{.12\textwidth}
\centering
\includegraphics[width=.8\linewidth]{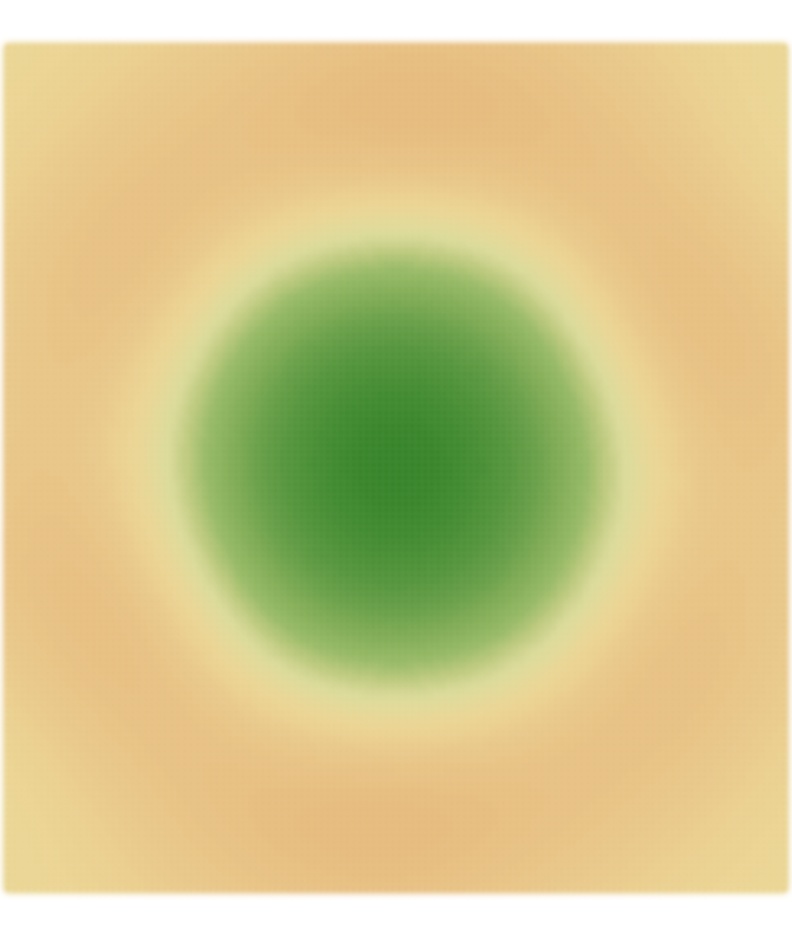}
\subcaption{$\rho^v_{r_{17}}$}
\end{subfigure}
\begin{subfigure}{.12\textwidth}
\centering
\includegraphics[width=.8\linewidth]{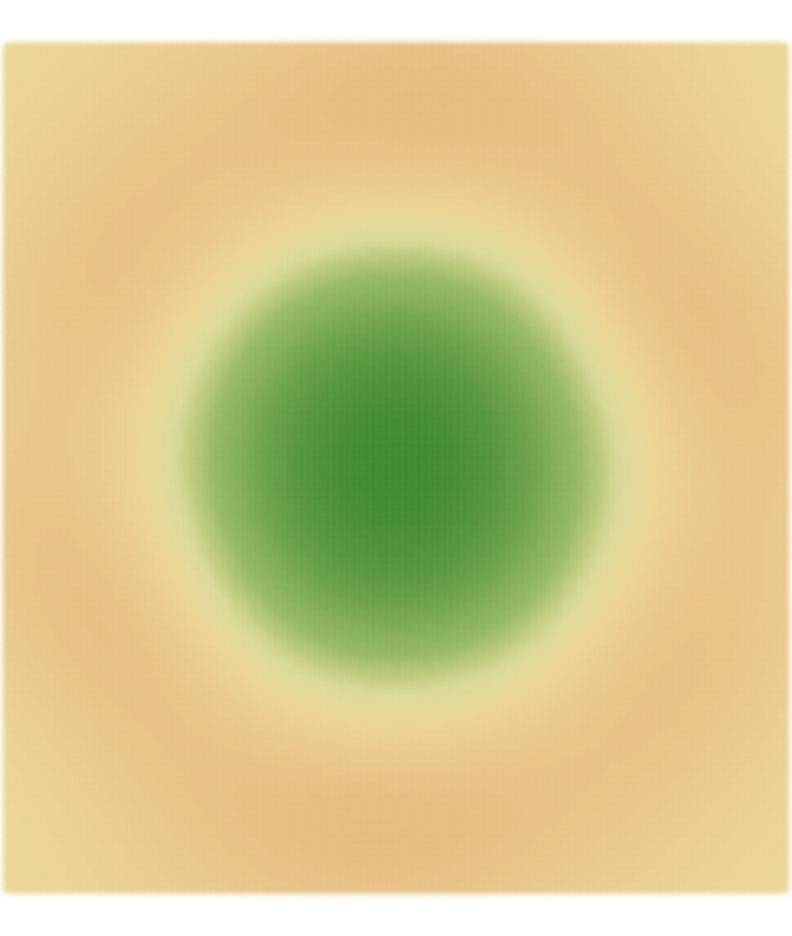}
\subcaption{$\rho^v_{r_{20}}$}
\end{subfigure}
\caption{Flower and circle: First row: log Jacobian; Second row: log Jacobian of residuals. Base scale: $r_1$, $r_{20}$.}
\label{fig: flower and circle residual}
\end{figure}
\end{example}

\begin{example}
For this example, we consider an other schematic human example in which two templates being mapped to two different targets at two scales respectively. As is shown in  \cref{fig: waving template and target}, the template in (a) is mapped to the target in (b) at scale $r_1$, and template in (c) is mapped to the target in (d) at $r_{20}$.

\begin{figure}
\centering
\begin{subfigure}{.24\textwidth}
\centering
\includegraphics[width=\linewidth]{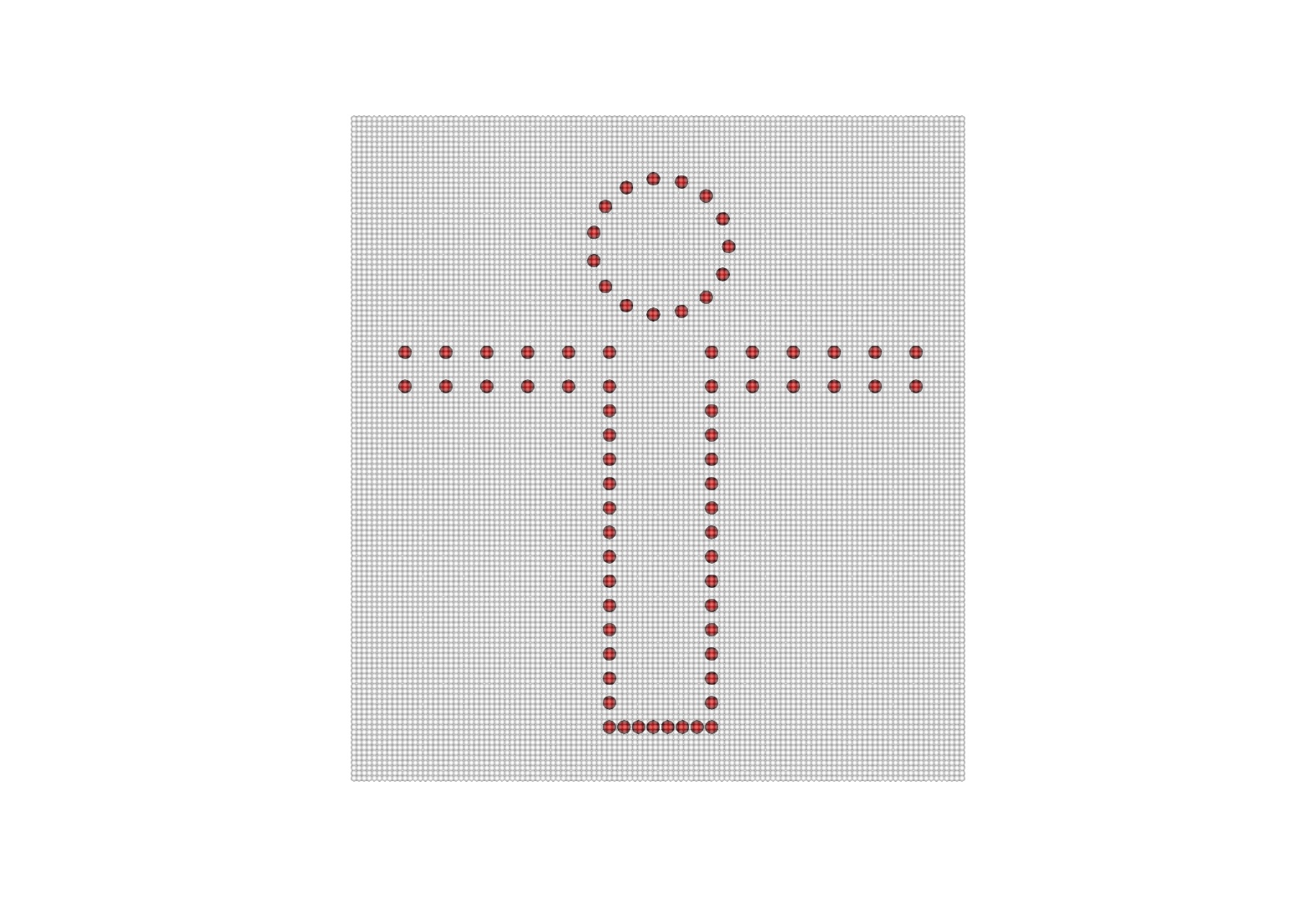}
\subcaption{template at $r_1$}
\end{subfigure}
\begin{subfigure}{.24\textwidth}
\centering
\includegraphics[width=\linewidth]{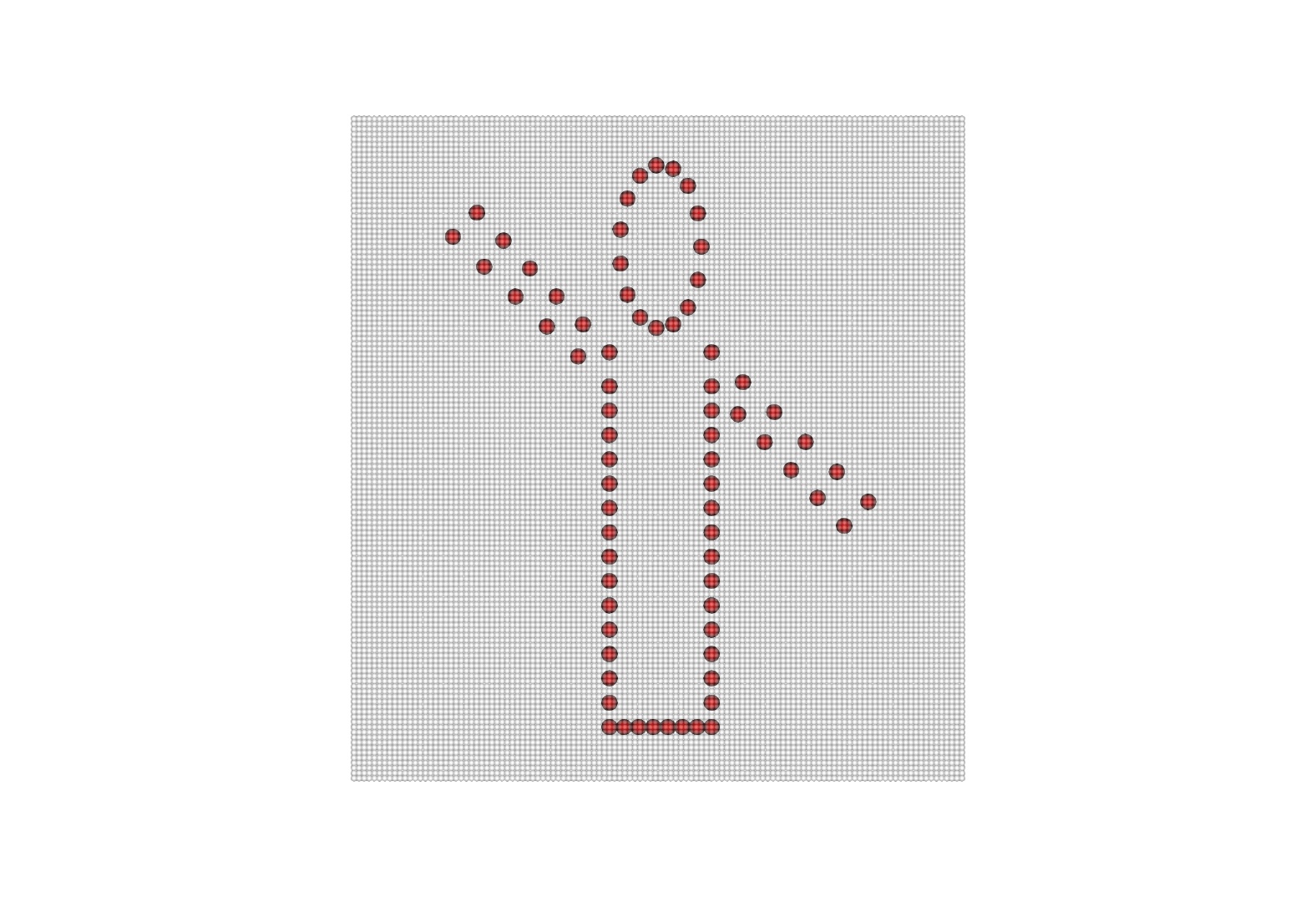}
\subcaption{target at $r_1$}
\end{subfigure}
\begin{subfigure}{.24\textwidth}
\centering
\includegraphics[width=\linewidth]{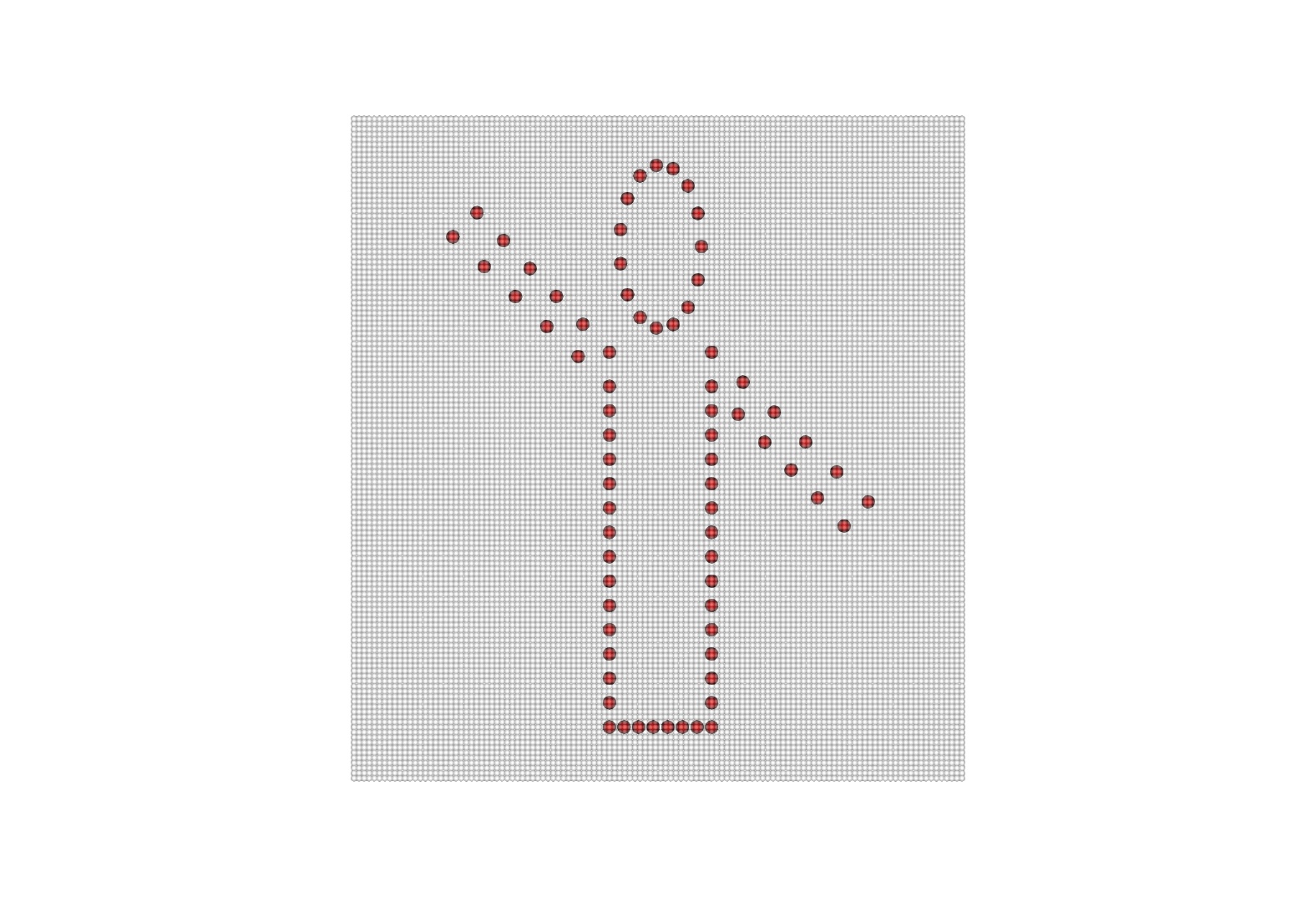}
\subcaption{template at $r_{20}$}
\end{subfigure}
\begin{subfigure}{.24\textwidth}
\centering
\includegraphics[width=\linewidth]{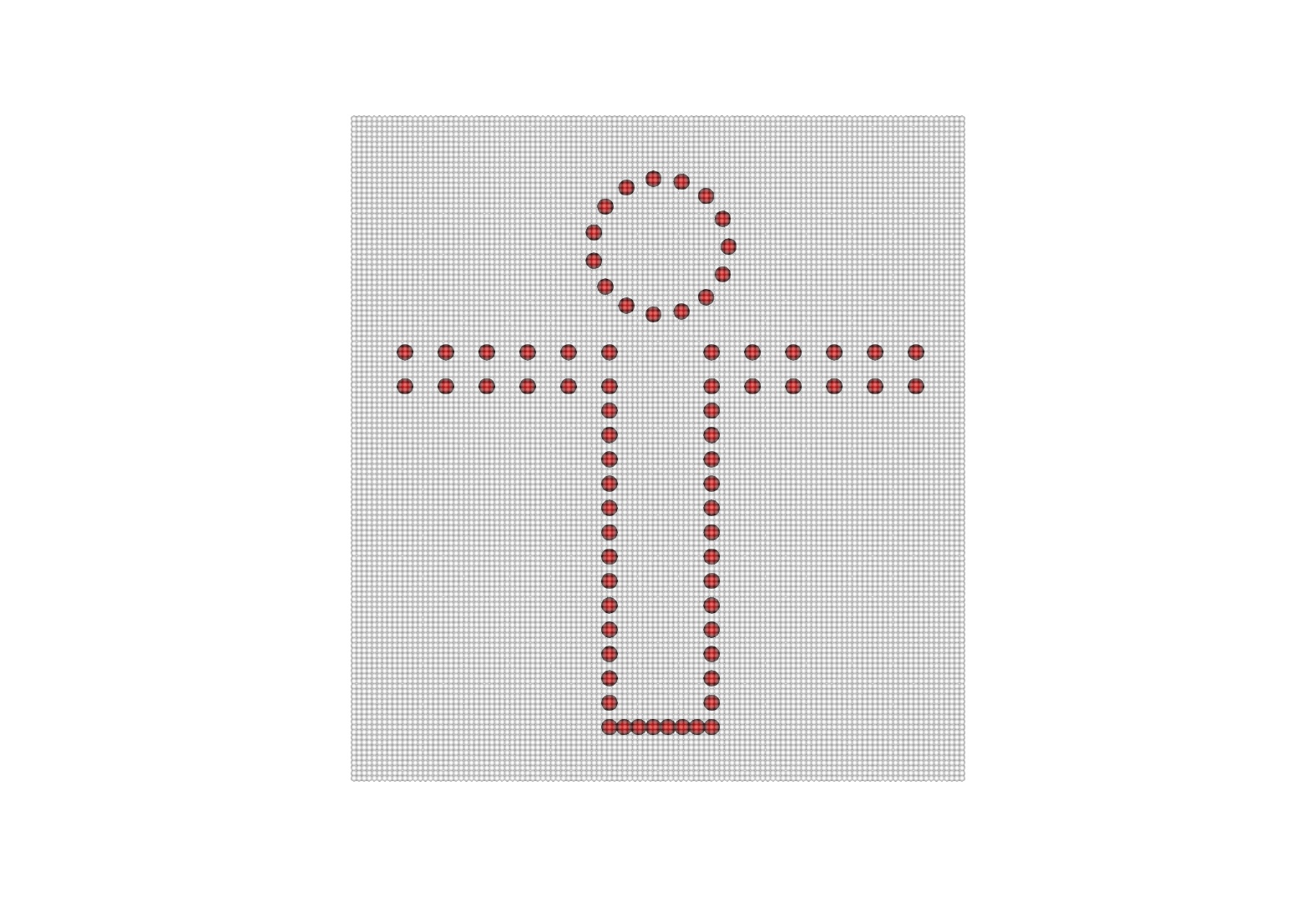}
\subcaption{target at $r_{20}$}
\end{subfigure}
\caption{Schematic human example 2: templates and targets.}
\label{fig: waving template and target}
\end{figure}

Since there are two sets of templates, there are two set of landmarks we need to keep track of.  The motion of these landmarks at different scales are shows in the two rows of \cref{fig: lmkset2 deformation}.
\Cref{fig: waving residual} shows the log Jacobian  of the transformations and  of their the residuals  on the original grid. Although the multiscale result presents interesting inter-scale behavior, the existence of multiple templates prevents further interpretation. 

\begin{figure}
\centering
\begin{subfigure}{.15\textwidth}
\centering
\includegraphics[width=\linewidth]{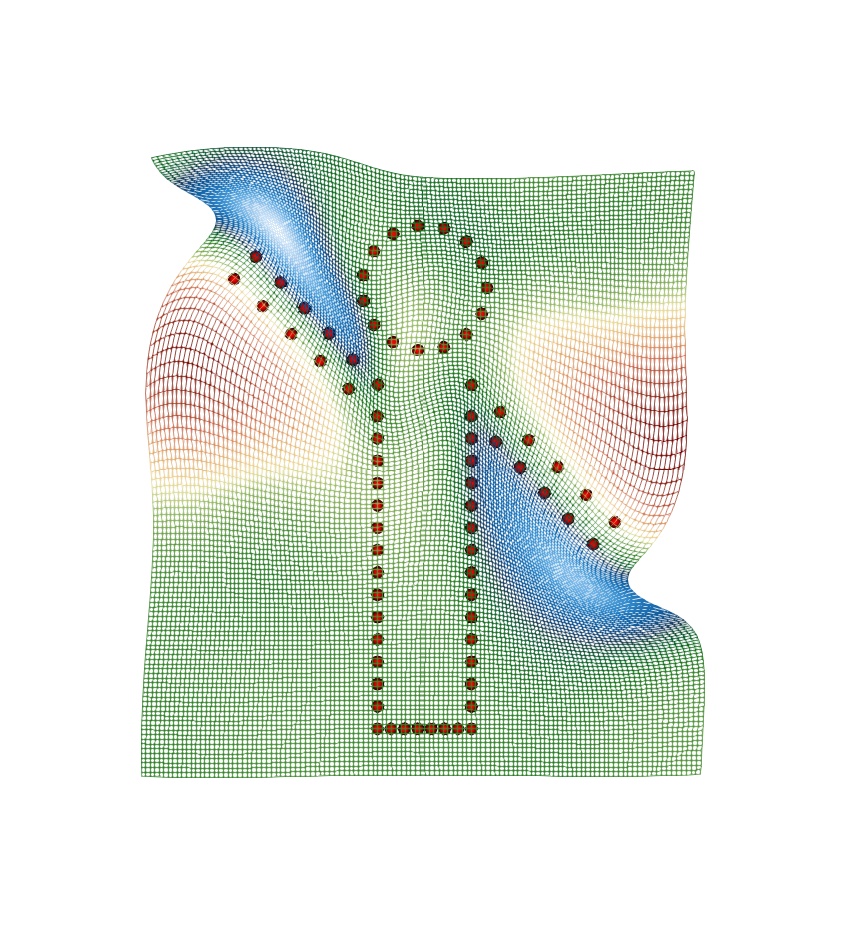}
\subcaption{$r_{1}$}
\end{subfigure}
\begin{subfigure}{.15\textwidth}
\centering
\includegraphics[width=\linewidth]{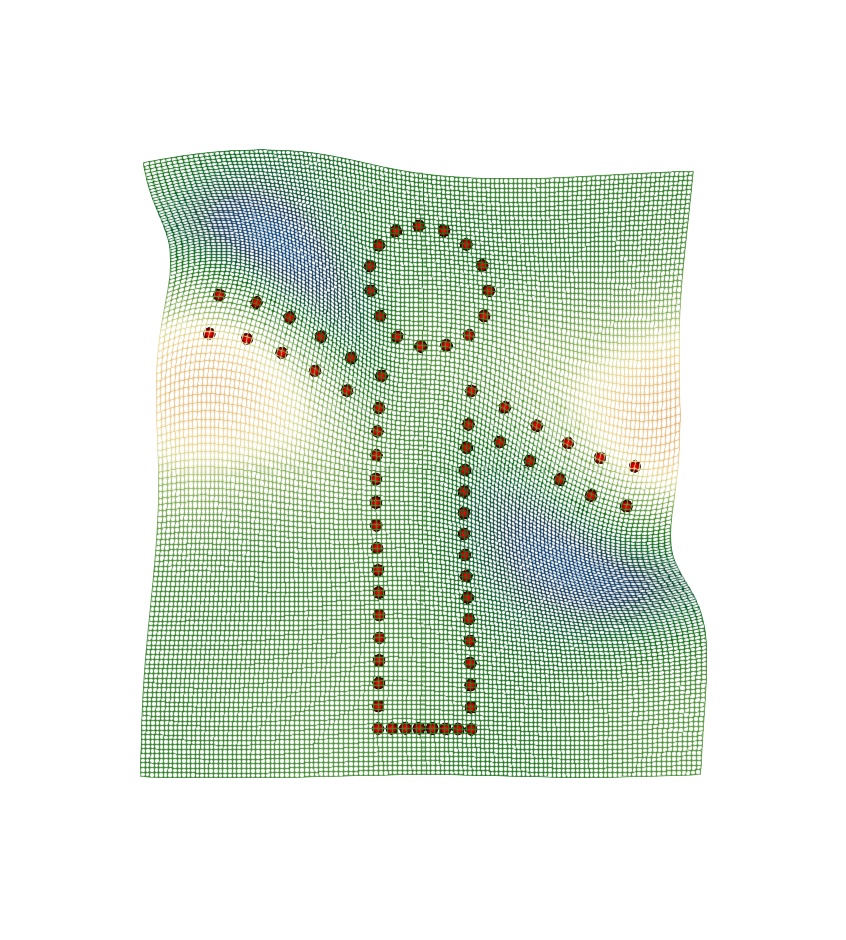}
\subcaption{$r_{5}$}
\end{subfigure}
\begin{subfigure}{.15\textwidth}
\centering
\includegraphics[width=\linewidth]{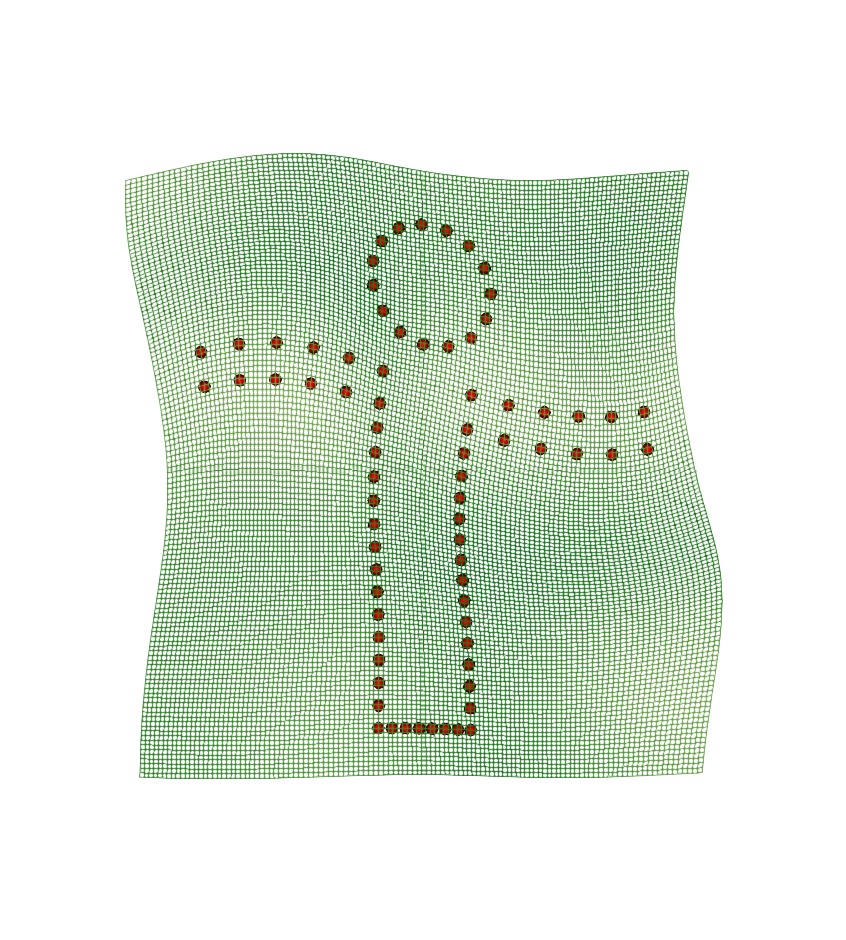}
\subcaption{$r_{9}$}
\end{subfigure}
\begin{subfigure}{.15\textwidth}
\centering
\includegraphics[width=\linewidth]{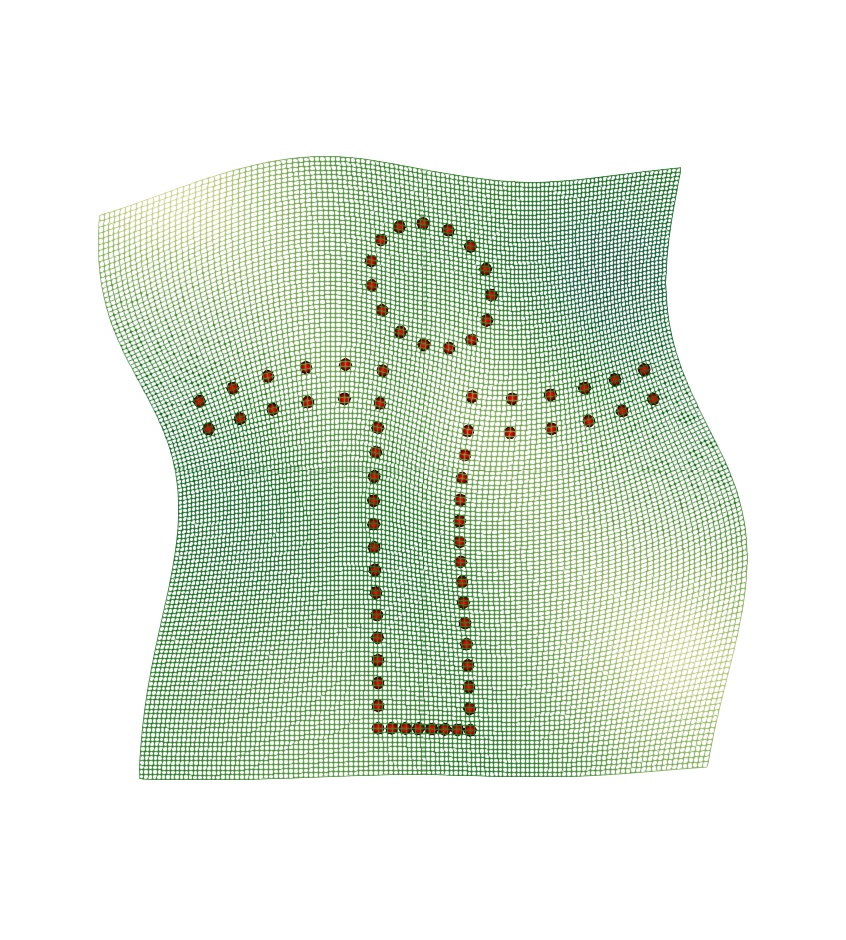}
\subcaption{$r_{13}$}
\end{subfigure}
\begin{subfigure}{.15\textwidth}
\centering
\includegraphics[width=\linewidth]{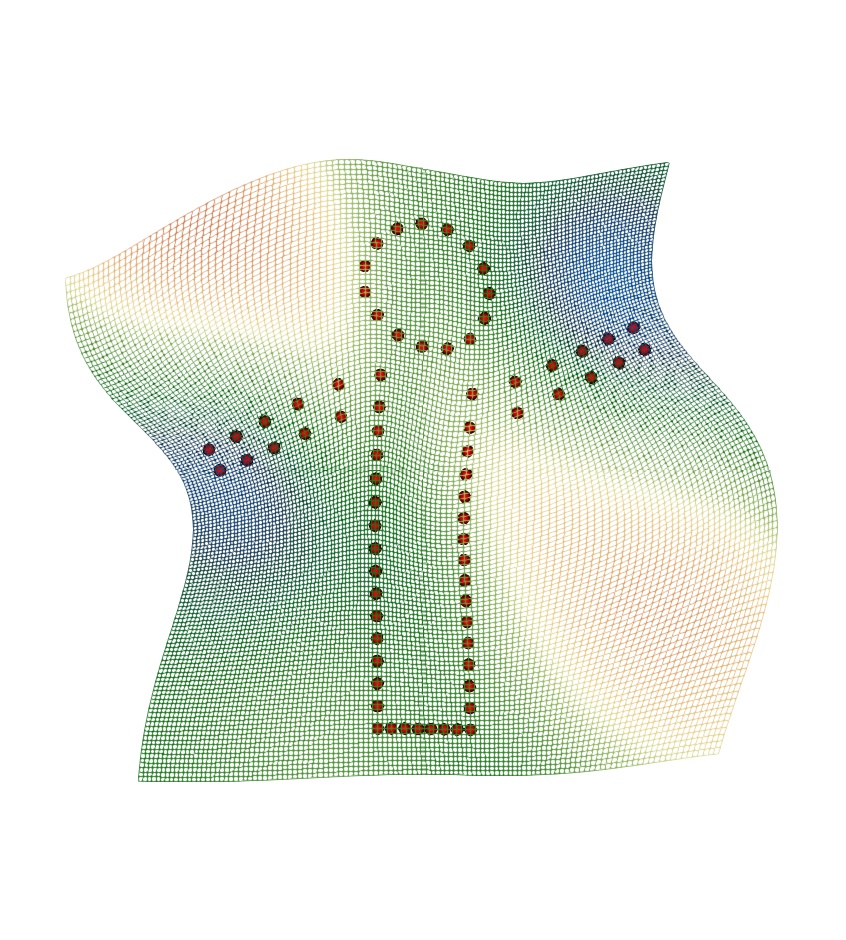}
\subcaption{$r_{17}$}
\end{subfigure}
\begin{subfigure}{.15\textwidth}
\centering
\includegraphics[width=\linewidth]{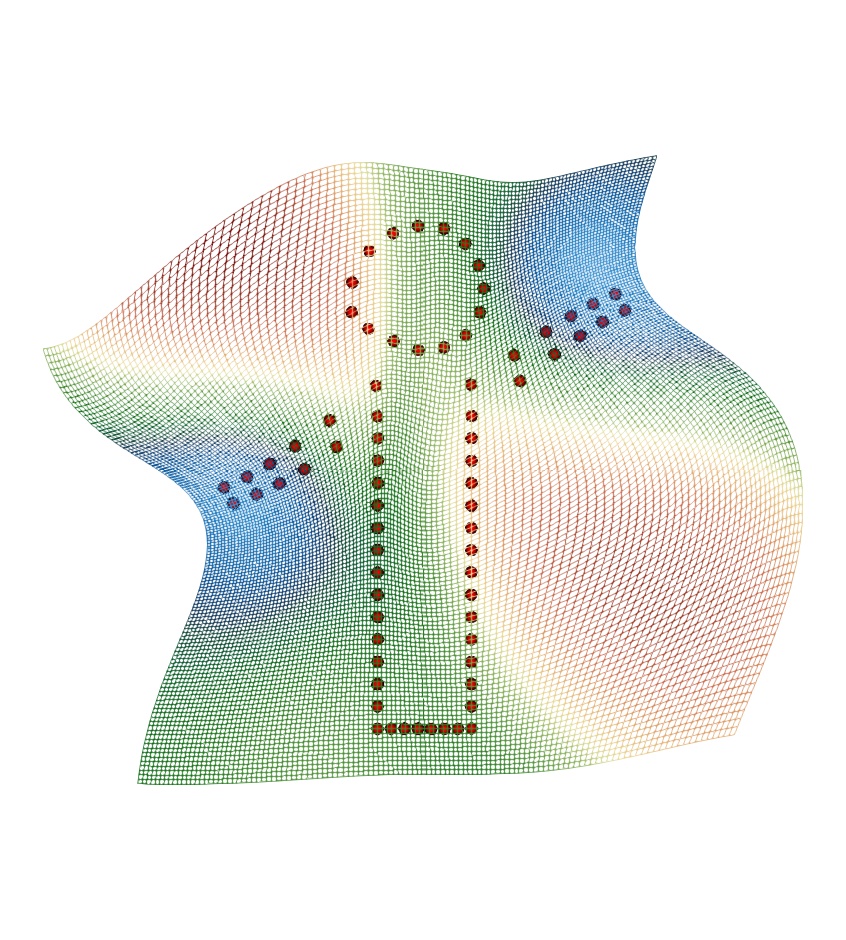}
\subcaption{$r_{20}$}
\end{subfigure}

\setcounter{subfigure}{0}
\centering
\begin{subfigure}{.15\textwidth}
\centering
\includegraphics[width=\linewidth]{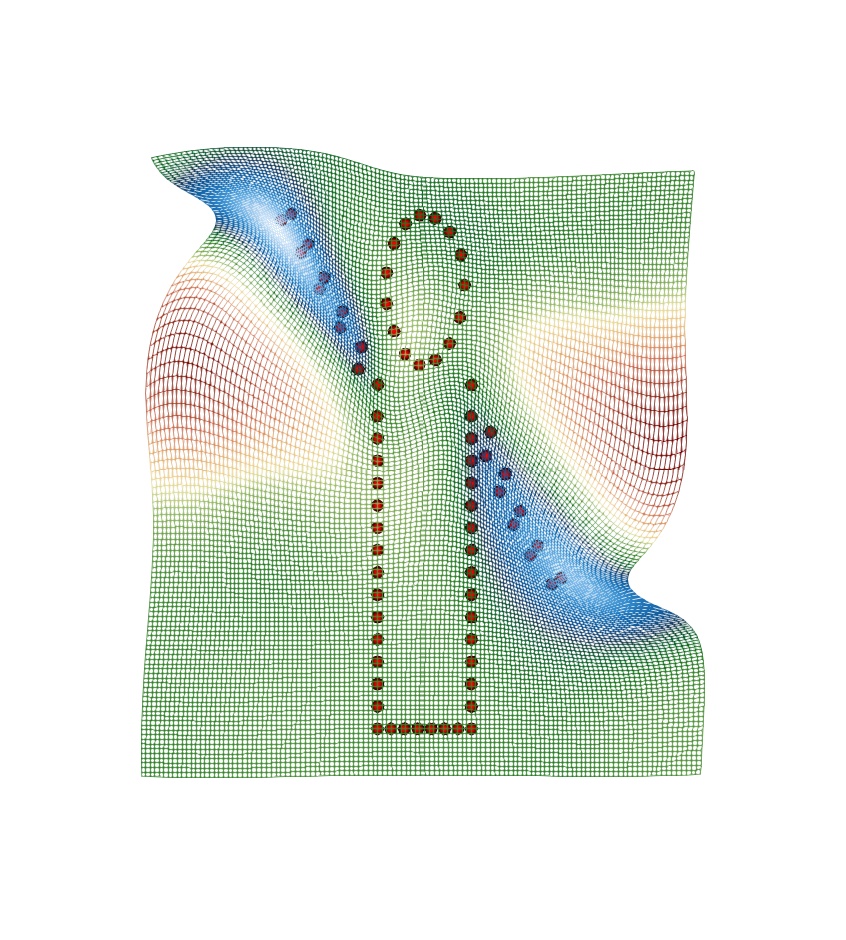}
\subcaption{$r_{1}$}
\end{subfigure}
\begin{subfigure}{.15\textwidth}
\centering
\includegraphics[width=\linewidth]{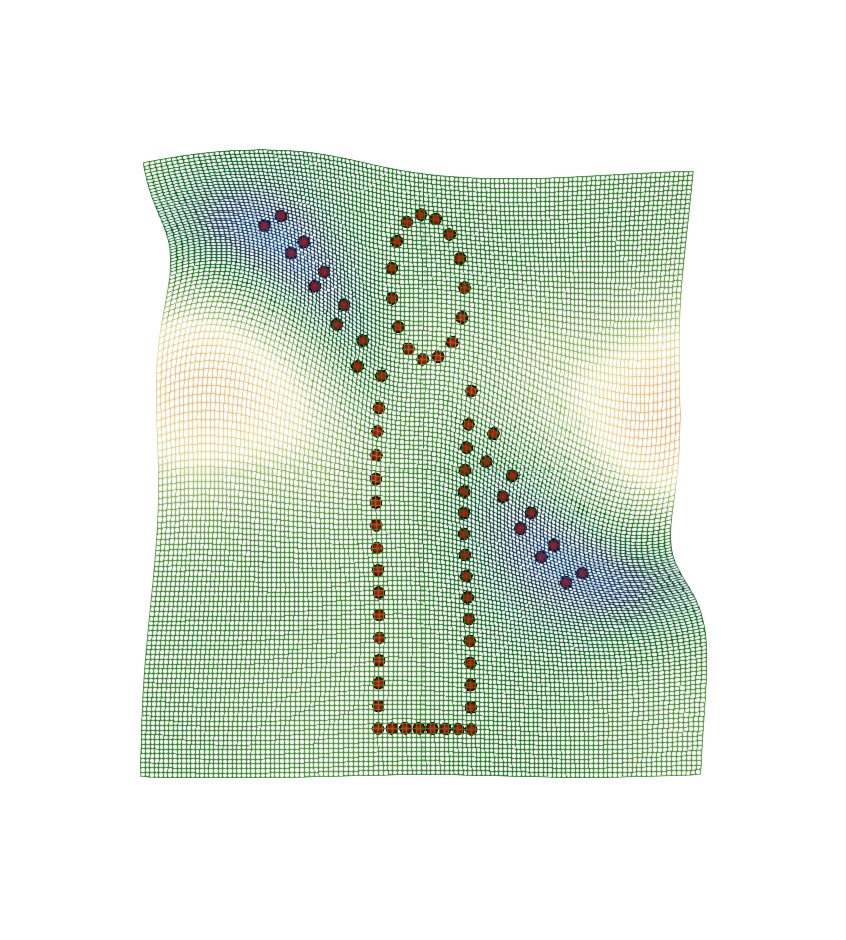}
\subcaption{$r_{5}$}
\end{subfigure}
\begin{subfigure}{.15\textwidth}
\centering
\includegraphics[width=\linewidth]{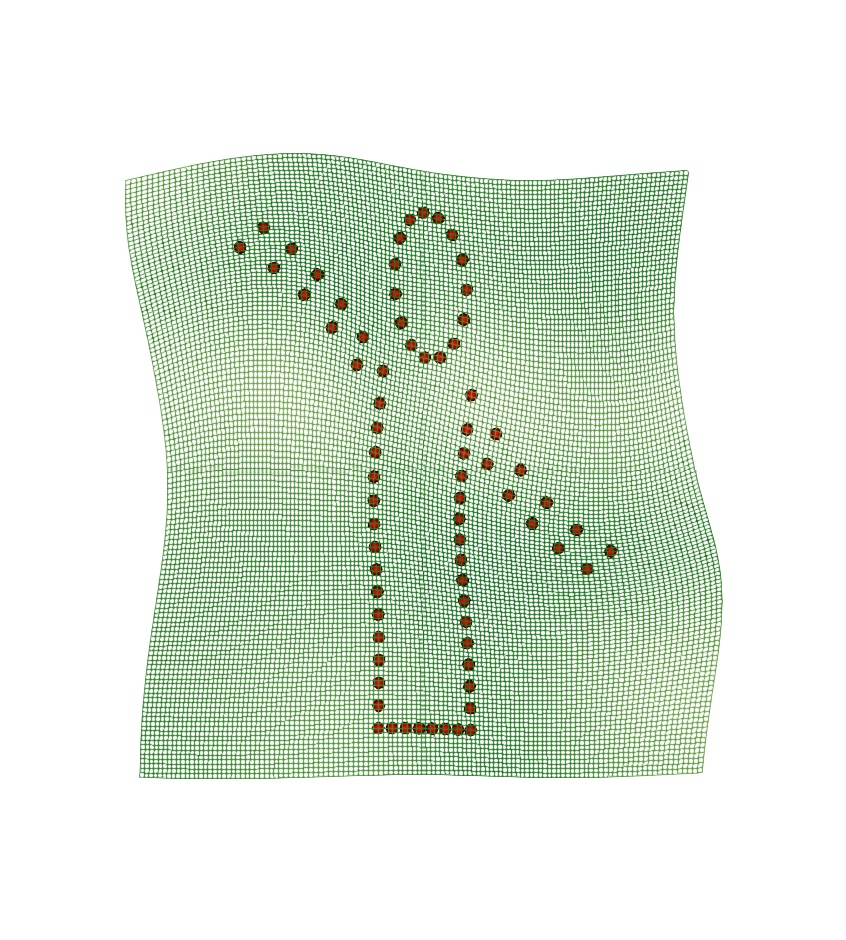}
\subcaption{$r_{9}$}
\end{subfigure}
\begin{subfigure}{.15\textwidth}
\centering
\includegraphics[width=\linewidth]{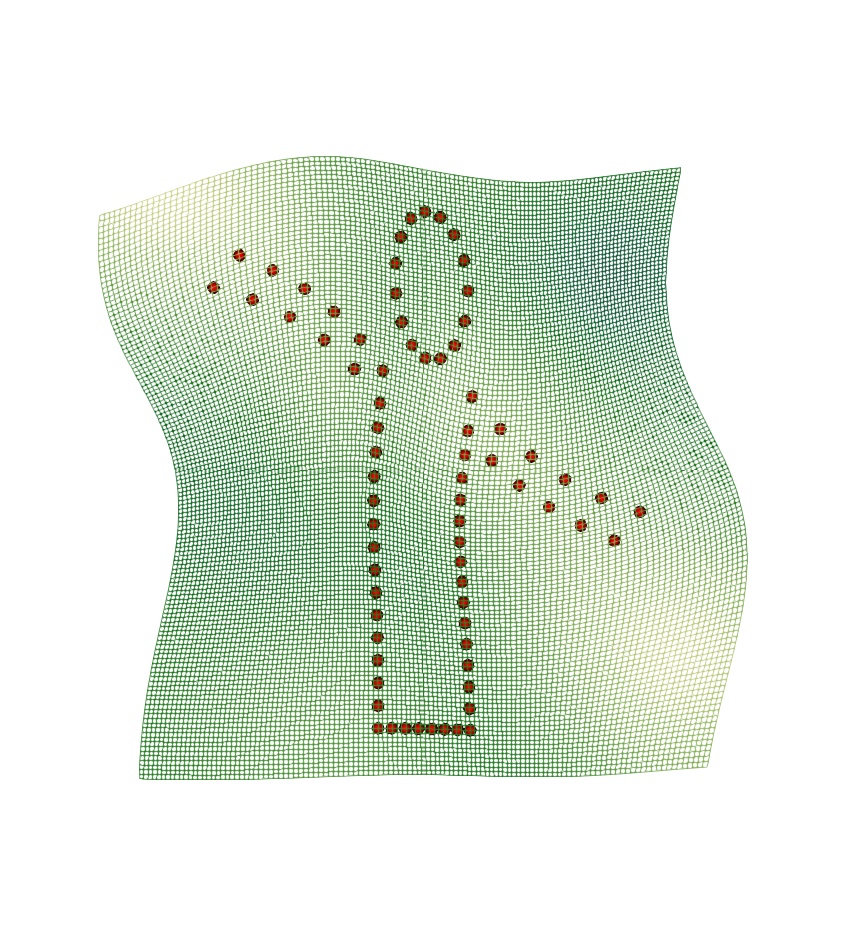}
\subcaption{$r_{13}$}
\end{subfigure}
\begin{subfigure}{.15\textwidth}
\centering
\includegraphics[width=\linewidth]{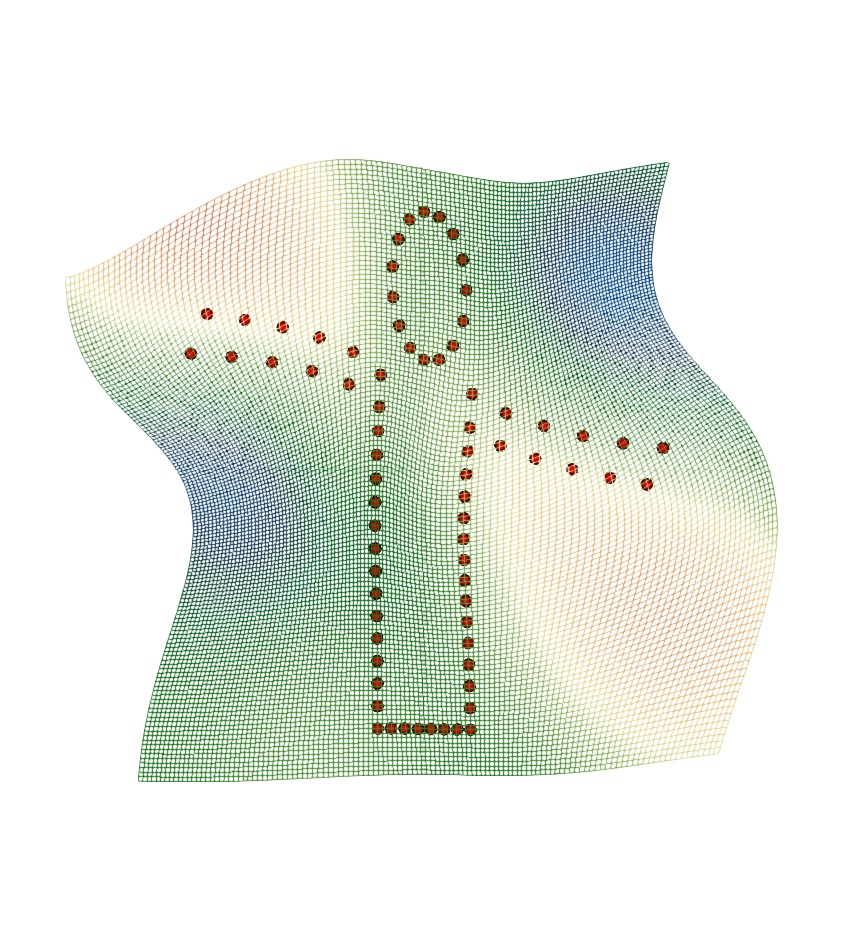}
\subcaption{$r_{17}$}
\end{subfigure}
\begin{subfigure}{.15\textwidth}
\centering
\includegraphics[width=\linewidth]{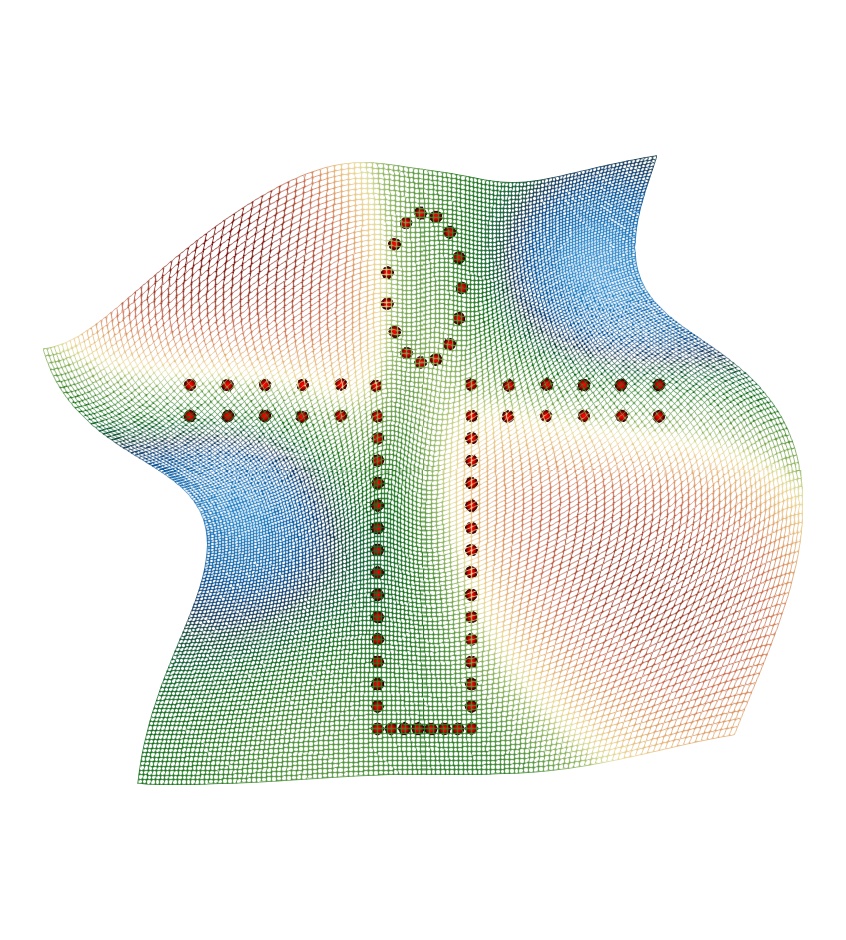}
\subcaption{$r_{20}$}
\end{subfigure}
\caption{Schematic human example with different templates and targets. First row: First template (with penalty constraint  at scale $r_1$) transformed by the multiscale transformation Second row: Same transformations applied to the second template (with penalty constraint at scale $r_{20}$). }
\label{fig: lmkset2 deformation}
\end{figure}

\begin{figure}
\centering
\begin{subfigure}{.13\textwidth}
\centering
\includegraphics[width=.8\linewidth]{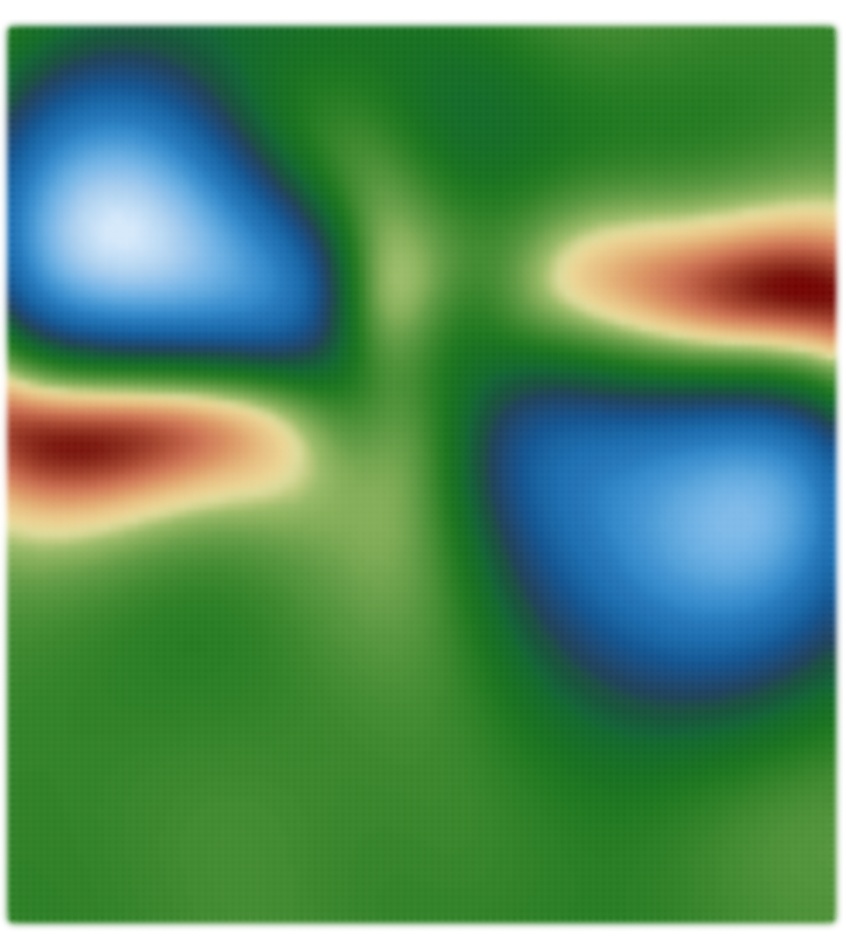}
\subcaption{$r_{1}$}
\end{subfigure}
\begin{subfigure}{.13\textwidth}
\centering
\includegraphics[width=.8\linewidth]{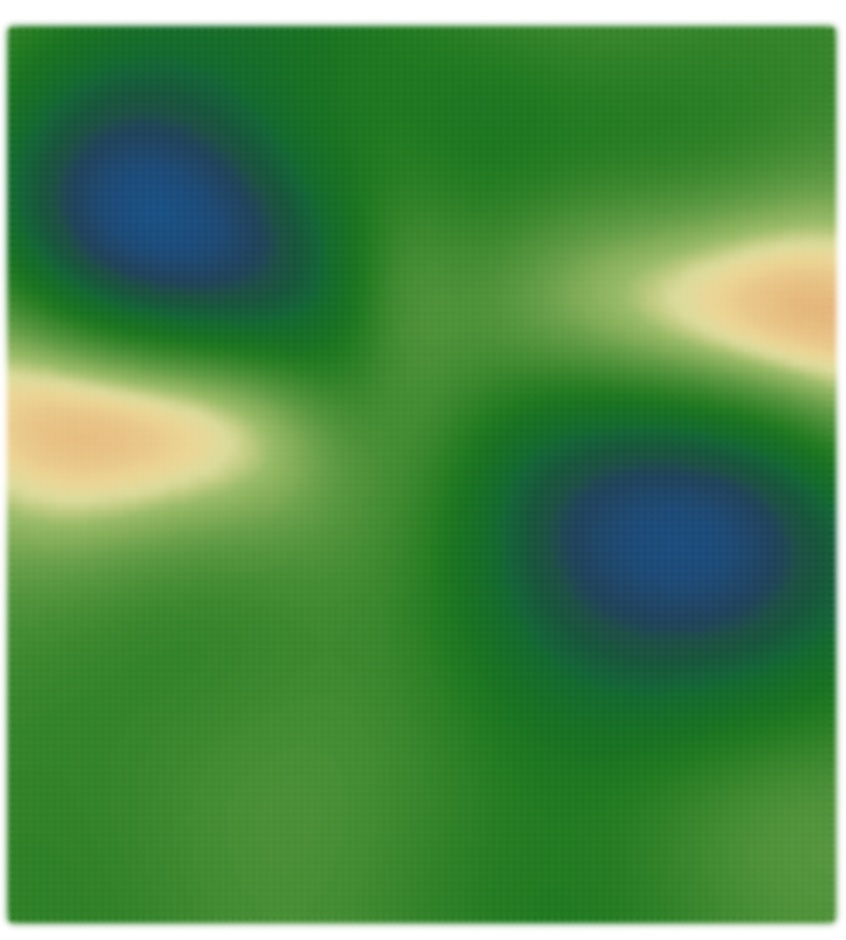}
\subcaption{$r_{5}$}
\end{subfigure}
\begin{subfigure}{.13\textwidth}
\centering
\includegraphics[width=.8\linewidth]{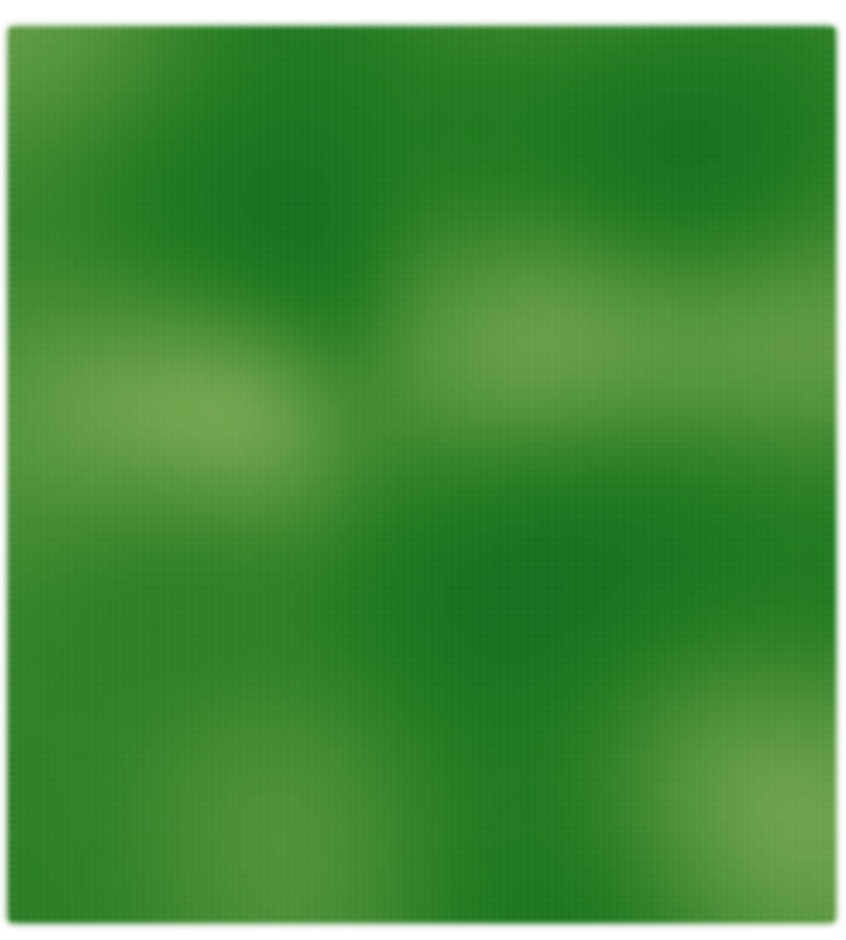}
\subcaption{$r_{9}$}
\end{subfigure}
\begin{subfigure}{.13\textwidth}
\centering
\includegraphics[width=.8\linewidth]{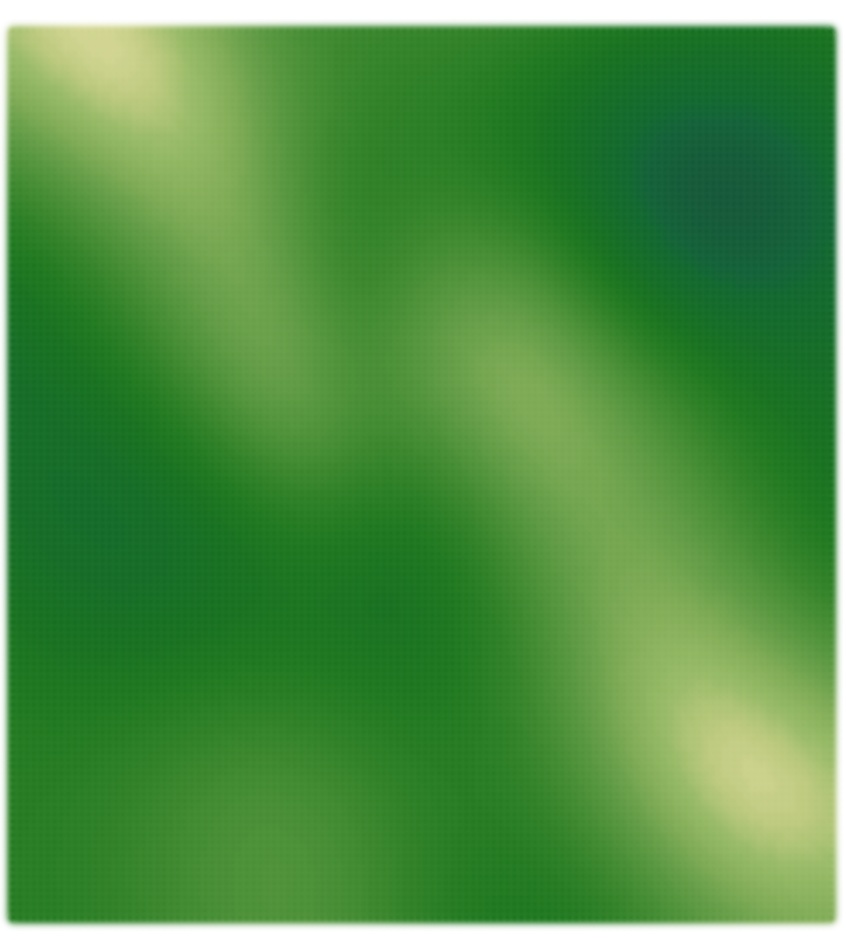}
\subcaption{$r_{13}$}
\end{subfigure}
\begin{subfigure}{.13\textwidth}
\centering
\includegraphics[width=.8\linewidth]{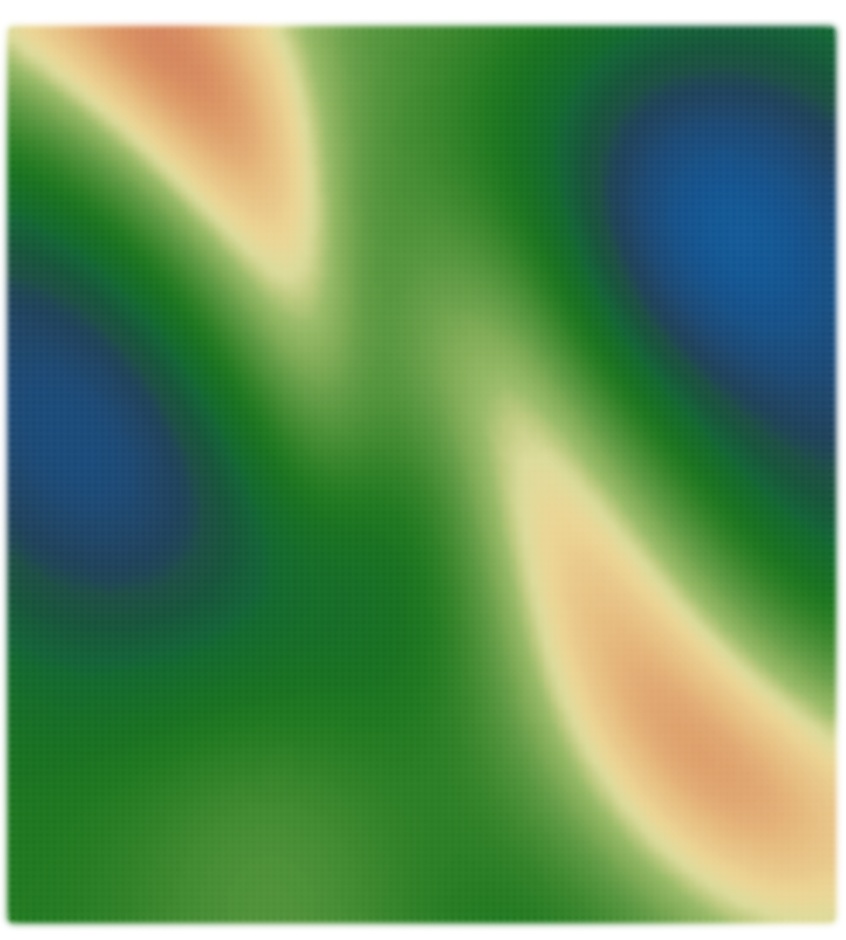}
\subcaption{$r_{17}$}
\end{subfigure}
\begin{subfigure}{.13\textwidth}
\centering
\includegraphics[width=.8\linewidth]{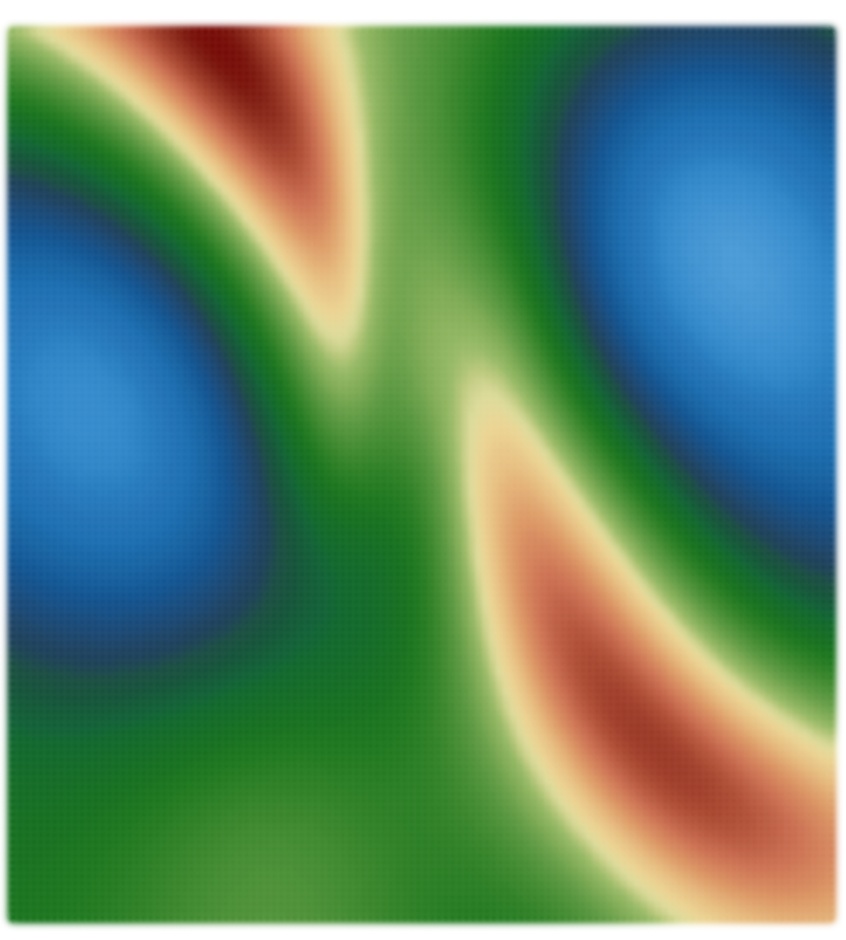}
\subcaption{$r_{20}$}
\end{subfigure}

\setcounter{subfigure}{0}
\begin{subfigure}{.13\textwidth}
\centering
\includegraphics[width=.8\linewidth]{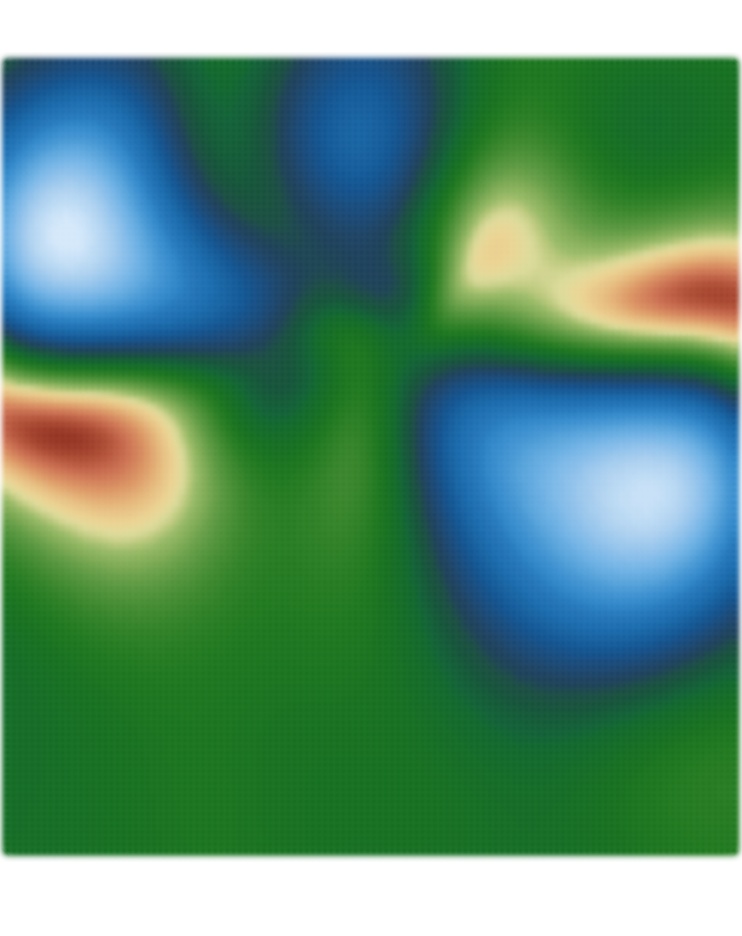}
\subcaption{$\rho^v_{r_{1}}$}
\end{subfigure}
\begin{subfigure}{.13\textwidth}
\centering
\includegraphics[width=.8\linewidth]{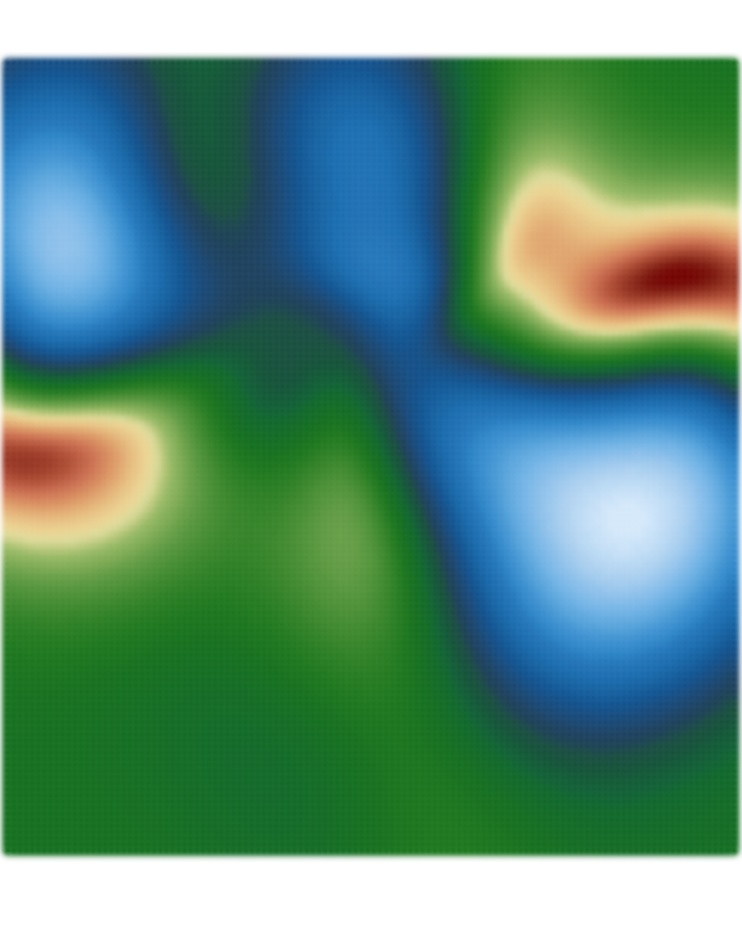}
\subcaption{$\rho^v_{r_{2}}$}
\end{subfigure}
\begin{subfigure}{.13\textwidth}
\centering
\includegraphics[width=.8\linewidth]{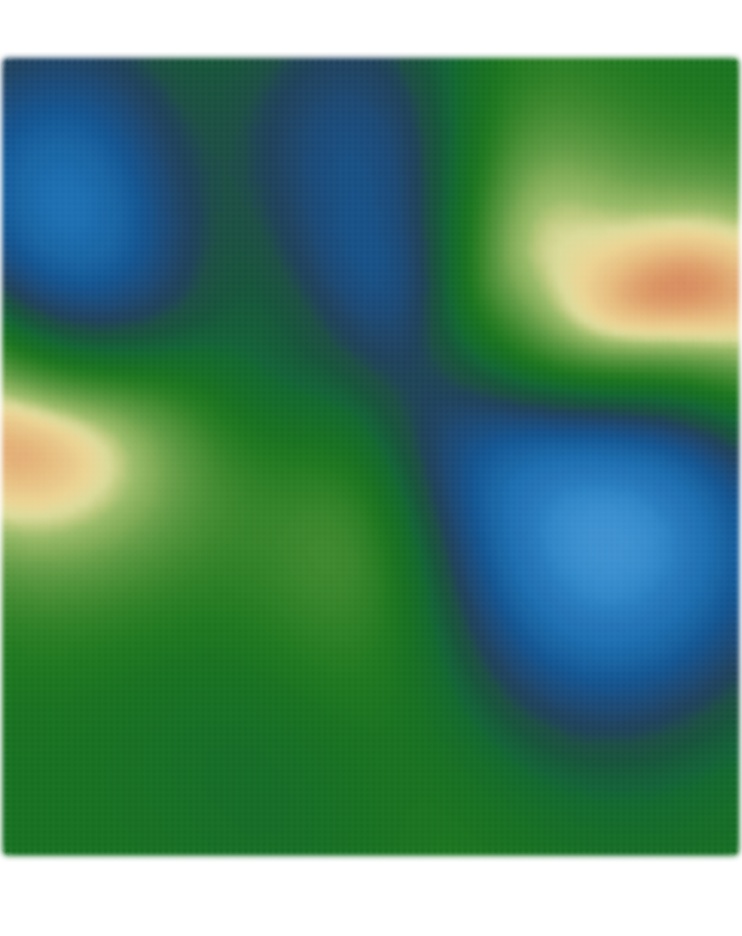}
\subcaption{$\rho^v_{r_{5}}$}
\end{subfigure}
\begin{subfigure}{.13\textwidth}
\centering
\includegraphics[width=.8\linewidth]{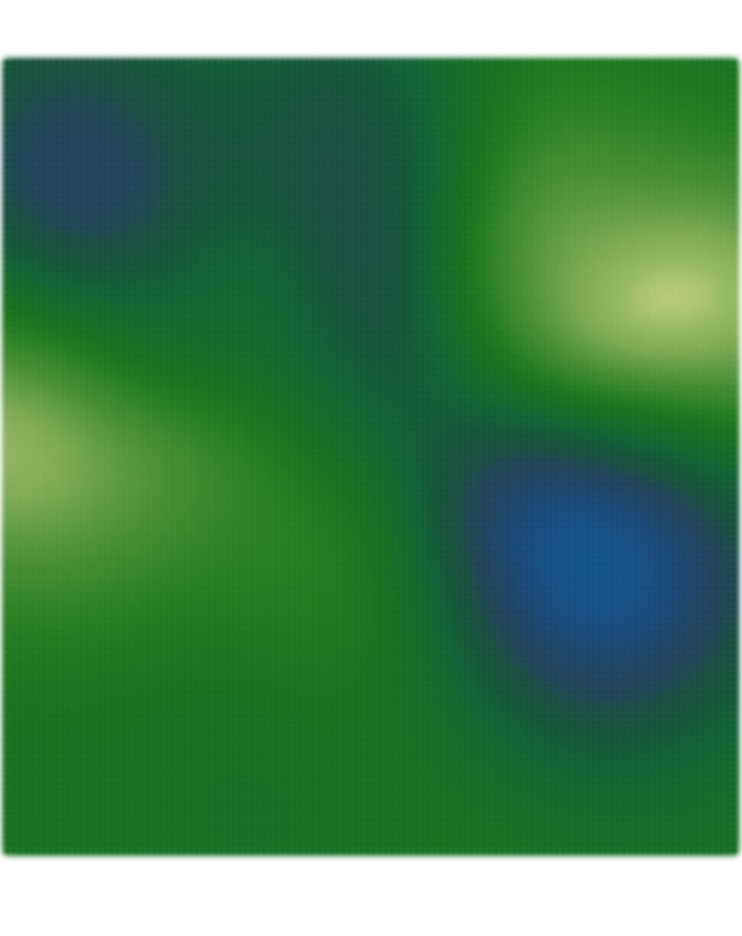}
\subcaption{$\rho^v_{r_{9}}$}
\end{subfigure}
\begin{subfigure}{.13\textwidth}
\centering
\includegraphics[width=.8\linewidth]{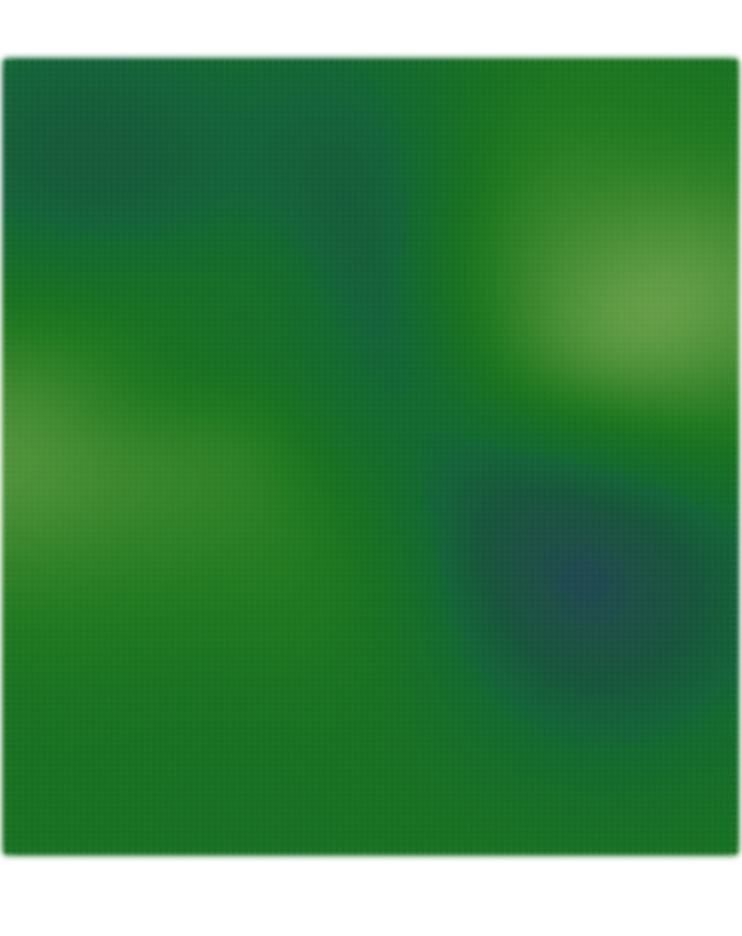}
\subcaption{$\rho^v_{r_{13}}$}
\end{subfigure}
\begin{subfigure}{.13\textwidth}
\centering
\includegraphics[width=.8\linewidth]{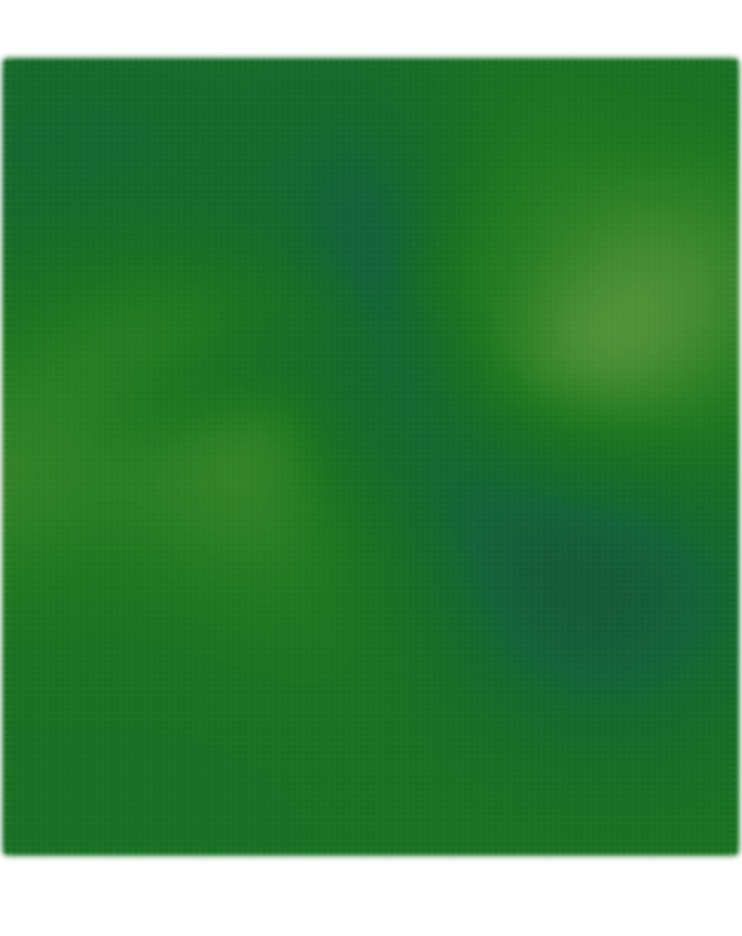}
\subcaption{$\rho^v_{r_{17}}$}
\end{subfigure}
\begin{subfigure}{.13\textwidth}
\centering
\includegraphics[width=.8\linewidth]{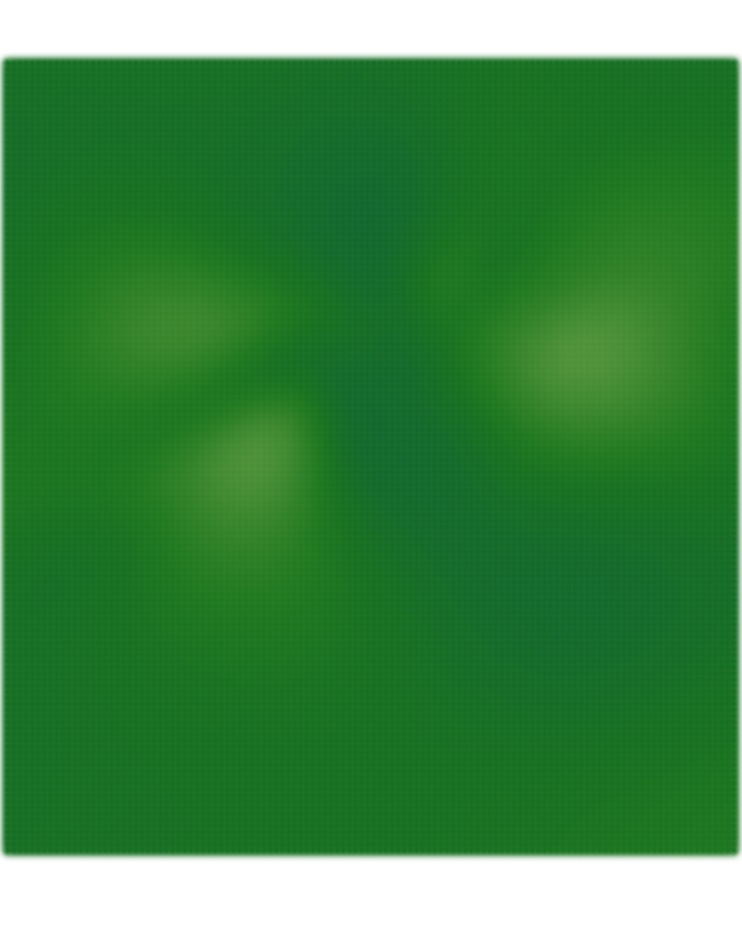}
\subcaption{$\rho^v_{r_{20}}$}
\end{subfigure}
\caption{Schematic human example with different templates and targets. First row: log Jacobian; Second row: residuals. Base scales: $r_1$, $r_{20}$.}
\label{fig: waving residual}
\end{figure}
\end{example}

\section{Conclusion}
In this paper, we have introduced a framework for diffeomorphic registration across scales varying on a continuous interval. This led us to study RKHS's of scale-dependent vector fields, for which one our main contribution was to provide an efficient numerical approximation of the reproducing kernel, which had no closed-form expression. This allowed us to derive a numerical scheme for the solution of a multi-scale LDDMM problem, for which we have provided experimental illustrations. From these examples, we see that our proposed MS-LDDMM model is able to perform a good landmark matching at base scales and at the same time provide an interpolation at the intermediate scales. Although we provided only 2D landmark matching examples, in fact this model can be applied to higher dimensional cases. On the other hand, the interpretation of interpolation heavily depends on the choice of multiscale kernels. Using the Gaussian kernels, we observe a rather fast decay of deformation across scales when there is only one base scale. But as in the schematic human example in  \cref{fig: human ms results}, the decay is ameliorated by imposing an other scale as a base scale, compared to the fast decay to an identity-like diffeomorphism in  \cref{fig: human fine scale} as well as  \cref{fig: human coarse scale}. 

And for the same landmark matching at two base scales, as in  \cref{fig: bumpy ms deformation}, we see that the deformations vary a lot, with the matching at $r_1$ stretches only two small portions on the left and right, while the matching at $r_{20}$ stretches a larger proportion in the middle, which can be distinguished by simple observation. This can be explained by the fact that for coarser scales, the kernels have a heavier tail, and thus a larger neighborhood will be affected by this landmark matching than a finer scale which concentrates more around its center and less effect on points further away. An important feature of the approach is to provide a scale-dependent residual decomposition of the transformation, which provides additional features, and possibly additional power, to morphometric shape analysis. 

\bibliography{ref}

\section{Appendix}\label{appendix}


\subsection{Proof of \texorpdfstring{\cref{lemma: ilambda: V to V}}{}}
\label{sec:lemma2.1} We restate \cref{lemma: ilambda: V to V} here:     

{\em For all $\lambda\in [s_1,s_2]$, $\iota_\lambda: \mathbb V\to V$ is a linear and bounded operator. 
}

\begin{proof}

Linearity is clear as this is just fixing one of the variables. It remains to check the continuity.

Take $\lambda,\mu\in [s_1,s_2]$ and use \cref{eqn: IBP} to obtain:
\begin{align*}
    \norm{\iota_\lambda v}_V \leq \norm{\iota_\mu v}_V + \int_{\min(\lambda,\mu)}^{\max(\lambda, \mu)} \norm{\partial_\lambda v(s)}_Vds\leq \norm{\iota_\mu v}_V + \int_{s_1}^{s_2}\norm{\partial_\lambda v(s)}_Vds.
\end{align*}

Integrating in $\mu$ with respect to Lebesgue's measure yields \begin{align*}
    (s_2-s_1)\norm{\iota_\lambda v}_V\leq \int_{s_1}^{s_2}\norm{\iota_s v}_Vds +(s_2-s_1)\int_{s_1}^{s_2}\norm{\partial_\lambda v(s)}_Vd\mu.
\end{align*}

Taking squares gives\begin{align*}
    (s_2-s_1)^2\norm{\iota_\lambda v}^2_V\leq 2\left(\int_{s_1}^{s_2}\norm{\iota_s v}_Vd\mu\right)^2+2(s_2-s_1)^2\left(\int_{s_1}^{s_2}\norm{\partial_\lambda v(s)}_Vds\right)^2\\
    \leq 2(s_2-s_1)\int_{s_1}^{s_2}\norm{\iota_s v}_V^2d\mu+2(s_2-s_1)^3\int_{s_1}^{s_2}\norm{\partial_\lambda v(s)}_V^2ds.
\end{align*}

This implies \begin{align}
\nonumber
    \norm{\iota_\lambda v}^2_V&\leq \frac{2}{s_2-s_1}\int_{s_1}^{s_2}\norm{\iota_s v}_V^2ds+2(s_2-s_1)\int_{s_1}^{s_2}\norm{\partial_\lambda v(s)}_V^2ds \\
    \label{eq:embedding.1}
    \norm{v(\lambda)}_V&\leq \sqrt{2}\max\left(\frac{1}{\sqrt{s_2-s_1}},\sqrt{s_2-s_1} \right)\norm{v}_{\mathbb V}.
\end{align}
\end{proof}

\subsection{Proof of \texorpdfstring{\cref{prop:V.equiv}}{}}
\label{sec:prop.V.equip} The statement runs as follows: 

{\em
    For any positive measure $\rho$ on $[s_1,s_2]$, $\|\cdot\|_{\mathbb V, \rho}\sim\|\cdot\|_{\mathbb V}$.  
}

\begin{proof}
Given such a measure $\rho$,
 we define $|\rho|=\rho([s_1,s_2])$ and \begin{align*}
    q_{\rho}(\lambda, s) = \left\{ \begin{aligned}
\frac{\rho([s_1, s))}{|\rho|}: s< \lambda\\
- \frac{\rho([s, s_2])}{|\rho|}: s\geq \lambda .
\end{aligned}
\right.
\end{align*} 

Let $v\in\mathbb V$. Then for all $s,\lambda\in[s_1,s_2]$, 
\[v(\lambda) = v(s)+\int_s^\lambda \partial_\lambda v(u)du.\] Integrate both sides with respect to  $\rho$ over $[s_1,\lambda)$, yielding (letting $\mathbf 1_A$ denote the indicator function of a set $A$):
\begin{align*}
    \rho([s_1,\lambda))v(\lambda)&=\int_{[s_1, \lambda)}v(s)d\rho(s)+\int_{s_1}^{s_2}\int_{[s_1,s_2]}\mathbf{1}_{[s_1,\lambda)}(s)\mathbf{1}_{(s,\lambda]}(u)\partial_\lambda v(u)dud\rho(s)\\
    &=\int_{[s_1,\lambda)}v(s)d\rho(s)+\int_{s_1}^{s_2}\int_{[s_1, s_2]}\mathbf{1}_{[s_1, u)}(s)\mathbf{1}_{(s_1,\lambda]}(u)\partial_\lambda v(u)d\rho(s)du\\ &=\int_{[s_1,\lambda)}v(s)d\rho(s)+\int_{s_1}^\lambda\rho([s_1,u))\partial_{\lambda }v(u)du. 
\end{align*}
Similarly, one can integrate over $[\lambda, s_2]$ to get \begin{align*}
    \rho([\lambda, s_2])v(\lambda) &= \int_{[\lambda, s_2]}v(s)d\rho(s)-\int_{[s_1, s_2]}\int_{s_1}^{s_2}\mathbf{1}_{[\lambda, s_2]}(s)\mathbf{1}_{[\lambda, s]}(u)\partial_\lambda v(u)dud\rho(s) \\
    &=\int_{[\lambda, s_2]}v(s)d\rho(s)-\int_{s_1}^{s_2}\int_{[s_1,s_2]}\mathbf{1}_{[u,s_2]}(s)\mathbf{1}_{[\lambda, s_2]}(u)\partial_\lambda v(u)d\rho(s)du \\
    &=\int_{[\lambda, s_2]}v(s)d\rho(s)-\int_{\lambda}^{s_2}\rho([u,s_2])\partial_\lambda v(u)du.
\end{align*}
Then summing these two equations yields \begin{align}
    \rho([s_1,s_2])v(\lambda) &= \int_{[s_1,s_2]}v(s)d\rho(s)+\int_{s_1}^{s_2}\left(\mathbf{1}_{[s_1,\lambda)}(u)\rho([s_1,u))-\mathbf{1}_{[\lambda, s_2]}\rho([u, s_2])\right)\partial_\lambda v(u)du\nonumber\\ \hspace{12pt} v(\lambda)&=\frac{1}{|\rho|}\int_{[s_1,s_2]}v(s)d\rho(s) +\int_{s_1}^{s_2}q_\rho(\lambda, u)\partial_\lambda v(u)du.
    \label{eqn: sum pos meas}
\end{align}

Therefore, by \cref{eqn: sum pos meas} and the fact that $|q_\rho(\lambda, s)|\leq 1$, one has \begin{align}
\nonumber
    \norm{v(\lambda)}_V&\leq \frac{1}{\sqrt{|\rho|}}\left(\int_{[s_1,s_2]}\norm{v(s)}_{V}^2d\rho(s)\right)+\sqrt{s_2-s_1}\left(\int_{s_1}^{s_2}\norm{\partial_\lambda v(s)}_V^2ds\right)^{1/2} \\ 
    \label{eq:embedding.2}
    \sup\limits_{\lambda\in[s_1,s_2]}\norm{v(\lambda)}_V&\leq M\left(\int_{[s_1,s_2]}\norm{v(s)}_V^2d\rho(s)+\int_{s_1}^{s_2}\norm{\partial_\lambda v(s)}_V^2ds\right)^{1/2},
\end{align}
where $M=\sqrt{2}\max\left(\frac{1}{\sqrt{|\rho|}}, \sqrt{s_2-s_1}\right)$.
We retrieve \cref{eq:embedding.1} when $\rho=\mathcal L$, the Lebesgue measure. Define $\norm{v}':=\norm{v}_\infty+\norm{\partial_\lambda v}_2$, where  $\|v\|_{\infty}:=\sup\limits_{\lambda\in[s_1,s_2]}\norm{v(\lambda)}_V$ and $\|\partial_\lambda v\|_2^2:=\int_{s_1}^{s_2}\norm{\partial_\lambda v(s)}_V^2ds$.
\Cref{eq:embedding.1}, combined with the fact that $\|v\|_2 \leq \sqrt{s_2-s_1} \|v\|_\infty$,  shows that the $\|\cdot\|'$ is equivalent to $\|\cdot \|_{\mathbb V}$, so that $(\mathbb V, \|\cdot\|')$ is a Banach space. 

\Cref{eq:embedding.2} now implies that $\norm{v}_{\mathbb V, \rho}$ is also equivalent to $\|\cdot\|'$ and therefore also to $\|\cdot\|_{\mathbb V}$.
\end{proof}

\subsection{Proof of \texorpdfstring{\cref{Prop W lambda}}{}}
\label{sec:Prop W lambda} 
We repeat the statement again:

{\em
    If $\lambda\in [s_1, s_2]$ is such that $\rho([\lambda, s_2])>0$, then  $\iota_\lambda(v)\in W_\lambda$ and $w\in \mathbb W$. Moreover, this operator is bounded from  $(\mathbb W, \norm{\cdot}_{\mathbb W})$ to $(W_\lambda, \norm{\cdot}_\lambda)$. 

    Let $\lambda_0\in [s_1, s_2]$ be such that $\rho([\lambda_0, s_2]) = 0$ and $\rho([\lambda, s_2]) > 0$ for all $\lambda < \lambda_0$. 
    If $W_{\lambda_0}=\cap_{\lambda\in [s_1,\lambda_0)}W_\lambda$, then $v$ (extended by continuity to $[s_1, \lambda_0]$) is such that $\iota_{\lambda_0}(v)\in W_{\lambda_0}$. 
    
    Furthermore, if one assumes, in addition, that the mapping $\lambda\in[s_1,\lambda_0]\mapsto \norm{w}_\lambda$ is continuous for all $w\in W_{\lambda_0}$, then $\iota_{\lambda_0}:\mathbb W\to W_{\lambda_0}$ is bounded. 
}

\begin{proof}
    Fix $\lambda \in [s_1,s_2)$, take $s\in [s_1,s_2]$ with $s\geq \lambda$. As $W_{\mu}\hookrightarrow W_{\nu}$ for $\mu\geq \nu$, $\norm{v(\mu)}_{\nu}$ is well defined for any $\mu\geq \nu$. In particular, one has 
    \[
    \norm{v(\lambda)}_\lambda\leq \norm{v(s)}_\lambda+\int^s_\lambda\norm{\partial_\lambda v(s')}_{\lambda}ds'.
    \]
    
    Integrating both sides with respect to $s$ in measure $\rho$ gives: 
    \begin{align*}
        \int_{[\lambda, s_2]}\norm{v(\lambda)}_\lambda d\rho(s)&\leq \int_{[\lambda, s_2]}\norm{v(s)}_\lambda d\rho(s)+\int_{[\lambda, s_2]}\int^{s}_{\lambda}\norm{\partial_\lambda v(s')}_\lambda ds'd\rho(s).
        \end{align*}
        As a consequence,
        \begin{align*}
        \norm{v(\lambda)}_\lambda&\leq \frac{1}{\rho([\lambda, s_2])}\int_{[\lambda, s_2]}\norm{v(s)}_\lambda d\rho(s)+\frac{1}{\rho([\lambda, s_2])}\int_{[\lambda, s_2]}\int_{\lambda}^{s_2}\norm{\partial_\lambda v(s')}_\lambda ds'd\rho(s) \\
        &\leq \frac{C}{\rho([\lambda, s_2])}\int_{[\lambda, s_2]}\norm{v(s)}_sd\rho(s)+C\int_{\lambda}^{s_2}\norm{\partial_\lambda v(s')}_{s'}ds'\\
        &\leq \frac{C}{\rho([\lambda, s_2])}\int_{[s_1, s_2]}\norm{v(s)}_sd\rho(s)+C\int_{s_1}^{s_2}\norm{\partial_\lambda v(s)}_{s}ds.
    \end{align*} This implies that \begin{align}
        \norm{v(\lambda)}_\lambda^2&\leq \frac{2C^2|\rho|}{\rho([\lambda, s_2])^2}\int_{[s_1, s_2]}\norm{v(s)}_s^2d\rho(s)+2C^2(s_2-s_1)\int_{s_1}^{s_2}\norm{\partial_\lambda v(s)}_s^2ds\nonumber\\
        &\leq \max\left(\frac{2C^2|\rho|}{\rho([\lambda, s_2])^2}, 2C^2(s_2-s_1)\right)\norm{v}_{\mathbb W}^2.
    \end{align}
    Therefore, one can see that $\norm{\iota_\lambda(v)}_\lambda<\infty$ and $\iota_\lambda: \mathbb W\to W_\lambda$ is continuous. 

    We now pass to the second statement of the theorem, first assuming that left-continuity of $W_\lambda$ at $\lambda=\lambda_0$ holds, i.e., $W_{\lambda_0}=\cap_{s\in [s_1,\lambda_0)} W_s$. Let $(s^k)_{k=1}^\infty$ be a increasing sequence in $[s_1,\lambda_0)$ that converges to $\lambda_0$ as $k\to\infty$. Then $\iota_{s^k}(v)\in W_{\mu}$ for all $k\in \mathbb N$ and $\mu\leq s^k$. 
    We have, for $l\geq k \geq k_0$,
    \[
    \|v(s^k) - v(s^l)\|_{s^{k_0}} \leq \int_{s^k}^{s^l} \|\partial_\lambda v(\lambda)\|_{s^{k_0}} d\lambda \leq C \int_{s^k}^{s^l} \|\partial_\lambda v(\lambda)\|_{\lambda} d\lambda,   
    \]
    which implies that, for all $k_0$, $(v(s^k), k\geq k_0)$ is a Cauchy sequence in $W_{s^{k_0}}$. Thus  $v(s^k)$ converges to a limit that belongs to $\cap_{k\geq 1} W_{s^k} = W_{\lambda_0}$.

    We now assume that, for $w\in W_{\lambda_0}$, one has
    $\left|\norm{w}_\lambda - \norm{w}_{\lambda_0}\right|\to 0$ as $\lambda\to \lambda_0$. Note that, for $\lambda < \lambda_0$, 
    $\rho([\lambda, \lambda_0)) = \rho([\lambda, s_2]) > 0$.

    Notice that \begin{align*}
        &\frac{1}{\rho([\lambda, \lambda_0])}\left|\int_{[\lambda, \lambda_0]}\norm{v(s)}_sd\rho(s) - \norm{v(\lambda_0)}_{\lambda_0}\right|
        \\
        &\qquad \leq \frac{1}{\rho([\lambda, \lambda_0])}\int_{[\lambda, \lambda_0]}\left|\norm{v(s)}_s-\norm{v(\lambda_0)}_{\lambda_0}\right|d\rho(s)\\
        &\qquad \leq \max_{s\in [\lambda, \lambda_0]}\left|\norm{v(s)}_s - \norm{v(\lambda_0)}_{\lambda_0}\right|\\
        &\qquad \leq \max_{s\in[\lambda, \lambda_0]}\norm{v(s)-v(\lambda_0)}_s+\max_{s\in[\lambda, \lambda_0]}\left|\norm{v(\lambda_0)}_s-\norm{v(\lambda_0)}_{\lambda_0}\right|
    \end{align*}
    By assumption, the second term goes to zero as $s\to 0$. For the first term, let $\mu\in [s, \lambda_0]$, then \begin{align*}
        \norm{v(\mu)-v(\lambda_0)}_\mu\leq \int_\mu^{\lambda_0}\norm{\partial_\lambda v(\mu)}_\mu d\mu\leq C\int_s^{\lambda_0}\norm{\partial_\lambda v(\mu)}_Vd\mu.
    \end{align*}
    As the right hand side goes to zero as $s\to \lambda_0$, for any $\epsilon>0$, one can take $s$ close enough to $\lambda_0$ such that for any $\mu \in [s, \lambda_0]$,  $\norm{v(\mu)-v(\lambda_0)}_{\mu}<\epsilon$. Therefore, one concludes that $\max\limits_{s\in[\lambda, \lambda_0]}\norm{v(s)-v(\lambda_0)}_s\to 0$ as $\lambda\to \lambda_0$. And thus 
    \begin{align*}
        |\norm{v(\lambda)}_\lambda-\norm{v(\lambda_0)}_{\lambda_0}|\leq \max_{s\in[\lambda, \lambda_0]}\norm{v(s)-v(\lambda_0)}_s+\max_{s\in[\lambda, \lambda_0]}\left|\norm{v(\lambda_0)}_s-\norm{v(\lambda_0)}_{\lambda_0}\right|.
    \end{align*}
    Since the right hand side vanishes as $\lambda\to \lambda_0$, the boundedness of $\iota_{\lambda_0}:\mathbb W\to W_{\lambda_0}$ has been established. 
    \end{proof}

\subsection{Proof of \texorpdfstring{\cref{th:bochner.plus}}{}}

For completeness, as we haven't been able to find a proof of this theorem in the literature, we provide a proof of  \cref{th:bochner.plus}, of which we repeat the statement.

{\em
    Let $S$ be any set and $\Gamma: S\times S \times \mathbb R^d\to \mathbb R$. Then the followings are equivalent:
    \begin{enumerate}[label=(\arabic*)]
        \item For all $s_1, s_2\in S$, $z\mapsto \Gamma(s_1, s_2, z) $ is continuous and the function
    \[
    K: ((s_1, x_1), (s_2, x_2)) \mapsto \Gamma(s_1, s_2, x_1-x_2)
    \]
    is a positive semi-definite kernel. 
    \item There exists a family of finite complex Radon measures $\mathcal M = (\mu_{s_1,s_2}, s_1, s_2 \in S)$ such that, for all $s_1, s_2\in S$ 
    \[
    \Gamma(s_1, s_2, z) = \int_{\mathbb R^d} e^{-i2\pi \xi^Tz} d\mu_{s_1, s_2}(\xi),
    \]
    and $\mathcal M$ is a measure-valued positive kernel in the sense that, for all $s_1, \ldots, s_n\in S$ and all $a_1, \ldots, a_n\in \mathbb C$, 
    \[
    \sum_{k,l=1}^n a_k\bar a_l \mu_{s_k, s_l}
    \]
    is a positive measure.
  \end{enumerate}
}
\subsubsection{(1) \texorpdfstring{$\Rightarrow$}{Lg} (2)} Because $K$ is positive semi-definite, one has, for all $s_1, s_2\in S$, $x_1, x_2\in \mathbb R^d$:
\[
K((s_1, x_1), (s_2, x_2))^2 \leq K((s_1, x_1), (s_1, x_1))K((s_2, x_2), (s_2, x_2)),
\]
which translates into 
\[
|\Gamma(s_1, s_2, z)| \leq \sqrt{\Gamma(s_1, s_1, 0)\Gamma(s_2, s_2, 0)}.
\]
Since the mapping $z\mapsto \Gamma(s_1, s_2, z)$ is continuous and bounded, it is the Fourier transform of a finite complex measure $\mu_{s_1,s_2}$. Because $\Gamma$ is real-valued and satisfies $\Gamma(s_1, s_2, z) = \Gamma(s_2, s_1, -z)$, we must have $\mu_{s_2,s_1} = \bar \mu_{s_1, s_2}$ and $\mu_{s_1, s_2}(A) = \bar \mu_{s_2, s_1}(-A)$ for all measurable $A\subset \mathbb R^d$.

The positivity of $K$ implies that, for all $a_1, \ldots, a_n\in \mathbb C$, $s_1, \ldots, s_n\in S$, $b_1, \ldots, b_m\in \mathbb C$, $x_1, \ldots, x_m\in \mathbb R^d$,
\[
\sum_{k,l=1}^n \sum_{q,r=1}^m a_l \bar a_lb_q\bar b_r \Gamma(s_k, s_l, x_q - x_r) \geq 0,
\]
which implies that
\[
\sum_{k, l=1}^n a_k \bar a_l \int_{\mathbb R^d} |\sum_{q=1}^m b_q e^{-2i\pi \xi^T x_q}|^2 d\mu_{s_k,s_l}(\xi) \geq 0.
\]
Let $f:\mathbb R^d \to [0, +\infty)$ be continuous and compactly supported, and let $M$ be large enough so that $f(\xi) = 0$ if $|\xi|_\infty \geq M/2$, where $|\xi|_\infty$ denotes the maximum of the absolute values of the coordinates of $\xi$. Denote $\Delta_r =\{\xi\in \mathbb R^d: |\xi|_{\infty}\leq r \}$. Choose $g: \mathbb R^d\to \mathbb C$ continuous such that $|g|^2 = f$.
For $\lambda > M$, let 
\[
g_\lambda(\xi) = g(\lambda \xi)
\]
so that $g_\lambda$ is supported on 
$\Delta_{1/2}=[-1/2, 1/2]^d$. Let $\tilde g_\lambda$ denote the $1$-periodic function that coincides with $g_\lambda$ on $\Delta_{1/2}$. Letting $F_N$, $N\geq 0$ denote the Fej\'er's kernel, the continuous and periodic  $\tilde g_\lambda$ is the uniform limit of $\tilde g_\lambda * F_N$ \cite{marsden1993elementary}, which takes the form
\[
\sum_{n \in \mathbb Z^d, |n|_\infty \leq N} c^N_{n, \lambda} e^{2i\pi n^T\xi}
\]
for some coefficients $c^N_{n, \lambda}$.

Fix $\epsilon>0$. As $\mu_{s_k, s_l}$ are finite measures for all $k,l$, we can choose $\lambda$ such that 
\[
\left|\sum_{k, l=1}^n a_k \bar a_l \int_{\mathbb R^d\setminus \Delta_{\lambda/2}} d\mu_{s_k,s_l}(\xi)\right| \leq \epsilon.
\]
Let
\[
f_\lambda(\xi) = |\tilde g_\lambda(\xi/\lambda)|^2,  
\]
then $f_\lambda(\xi) = f(\xi)$ on $\Delta_{\lambda/2}$. This shows that
\[
\sum_{k, l=1}^n a_k \bar a_l \int_{\mathbb R^d} |f(\xi) - f_\lambda(\xi)|  d\mu_{s_k,s_l}(\xi) \leq \epsilon\|f\|_\infty.
\]
We know that, for any finite $N$,
\[
\sum_{k, l=1}^n a_k \bar a_l \int_{\mathbb R^d} \left|\sum_{n \in \mathbb Z^d, |n|\leq N} c^N_{n, \lambda} e^{2i\pi n^T\xi}\right|^2 d\mu_{s_k,s_l}(\xi) \geq 0,
\]
and letting $N$ go to infinity, we get 
\[
\sum_{k, l=1}^n a_k \bar a_l \int_{\mathbb R^d} f_\lambda(\xi) d\mu_{s_k,s_l}(\xi) \geq 0.
\]
This gives 
\[
\sum_{k, l=1}^n a_k \bar a_l \int_{\mathbb R^d} f(\xi) d\mu_{s_k,s_l}(\xi) \geq - \epsilon\|f\|_\infty
\]
with arbitrary $\epsilon$.
As any Borel set $A\subset \mathbb R^d$ can be written as a countable union of compact sets $A=\cup_{n=1}^\infty B_n$, by taking continuous $f$ mollifying the discontinuity of $\mathbf{1}_{B_n}$, one has \[
\sum_{k,l=1}^na_k\bar a_l \mu_{s_k, s_l}(B_n)\geq 0.\] Given that $\mu_{s_k, s_l}$ are finite measures, passing the limit shows \[\sum_{k,l=1}^na_k\bar a_l \mu_{s_k, s_l}(A)\geq 0\]
for any Borel set $A\subset \mathbb R^d$.

\subsubsection{ (2) \texorpdfstring{$\Rightarrow$}{Lg} (1)}

Let $s_1, \ldots, s_n\in S$, $x_1, \ldots, x_n\in \mathbb R^d$ and $a_1, \ldots, a_n\in \mathbb C$. Then
\[
\sum_{k,l=1}^n a_k\bar a_l \Gamma(s_k, s_l, x_k - x_l) = 
\int_{\mathbb R^d} \sum_{k,l=1}^n a_k \bar a_l e^{-i2\pi \xi^T (x_k-x_l)} d\mu_{s_k, s_l}(\xi).
\]
Letting $f_k(\xi) = a_k e^{-i2\pi\xi^Tx_k}$, the r.h.s. takes the form 
\[
\sum_{k,l=1}^n\int_{\mathbb R^d}  f_k(\xi)\bar f_l(\xi)d\mu_{s_k,s_l}(\xi),
\]

For all $\delta>0$, consider the regular grid $G_\delta = \{n\delta: n\in \mathbb Z^d\}$ and associate to each $p \in G_\delta$ the semi-closed set $C_p^\delta = \{\xi\in\mathbb R^d: p_i-\delta/2 \leq \xi_i<p_i+\delta/2, i=1, \ldots, d\}$ so that the sets $C_p^\delta, p\in G_\delta$ are non-intersecting and cover $\mathbb R^d$. Let
\[
f_k^\delta(\xi) = \sum_{p\in G_\delta} f_k(p) \mathbf 1_{C_p^\delta}(\xi).
\]
Fix $\epsilon >0$. Because all $f_k$'s are uniformly continuous, there exists $\delta_0>0$ such that $\|f_k - f_k^\delta\|_\infty \leq \epsilon$ for all $\delta\leq \delta_0$, which implies that 
\[
\left| \sum_{k,l=1}^n\int_{\mathbb R^d}  (f_k(\xi)\bar f_l(\xi) - f^\delta_k(\xi)\bar f^\delta_l(\xi))d\mu_{s_k,s_l}(\xi)\right|\leq \epsilon \sum_{k,l=1}^n (|a_k| + |a_l|) |\mu_{s_k,s_l}|(\mathbb R^d) = C\epsilon.
\]
However, 
\begin{align*}
\sum_{k,l=1}^n\int_{\mathbb R^d}  f^\delta_k(\xi)\bar f^\delta_l(\xi)d\mu_{s_k,s_l}(\xi)
&= \sum_{k,l=1}^n \sum_{p, p'\in G_\delta} f_k(p) \bar f_l(p') \mu_{s_k, s_l}(C_p^\delta \cap C_{p'}^\delta)\\
&= \sum_{p\in G_\delta} \sum_{k,l=1}^n  f_k(p) \bar f_l(p) \mu_{s_k, s_l}(C_p^\delta)\geq 0
\end{align*}
since each term in the inner double sum is non-negative by (2). So, we find
\[
\sum_{k,l=1}^n\int_{\mathbb R^d}  f_k(\xi)\bar f_l(\xi)d\mu_{s_k,s_l}(\xi) \geq - C\epsilon
\]
with $\epsilon$ arbitrary, which gives (1).

\end{document}